\numberwithin{equation}{section}
\newtheorem{thm}[equation]{Theorem}
\newtheorem{cor}[equation]{Corollary}
\newtheorem{lem}[equation]{Lemma}
\newtheorem{prop}[equation]{Proposition}
\newtheorem{MainThm}{Theorem}
\theoremstyle{definition}
\newtheorem{defn}[equation]{Definition}
\newtheorem{notation}[equation]{Notation}
\theoremstyle{remark}
\newtheorem{rem}[equation]{Remark}
\newtheorem{assumption}[equation]{Assumption}
\newtheorem{construction}[equation]{Construction}
\newcommand{\colim}{\mathrm{colim}}
\newcommand{\scpr}[1]{\langle #1 \rangle}
\newcommand{\bQ}{\mathbb{Q}}
\newcommand{\bR}{\mathbb{R}}
\newcommand{\bZ}{\mathbb{Z}}
\newcommand{\bN}{\mathbb{N}}
\newcommand{\cD}{\mathcal{D}}
\newcommand{\cM}{\mathcal{M}}
\newcommand{\cE}{\mathcal{E}}
\newcommand{\cF}{\mathcal{F}}
\newcommand{\cG}{\mathcal{G}}
\newcommand{\Diff}{\mathrm{Diff}}
\newcommand{\GL}{\mathrm{GL}}
\newcommand{\SL}{\mathrm{SL}}
\newcommand{\Sp}{\mathrm{Sp}}
\newcommand{\SO}{\mathrm{SO}}
\newcommand{\id}{\mathrm{id}}
\newcommand{\bF}{\mathbb{F}}
\newcommand{\hAut}{\mathrm{hAut}}
\newcommand{\Aut}{\mathrm{Aut}}
\newcommand{\Fr}{\mathrm{Fr}}
\newcommand{\inter}[1]{\mathrm{int}(#1)}
\newcommand{\Hom}{\mathrm{Hom}}
\newcommand{\End}{\mathrm{End}}
\newcommand{\map}{\mathrm{map}}
\newcommand{\rank}{\mathrm{rank}}
\newcommand{\cA}{\mathcal{A}}
\newcommand{\Mod}{\mathrm{Mod}}
\newcommand{\op}{\mathrm{op}}
\newcommand{\blockdiff}{\widetilde{\mathrm{Diff}}}
\newcommand{\st}{\mathrm{st}}
\newcommand{\Hq}[2]{#1 \sslash #2}
\newcommand{\hq}[2]{  \ooalign{
    $\genfrac{}{}{1.2pt}1{#1}{#2}$\cr
    $\color{white}\genfrac{}{}{.4pt}1{\phantom{#1}}{\phantom{#2}}$}
}
\newcommand{\norm}[1]{\| #1 \|}
\newcommand{\sSet}{\mathcal{S}}
\newcommand{\inc}{\mathrm{inc}}
\newcommand{\hur}{\mathrm{hur}}
\newcommand{\bL}{\mathbb{L}}
\newcommand{\bK}{\mathbb{K}}
\newcommand{\ev}{\mathrm{ev}}
\newcommand{\cP}{\mathcal{P}}
\newcommand{\height}{\mathrm{ht}} 
\newcommand{\evrow}{\mathrm{er}}
\newcommand{\evcol}{\mathrm{ec}}
\newcommand{\coker}{\mathrm{coker}}
\newcommand{\im}{\mathrm{im}}
\newcommand{\sgn}{\mathrm{sgn}}
\newcommand{\hocolim}{\mathrm{hocolim}}
\newcommand{\MT}{\mathrm{MT}}
\title[Diffeomorphisms of some odd-dimensional manifolds]{Some rational homology computations for diffeomorphisms of odd-dimensional manifolds}
\author{Johannes Ebert}
\email{johannes.ebert@uni-muenster.de}
\address{
Mathematisches Institut\\
Universit\"at M{\"u}nster\\
Einsteinstr. 62\\
48149 M{\"u}nster\\
Germany
}
\author{Jens Reinhold}
\email{jens.reinhold@posteo.de}
\address{
}
\thanks{The authors were supported by the Deutsche Forschungsgemeinschaft (DFG, German Research Foundation) -- Project-ID 427320536 -- SFB 1442, as well as under Germany’s Excellence Strategy EXC 2044 -- 390685587, Mathematics M\"unster: Dynamics–Geometry–Structure.
}
\date{\today}
\begin{document}

\begin{abstract}
We calculate the rational cohomology of the classifying space of the diffeomorphism group of the manifolds $U_{g,1}^n:= \#^g(S^n \times S^{n+1})\setminus \inter{D^{2n+1}}$, for large $g$ and $n$, up to degree $n-3$. The answer is that it is a free graded commutative algebra on an appropriate set of Miller--Morita--Mumford classes. 

Our proof goes through the classical three-step procedure: (a) compute the cohomology of the homotopy automorphisms, (b) use surgery to compare this to block diffeomorphisms, (c) use pseudoisotopy theory and algebraic $K$-theory to get at actual diffeomorphism groups.
\end{abstract}

\maketitle

\tableofcontents

\section{Introduction}

\subsection{Context: Madsen--Weiss type theorems}

For a smooth compact manifold with boundary $M$, let $\Diff_\partial (M)$ denote the topological group of diffeomorphisms of a smooth compact manifold $M$ which are equal to the identity near $\partial M$. One of the success stories of differential topology in the 21st century was a (partial) computation of the cohomology of the classifying space $B \Diff_\partial (M)$ for some even--dimensional manifolds, by Madsen--Weiss \cite{MadsenWeiss} (for surfaces) and by Galatius--Randal-Williams \cite{GRW14} \cite{GRW18} \cite{GRW17} (in the higher dimensional case). The simplest case of these results concerns the manifolds 
\[
W_{g,1}^n:= \#^g (S^n \times S^n) \setminus \inter{D^{2n}}, 
\]
the connected sum of $g$ copies of $S^n \times S^n$, minus the interior of a disc, and are formulated in terms of the Madsen--Tillmann spectra\footnote{We use the notation from \cite{HebestreitPerlmutter} instead of that from \cite{GRW14}.} $\MT \theta^n_{2n}$, the Thom spectrum of the additive inverse of the universal $2n$-dimensional vector bundle over the $n$-connected cover $BO(2n) \langle n \rangle \to BO(2n)$. There is a natural map $\alpha_g^{2n}: B \Diff_\partial (W_{g,1}^n) \to \Omega^\infty_0 \MT \theta^n_{2n}$ to the unit component of the infinite loop space. These maps are compatible for varying value of $g$, and induce a map 
\[
\alpha_\infty^{2n}: \hocolim_{g \to \infty } B \Diff_\partial (W_{g,1}^n) \to \Omega^\infty_0 \MT \theta^n_{2n}
\]
in the limiting case, which is an integral homology equivalence (for $n=1$ by \cite{MadsenWeiss}, for $n\geq 3$ by \cite{GRW14} and for $n=2$ by \cite{GRW17}). This is complemented by homological stability theorems (unless $n=2$) due to \cite{Harer} and \cite{GRW18}, so that $\alpha_g^{2n}$ induces an isomorphism in homology in a range of degrees increasing with $g$. 

The rational cohomology of $\Omega^\infty_0 \MT \theta^n_{2n}$ (and more general Madsen--Tillmann spectra) is easily calculated using the standard tools from algebraic topology. The answer is that 
\[
H^* (\Omega^\infty_0 \MT \theta^n_{2n};\bQ) = \bF (s^{-2n} H^{*>2n} (BO(2n) \langle n \rangle;\bQ)),
\]
the free graded-commutative algebra generated by the desuspension of $H^{*>2n} (BO(2n) \langle n \rangle;\bQ)$. Let $\mu_c \in H^{k-2n} (\Omega^\infty_0 \MT \theta^n_{2n};\bQ)$ be the element corresponding to $c \in H^{k}  (BO(2n) \langle n \rangle;\bQ)$. The pullback $(\alpha_g^{2n})^* \mu_c\in H^{k-2n}(B \Diff_\partial (W_{g,1}^n);\bQ)$ is the tautological class $\kappa_c$ of the universal bundle over $B \Diff_\partial (W_{g,1}^n)$. Finally $H^* (BO(2n) \langle n \rangle;\bQ)$ is the polynomial algebra generated by the Pontrjagin classes $p_m$ with $\frac{n+1}{4} \leq m \leq n-1$ and the Euler class. So altogether, in a range of degrees, $H^*(B \Diff_\partial (W_{g,1}^n);\bQ)$ is a polynomial algebra in certain tautological classes. 

All these results are for \emph{even-dimensional} manifolds. The map $\alpha_g^{2n}$ is an instance of a more general construction: for an oriented $M$ of dimension $d$, there is a map $\alpha_M: B\Diff^+_\partial (M) \to \Omega^{\infty}_0 \MT SO(d)$. If the inclusion $\partial M \to M$ is $(k-1)$-connected, the map $\alpha_M$ can be refined to a map $B \Diff_\partial (M)=B \Diff_\partial^+ (M) \to \Omega^\infty_0 \MT \theta_M$, where $\theta_M: B \to BO(d)$ arises from the $k$th stage $M \to B \to BSO(d)$ of the Moore-Postnikov factorization of the classifying map $M \to BSO(d)$ of the tangent bundle $TM$. For $M=W_{g,1}^n$, we can take $k=n$ and get the map $\alpha_g^{2n}$.

It has been observed by the first named author \cite{Ebert13} that the classes $\alpha_M^* \mu_{L_m}$ associated to the components of the Hirzebruch $L$-class vanish, for each odd-dimensional $d$, though $\mu_{L_m} \neq 0 \in H^* (\Omega^\infty_0 \MT SO(d);\bQ)$. Except for small values of $m$, the pullback of $\mu_{L_m}$ to $H^* (\Omega^\infty_0 \MT \theta_M;\bQ)$ is also nonzero, hence the naive generalization of \cite{GRW14} will fail in odd dimensions.

\subsection{Main result}

Even though some substantial inroads into the odd-dimensional situation have been made recently \cite{PerlStab}, \cite{BotPerl}, \cite{Perlmutter}, \cite{HebestreitPerlmutter}, it remains a mystery and there does not seem to be a convincing conjectural odd-dimensional analogue of the main result of \cite{GRW14}. Our modest hope in this work is that our main result, Theorem \ref{mainthm:main} below, might eventually serve as a piece of evidence which helps to formulate an odd-dimensional version of these results. Let us consider the manifolds 
\[
U_{g,1}^n := \sharp^g (S^n \times S^{n+1})\setminus \inter{D^{2n+1}}
\]
which we view as an odd-dimensional variant of the manifolds $W_{g,1}^n$. Since $U_{g,1}^n$ is $(n-1)$-connected and $n$-parallelizable, one obtains a map 
\begin{equation}\label{eqn:madsentillmannmapug1n}
\beta_g^n: B \Diff_\partial (U_{g,1}^n) \to \Omega^\infty_0 \MT \theta_{2n+1}^n,
\end{equation}
where the target is the Madsen--Tillmann spectrum of $BO(2n+1) \langle n \rangle \to BO(2n+1)$. Note that 
\[
H^* (BO(2n+1) \langle n \rangle; \bQ) \cong \bQ [L_m \vert \tfrac{n+1}{4} \leq m \leq n]
\]
(there is no Euler class; and it is more convenient to use the components of the Hirzebruch $L$-class instead of the Pontrjagin classes as generators). Hence $H^* (\Omega^\infty_0 \MT \theta_{2n+1}^n;\bQ)$ is the exterior algebra generated by the set 
\[
\{ \mu_{L_{m_1}\cdots L_{m_r}}\vert \tfrac{n+1}{4} \leq m_1 \leq \ldots \leq m_r \leq n \}.
\]

\begin{MainThm}\label{mainthm:main}
The map 
\[
(\beta_g^n)^*: H^*(\Omega^\infty_0 \MT \theta_{2n+1}^n;\bQ) \to H^* (B \Diff_\partial (U_{g,1}^n);\bQ)
\]
is surjective in degrees $* \leq \min (\frac{g-4}{2}, n-3)$, and in that range of degrees, the kernel is the ideal generated by the classes $\mu_{L_m}$ (all $m$) and by the linear subspace $H^1(\Omega^\infty_0 \MT \theta_{2n+1}^n;\bQ)$.
\end{MainThm}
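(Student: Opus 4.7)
The plan is to execute the three-step strategy advertised in the abstract: first understand $B\hAut_\partial(U_{g,1}^n)$, then relate it to block diffeomorphisms via surgery, and finally pass to $B\Diff_\partial(U_{g,1}^n)$ using pseudoisotopy theory.

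\emph{Step 1. Homotopy automorphisms.} Rationally $U_{g,1}^n$ is homotopy equivalent to $\bigvee^g S^n \vee \bigvee^g S^{n+1}$, since $S^n \times S^{n+1}$ minus a point has this homotopy type. I would follow the Berglund--Madsen strategy, working with a Quillen model that is a free graded Lie algebra on $g$ generators in degree $n-1$ and $g$ generators in degree $n$, equipped with the hyperbolic intersection pairing inherited from Poincar\'e--Lefschetz duality on $U_{g,1}^n$. The rational cohomology of $B\hAut_\partial(U_{g,1}^n)$ is then Chevalley--Eilenberg cohomology of a positive-weight derivation Lie algebra, and in the stable range (as $g \to \infty$, invoking or proving homological stability of Krannich type) it should be a free graded-commutative algebra on generators that, after composition with $\beta_g^n$, account for the classes $\mu_c$ with $c$ ranging over $H^{>2n+1}(BO(2n+1)\langle n\rangle;\bQ)$ not of $L_m$-type.

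\emph{Step 2. Block diffeomorphisms.} The comparison of $B\hAut_\partial$ with $B\blockdiff_\partial$ is controlled by the surgery structure space. In odd dimensions this is governed rationally by the symmetric $L$-theory spectrum of $\bZ$, which is concentrated in degrees divisible by $4$. The extra generators appearing on passing to $B\blockdiff_\partial$ should correspond precisely to the signature-based classes $\mu_{L_m}$; by the vanishing result of \cite{Ebert13} cited in the introduction, these must vanish on $B\Diff_\partial(U_{g,1}^n)$, which forces them to lie in the kernel of $(\beta_g^n)^*$ and accounts for the $\mu_{L_m}$ part of the ideal.

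\emph{Step 3. Diffeomorphisms.} Within the concordance-stable range, which by Igusa's theorem has slope $n-3$ in the dimension, the homotopy fibre of $B\Diff_\partial(U_{g,1}^n) \to B\blockdiff_\partial(U_{g,1}^n)$ is computed rationally by Waldhausen's description of the pseudoisotopy spectrum in terms of $A(U_{g,1}^n)$ and the $K$-theory of $\bZ$, together with the Vogell involution. Since $K_*(\bZ) \otimes \bQ$ is concentrated in degrees $0$ and $4m+1$ for $m \ge 1$, a careful degree bookkeeping should match the resulting rational contributions with exactly the degree-$1$ piece $H^1(\Omega^\infty_0 \MT \theta_{2n+1}^n;\bQ)$ claimed to generate the remaining part of the ideal.

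The main obstacle I expect is Step 1. In the even-dimensional setting the Quillen model has generators in a single degree and the intersection form is symplectic or orthogonal, so the representation theory behind the Berglund--Madsen calculation is classical. Here the generators sit in two consecutive degrees $n-1$ and $n$ and the pairing is hyperbolic in an asymmetric way, so the invariant theory of the positive-weight derivations is less standard and the stable ring of invariants must be extracted by hand. A secondary difficulty is aligning the three stability ranges, so that the homological stability for $B\hAut_\partial(U_{g,1}^n)$, the Weiss--Williams comparison range, and the concordance-stable range all simultaneously exceed $\min(\tfrac{g-4}{2}, n-3)$; in particular, establishing homological stability with slope at least $\tfrac{1}{2}$ for $B\hAut_\partial(U_{g,1}^n)$ looks like a nontrivial prerequisite.
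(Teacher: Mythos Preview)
Your high-level architecture matches the paper's, but the bookkeeping of what is killed at each stage is off in ways that matter, and there is a genuine missing ingredient.

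The most serious gap is that you never mention the Borel classes. The paper's answer for block diffeomorphisms (Theorem~\ref{thm:blockdiffs}) is that in degrees $\leq n-3$ the ring $H^*(B\blockdiff_\partial(U_{g,1}^n);\bQ)$ is the free graded-commutative algebra on certain classes $\kappa_{L_{m_0}L_{m_1}}$ \emph{and} on the Borel classes $\beta_{4k+1}$ pulled back from $B\GL_\infty(\bZ)$ via the action on $H_n$. The classes $\kappa_{L_m}$ already vanish on $B\blockdiff_\partial$ by the family signature theorem for block bundles, so they are not ``extra generators appearing on passing to block diffeomorphisms'' as you write in Step~2: they are relations, not generators, and they appear one step earlier than you place them. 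The genuinely new generators on the block side, invisible from $H^*(\Omega^\infty_0 \MT\theta_{2n+1}^n;\bQ)$, are the Borel classes.

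This misalignment propagates to Step~3. The $K$-theory of $\bZ$ does enter, but not by matching the degree-$1$ piece of the Madsen--Tillmann spectrum. It enters because the homotopy fibre of $B\Diff_\partial(U_{g,1}^n)\to B\blockdiff_\partial(U_{g,1}^n)$ is rationally $B\Diff_\partial(D^{2n+1})$ (Morlet disjunction), whose rational homotopy by Farrell--Hsiang sits in degrees $\equiv 0 \pmod 4$; dually, the classes killed in $H^*(B\blockdiff)$ on passing to $H^*(B\Diff)$ sit in degrees $4k+1$, and the Dwyer--Weiss--Williams index theorem identifies them as precisely the Borel classes. The extra relation in $H^1$ for $n\equiv 3\pmod 4$, namely $\kappa_{L_k^2}=0$, is not a $K$-theory phenomenon at all: it already holds on block bundles for any stably parallelizable manifold (Proposition~\ref{prop:vanishing-degree1}), and in the specific computation it falls out of the invariant-theory step rather than the pseudoisotopy step.

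Finally, the paper's treatment of Steps~1--2 is different from and lighter than yours. Rather than computing $H^*(B\hAut_\partial(U_{g,1}^n);\bQ)$ via a Quillen model and derivation Lie algebras, the paper invokes Berglund--Madsen as a black box to identify $H^*(B\blockdiff_\partial)$ directly with the cohomology of the homotopy quotient $\map_\partial(U_{g,1}^n;BO_\bQ)^0 \sslash \hAut_\partial(U_{g,1}^n)$, modulo the $\kappa_{L_m}$. Only very crude information about $B\hAut_\partial(U_{g,1}^n)^{\id}$ is needed (its rational homotopy vanishes in degrees $3$ through $n-3$, and $\pi_2$ is identified); the substantial work is an invariant-theoretic computation of the $\GL_g(\bQ)$-invariants in the resulting Leray--Serre spectral sequence, followed by Borel's vanishing theorem for $\SL_g(\bZ)$ with coefficients. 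Your proposed route through the full derivation Lie algebra would presumably give more (the paper flags this as Stoll's ongoing work) but is not needed for the stated range. The bound $\tfrac{g-4}{2}$ comes from Perlmutter's stability for $B\Diff_\partial(U_{g,1}^n)$, not from a separate stability result for $B\hAut_\partial$.
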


\begin{rem}
\begin{enumerate}
\item The theorem is vacuous for $n \leq 3$.
\item That $\mu_{L_m}$ lies in the kernel of $(\beta_g^n)^*$ is the main result of \cite{Ebert13}. 
\item The space $H^1(\Omega^\infty_0 \MT \theta_{2n+1}^n;\bQ)$ is zero if $n$ is even. If $n\equiv 1 \pmod 4$, say $n=4k+1$, one checks that 
\[
H^1 (\Omega^\infty_0 \MT \theta_{8k+3}^{4k+1};\bQ) = \bQ \{ \mu_{L_{2k+1}}\}, 
\]
and the triviality of $(\beta_g^n)^*$ in degree $1$ follows from \cite{Ebert13}.
If $n\equiv 3 \pmod 4$, say $n=4k-1$, we get a new relation. In that case, one checks that 
\[
H^1 (\Omega^\infty_0 \MT \theta_{8k-1}^{4k-1};\bQ) = \bQ \{ \mu_{L_{2k}}, \mu_{L_{k}^2}\},
\]
and the new relation is $ \kappa_{L_{k}^2}=0$. This relations holds more generally for all stably parallelizable manifolds of those dimensions. We give the fairly elementary proof in Proposition \ref{prop:vanishing-degree1} below; for the special manifolds $U_{g,1}^n$ the relation comes out of the proof of Theorem \ref{mainthm:main}. 
\item The bound in $g$ stems from a homological stability result due to Perlmutter \cite[Corollary 1.3.2]{Perlmutter}: the stabilization map $B \Diff_\partial (U_{g,1}^n) \to B \Diff_\partial (U_{g+1,1}^n)$ is homologically $\frac{g-2}{2}$-connected (with integral coefficients). 
\item The bound in $n$ comes from our method of proof which we describe informally in \S \ref{subsec:methodofproof} below.
\end{enumerate}
\end{rem}

\subsection{Relation to Hebestreit--Perlmutter's work}

Let us comment on the relationship of the present work with \cite{HebestreitPerlmutter}. The disjoint union 
\[
B \cD:= \coprod_{g} B \Diff_\partial (U_{g,1}^n)
\]
carries a natural structure of an algebra over the operad of little $(2n+1)$-discs. Hence we can form its group completion $\Omega B (B \cD)$ which is a $(2n+1)$-fold loop space. We clearly have $\pi_0 (\Omega B (B \cD)) \cong \bZ$, and an application of the group completion theorem shows that the homology of each of the components is $H_* (\Omega_0 B (B \cD)) \cong H_* (\hocolim_g B \Diff_\partial (U_{g,1}^n))$; hence Theorem \ref{mainthm:main} also evaluates the rational cohomology of $\Omega_0 B (B \cD)$ in a range of degrees. In \cite{HebestreitPerlmutter}, a larger $E_{2n+1}$-algebra is considered, namely 
\[
\cM_{2n+1}:= \coprod_{[W]} B \Diff_\partial (W),
\]
where $[W]$ runs through all diffeomorphism classes of $(n-1)$-connected and $n$-parallizable $(2n+1)$-manifolds $W$ with boundary $S^{2n}$. The main result of \cite{HebestreitPerlmutter} is that the group completion of $\cM_{2n+1}$ has the homotopy type of an infinite loop space if $n \geq 4$ and $n \neq 7$ (this is an odd-dimensional version of a theorem by Tillmann \cite{Tillmann} for surfaces). This is done by showing that $\Omega B \cM_{2n+1}$ is homotopy equivalent to the infinite loop space of a spectrum denoted $\mathrm{MT} \mathcal{L}_{2n+1}$. The latter is not a Madsen--Tillmann spectrum despite the notation, but rather obtained from a certain cobordism category of manifolds equipped with certain subspaces of their homology by using infinite loop space machinery. That cobordism category does not fit into the general theory of cobordism categories as in \cite{GMTW}; there is however a map $\mathrm{MT} \mathcal{L}_{2n+1} \to \MT \theta_{2n+1}^n$ of spectra. 

Clearly $B \cD \subset \cM_{2n+1}$ is a union of path components. However, while $\pi_0 (B \cD)\cong \bN_0$, $\pi_0 (\cM_{2n+1})$ is much larger; \cite[Proposition 3.2.5]{HebestreitPerlmutter} deduces a description of $\pi_0 (\cM_{2n+1})$ from \cite{Wall1967}. It is therefore not clear how to relate the group completions of $B \cD$ and of $\cM_{2n+1}$. As Fabian Hebestreit and Manuel Krannich pointed out to us, it seems conceivable that the map $\Omega_0 B (B\cD)\to \Omega_0 B \cM_{2n+1}$ is a rational homology equivalence. If that turns out to be true, Theorem \ref{mainthm:main} will also compute the rational homology of $\Omega^\infty_0 \MT \mathcal{L}_{2n+1}$ in a range of degrees. 

\subsection{Method of proof and outline of the paper}\label{subsec:methodofproof}

Having said that the methods of \cite{GRW14} must fail in the odd-dimensional case we need to say how we approach Theorem \ref{mainthm:main}. There is an established three-stage procedure to describe the topology of $B \Diff_\partial (M)$ for a high-dimensional manifold ($d=\dim (M) \geq 5$) in a range depending on $d$. The \emph{first step} is to get a hold on $B \hAut_\partial (M)$, the classifying space of the homotopy automorphisms of $M$, relative to the boundary. The \emph{second step} uses Quinn's space-level version of the surgery exact sequence \cite{Quinn} to compare $B \hAut_\partial (M)$ with the classifying space $B \blockdiff_\partial(M)$ of the \emph{block diffeomorphism} group; the difference is in terms of the $L$-theory of the group ring of $\pi_1 (M)$ and the normal invariants of $M$ in the form of the mapping space $\map_\partial (M;G/O)$. The \emph{third step} compares block diffeomorphisms to diffeomorphisms in a range of degrees, in terms of algebraic $K$-theory.

Our strategy in this paper is to first compute $H^* (\hocolim_g B \blockdiff_\partial (U_{g,1}^n);\bQ)$ for $* \leq n-3$, largely merging the first and second step, and then to use the comparison from the third step (which holds in a larger range of degrees) to arrive at $H^* (\hocolim_g B \Diff_\partial (U_{g,1}^n);\bQ)$, and finally to invoke \cite[Corollary 1.3.2]{Perlmutter}.

The map \eqref{eqn:madsentillmannmapug1n} does not extend to block diffeomorphisms, hence the spectrum $\MT \theta_{2n+1}^n$ does not play an important role in the calculation. However, the tautological classes can be extended by \cite{ERWblock}; this leads to a commutative diagram
\begin{equation}\label{maindiagram}
\xymatrix{
\bF (s^{-2n-1}H^{*>2n+1} (BSO(2n+1)\langle n \rangle;\bQ)) \ar[r]^-{\Psi}_-{\cong}  & H^* (\Omega^\infty_0 \MT \theta_{2n+1}^n;\bQ) \ar[d]^{\beta_g^\ast} \\
\bF (s^{-2n-1} H^{*>2n+1} (BO;\bQ)) \ar[dr]_-{\tilde{\Phi}_{U_{g,1}^n}} \ar[r]^-{\Phi_{U_{g,1}^n}} \ar[u] & H^* ( B \Diff_\partial (U_{g,1}^n);\bQ) \\
 & H^* ( B \blockdiff_\partial (U_{g,1}^n);\bQ) \ar[u].
}
\end{equation}
The map $\Psi$ sends the element $k_c$ corresponding to $c \in H^* (BO(2n+1)\langle n \rangle;\bQ)$ to $\mu_c$ and is an isomorphism. The left vertical map is induced from the map of spaces $BO(2n+1)\langle n \rangle \to BO$; it is surjective and the kernel is the ideal generated by the elements $k_{L_m c}$ with $4m \leq n$ and $c \in H^* (BO;\bQ)$ and by the elements $k_{p_m c}$ with $m >n$ and $c \in H^*(BO;\bQ)$. The lower right vertical map is induced by the natural comparison map from diffeomorphisms to block diffeomorphisms. The maps $\Phi_{U_{g,1}^n}$ and $\tilde{\Phi}_{U_{g,1}^n}$ send $k_c$ to $\kappa_c$ in the respective target algebras. 
Therefore Theorem \ref{mainthm:main} is equivalent to the following result.

\begin{thm}\label{mainthm:cohomological}
The map $\Phi_{U_{g,1}^n}$ is surjective in degrees $* \leq \min (\frac{g-4}{2}, n-3)$, and in that range of degrees, the kernel is the ideal generated by the following list of elements:
\begin{align}\label{eqn:mainthm2}
k_{L_m} & \; \; \text{all} \; \; m, \\
k_{L_m c} &\; \;  4m \leq n, \, c \in H^*(BO;\bQ),\\
k_{L_{m_0}L_{m_1}} &\; \;   4(m_0+m_1)=2n+2. 
\end{align}
\end{thm}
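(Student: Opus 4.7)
The plan is to follow the three-step strategy sketched in \S\ref{subsec:methodofproof}: first compute the rational cohomology of $B\blockdiff_\partial(U_{g,1}^n)$ stably in $g$ and in degrees $\leq n-3$ (merging the homotopy-automorphism and surgery steps), then apply algebraic $K$-theory / pseudoisotopy theory to transport the calculation to actual diffeomorphisms in the same range, and finally invoke Perlmutter's homological stability \cite[Corollary 1.3.2]{Perlmutter} to pass from $\hocolim_g$ to finite $g$.

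\textbf{Step 1: Block diffeomorphisms.} I would first analyze $B\hAut_\partial(U_{g,1}^n)$ rationally. Since $U_{g,1}^n$ is $(n-1)$-connected with $H_n \cong H_{n+1} \cong \bZ^g$, its rational homotopy type is determined by the intersection/linking pairing on middle homology, and the rational homotopy automorphism group should fit into a fibration over a classifying space of the appropriate matrix group, with a computable fibre (in the spirit of Berglund--Madsen, adapted to odd dimensions). I would then invoke Quinn's space-level surgery exact sequence, which compares $B\hAut_\partial(U_{g,1}^n)$ with $B\blockdiff_\partial(U_{g,1}^n)$ in terms of the normal-invariants space $\map_\partial(U_{g,1}^n; G/O)$ and the $L$-theory of $\bZ[\pi_1 U_{g,1}^n] = \bZ$. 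Since $\pi_1 = 1$, the $L$-theory piece is simple rationally, and $G/O$ is rationally equivalent to $BO$, so the normal-invariants contribution decomposes according to $H^*(U_{g,1}^n; \bQ)$. This should produce an explicit rational-cohomology calculation for $B\blockdiff_\partial(U_{g,1}^n)$ in the target range, within which I would identify the tautological classes $\kappa_c = \tilde\Phi_{U_{g,1}^n}(k_c)$.

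\textbf{Step 2: Relations.} The three relation families in \eqref{eqn:mainthm2} must then appear in the block-level calculation. The vanishing $\kappa_{L_m} = 0$ for all $m$ is the main theorem of \cite{Ebert13}. The relations $\kappa_{L_m c} = 0$ for $4m \leq n$ should emerge from the fact that, in the stable range, these classes factor through the normal-invariants piece $\map_\partial(U_{g,1}^n; BO)_\bQ$, and the small-degree constraint on $L_m$ forces the resulting factor to be zero. The new degree-$1$ relation $\kappa_{L_{m_0} L_{m_1}} = 0$ when $4(m_0+m_1) = 2n+2$ should coincide with the degree-$1$ case handled separately in greater generality in Proposition \ref{prop:vanishing-degree1}. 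Surjectivity of $\tilde{\Phi}_{U_{g,1}^n}$ modulo these relations should follow by counting generators on the two sides of the comparison.

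\textbf{Step 3: Diffeomorphisms and the main obstacle.} To pass from block diffeomorphisms to diffeomorphisms I would invoke Waldhausen's algebraic $K$-theory of spaces (Weiss--Williams, Farrell--Hsiang): in the concordance-stable range, the homotopy fibre of $B\Diff_\partial(U_{g,1}^n) \to B\blockdiff_\partial(U_{g,1}^n)$ is controlled by the Whitehead spectrum, which for simply connected $U_{g,1}^n$ is rationally determined by $K(\bZ) \otimes \bQ$. The nontrivial rational contributions live in degrees $4i+1$, $i \geq 1$, whose effect on $H^*(B\Diff_\partial(U_{g,1}^n); \bQ)$ lies outside the range $\leq n-3$ for $n$ large enough; hence the block-level answer transfers to the diffeomorphism level. \emph{The main obstacle} is Step 1: obtaining a usable stable rational model for $B\hAut_\partial(U_{g,1}^n)$ is the odd-dimensional analogue of Berglund--Madsen's theorem for $W_g$ and requires handling the splitting of the middle homology between degrees $n$ and $n+1$ together with its asymmetric pairing structure, which is not available off the shelf.
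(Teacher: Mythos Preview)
Your overall three-step strategy matches the paper's, and you correctly identify Step~1 as the hard part. However, there is a genuine gap in Step~3.

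Your claim that the Whitehead/$K(\bZ)$ contributions ``lie outside the range $\leq n-3$ for $n$ large enough'' is false. The homotopy fibre of $B\Diff_\partial(U_{g,1}^n)\to B\blockdiff_\partial(U_{g,1}^n)$ is rationally equivalent (in the relevant range) to $B\Diff_\partial(D^{2n+1})$, whose rational homotopy is nonzero in every degree $\equiv 0\pmod 4$, starting already at degree~4. So for $n\geq 7$ these contributions lie \emph{inside} the range $*\leq n-3$, and the block-to-diffeomorphism comparison is genuinely nontrivial there. Simply invoking the concordance-stable range does not make the fibre rationally trivial in low degrees; you need a mechanism to kill these classes.

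The paper's mechanism is the one you omit: Theorem~\ref{thm:blockdiffs} shows that $H^*(B\blockdiff_\partial(U_{g,1}^n);\bQ)$ in the range is generated not only by $\kappa$-classes but also by the \emph{Borel classes} $\beta_{4k+1}$ coming from the map $B\blockdiff_\partial(U_{g,1}^n)\to B\GL_g(\bZ)$. These Borel classes are precisely what account for the fibre of $B\Diff_\partial\to B\blockdiff_\partial$: in the plus-constructed fibre sequence $\cG\to B\cD_\infty^+\to B\tilde\cD_\infty^+$ of loop spaces, the map $\cG\to B\cD_\infty^+$ is rationally null in homology (this uses \cite{EbertDisc}), so $\pi_*(BI_\infty)\otimes\bQ$ is injective and the kernel of $H^*(BI_\infty;\bQ)$ is generated by one class in each degree $4k+1$. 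The identification of this kernel with the ideal generated by the Borel classes then follows from the Dwyer--Weiss--Williams index theorem (Theorem~\ref{dww:vaninsihg} / Proposition~\ref{prop:vanishingdww}), which shows $\beta_{4k+1}^n=0$ on $B\Diff_\partial(U_{g,1}^n)$. Without this ingredient, Step~3 does not go through.

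A smaller point: you also do not mention that carrying out Step~1 requires computing $\GL_g(\bQ)$-invariants in a tensor algebra (the key calculation of \S\ref{sec:representationtheory}) and applying Borel's vanishing theorem for $\SL_g(\bZ)$ with nontrivial coefficients; these are the tools that actually produce the identification of $H^*(B\blockdiff_\partial(U_{g,1}^n);\bQ)$ with an explicit free algebra, and they are where the bound $g\gg n$ enters.
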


To state our result concerning $B \blockdiff_\partial (U_{g,1}^n)$, recall that 
\[
H^* (B \GL_\infty(\bZ);\bQ) \cong  \bF(\bigoplus_{k \geq 1} \bQ[4k+1])
\]
by Borel's famous calculation \cite{Borel}. The action of the block diffeomorphism group on $H_n (U_{g,1}^n;\bZ)\cong \bZ^g$ gives a map 
\[
H^* (B \GL_\infty(\bZ);\bQ) \to H^* (B \GL_g(\bZ);\bQ) \to H^*(B \blockdiff_\partial (U_{g,1}^n);\bQ). 
\]
Combining this with $\tilde{\Phi}_{U_{g,1}^n}$, we obtain an algebra map 
\begin{equation}\label{eqn:fromfreetocohblockdiff}
\Gamma: \bF (s^{-2n-1} H^{*>2n+1} (BO;\bQ)) \otimes H^* (B \GL_\infty(\bZ);\bQ) \to H^* (B \blockdiff_\partial (U_{g,1}^n);\bQ), 
\end{equation}
and we will prove the following result.
\begin{MainThm}\label{thm:blockdiffs}
The map $\Gamma$ is surjective in degree $* \leq n-3$ and for $g \gg n$, and in that range of degrees, the kernel is spanned by the same elements as given in Theorem \ref{mainthm:cohomological}. 
\end{MainThm}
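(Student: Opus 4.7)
The plan is to follow the first two stages of the three-stage procedure described in the introduction, adapting the Berglund--Madsen style framework to the odd-dimensional manifolds $U_{g,1}^n$.

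\smallskip
\noindent\textbf{Step A (Homotopy automorphisms).} The first task is to compute $H^*(B\hAut_\partial(U_{g,1}^n);\bQ)$ in a stable range. The manifold $U_{g,1}^n$ is $(n-1)$-connected with rational homology of rank $g$ concentrated in degrees $n$ and $n+1$, linked by a non-degenerate Lefschetz-duality pairing $H_n \otimes H_{n+1} \to \bQ$. I would build a minimal Sullivan model of $U_{g,1}^n$ relative to its boundary, then follow the Berglund--Madsen strategy: identify $B\hAut_\partial$ with the classifying space of an arithmetic extension of a nilpotent ``derivation'' space, and compute its stable rational cohomology via a Hochschild--Serre style spectral sequence. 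Because the duality pairing here links \emph{two different} cohomological degrees, the relevant arithmetic group is $\GL_g(\bZ)$ (acting diagonally on $H_n$ and contragrediently on $H_{n+1}$), rather than the symplectic or orthogonal groups appearing in the even-dimensional case. Borel's theorem then produces the $H^*(B\GL_\infty(\bZ);\bQ)$ factor in $\Gamma$, while the derivation Lie-algebra cohomology should supply a copy of $\bF(s^{-2n-1} H^{*>2n+1}(BO;\bQ))$ modulo the classes in the Euler/odd-Pontrjagin directions that do not see the tangential structure of $U_{g,1}^n$.

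\smallskip
\noindent\textbf{Step B (Surgery).} Next, apply Quinn's space-level surgery exact sequence to compare $B\hAut_\partial(U_{g,1}^n)$ with $B\blockdiff_\partial(U_{g,1}^n)$. Since $U_{g,1}^n$ is simply-connected the group ring is just $\bZ$, and the surgery obstruction is governed rationally by $L^*(\bZ)\otimes\bQ$ together with the normal-invariant space $\map_\partial(U_{g,1}^n,G/O)$. Rationally $G/O\simeq_\bQ BO$ has homotopy concentrated in degrees $4k$, so the rational homotopy of $\map_\partial(U_{g,1}^n,G/O)$ is determined by $H^{4k-*}(U_{g,1}^n,\partial;\bQ)$, which lives only in degrees $n$, $n+1$, and $2n+1$. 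A degree count shows that in the target range $*\leq n-3$ only finitely many combinations can possibly contribute, and one has to verify that the resulting relations are precisely $k_{L_m}$, $k_{L_m c}$ with $4m\leq n$, and the single extra relation $k_{L_{m_0}L_{m_1}}$ with $4(m_0+m_1)=2n+2$ listed in Theorem \ref{mainthm:cohomological}. The first family is the block-level avatar of \cite{Ebert13}; the middle family records low-dimensional relations coming from $H^{<n+1}(U_{g,1}^n,\partial)=0$; the last one should emerge as a genuine surgery correction in mid-degree (this is where the $n\equiv 3\pmod 4$ relation of Proposition \ref{prop:vanishing-degree1} reappears at higher weight).

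\smallskip
\noindent\textbf{Step C (Matching tautological classes and assembling $\Gamma$).} Finally, I would identify the generators produced by Steps A and B with the images $\tilde{\Phi}_{U_{g,1}^n}(k_c)$ of the classes $k_c$ under the block-level tautological-class construction of \cite{ERWblock}. Naturality of $\tilde{\Phi}$ with respect to the action on middle-dimensional homology glues in the $H^*(B\GL_\infty(\bZ);\bQ)$ factor, so that $\Gamma$ is defined on the tensor product and hits every generator of the stable cohomology. Surjectivity then follows from Steps A and B, and the kernel has been pinned down in Step B.

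\smallskip
\noindent\textbf{Main obstacle.} The chief difficulty is Step A: extending the Berglund--Madsen Sullivan-model machinery to the mixed-degree situation of $U_{g,1}^n$ and controlling the stable range in $g$. One must set up the derivation dg Lie algebra for a model whose generators sit in two adjacent degrees, keep track of the $\GL_g(\bZ)$-representation theory of the resulting invariants, and prove a vanishing result for non-trivial coefficient systems in the relevant range (the analogue, for $\GL$, of the vanishing used for symplectic or orthogonal coefficients in the even-dimensional case). The surgery step (Step B), while delicate in bookkeeping, is essentially forced once Step A is in hand.
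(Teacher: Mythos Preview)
Your outline follows a recognisable three-step template, but it diverges from the paper's actual argument in a way that matters, and Step~A as you describe it contains a conceptual error.

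First, the paper does \emph{not} compute $H^*(B\hAut_\partial(U_{g,1}^n);\bQ)$ and then separately apply surgery. Instead it uses \cite[Theorem~1.1]{BerglundMadsen} as a black box (Theorem~\ref{thm:berglundmadsen} and its corollaries) to identify $B\blockdiff_\partial(U_{g,1}^n)$ rationally with the homotopy quotient $\hq{\map_\partial((U_{g,1}^n)_\bQ;BO_\bQ)^0}{\hAut_\partial((U_{g,1}^n)_\bQ)^{\cong}}$, up to killing the classes $\kappa_{L_m}$ (Theorem~\ref{thm:blockdiffsversusmappingspace}). This merges your Steps~A and~B into a single target. The paper then attacks that quotient via the Leray--Serre spectral sequences of the two fibrations \eqref{eqn:firstkeyfibresequence} and \eqref{eqn:secondkeyfibresequence}. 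The only information needed about $B\hAut_\partial((U_{g,1}^n)_\bQ)^{\id}$ is that in degrees $\le n-3$ it is a $K(L^2 N(g)^\vee,2)$ (Proposition~\ref{prop:rationalhomotopyhautug1n}); the full cohomology of $B\hAut_\partial(U_{g,1}^n)$ is never computed. (Indeed, as the paper remarks, Stoll's work shows that $H^*(B\hAut_\partial(U_{g,1}^n);\bQ)$ contains graph-homology contributions, so your expectation that it is freely generated by MMM-type classes is wrong outside a small range.)

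Second, your Step~A is confused about which object sees $BO$. The derivation Lie algebra governs $B\hAut_\partial$, which knows nothing about Pontrjagin classes; the free algebra $\bF(s^{-2n-1}H^{*>2n+1}(BO;\bQ))$ can only appear once the tangent bundle enters, i.e.\ on the block-diffeomorphism side. In the paper the $BO$-dependence enters through the fibre $\map_\partial(U_{g,1}^n;BO_\bQ)^0$ of \eqref{eqn:secondkeyfibresequence}, whose cohomology is described explicitly by $\lambda$-classes (Corollary~\ref{cor:cohomology-mappingspace}).

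Third, and most importantly, you have not identified the genuine technical crux. After setting up the spectral sequence, one must compute the $\GL_g(\bQ)$-invariants of an algebra of the shape
\[
S^*(L^2 N(g)) \otimes \bF\bigl(N(g)\otimes W(n)\bigr) \otimes \bF\bigl(N(g)^\vee\otimes U(n)\bigr),
\]
including the effect of the $d_2$-differential. This is the content of \S\ref{sec:representationtheory} (Proposition~\ref{prop:keylemma-invarianttheory}), proved via the Cauchy formulas and a Littlewood--Richardson identity; the paper credits Weyman for the key idea. Your proposal mentions ``$\GL_g(\bZ)$-representation theory of the resulting invariants'' only in passing, but this computation is where the extra relation $k_{L_{m_0}L_{m_1}}$ with $4(m_0+m_1)=2n+2$ actually comes from (via the kernel of the map \eqref{eqn:mapforkoszulcomplex} when $n\equiv 3 \pmod 4$), not from a ``surgery correction'' as you suggest. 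Borel's vanishing theorem (Theorem~\ref{thm:borelvanishing}) is then used, as you anticipate, to pass from the invariants to the $\GL_g(\bZ)$-quotient.

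In short: your high-level plan is in the right spirit, but the paper's route is different (merged Steps A and B via the mapping-space model), your Step~A mislocates the source of the $BO$-classes, and the decisive invariant-theoretic calculation is absent from your outline.
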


\begin{rem}
Grey \cite[Theorem B]{Grey} has shown a homological stability result for block diffeomorphism groups which applies to the manifolds $U_{g,1}^n$; the result shows that Theorem \ref{thm:blockdiffs} holds in homological degrees $* \leq \min \{ n-3,\frac{g-2}{2}\}$. A similar bound can also deduced from our argument. The referee pointed us to the papers \cite{KupersMillerPatzt} and \cite{LiSun}. Using the results of these instead of the more classical \cite{VanderKallen} and \cite{Borel2}, one can improve the bound in \ref{thm:blockdiffs}. Since our main interest is in diffeomorphisms, not block diffeomorphisms, we do not spell out this here.
\end{rem}

In \S 2, we will show/review the vanishing theorems for the tautological classes that are entailed by Theorem \ref{mainthm:cohomological} and Theorem \ref{thm:blockdiffs}, namely that the listed elements lie in the kernel of $\Phi_{U_{g,1}^n}$ and $\Gamma$. We will actually only use the vanishing of $\kappa_{L_m}$ on $B \blockdiff_\partial (U_{g,1}^n)$ which we can just quote from \cite{ORWsignature}. The other two relations follow (for the manifolds $U_{g,1}^n$) from the subsequent calculations; we give the proper context for them in Propositions \ref{prop:vanishingobstruction} and \ref{prop:vanishing-degree1} for sake of completeness. In \S \ref{subsec:borelclasses}, we also show that the composition 
$H^{*>0} (B \GL_\infty;\bQ) \to H^* (B \blockdiff_\partial (U_{g,1}^n);\bQ) \to H^* (B \Diff_\partial (U_{g,1}^n;\bQ)$ is zero. This property is specific to the manifolds $U_{g,1}^n$ and follows from the Dwyer--Weiss--Williams index theorem \cite{DWW}; it plays a key role for deriving Theorem \ref{mainthm:cohomological} from Theorem \ref{thm:blockdiffs}.

\S \ref{sec:cohoblockdiffgeneral} is about the general theory behind the proof of Theorem \ref{thm:blockdiffs}. As already said, the proof of Theorem \ref{thm:blockdiffs} uses Quinn's theory, which roughly expresses the homotopy fibre of the forgetful map $B \blockdiff_\partial (M) \to B \hAut_\partial (M)$ in terms of $L$-theory and the normal invariants which is quite manageable for $1$-connected $M$. We refer to \cite{BerglundMadsen} for a more informative survey; more importantly, that paper contains a consequence of Quinn's theory \cite[Theorem 1.1]{BerglundMadsen} which allows us to use surgery theory completely as a black box. Some streamlining of their result leads to Theorem \ref{thm:blockdiffsversusmappingspace} which says that the calculation of $H^* (B \blockdiff_\partial (U_{g,1}^n);\bQ)$ is equivalent to the calculation of the cohomology of the homotopy quotient
\begin{equation}\label{eqn:mappingspacequients}
H^* (\hq{\map_\partial (U_{g,1}^n;BO_\bQ)^0 }{ \hAut_\partial (U_{g,1}^n)};\bQ), 
\end{equation}
up to some smallprint that we shall ignore for the moment ($\map_\partial (U_{g,1}^n;BO_\bQ)^0$ is the component of the mapping space containing the constant map). In \S \ref{subsec:cohmappingspace}, we also give a formula for the cohomology of $\map_\partial (U_{g,1}^n;BO_\bQ)^0$; this is mainly about bookkeeping elements.

The computation of \eqref{eqn:mappingspacequients} for large $g$ fills the largest portion of this paper. This is basically done by going through the Leray--Serre spectral sequences of the two fibrations 
\begin{equation}\label{eqn:firtfibresequintroduction}
\map_\partial (U_{g,1}^n;BO_\bQ)^0  \to \hq{\map_\partial (U_{g,1}^n;BO_\bQ)^0 }{ \hAut_\partial (U_{g,1}^n)^{\id}} \to B \hAut_\partial (U_{g,1}^n)^{\id}
\end{equation}
($\hAut_\partial (U_{g,1}^n)^{\id}$ is the unit component of $\hAut_\partial (U_{g,1}^n)$) and 
\begin{equation}\label{eqn:secondfibresequintroduction}
\hq{\map_\partial (U_{g,1}^n;BO_\bQ)^0 }{ \hAut_\partial (U_{g,1}^n)^{\id}} \to \hq{\map_\partial (U_{g,1}^n;BO_\bQ)^0 }{ \hAut_\partial (U_{g,1}^n)} \to B \pi_0 ( \hAut_\partial (U_{g,1}^n))
\end{equation}
(we actually consider some close variants which turn out to be more convenient for the computation, see \eqref{eqn:firstkeyfibresequence} and \eqref{eqn:secondkeyfibresequence} below).

In \S \ref{sec:homotopyautomorphisms}, we give the necessary calculations to compute the $E_2$-term of the spectral sequence of \eqref{eqn:firtfibresequintroduction} in a range of degrees, including the $d_2$-differential. These are mostly exercises in basic homotopy theory and are to a certain extent already included in \cite{Grey}. 
As we also explain in \S \ref{sec:homotopyautomorphisms}, the component group $\pi_0 (\hAut_\partial (U_{g,1}^n))$ is very close to $\GL_g (\bZ)$ so that naturally Borel's work on the cohomology of arithmetic group enters, which is one of the reasons why $g$ needs to be large in Theorem \ref{thm:blockdiffs}. For general reasons, the spectral sequence of \eqref{eqn:firtfibresequintroduction} is a sequence of $\GL_g (\bQ)$-representations, and we determine the $E_2$-term as such.

In order to compute \eqref{eqn:mappingspacequients}, we employ Borel's vanishing theorem \cite{Borel2} in a similar way to its use in \cite{ERWTorelli} or \cite{KupORW}; it shows that (approximately) 
\[
H^* (\hq{\map_\partial (U_{g,1}^n;BO_\bQ)^0 }{ \hAut_\partial (U_{g,1}^n)};\bQ) \cong H^* (\hq{\map_\partial (U_{g,1}^n;BO_\bQ)^0 }{ \hAut_\partial (U_{g,1}^n)^\id};\bQ)^{\GL_g (\bQ)}
\]
in a range of degrees. Since the category of rational $\GL_g (\bQ)$-representations is semisimple, the second algebra can be calculated by the $\GL_g (\bQ)$-invariant part of the spectral sequence of \eqref{eqn:firtfibresequintroduction}. This forces us to compute the invariant part of the $E_2$-term of the spectral sequence of \eqref{eqn:firtfibresequintroduction} which is a nontrivial calculation in invariant theory of the general linear group. The purely algebraic \S \ref{sec:representationtheory} is devoted to this computation.

In \S \ref{sec:coh:blockdiffspecial}, we put all these preliminaries together and finish the proof of Theorem \ref{thm:blockdiffs}. In degrees $* \leq n-3$, the evaluation of the invariant part of the spectral sequence of \eqref{eqn:firtfibresequintroduction} is, after the preliminary work, quite straightforward: the invariant part of the spectral sequence collapses (in the relevant range) at the $E_3$-stage, and the $E_3$-term is concentrated in the $0$th column; this solves all potential extension problems. The ultimate reason why this works out is that the only nonzero rational homotopy group of $B \hAut_\partial (U_{g,1}^n)^\id$ in the range $* \leq n-3$ is $\pi_2$.

\begin{rem}
In principle the general theory allows us to compute the cohomology of block diffeomorphisms in arbitrary degrees. We could not go beyond degree $n-3$, because $\pi_{n-2}(B \hAut_\partial (U_{g,1}^n)^\id) \otimes \bQ \neq 0$ and it is not clear to us how to understand the effect of this homotopy group in homology; here the rather naive technique we use (Leray--Serre spectral sequence) comes to its limits.

A more promising approach comes from recent work of Stoll \cite{Stoll}. He computed $H^* (B \hAut_\partial (U_{g,1}^n);\bQ)$ for $* \leq g-2$ and found contributions from graph homology, similar to Berglund-Madsen's results about $H^*(B \blockdiff_\partial (W_{g,1}^n);\bQ)$ in \cite{BerglundMadsen}. We have been informed by Stoll that the results of \cite{Stoll} can be used to compute $H^* (B \blockdiff_\partial (U_{g,1}^n);\bQ)$ in a range growing to infinity with $g$, but that work is still in progress.
\end{rem}

Let us now turn to the comparison of block diffeomorphisms and actual diffeomorphisms. The result is stated in detail as Theorem \ref{thm:comparisondiffblockdiff} below; we have formulated it in such a way that it is largely independent of our computation of $H^* (B \blockdiff_\partial (U_{g,1}^n);\bQ)$. 
In general, the comparison of diffeomorphisms and block diffeomorphisms is in terms of pseudo-isotopy theory and algebraic $K$-theory, with the stable h-cobordism theorem \cite{WJR} and Igusa's stability theorem \cite{IgusaStability} as the main points; the last one enforces a bound depending on the dimension of the manifold. An elaborate formulation of this step was given by Weiss--Williams in \cite{WeissWilliams}, but a simpler variant suffices for us. 
Let us describe briefly how the comparison is done. Recall the classical result by Farrell--Hsiang \cite{FarrellHsiang} stating that
\[
 \pi_k (B \Diff_\partial (D^{2n+1})) \otimes \bQ\cong 
\begin{cases}
\bQ & k \equiv 0 \pmod 4\\
0 & k \not \equiv 0 \pmod 4
\end{cases}
\]
holds in a range of degrees. This range is nowadays known to be roughly $2n$, by recent work of Krannich \cite{Krannich} and Krannich--Randal-Williams \cite{KrannRW}. An instance of Morlet's lemma of disjunction states that the homotopy fibre of 
\begin{equation}\label{eqn:fibrationblockdifffiff}
B \Diff_\partial (U_{g,1}^n) \to B \blockdiff_\partial (U_{g,1}^n)
\end{equation}
is rationally equivalent to $B \Diff_\partial (D^{2n+1})$ up to degree approximately $2n$, and the main result of \cite{EbertDisc} (which is a consequence of \cite{BotPerl} and \cite{PerlStab}) says that the inclusion of the homotopy fibre into the total space of \eqref{eqn:fibrationblockdifffiff} induces the trivial map on rational homology in a range of degrees. 

To combine those facts for a homological conclusion, we apply the Quillen plus construction to the spaces $\hocolim_g B \Diff_\partial (U_{g,1}^n)$ and $\hocolim_g B \blockdiff_\partial (U_{g,1}^n)$; after plus construction the latter two spaces are homotopy commutative $H$-spaces. It is crucial that the map from the homotopy fibre of \eqref{eqn:fibrationblockdifffiff} to the homotopy fibre of $\hocolim_g B \Diff_\partial (U_{g,1}^n)\to \hocolim_g B \blockdiff_\partial (U_{g,1}^n)$ is highly connected; for the proof of this fact, we borrow an argument from Krannich's paper \cite{Krannich}, using Morlet's lemma of disjunction a second time. After this maneuver, the deduction of Theorem \ref{mainthm:cohomological} from Theorem \ref{thm:blockdiffs} is straightforward.

\subsubsection*{Notations}

We use the following standard notations in this paper. For two $\bN_0$-graded algebras $A$ and $B$, $A \otimes B$ always denotes the \emph{graded} tensor product, to conform the conventions of homological algebra. The free graded-commutative algebra generated by a graded vector space $V$ is denoted $\bF (V)$. If $V$ is a vector space and $n \in \bN_0$, we let $V[n]$ be the graded vector space which is $V$ in degree $n$ and $0$ otherwise (and not the degree shift of a graded vector space). 
For a module $N$ over a commutative ring $R$, we write $N^\vee:= \Hom_R (N;R)$ for the dual.

\subsection{Acknowledgements}

The authors would like to thank Alexander Berglund, Fabian Hebestreit, Lutz Hille, Manuel Krannich, Sander Kupers, Ib Madsen, Thomas Nikolaus, Oscar Randal-Williams and Robin Stoll for helpful conversations about various aspects of this work. We also thank Andrea Bianchi and the anonymous referee for their careful reading of earlier versions of this paper. Last but not least, it is a pleasure to thank Jerzy Weyman. Without his help, we would not have been able to carry out the crucial invariant-theoretic calculation in Proposition \ref{prop:keylemma-invarianttheory}, and would not have been able to get this project to a conclusion. 

\section{Characteristic classes of smooth and block bundles}\label{subsec:characteristicclasses}

\subsection{Automorphism groups}

Let us first establish some notation. 
Let $M^d$ be a compact oriented smooth manifold with boundary. We write $\Diff(M)$ for the diffeomorphism group with its $C^\infty$-topology and $\Diff_\partial(M) \subset \Diff (M)$ for the subgroup of diffeomorphisms which are the identity near $\partial M$. We furthermore let $\Diff^+ (M)\subset \Diff (M)$ and $\Diff^+_\partial (M) \subset \Diff_\partial (M)$ the subgroups of orientation-preserving diffeomorphisms; note that $\Diff^+_\partial (M)=\Diff_\partial (M)$ if the inclusion $\partial M \to M$ is $0$-connected. 

In the present paper, we make heavy use of the \emph{block diffeomorphism} group $\blockdiff(M)$. We won't repeat the definition here and refer instead to \S 1 and 2 of \cite{Krannich} for an up-to-date exposition. There are block analogues $\blockdiff_\partial(M)$, $\blockdiff^+ (M)$, $\blockdiff^+_\partial(M)$ of respective diffeomorphism groups. Let us also note that the natural map
\[
I: \Diff(M) \to \blockdiff(M)
\]
is \emph{by definition} $0$-connected; the same holds for the decorated versions.

We shall need various flavours of homotopy automorphism groups. When forming mapping spaces, we secretly replace all spaces that occur by their singular simplicial set, and view the mapping space as a simplicial set. For a CW-pair $(X,A)$ and a pointed space $Y$, we let $\map_A (X;Y)$ be the space of maps $X \to Y$ whose restriction to $A$ is the constant map to the basepoint in $Y$ (or more formally the fibre of the restriction map $\map(X;Y) \to \map(A;Y)$ over the constant map). For a CW-pair $(X,A)$, we let $\hAut_A (X)$ be the monoid of all homotopy self-equivalences of $X$ which are the identity on $A$. For an oriented compact manifold, we let also $\hAut_\partial (M)^+ \subset \hAut_\partial (M)$ the submonoid of those self-equivalences which preserve the fundamental homology class. 
There are natural maps 
\[
\blockdiff_\partial (M) \to \hAut_\partial (M)\; \text{and} \; \blockdiff_\partial^+ (M) \to \hAut_\partial^+ (M)
\]
(or rather a zig-zag, see the discussion in \cite[p. 98 f]{BerglundMadsen} for more details).

We also need homotopy quotients by homotopy automorphisms; we use the following models. Suppose that a grouplike topological monoid $G$ acts from the left on a space $X$ and that $Z$ is a further space. Then $G$ acts from the right on $\map(X;Z)$, and by the notation $\Hq{\map(X;Z)}{G}$, we mean the two-sided bar construction $\norm{B_\bullet (\map(X;Z),G,*)}$. The two-sided bar construction 
\[
\Hq{(\map(X;Z) \times X)}{G} :=\norm{B_\bullet (\map(X;Z),G,X)} 
\]
comes with a map to $\Hq{\map(X;Z)}{G}$ with fibre $X$ and with a map 
\begin{equation}\label{augmentation-twosidedbarconstriction}
\Hq{(\map(X;Z) \times X)}{G} \to Z. 
\end{equation}
The latter arises from an augmentation $\epsilon_\bullet:B_\bullet (\map(X;Z),G,X)\to Z$; on the space of $p$-simplices $B_p (\map(X;Z),G,X)= \map(X;Z)\times G^p \times X$, the augmentation is the map $(f,g_1,\ldots,g_p,x) \mapsto f\circ g_1 \circ \ldots \circ g_p (x)$. This construction has an obvious variant for relative homotopy automorphisms and relative mapping spaces.

Assume now that $V \to X$ is a vector bundle and that $C \subset A$ is a subcomplex. We let $\hAut_A^C (V)$ be the monoid of all pairs $(f,\hat{f})$ where $f \in \hAut_A(X)$ and $\hat{f}: V \to V$ is a bundle map covering $f$ which is fibrewise an isomorphism, and such that $f|_{V|_C}$ is the identity, see \cite[p. 107f]{BerglundMadsen} for more details. There is a stable version of that construction given on p. 110 loc.cit.; we define 
\[
\hAut_A^C (V)^{\st}:= \colim_k \hAut_A^C (V \oplus \bR^k). 
\]
All the monoids we just introduced are grouplike $E_1$-spaces and therefore admit classifying spaces. The classifying space $B \hAut_A^C(V)^\st$ has a convenient description as follows. If $V|_C$ is stably trivial and a stable trivialization is chosen, Proposition 4.13 of \cite{BerglundMadsen} provides a weak equivalence
\begin{equation}\label{eqn:prop4.14BM}
B \hAut_A^C (V)^{\st} \simeq (\hq{\map_C (X;BO)}{\hAut_A(X)})_V ,
\end{equation}
where 
\[
( \hq{\map_C (X;BO)}{ \hAut_A(X)})_V \subset \hq{\map_C (X;BO)}{\hAut_A(X)}
\]
denotes the connected component determined by a fixed classifying map $\lambda:X \to BO$ of $V$ which extends the given trivialization of $V|_C$, viewed as a point in $\map_C (X;BO)$. The map in \eqref{eqn:prop4.14BM} arises as follows: the total space of the universal fibration over $B \hAut_A^C (V)^{\st}$ with fibre $X$ carries a stable vector bundle which is built from $V$. For a detailed construction on the point-set level, we refer to \cite{BerglundMadsen}. 
When $V$ is stably trivial, \eqref{eqn:prop4.14BM} can be reformulated as follows. 

\begin{lem}\label{lem:Bofbundlehomotopyautomorphisms}
Assume that $V\to X$ is stably trivial, and that a stable trivialization of $V|_C$ is fixed. Then there is a weak equivalence
\[
B \hAut_A^C (V)^{\st} \simeq \hq{\map_C (X;BO)^0}{\hAut_A(X)}, 
\]
where $\map_C (X;BO)^0 \subset \map_C (X;BO)$ denotes the component of the constant map. 
\end{lem}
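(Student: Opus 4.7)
The plan is to deduce the lemma directly from the Berglund--Madsen equivalence \eqref{eqn:prop4.14BM}, which identifies $B \hAut_A^C(V)^\st$ with the path component $(\hq{\map_C(X;BO)}{\hAut_A(X)})_V$ of $V$. The content of the lemma is therefore that, under the stable triviality hypothesis, this component coincides with $\hq{\map_C(X;BO)^0}{\hAut_A(X)}$.

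First I would pick a classifying map $\lambda: X \to BO$ of $V$ whose restriction to $C$ is the constant map at the basepoint (via the given stable trivialization of $V|_C$), so that $\lambda$ is a point of $\map_C(X;BO)$. A stable trivialization of $V$ compatible with the fixed trivialization of $V|_C$ then exhibits a path from $\lambda$ to the constant map \emph{inside} $\map_C(X;BO)$, which places $\lambda$ in the component $\map_C(X;BO)^0$ of the constant map.

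The crucial observation is that the constant map $X \to BO$ is a strict fixed point of the right precomposition action of $\hAut_A(X)$ on $\map_C(X;BO)$: precomposing a constant map with anything returns the same constant map. Hence the component $\map_C(X;BO)^0$ is preserved by every element of $\hAut_A(X)$, and the simplicial subspace $B_\bullet(\map_C(X;BO)^0, \hAut_A(X), *) \subset B_\bullet(\map_C(X;BO), \hAut_A(X), *)$ is a union of simplicial components — its face and degeneracy maps preserve the inclusion because the action does. Its geometric realization $\hq{\map_C(X;BO)^0}{\hAut_A(X)}$ is therefore a union of components of $\hq{\map_C(X;BO)}{\hAut_A(X)}$, it contains $\lambda$, and it is connected (it fibres with connected fibre $\map_C(X;BO)^0$ over the connected base $B\hAut_A(X)$). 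Consequently it \emph{is} the path component of $V$, and combining with \eqref{eqn:prop4.14BM} yields the asserted weak equivalence.

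I do not expect a substantive obstacle; the argument is essentially formal once \eqref{eqn:prop4.14BM} is accepted. The only mild care needed is the compatibility between the chosen stable trivialization of $V$ and the fixed one on $C$, needed to place $\lambda$ in the component of the constant map; this is implicit in the hypothesis and, if necessary, can be arranged by modifying the global trivialization of $V$ by a map $X \to O$ absorbing the discrepancy on $C$.
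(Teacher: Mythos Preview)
Your proposal is correct and takes essentially the same route as the paper: both start from \eqref{eqn:prop4.14BM} and reduce to showing that $\hq{\map_C(X;BO)^0}{\hAut_A(X)}$ is precisely the component of $\lambda$ in the full homotopy quotient, using that the constant map is $\hAut_A(X)$-fixed. The paper phrases this last point as ``$\pi_0(\map_C(X;BO))=KO^0(X,C)$ is an abelian group on which $\hAut_A(X)$ acts by group automorphisms, hence fixes the neutral element,'' while you observe more concretely that the constant map is a strict fixed point of precomposition; these are equivalent. Your caveat about compatibility of the global stable trivialization with the fixed one on $C$ is apt (and the paper's proof glosses over the same point); in the applications the fixed trivialization of $V|_C$ is simply the restriction of a global stable trivialization of $V$, so no discrepancy arises and your suggested ``absorbing'' step is not needed.
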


\begin{proof}
According to \eqref{eqn:prop4.14BM}, we must prove that 
\[
(\hq{\map_C (X;BO) }{ \hAut_A(X)})_V \simeq \hq{\map_C(X;BO)^0}{\hAut_A(X)}
\]
if $V$ is stably trivial. Now $\pi_0 (\map_C (X;BO)) = KO^0 (X,C)$ is an abelian group and 
$\hAut_A(X)$ acts by group automorphisms, and therefore fixes the neutral element. So $\hq{\map_C(X;BO)^0 }{\hAut_A(X)}$ is the component of $\hq{\map_C(X;BO)}{\hAut_A(X)}$ containing the constant map, which is exactly $(\hq{\map_C (X;BO)}{\hAut_A(X)})_V$. 
\end{proof}

One important feature of block diffeomorphisms is the existence of the \emph{derivative map}
\begin{equation}\label{eqn:detrivativemap}
D: \blockdiff_\partial (M) \to \hAut_\partial^\partial (TM)^{\st},
\end{equation}
which is a map of $E_1$-monoids and can therefore be delooped to a map
\begin{equation}\label{eqn:detrivativemap2}
BD: B\blockdiff_\partial (M) \to B\hAut_\partial^\partial (TM)^{\st}. 
\end{equation}
The derivative map is constructed in \cite[\S 4.3]{BerglundMadsen}, see also \cite[\S 1.9]{Krannich}, and is an expression of the fact, first proven in \cite{ERWblock} and expanded on in \cite[\S 2.4]{HLLRW}, that block bundles have a stable vertical tangent bundle. By virtue of its definition, the derivative map is a map over $\hAut_\partial (M)$. 
If $M$ is stably parallelizable, we can use Lemma \ref{lem:Bofbundlehomotopyautomorphisms} to rewrite \eqref{eqn:detrivativemap2} in the form
\begin{equation}\label{eqn:detrivativemap3}
BD: B\blockdiff_\partial (M) \to \hq{\map_\partial (M;BO)^0}{\hAut_\partial(M)}.
\end{equation}

\subsection{Tautological classes}

Tautological classes (aka Miller--Morita--Mumford clas\-ses or $\kappa$-classes) for block bundles have been constructed in \cite{ERWblock} and more systematically in \cite{HLLRW}. The most streamlined construction can be given using the derivative map \eqref{eqn:detrivativemap2}, and we sketch the definition briefly, in a level of generality that will prove to be useful for us later on. 
\begin{construction}\label{construction-kappaclasses}
Assume that $M^d$ is a compact oriented smooth manifold with boundary. Consider the universal fibration pair 
\[
(E,\partial E):= (\hq{\map_\partial (M;BO) \times M}{\hAut^+_\partial (M)},\hq{\map_\partial (M;BO) \times \partial M}{\hAut^+_\partial (M)})\to B:=\hq{\map_\partial (M;BO)}{\hAut^+_\partial (M)}
\] 
with fibre $(M,\partial M)$. The construction \eqref{augmentation-twosidedbarconstriction} gives a map $\epsilon: E \to BO$ sending $\partial E$ to the basepoint, and the restriction of $\epsilon$ to the fibre over the basepoint in $B\hAut_\partial (M)$ can be identified with the evaluation map 
\[
\ev: \map_\partial (M;BO) \times M \to BO. 
\]
We may think of $\epsilon$ as a stable vector bundle on $E$, trivialized on $\partial E$. 
The Leray--Serre spectral sequence for the fibration pair $(E,\partial E) \to B$ yields the Gysin map
\[
\pi_!: H^k (E;\partial E) \to E_\infty^{k-d,d} \subset E_2^{k-d,d} \cong H^{k-d}(B;H^d (M;\partial M)) \to H^{k-d}(B)
\]
(with coefficients in an arbitrary ring). Given a class $c \in H^k (BO)$, we can therefore form 
\[
\kappa_c := \pi_! (\epsilon^* c) \in H^{k-d}(\hq{\map_\partial (M;BO)}{\hAut^+_\partial (M)}).
\]

Let $M^d$ be an oriented compact smooth manifold with boundary and assume that $TM|_{\partial M}$ is stably trivial. 
Combining the derivative map $BD$ with \eqref{eqn:prop4.14BM} yields a map
\[
B \blockdiff_\partial^+ (M) \to B \hAut_\partial^\partial (TM)^\st \to \hq{\map_\partial (M;BO)}{\hAut^+_\partial (M)},
\]
and we can pull back $\kappa_c$ to a class, also denoted $\kappa_c \in H^{k-d}(B \blockdiff_\partial^+ (M))$. Pulling this further back along $BI: B \Diff_\partial (M) \to B \blockdiff_\partial (M)$, we obtain the usual $\kappa$-classes on classifying spaces of diffeomorphism groups. 
\end{construction}

A map $\gamma$ from a space $X$ to one of the classifying spaces $B \Diff^+_\partial (M)$, $B \blockdiff_\partial^+ (M)$ or $\hq{\map_\partial (M;BO)}{\hAut^+_\partial (M)}$ classifies $E \to X$, which is a smooth fibre bundle / block bundle / fibration with a stable vector bundle on its total space. In such situations, we use the suggestive notation $\kappa_c (E):= \gamma^* \kappa_c$.

We now specialize to rational coefficients. Recall that $H^* (BO;\bQ)$ is the polynomial algebra in the Pontrjagin classes. For our purposes, it is useful to observe that one can also write 
\[
H^* (BO;\bQ) \cong \bQ[L_1,L_2, \ldots], 
\]
where $L_m \in H^{4m}(BO;\bQ)$ is the $m$th component of the Hirzebruch $L$-class. The above is true by the well-known fact \cite[p. 14]{Hirzebruch} that the coefficient $a_m$ of $p_m$ in $L_m$ is nonzero. 

\subsection{Some vanishing theorems for tautological classes}\label{sebsec:vanishingtheorems}

In this subsection, we review the three vanishing theorems that are entailed by Theorem \ref{mainthm:main}. Only one of them  (Theorem \ref{thm:vanishing}) is actually used in the proof of our main theorem; the other two (Proposition \ref{prop:vanishingobstruction} and Proposition \ref{prop:vanishing-degree1}) are only stated for sake of completeness; the fact that they are valid for the manifolds $U_{g,1}^n$ is a byproduct of our computations below. 

\subsubsection*{An additivity property}

Let us begin with a fact which seems to be well-known for diffeomorphism groups; the argument we give is essentially contained in \cite{Miller}, \cite{Morita}. 

\begin{lem}\label{lem:additivity-kappaclass}
Let $M$ and $N$ be compact oriented $d$-manifolds with boundary with a common (closed) part $\partial_0 \subset \partial M, \partial N$ of their boundary. Let 
\[
\mu: \hq{\map_\partial (M;BO)}{\hAut_\partial^+ (M)} \times \hq{\map_\partial (N;BO)}{\hAut_\partial^+ (N)} \to  \hq{\map_\partial (M\cup_{\partial_0} N;BO)}{\hAut_\partial^+ (M\cup_{\partial_0} N)}
\]
be the obvious gluing map, let 
\[
p_M: \hq{\map_\partial (M;BO)}{\hAut_\partial^+ (M)} \times \hq{\map_\partial (N;BO)}{\hAut_\partial^+ (N)} \to  \hq{\map_\partial (M;BO)}{\hAut_\partial^+ (M)}
\]
be the projection and define $p_N$ similarly.
Then for each $c \in H^k (BO)$ with $k>0$, we have 
\[
\mu^* \kappa_c = p_M^* \kappa_c+ p_N^* \kappa_c.
\]
\end{lem}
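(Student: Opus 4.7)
The plan is to reduce the statement to the additivity of fiber-integration under a codimension-one decomposition of the fiber, together with a compatibility of the universal stable bundle $\epsilon$ with the gluing. First I would unpack $\mu$ at the level of universal fibration pairs. Write $X := M \cup_{\partial_0} N$ and let $\pi_M: E_M \to B_M$, $\pi_N: E_N \to B_N$, $\pi_X: E_X \to B_X$ denote the universal fibration pairs of Construction \ref{construction-kappaclasses}, with augmentations $\epsilon_M, \epsilon_N, \epsilon_X$ to $BO$. The key identification to establish is
\[
\mu^* E_X \;\cong\; p_M^* E_M \;\cup_{\partial_0 \times B_M \times B_N}\; p_N^* E_N
\]
as fibration pairs over $B_M \times B_N$, together with the splitting $\mu^*\epsilon_X|_{p_M^*E_M} = p_M^*\epsilon_M$ and the analogous statement for $N$. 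This is essentially tautological once one writes out the concatenation defining $\mu$ on both the mapping-space and the homotopy-automorphism factors, and observes that the evaluation formula for the augmentation respects gluing along $\partial_0$ (where all maps to $BO$ are constant at the basepoint).

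With this in hand, the proof reduces to the fiberwise additivity of the Gysin map. The fiber decomposition $(X,\partial X) = (M,\partial M)\cup_{\partial_0}(N,\partial N)$ yields a splitting of the relative fundamental class
\[
[X, \partial X] \;=\; (j_M)_*[M, \partial M] + (j_N)_*[N, \partial N] \quad \text{in } H_d(X, \partial X),
\]
where $j_M, j_N$ are the inclusions. Since $c$ has positive degree, $\epsilon_X^* c$ lifts canonically to $H^k(E_X, \partial E_X)$ (because the augmentation collapses the boundary fibration to the basepoint of $BO$), and analogously for $\epsilon_M^* c$ and $\epsilon_N^* c$. Evaluating fiberwise against the fundamental-class decomposition then gives
\[
(\pi_X)_!\, \alpha \;=\; (\pi_M)_!\, j_M^* \alpha \;+\; (\pi_N)_!\, j_N^* \alpha
\]
for $\alpha \in H^*(E_X, \partial E_X)$; this follows at the spectral-sequence level from the Mayer--Vietoris decomposition of the fiber in top relative cohomology. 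Applying this to $\alpha = \mu^*\epsilon_X^* c$ and combining with the splitting from the previous paragraph yields
\[
\mu^* \kappa_c \;=\; p_M^* (\pi_M)_! \epsilon_M^* c \;+\; p_N^* (\pi_N)_! \epsilon_N^* c \;=\; p_M^* \kappa_c + p_N^* \kappa_c,
\]
as desired.

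The main obstacle is the bookkeeping in the first step: one has to check that the simplicial bar-construction model used in Construction \ref{construction-kappaclasses} is functorial for the gluing $\mu$, so that the augmentations $\epsilon_M, \epsilon_N, \epsilon_X$ match on the nose (or at least up to canonical homotopy) on the common boundary piece and on each half. The hypothesis $k > 0$ is essential: for $k = 0$ one would pick up the correction term $-\chi(\partial_0)$ coming from the additivity of Euler characteristics with boundary, reflecting that $1 \in H^0(BO)$ does not lift to reduced cohomology. Everything else is a formal consequence of the two standard inputs of fiber-integration additivity and functoriality of the evaluation map.
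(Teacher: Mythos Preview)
Your approach is correct and is essentially the same two-step argument as the paper's: (i) show the characteristic class on the glued total space decomposes into contributions from the two pieces, (ii) use additivity of the Gysin map under the codimension-zero decomposition of the fibre. There is one genuine simplification in your version: because you work directly with the augmentation maps $\epsilon_M,\epsilon_N,\epsilon_X$ to $BO$, the factorisation of $\epsilon_X$ through $E_M/\partial E_M \vee E_N/\partial E_N$ is tautological, so step (i) is immediate. The paper instead proves step (i) for an \emph{arbitrary} pair of stable bundles $V\to E$, $W\to F$ trivialised on the boundaries, and for that generality one needs the clutching isomorphism
\[
(V \cup_A \underline{\bR^n}_F) \oplus (\underline{\bR^n}_E \cup_A W) \cong (V \cup_A W) \oplus (\underline{\bR^n}_E\cup_A \underline{\bR^n}_F),
\]
which is where the ``exercise in linear algebra'' enters. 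For the lemma as stated (and for its applications in the paper), your shortcut suffices.

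One point of care: your formula $(\pi_X)_!\,\alpha = (\pi_M)_!\, j_M^* \alpha + (\pi_N)_!\, j_N^* \alpha$ for general $\alpha \in H^*(E_X,\partial E_X)$ is not literally well-posed, since the inclusion $j_M$ does not take $\partial E_M$ into $\partial E_X$ (the common piece $\partial_0$ becomes interior), so $j_M^*$ does not land in $H^*(E_M,\partial E_M)$. The correct maps go the other way: the collapse $q_M:(E_X,\partial E_X)\to (E_M/\partial E_M,\ast)$ gives $q_M^*:H^*(E_M,\partial E_M)\to H^*(E_X,\partial E_X)$, and the identity you want is $(\pi_X)_! \circ q_M^* = (\pi_M)_!$, which is what the paper records. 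Your argument implicitly uses this: you note that $\mu^*\epsilon_X^*c$ \emph{does} lift to $H^*(E_M,\partial E_M)$ via $p_M^*\epsilon_M^*c$, which is exactly the statement $\mu^*\epsilon_X^*c = q_M^*(p_M^*\epsilon_M^*c) + q_N^*(p_N^*\epsilon_N^*c)$.
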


\begin{proof}
Assume that $\pi^E: E\to X$ and $\pi^F: F  \to X$ are two oriented fibrations with fibres $M$ and $N$ and trivialized boundaries, and that $E$ and $F$ contain a common part $A \subset \partial E, \partial F$ of their boundary, and let $V \to E$ and $W \to F$ be two (stable) vector bundles of the same rank $n$ which are trivialized over the respective boundaries and let $c \in H^k(BO)$ with $k>0$. The lemma amounts to the statement that in the above situation, we have
\begin{equation}\label{eqn:primitivity-kappa-class}
\pi^{E \cup_A F}_! (c(V \cup W))= \pi^E_! (c(V)) + \pi^F_! (c(W)) \in H^{k-d}(X).
\end{equation}
In order to prove \eqref{eqn:primitivity-kappa-class}, we may suppose that $X$ is a finite CW complex. It is an exercise in linear algebra to construct an isomorphism 
\[
(V \cup_A \underline{\bR^n}_F) \oplus (\underline{\bR^n}_E \cup_A W) \cong (V \cup_A W) \oplus (\underline{\bR^n}_E\cup_A \underline{\bR^n}_F)
\]
of clutched bundles on $E \cup_A F$ (hint: picking bundle maps $f: \underline{\bR^n} \to V$ and $g:\underline{\bR^n} \to W$ which are the identity on the boundary is a good start). Hence the classifying map $\gamma_{V \cup_A W}: E \cup_A F \to BO$ of $V \cup_A W$ can be factored as 
\[
E \cup_A F \to E /\partial \vee F / \partial \stackrel{\gamma_E \vee \gamma_F}{\to} BO \vee BO \stackrel{\mathrm{fold}}{\to} BO.  
\]
Therefore $c(V \cup_A E) = q_E^* c(V) + q_F^* c(W)$, where 
\[
q_E^*: H^* (E,\partial E) \cong H^* (E \cup_A F,\partial E \cup F) \to H^* (E \cup_A F,\partial (E \cup_A F))
\]
and $q_F^*$ is similarly defined. The formula \eqref{eqn:primitivity-kappa-class} follows from the observation that 
\[
H^* (E,\partial E) \stackrel{q_E^*}{\to} H^* (E \cup_A F,\partial (E \cup_A F)) \stackrel{\pi^{E \cup_A F}_!}{\to} H^{*-d}(X)
\]
is nothing else than $\pi^E_!$ (and the similar fact for $F$ in place of $E$). 
\end{proof}

The next result is fairly simple-minded; compare \cite[Lemma 7.16]{GRW14} for a more elaborate, but closely related result about diffeomorphism groups. 

\begin{prop}\label{prop:vanishingobstruction}
Let $M$ be a compact oriented $d$-manifold and fix a stable trivialization of $TM|_{\partial M}$. Assume that the inclusion map $\partial M \to M$ is $(k-1)$-connected and that all rational relative Pontrjagin classes of $TM$ up to degree $k$ are zero. Then 
\[
\kappa_c=0 \in H^{|c|-d} (B \blockdiff_\partial^+ (M);\bQ)
\]
whenever $c \in H^* (BO;\bQ)$ lies in the ideal generated by $\bigoplus_{0<j\leq k} H^j (BO;\bQ)$. 
\end{prop}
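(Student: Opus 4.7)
The plan is to work directly with the universal block bundle $\pi : E \to B\blockdiff_\partial^+(M)$ and its stable vertical tangent bundle $T_\pi E$. By the derivative map \eqref{eqn:detrivativemap2} and the fixed stable trivialization of $TM|_{\partial M}$, $T_\pi E$ carries a canonical stable trivialization over $\partial E$, so it is classified by a map of pairs $(E, \partial E) \to (BO, *)$. For each $c \in H^{>0}(BO;\bQ)$ this produces a relative characteristic class $\tilde c(T_\pi E) \in H^{|c|}(E, \partial E;\bQ)$, and by Construction \ref{construction-kappaclasses} one has $\kappa_c = \pi_! \tilde c(T_\pi E)$. I would then decompose $c = \sum_i a_i b_i$ with $a_i \in H^{|a_i|}(BO;\bQ)$, $0 < |a_i| \leq k$, and $b_i \in H^*(BO;\bQ)$; compatibility of Pontrjagin classes with the relative cup product gives $\tilde c(T_\pi E) = \sum_i \tilde a_i(T_\pi E) \cdot b_i(T_\pi E)$, so it suffices to show that each factor $\tilde a_i(T_\pi E) \in H^{|a_i|}(E, \partial E;\bQ)$ vanishes.

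The key step is a Leray--Serre argument for the relative fibration $(E, \partial E) \to B\blockdiff_\partial^+(M)$ with fibre $(M, \partial M)$,
\[
E_2^{p,q} = H^p\bigl(B\blockdiff_\partial^+(M); H^q(M, \partial M;\bQ)\bigr) \Rightarrow H^{p+q}(E, \partial E;\bQ),
\]
possibly with non-trivial local coefficients. The $(k-1)$-connectedness of $\partial M \hookrightarrow M$ implies $H^q(M, \partial M;\bQ) = 0$ for $q < k$, so $E_2^{p,q} = 0$ throughout that range. Hence in total degree $n \leq k$ the only possibly non-zero $E_\infty$-entry is $E_\infty^{0,k}$; in particular, the filtration subgroup $F^1 H^n(E, \partial E;\bQ)$ is zero for all $n \leq k$. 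On the other hand, the fibre restriction of $\tilde a_i(T_\pi E)$ is the relative class $a_i(TM) \in H^{|a_i|}(M, \partial M;\bQ)$ determined by the boundary trivialization; since $a_i$ is a polynomial in Pontrjagin classes whose total degree is $|a_i| \leq k$, every individual Pontrjagin class appearing has degree $\leq k$, and the hypothesis forces $a_i(TM) = 0$. Thus $\tilde a_i(T_\pi E)$ lies in $F^1 H^{|a_i|}(E, \partial E;\bQ) = 0$, so $\tilde a_i(T_\pi E) = 0$, and consequently $\tilde c(T_\pi E) = 0$ and $\kappa_c = \pi_!(0) = 0$.

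I do not expect a serious obstacle: the one piece of bookkeeping to check is that $\tilde a_i(T_\pi E)$, defined as the pullback of $a_i \in H^*(BO, *)$ along the relative classifying map $(E, \partial E) \to (BO, *)$, really does restrict on each fibre to the relative Pontrjagin expression $a_i(TM)$, which follows from the naturality of relative characteristic classes. One also sees from the proof that the vanishing holds already on $B\hAut_\partial^{\partial}(TM)^{\st}$, where $T_\pi E$ is defined; pulling back along the derivative map then yields the statement on $B\blockdiff_\partial^+(M)$.
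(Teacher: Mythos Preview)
Your argument is correct and is essentially the same as the paper's. The only difference is cosmetic: the paper phrases the injectivity of the fibre-restriction map $H^{*\leq k}(E,\partial E;\bQ)\to H^{*\leq k}(M,\partial M;\bQ)$ via the long exact sequence of the triple $(E, M\cup\partial E, \partial E)$ (after first showing $H^{*\leq k}(E, M\cup\partial E;\bQ)=0$ from the spectral sequence over the pair $(B\blockdiff_\partial^+(M),*)$), whereas you deduce the same injectivity from the vanishing of the filtration piece $F^1 H^{*\leq k}(E,\partial E;\bQ)$ in the Leray--Serre spectral sequence over $B\blockdiff_\partial^+(M)$; these are two standard packagings of the same fact.
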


\begin{proof}
The cohomological Leray--Serre spectral sequence for the universal block bundle pair $(E,\partial E)$ over $(B\blockdiff_\partial^+(M),*)$ reads as follows:
\[
E_2^{p,q}= H^p (B\blockdiff_\partial^+(M),*;H^q (M,\partial M;\bQ))\Rightarrow H^{p+q} (E, M \cup \partial E;\bQ). 
\]
Hence the connectivity assumption on $M$ shows that $H^* (E,M \cup \partial E;\bQ)=0$ whenever $* \leq k$. 
By the long exact sequence for the triple $(E,M\cup\partial E,\partial E)$, the map $H^* (E,\partial E;\bQ) \to H^* (M \cup \partial E,\partial E;\bQ) \cong H^* (M,\partial M;\bQ)$ is injective when $* \leq k$. The latter map sends the relative Pontrjagin classes of $T_v E$ to those of $TM$; so $c(T_v E)=0 \in H^* (E,\partial E;\bQ)$; a fortiori, $\kappa_c =0$.
\end{proof}

\subsubsection*{The family signature theorem}

The classical family signature theorem for smooth fibre bundles (which uses families of elliptic operators in its proof) holds more generally for block bundles, as shown by Randal--Williams in \cite[Theorem 2.6]{ORWsignature}. It has two cases, the odd-dimensional and the even-dimensional case. The odd case which reads as follows.

\begin{thm}\label{thm:vanishing}
Let $M^d$ be an odd-dimensional oriented manifold, and assume for simplicity that $TM|_{\partial M}$ is stably trivial. Then
\[
\kappa_{L_m} =0 \in H^{4k-d} (B \blockdiff^+_\partial M;\bQ)
\]
for each $m\in \bN$ (and hence the same is true for diffeomorphisms). 
\end{thm}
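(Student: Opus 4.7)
This is essentially the odd-dimensional case of Randal--Williams' block family signature theorem, so the most direct proof is simply to quote \cite[Theorem 2.6]{ORWsignature}. If one wants to see how the vanishing is extracted from that result, the plan is as follows. It suffices to check $\langle \kappa_{L_m}(E),[B]\rangle = 0$ for every closed oriented $(4m-d)$-manifold $B$ equipped with a block bundle $\pi\colon E\to B$ of fibre $(M,\partial M)$ with trivialised boundary, because rationally every homology class of $B\blockdiff^+_\partial M$ is realised by such a classifying map (via the Atiyah--Hirzebruch/Thom surjection $\Omega^{SO}_{4m-d}(-)\otimes\bQ \twoheadrightarrow H_{4m-d}(-;\bQ)$). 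First I would close up the fibres by doubling, forming $DE := E\cup_{\partial E}E \to B$, a closed block bundle with odd-dimensional fibre $DM = M\cup_{\partial M}M$ and closed total space of dimension $4m$. Lemma \ref{lem:additivity-kappaclass} applied to this gluing gives $\kappa_{L_i}(DE) = 2\kappa_{L_i}(E)$.

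Next, apply the block bundle Hirzebruch signature formula. Using the stable identification $TDE \cong T_vDE\oplus \pi^*TB$ and the projection formula,
\[
\mathrm{sign}(DE) = \Big\langle \sum_{i+j=m} \pi_!\bigl(L_i(T_vDE)\bigr)\cdot L_j(TB),\,[B]\Big\rangle = \sum_{i+j=m} 2\langle \kappa_{L_i}(E)\cdot L_j(TB),\,[B]\rangle.
\]
Separately, since the fibre $DM$ is odd-dimensional it has signature zero, and the block bundle version of Chern--Hirzebruch--Serre multiplicativity (also a consequence of \cite[Theorem 2.6]{ORWsignature}) forces $\mathrm{sign}(DE)=0$.

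I would finish by induction on $m$. Terms with $4i<d$ vanish because $\kappa_{L_i}$ lies in negative degree; terms with $d<4i<4m$ vanish by the inductive hypothesis applied universally to all block bundles. Only the term $i=m,\,j=0$ survives, giving $2\langle \kappa_{L_m}(E),[B]\rangle = 0$ as required. The substantive difficulty is entirely in the block family signature theorem; everything beyond that is bookkeeping using Lemma \ref{lem:additivity-kappaclass}.
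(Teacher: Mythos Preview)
Your opening sentence is correct and matches the paper's approach: reduce to the closed case and cite \cite[Theorem 2.6]{ORWsignature}. The paper does the reduction more simply than you do, by gluing on a \emph{trivial} $(-M)$-bundle (extending diffeomorphisms by the identity) and invoking Lemma \ref{lem:additivity-kappaclass}; the trivial piece contributes $0$, so $\kappa_{L_m}$ on $B\blockdiff^+_\partial(M)$ agrees with the pullback of $\kappa_{L_m}$ from $B\blockdiff^+(DM)$.

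Your extraction sketch, however, has a genuine error at the doubling step. For $DE = E\cup_{\partial E}E$ to carry a consistent fibrewise orientation, the second copy must have its fibre orientation reversed. The Gysin map changes sign under orientation reversal while the $L$-classes do not, so Lemma \ref{lem:additivity-kappaclass} actually yields
\[
\kappa_{L_i}(DE) \;=\; \kappa_{L_i}(E) + \kappa_{L_i}(\overline{E}) \;=\; \kappa_{L_i}(E) - \kappa_{L_i}(E) \;=\; 0,
\]
not $2\kappa_{L_i}(E)$. This makes the signature identity you derive vacuous and the induction never gets off the ground. The fix is exactly the paper's move: cap off with $B\times(-M)$ rather than with a second copy of $E$, so that the added piece contributes $0$ and $\kappa_{L_i}$ is preserved.

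There is a second, more technical issue with the extraction even after this fix: for block bundles the total space $E'$ is not a priori smooth, so writing $TE'\cong T_vE'\oplus\pi^*TB$ and applying the ordinary Hirzebruch formula needs justification. Randal--Williams' theorem in \cite{ORWsignature} is precisely designed to bypass this, proving the vanishing $\kappa_{L_m}=0$ for odd-dimensional closed fibres directly; so once you have reduced to the closed case there is nothing further to extract.
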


\begin{proof}[References]
By Lemma \ref{lem:additivity-kappaclass}, it is enough to show the theorem for closed $M$, and this is done in \cite[Theorem 2.6]{ORWsignature} for block diffeomorphisms, and in \cite{Ebert13} for diffeomorphisms. 
\end{proof}

Let us state the even-dimensional case for sake of completeness and only for the case where $M$ is closed or $\partial M$ is a sphere (we use this in the proofs of Proposition \ref{prop:vanishing-degree1} and Theorem \ref{thm:blockdiffsversusmappingspace} which we believe to be of independent interest). 
The point is that if $\dim (M)=2n$, the action of $\hAut_\partial^+ (M)$ on $H_n (M;\bQ)$ preserves the (nondegenerate) intersection form $I_M$. This fact produces a map $h:  B\hAut_\partial^+ (M) \to B \Aut(H_n (M;\bQ);I_M)$; the latter is the classifying space of a symplectic or an orthogonal group, depending on the parity of $n$. Randal--Williams defines classes $\sigma_i \in H^i( B \Aut(H_n (M;\bQ);I_M);\bQ)$, which live in degrees $i\equiv 2 \pmod 4$ if $n$ is odd and $i \equiv 0 \pmod 4$ if $n$ is even, and shows in \cite[Theorem 2.6]{ORWsignature} that 
\begin{equation}\label{eqn:evenfamilysignautre}
\kappa_{L_m} = h^* \sigma_{4m-2n} \in H^{4m-2n}(B  \blockdiff_\partial^+ (M);\bQ).
\end{equation}

\begin{prop}\label{prop:vanishing-degree1}
Let $M$ be a compact oriented $d$-manifold, assume that $TM|_{\partial M}$ is stably trivial, and suppose that all rational Pontrjagin classes of $M$ are trivial. Then for each $c \in H^k (BO;\bQ)$, $k>d$, the homomorphism 
\[
\pi_{k-d}(B\blockdiff_\partial^+ (M)) \to \bQ
\]
given by 
\[
[f] \mapsto \scpr{f^* \kappa_c;[S^{k-d}]}
\]
is the zero map. In particular, if $k=d+1$, then $\kappa_c =0 \in H^1 (B \blockdiff_\partial^+ (M);\bQ)$.
\end{prop}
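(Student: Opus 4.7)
The plan is to turn the pairing $\scpr{f^\ast \kappa_c;\,[S^{k-d}]}$, for $f \colon S^{k-d} \to B\blockdiff_\partial^+(M)$ classifying a block bundle $\pi \colon (E, \partial E) \to S^{k-d}$ with fibre $(M, \partial M)$, into a Pontrjagin number of the total space. By Construction~\ref{construction-kappaclasses} and the projection formula,
\[
\scpr{f^\ast \kappa_c;\,[S^{k-d}]} \;=\; \scpr{c(T_v E);\,[E, \partial E]},
\]
so the goal is to prove that this generalised Pontrjagin number vanishes for every polynomial $c$ in the Pontrjagin classes of degree $k > d$.

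The first step is to adjust the trivialisation on $\partial M$. Because every rational Pontrjagin class of $TM$ vanishes, the classifying map $\lambda_M \colon M \to BO$ is rationally null-homotopic; restricting a rational null-homotopy to $\partial M$ yields a rational stable trivialisation $\tau_0$ of $TM|_{\partial M}$ with respect to which all \emph{relative} Pontrjagin classes $p_i(TM, \tau_0) \in H^{4i}(M, \partial M; \bQ)$ vanish. A short boundary calculation (comparing two trivialisations and analysing the resulting correction $\delta(\gamma) \in H^k(E, \partial E; \bQ)$ supported on the trivial product $\partial E = \partial M \times S^{k-d}$) shows that the pairing $\scpr{c(T_v E);\,[E, \partial E]}$ is independent of the choice of rational boundary trivialisation, so one may work with $\tau_0$ throughout.

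Next one runs the Serre spectral sequence of the pair fibration $(E, \partial E) \to S^{k-d}$, which is concentrated in the columns $p = 0$ and $p = k - d$. Because the fibre restriction of $p_i(T_v E, \tau_0)$ is the vanishing class $p_i(TM, \tau_0)$, each relative Pontrjagin class lies in Serre filtration at least $k - d$; by compatibility of the cup product with the filtration, any product of $r \geq 2$ such classes lies in filtration at least $r(k-d) > k - d$, which is zero. Writing $c$ as a polynomial in $p_1, p_2, \ldots$, only single-factor monomials $p_{i_0}$ with $4 i_0 = k$ survive in $c(T_v E)$: if $k \not\equiv 0 \pmod 4$ no such monomial exists and the pairing is trivially zero, while otherwise $c(T_v E) = \alpha \cdot p_{i_0}(T_v E)$ for the scalar $\alpha = [c : p_{i_0}]$.

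To conclude, note that $L_{i_0} - a_{i_0} p_{i_0}$ is a polynomial in $p_1, \ldots, p_{i_0 - 1}$ each of whose monomials has at least two factors; the filtration argument therefore gives $p_{i_0}(T_v E) = a_{i_0}^{-1} L_{i_0}(T_v E)$ in $H^k(E, \partial E; \bQ)$, and hence
\[
\scpr{c(T_v E);\,[E, \partial E]} \;=\; \alpha\, a_{i_0}^{-1} \scpr{f^\ast \kappa_{L_{i_0}};\,[S^{k-d}]}
\]
vanishes by the odd-dimensional family signature theorem (Theorem~\ref{thm:vanishing}); the analogous assertion when $d$ is even is obtained from the even version~\eqref{eqn:evenfamilysignautre} combined with the monodromy analysis for block bundles over $S^{k-d}$. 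The ``in particular'' case $k = d + 1$ then gives $\kappa_c = 0 \in H^1$ by rational duality between $H^1$ and $\pi_1$. The hard part will be the first step: a rigorous argument that the pairing is independent of the choice of boundary trivialisation, since the spectral sequence argument relies crucially on using the adapted trivialisation $\tau_0$ which forces the relative fibre classes to vanish.
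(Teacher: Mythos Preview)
Your core strategy---reduce to the indecomposable case via the Serre spectral sequence of the fibration over $S^{k-d}$, then invoke the family signature theorem---matches the paper's proof exactly. The difference lies in how you handle the boundary.

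The paper sidesteps your ``hard part'' entirely: instead of changing the boundary trivialisation and arguing independence, it doubles $M$ to a closed manifold $M \cup_{\partial M} M$, observes (citing \cite{ERWpsc3}) that the double still has vanishing rational Pontrjagin classes, and uses the additivity Lemma~\ref{lem:additivity-kappaclass} (extending diffeomorphisms by the identity on the second copy) to reduce to the closed case. Once $M$ is closed there is no trivialisation to choose, and the fibre restriction of $p_i(T_vE)$ is simply $p_i(TM)=0$ by hypothesis; your filtration argument then applies verbatim.

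Your independence-of-trivialisation claim is plausible---the correction term is pulled back along $\partial E = \partial M \times S^{k-d} \to \partial M$, and since $\dim\partial M = d-1 < k-1$ it should pair trivially with $[\partial E]$---but making this precise for non-primitive $c$ is more delicate than the phrase ``short boundary calculation'' suggests (the coaction of $\Sigma\partial E$ on $E/\partial E$ does not interact additively with a general polynomial $c$). The doubling trick buys you a two-line reduction in place of that analysis.
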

\begin{proof}
For diffeomorphisms, this is a well-known fact, see e.g. \cite[Proposition 13]{KreckOdd} or \cite[Proposition 1.9]{HankeSteimleSchick}. To see that the proof also applies to block diffeomorphisms, we review the argument. 

Firstly, the double $M\cup_{\partial M} M$ has trivial rational Pontrjagin classes, by an argument given in the proof of Theorem F of \cite{ERWpsc3}; hence by Lemma \ref{lem:additivity-kappaclass} (extend diffeomorphisms trivially over the second copy of $M$) it suffices to give the argument for closed $M$. 

Let $k=4m$ and let $\pi: E \to S^{4m-d}$ be an oriented block bundle with fibre $M$, classified by $f: S^{4m-d} \to B \blockdiff^+ (M)$. The restriction of the stable vertical tangent bundle $T_v E$ to the fibre $M$ over $*$ is stably isomorphic to $TM$. A brief inspection of the Leray--Serre spectral sequence of $\pi$ proves that $c(T_v E)=0$ if $c$ can be written as a product of classes in positive degrees. Hence we only have to prove that $\kappa_{L_m} (E)=0$. This follows from Theorem \ref{thm:vanishing} for odd $d$, and from \eqref{eqn:evenfamilysignautre} for even $d$ (in the latter case, we only need to consider bundles over spheres of even dimension which are $1$-connected, so that $h \circ f$ is nullhomotopic).

The last sentence follows by the Hurewicz theorem.
\end{proof}

\subsection{Borel classes}\label{subsec:borelclasses}

Let $K(\bZ)$ denote the algebraic $K$-theory spectrum of the integers; recall that $\pi_0 (K(\bZ))\cong \bZ$ and that $\Omega^\infty_0 K(\bZ)\simeq  B \GL_\infty(\bZ)^+$. A celebrated result of Borel describes the rational cohomology of the latter space. 

\begin{thm}[Borel]\label{thm:Borel}\mbox{}
\begin{enumerate}
\item The rational cohomology $H^* (\Omega_0^\infty K(\bZ);\bQ)=H^* (B \GL_\infty(\bZ);\bQ)$ is an exterior algebra with generators $\beta_{4k+1} \in H^{4k+1}(\Omega^\infty_0 K(\bZ);\bQ)$, $k \geq 1$. 
The classes $\beta_{4k+1}$ are primitive. 
\item The restriction maps 
\[
H^p (B\GL_\infty(\bZ);\bQ) \to H^p (B \GL_g (\bZ);\bQ)\to H^p (B \SL_g (\bZ);\bQ)
\]
are isomorphisms provided that $g \geq 2p+2$.
\item The group homomorphism $\kappa: \GL_g (\bZ) \to \GL_g (\bZ)$ given by the inverse transpose $\kappa(x):= (x^\top)^{-1}$ has the following effect on these classes:
\[
(B \kappa)^* \beta_{4k+1}=-\beta_{4k+1}. 
\]
\end{enumerate}
\end{thm}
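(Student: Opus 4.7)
The plan is to assemble all three parts from Borel's foundational work on the stable real cohomology of arithmetic groups, combined with a short Cartan-involution computation for part (3). Parts (1) and (2) are essentially citations of \cite{Borel} and \cite{Borel2}; the actual content to be supplied is mainly in part (3).

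For (1), I would invoke Borel's theorem from \cite{Borel}: for a torsion-free arithmetic subgroup $\Gamma$ of a reductive $\bQ$-group $G$ with maximal compact subgroup $K \subset G(\bR)$, the inclusion of $G(\bR)$-invariant forms on the symmetric space $G(\bR)/K$ into $H^*(B\Gamma;\bR)$ is injective and, in a stable range, an isomorphism. Specializing to $G = \GL_n$ with $K = O(n)$ and passing to $n \to \infty$, a Chevalley--Eilenberg computation identifies the invariant-form algebra with an exterior algebra on primitive generators in degrees $4k+1$, $k \geq 1$. Rationality of the generators is secured either by normalizing them via the rational Chern character under $\Omega^\infty_0 K(\bZ) \simeq B\GL_\infty(\bZ)^+$, or directly from Borel's argument (the invariant forms can be chosen rational). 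Primitivity is forced by the Hopf algebra structure on $H^*(B\GL_\infty(\bZ);\bQ)$ coming from block sum of matrices, combined with the fact that an exterior algebra on odd-degree generators has no room for decomposable indecomposables. For (2), the stability bound $g \geq 2p+2$ is the one recorded in \cite{Borel2}, and the comparison with $B\SL_g(\bZ)$ follows from the Hochschild--Serre spectral sequence applied to $1 \to \SL_g(\bZ) \to \GL_g(\bZ) \to \{\pm 1\} \to 1$, whose quotient has trivial rational cohomology.

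The only part requiring a new argument is (3). The key observation is that $\kappa(x) = (x^\top)^{-1}$ is the restriction to $\GL_n(\bZ)$ of the standard Cartan involution $\theta$ of $\GL_n(\bR)$; indeed $\theta$ is defined by exactly the same formula and it fixes $O(n)$ pointwise. Thus the pair $(\kappa,\theta)$ is a morphism of pairs $(\GL_n, O(n)) \to (\GL_n, O(n))$, and Borel's identification from (1) is natural in such morphisms. Consequently $(B\kappa)^*$ is computed by the action of $\theta$ on the algebra $(\Lambda^* \mathfrak{p}^*)^{O(n)}$ of invariant forms, where $\mathfrak{gl}_n(\bR) = \mathfrak{o}(n) \oplus \mathfrak{p}$ is the Cartan decomposition and $\mathfrak{p}$ is the space of symmetric matrices. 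Since $\theta$ acts by $+\id$ on $\mathfrak{o}(n)$ and by $-\id$ on $\mathfrak{p}$, it multiplies each $p$-form by $(-1)^p$. In odd degree $4k+1$ this is precisely the sign $-1$, yielding $(B\kappa)^* \beta_{4k+1} = -\beta_{4k+1}$.

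The main obstacle is in fact not serious: it is the bookkeeping step of verifying the naturality of Borel's transfer in (1) for the pair-morphism $(\kappa,\theta)$, which amounts to unwinding the definition of Borel's map at the level of invariant differential forms on the symmetric space, and observing that $\kappa$ induces the honest involution $\theta$ and not an inner-automorphism perturbation of it (this matters because rationally primitive classes could a priori be shifted by commutators). Since $\kappa$ is a group-level involution and the identification is entirely in terms of $G(\bR)$-invariant forms, no such correction arises.
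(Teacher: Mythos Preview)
Your argument for (1) and (3) is essentially the paper's own: both invoke Borel's identification of the stable cohomology with invariant forms on the symmetric space, and for (3) both compute the action of the Cartan involution on degree-$p$ invariant forms as multiplication by $(-1)^p$ (the paper works with $\SL_g(\bR)/\SO(g)$ rather than $\GL_n(\bR)/O(n)$, but this is cosmetic, and the reduction from $\GL$ to $\SL$ via the double cover is the same in both).

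For (2), however, your citation is off: the explicit linear bound $g \geq 2p+2$ is not what one finds in \cite{Borel2}; Borel's own stability range is weaker. The paper obtains the stated bound instead from the homological stability theorem of Maazen and Van der Kallen \cite{Maazen}, \cite[Theorem 4.11]{VanderKallen}, using that the Bass stable rank of $\bZ$ equals $2$ (see \cite[\S 4.1.11]{HahnOMeara}). This is not a gap in the mathematical idea --- your Hochschild--Serre argument for the $\GL$-to-$\SL$ comparison is fine --- but the reference you give does not support the stated numerical range.
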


\begin{proof}[References]
(1) is of course a famous theorem of Borel \cite{Borel} (he treats real cohomology which makes little difference as the cohomology spaces are all finite-dimensional by \cite{QuillenKfg}). (2) A range in which the map from the stable cohomology to the unstable cohomology is an isomorphism is also determined in Borel's paper; the range as stated follows from Maazen--Van der Kallen's homological stability theorem \cite{Maazen} \cite[Theorem 4.11]{VanderKallen}, using that the Bass stable rank of the integers is $2$; see \cite[\S 4.1.11]{HahnOMeara}. 
(3) can also easily be deduced from Borel's work. Since we do not know a reference, we shall indicate the proof here. It suffices to prove the statement for $\SL_g (\bZ)$ instead of $\GL_g (\bZ)$, since the covering map $B \SL_g (\bZ) \to B\GL_g (\bZ)$ induces an injection in rational cohomology. We need to recall how the Borel classes are constructed. Let $X$ be the symmetric space $\SL_g (\bR)/SO(g)$ and let $\cA^* (X)^{\SL_g (\bR)}$ be the cochain complex of invariant differential forms, which has trivial differential as each $\SL_g (\bR)$-invariant differential form on $X$ is closed, by a general fact about symmetric spaces. On the other hand, $X$ is contractible and the $\SL_g (\bZ)$-action is proper, so that there is a natural isomorphism
\[
 H^* (B \SL_g (\bZ); \bR) \cong H^* (\cA^* (X)^{\SL_g (\bZ)}). 
\]
On $X$, there is the Cartan involution $\tau: X \to X$, $\tau(x SO(g)) := (x^\top)^{-1} SO(g)$. It is easily verified that $\tau$ induces an involution on $\cA^* (X)^{\SL_g (\bR)}$, and that the diagram
\[
 \xymatrix{
\cA^* (X)^{\SL_g (\bR)} \ar[d]^{\tau^*} \ar[r] & H^* (B \SL_g (\bZ);\bR) \ar[d]^{B \kappa^*} \\
 \cA^* (X)^{\SL_g (\bR)} \ar[r] & H^* (B \SL_g (\bZ);\bR)
  }
\]
commutes. By definition, the Borel classes come from certain invariant forms on $X$. It is therefore enough to show that $\tau^*: \cA^p (X)^{\SL_g (\bR)}  \to \cA^p (X)^{\SL_g (\bR)}$ is multiplication by $(-1)^p$. 
The involution $\tau$ fixes the basepoint $o:=SO(g) \in X$, and since invariant forms on $X$ are determined by their values at $o$, it is enough to check that $T_o \tau = -1$. But this is easily verified by a direct calculation. 
\end{proof}

Now let $X$ be a finite CW-complex. For each $p$, the action on $H_p (X;\bZ)$ provides a map 
\[
B \rho_p :B \hAut(X)\to B\GL(H_p (X;\bZ)). 
\]
For each finitely generated abelian group $A$ with torsion subgroup $TA$, there is a map 
\[
\iota: B \GL(A) \to B\GL(A/TA) \to \Omega^\infty K(\bZ)
\]
well defined up to homotopy; it hits the component of $\rank (A) \in \bZ = \pi_0(K(\bZ))$. 
Composing $\iota$ and $B \rho_p$ gives classes 
\[
\beta_{4k+1}^p:= (\iota \circ B \rho_p)^* \beta_{4k+1} \in H^{4k+1}(B \hAut(X);\bQ).
\]
Using the infinite loop space structure on $\Omega^\infty K(\bZ)$, we can form the alternating sum
\[
\chi:= \sum_{p\geq 0} (-1)^p \iota \circ B \rho_p: B \hAut(X) \to \Omega_{\chi(X)}^\infty K(\bZ) \simeq \Omega^\infty_0 K(\bZ),
\]
the \emph{algebraic $K$-theory Euler characteristic}. Because the Borel classes are primitive, the $H$-space inversion on $\Omega^\infty K(\bZ)$ acts on them by a minus sign, and so the relation 
\[
\chi^* \beta_{4k+1}= \sum_{p \geq 0} (-1)^p \beta_{4k+1}^p \in H^{4k+1}(B \hAut(X);\bQ)
\]
holds. 
\begin{thm}[Dwyer--Weiss--Williams]\label{dww:vaninsihg}
Let $M$ be a smooth compact manifold, possibly with boundary. Then 
\[
\chi^* \beta_{4k+1} = 0 \in H^{4k+1} (B \Diff(M);\bQ).
\]
\end{thm}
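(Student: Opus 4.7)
The plan is to deduce the result from the Dwyer--Weiss--Williams smooth parametrized index theorem of \cite{DWW}. A classifying map $f : B \to B\Diff(M)$ corresponds to a smooth fibre bundle $\pi : E \to B$ with fibre $M$ and vertical tangent bundle $T_v\pi$, and the composition $\chi \circ f$ is the parametrized algebraic $K$-theory Euler characteristic $\chi_\pi$ of this family; it therefore suffices to prove $\chi_\pi^\ast \beta_{4k+1} = 0$ for every smooth bundle with compact fibre (possibly with boundary).

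The smooth DWW theorem provides, using $T_v\pi$ and the Becker--Gottlieb transfer $\tau_\pi$ built from it, a homotopy-commutative factorisation
\[
\xymatrix@C=1em{
B \ar[rr]^-{\chi_\pi} \ar[rd]_-{\tau_\pi} && \Omega^\infty K(\bZ)  \\
& \Omega^\infty \Sigma^\infty_+ E \ar[ru]_-{u} &
}
\]
where $u$ is built from the unit $S \to A(\ast)$ of Waldhausen's $A$-theory followed by the linearisation $A(\ast) \to K(\bZ)$. Equivalently, after collapsing $E$ to a point and applying the ring-spectrum unit $\eta_K : S \to K(\bZ)$, one obtains the composite $\Sigma^\infty_+ E \to \Sigma^\infty S^0 \xrightarrow{\eta_K} K(\bZ)$ as a description of $u$. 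The content of the theorem is precisely that this factorisation exists because of the smooth structure; it is unavailable for block or topological bundles, which is consistent with the nonvanishing of higher torsion classes in those settings.

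Rationally, the stable stems $\pi_\ast^s \otimes \bQ$ are concentrated in degree zero, so $\eta_K$ hits only the degree-zero component of $K(\bZ)_\bQ$. Consequently $u$ induces the zero map on $\tilde H^{>0}(-;\bQ)$, and in particular $u^\ast \beta_{4k+1} = 0$ for every $k \ge 1$. Therefore $\chi_\pi^\ast \beta_{4k+1} = \tau_\pi^\ast u^\ast \beta_{4k+1} = 0$. Naturality under the universal smooth bundle on $B\Diff(M)$ then yields the theorem.

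The main obstacle is extracting the DWW factorisation in precisely the form above for manifolds with boundary. For closed $M$ this is the smooth case of \cite{DWW}; the bordered case is handled there by using the Becker--Gottlieb transfer for bundles of compact manifolds with boundary, with the vertical tangent bundle equipped with suitable boundary conditions. Alternatively, one can reduce to the closed case by fibrewise doubling $M$ along $\partial M$ and using naturality of $\chi$ under the stabilisation $B\Diff_\partial(M) \to B\Diff(M \cup_{\partial M} M)$ (extending diffeomorphisms by the identity on the second copy), combined with the additive behaviour of $\chi$ in the infinite loop space structure on $\Omega^\infty K(\bZ)$.
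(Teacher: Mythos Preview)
Your proof is correct and follows essentially the same route as the paper: factor $\chi$ through the Becker--Gottlieb transfer via the Dwyer--Weiss--Williams index theorem, and then invoke Serre's finiteness theorem (your observation that $\pi_*^s\otimes\bQ$ is concentrated in degree zero) to conclude that the Borel classes pull back to zero. The paper states this more tersely, citing \cite[Corollary 8.12]{DWW} for the factorisation $B\Diff(M)\to QS^0 \to \Omega^\infty K(\bZ)$ directly, but the content is the same.
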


Let us remark that the analogue of Theorem \ref{dww:vaninsihg} for block diffeomorphism groups is \emph{false}; in fact our computation of $H^* (B \blockdiff_\partial (U_{g,1}^n);\bQ)$ certifies its failure.

\begin{proof}[References]
The Dwyer--Weiss--Williams index theorem \cite[Corollary 8.12]{DWW} shows that the map $\chi$ factors through the Becker--Gottlieb transfer $B \Diff(M) \to Q S^0$, so that the result simply follows from Serre's finiteness theorem. See also \S 2 of \cite{EbertDisc} for a more detailed summary.
\end{proof}
We now use Poincar\'e duality to deduce a sharper vanishing theorem from Theorem \ref{dww:vaninsihg}. 
\begin{lem}\label{lem:borel-class-poincaredual}
Let $M^d$ be a connected smooth oriented manifold with boundary, and suppose that $\partial M = S^{d-1}$. Then 
\[
\beta_{4k+1}^p = - \beta_{4k+1}^{d-p}\in H^{4k+1}(B \Diff^+_\partial (M);\bQ)
\]
for all $p$. 
\end{lem}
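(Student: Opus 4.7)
The plan is to exploit Poincar\'e--Lefschetz duality together with Theorem \ref{thm:Borel}(3). Writing $V_p := H_p(M;\bZ)/\mathrm{torsion}$, the composition $\iota \circ B\rho_p$ factors through $B\GL(V_p)$, so it suffices to compare the representations $\rho_p$ and $\rho_{d-p}$ of $\Diff_\partial^+(M)$ on these free quotients. The extreme cases $p=0$ and $p=d$ are vacuous, since $H_0(M;\bZ)=\bZ$ and $H_d(M;\bZ)=0$ both carry trivial $\Diff_\partial^+(M)$-actions (the latter uses $\partial M \neq \emptyset$), giving $\beta_{4k+1}^0 = \beta_{4k+1}^d = 0$.

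For $0 < p < d$, the first step is to combine the hypothesis $\partial M = S^{d-1}$ with the long exact sequence of the pair $(M, \partial M)$ to obtain a $\Diff_\partial^+(M)$-equivariant isomorphism $H_p(M;\bZ) \cong H_p(M, \partial M;\bZ)$. The only potentially nontrivial contribution from $H_\ast(\partial M) = H_\ast(S^{d-1})$ is at $p = d-1$, but there the connecting homomorphism $H_d(M, \partial M) \to H_{d-1}(\partial M)$ sends the relative fundamental class to a generator of $H_{d-1}(S^{d-1})$, so the map $H_{d-1}(\partial M) \to H_{d-1}(M)$ vanishes and the claimed isomorphism still holds. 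Lefschetz duality together with the universal coefficient theorem then supplies a unimodular pairing $V_p \otimes V_{d-p} \to \bZ$ which is preserved by $\Diff_\partial^+(M)$, since any orientation-preserving diffeomorphism fixing the boundary fixes the relative fundamental class $[M, \partial M] \in H_d(M, \partial M; \bZ)$. This yields a $\Diff_\partial^+(M)$-equivariant isomorphism $V_{d-p} \cong V_p^\vee$ of free abelian groups of the same finite rank.

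Choosing dual $\bZ$-bases and identifying both $V_p$ and $V_{d-p}$ with $\bZ^g$, the equivariance of the Lefschetz pairing translates to the matrix identity $\rho_{d-p}(\phi) = (\rho_p(\phi)^\top)^{-1} = \kappa(\rho_p(\phi))$ for every $\phi \in \Diff_\partial^+(M)$. After stabilizing via $\iota$ and using that $\kappa$ commutes with the block-sum stabilization $\GL(V_p) \hookrightarrow \GL_\infty(\bZ)$, the two maps $\iota \circ B\rho_{d-p}$ and $B\kappa \circ \iota \circ B\rho_p$ from $B\Diff_\partial^+(M)$ to $\Omega_0^\infty K(\bZ)$ are homotopic. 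Pulling back $\beta_{4k+1}$ and invoking Theorem \ref{thm:Borel}(3) then yields $\beta_{4k+1}^{d-p} = -\beta_{4k+1}^p$, completing the proof. The main obstacle is purely bookkeeping: verifying the unimodularity of the Lefschetz pairing on the free quotients and checking that the resulting identification of $B\GL(V_{d-p})$ with $B\GL(V_p)$ is compatible with $\iota$; no deeper input beyond standard duality theory is needed.
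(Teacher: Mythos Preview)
Your proof is correct and follows essentially the same approach as the paper: Poincar\'e--Lefschetz duality combined with the universal coefficient theorem to identify $V_{d-p}$ with $V_p^\vee$ equivariantly, then Theorem \ref{thm:Borel}(3). The only difference is organizational: the paper first caps off to the closed manifold $\hat{M} = M \cup_{\partial M} D^d$ (which leaves the Borel classes unchanged since diffeomorphisms extend by the identity over the disk and the action on $H_d(\hat{M})=\bZ$ is trivial) and then invokes ordinary Poincar\'e duality, whereas you stay with $M$, use the long exact sequence of the pair to identify $H_p(M)\cong H_p(M,\partial M)$ for $0<p<d$, and apply Lefschetz duality directly. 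These two routes are equivalent; your long exact sequence argument is precisely what justifies the paper's ``without changing the Borel classes'' step.
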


\begin{proof}
We can consider $\hat{M}:= M \cup_{\partial M} D^d$ instead, without changing the Borel classes. Poincar\'e duality, the universal coefficient theorem and Theorem \ref{thm:Borel} (3) proves the claim.
\end{proof}

\begin{prop}\label{prop:vanishingdww}
Let $M$ be as in Lemma \ref{lem:borel-class-poincaredual}. If $d=2n$, then 
\[
\beta_{4k+1}^n =0 \in H^{4k+1}(B \Diff_\partial^+ (M);\bQ).
\] 
If $d=2n+1$, then 
\[
\sum_{p=0}^n (-1)^p \beta_{4k+1}^p =0 \in H^{4k+1}(B\Diff_\partial^+ (M);\bQ).
\]
\end{prop}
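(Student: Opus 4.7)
The plan is simply to combine the two preceding results: Theorem~\ref{dww:vaninsihg} (which gives $\sum_{p\geq 0}(-1)^p \beta^p_{4k+1} = 0$ on $B\Diff^+_\partial(M)$) and Lemma~\ref{lem:borel-class-poincaredual} (which, under the hypothesis that $\partial M = S^{d-1}$, identifies $\beta^p_{4k+1} = -\beta^{d-p}_{4k+1}$). Everything is formal cancellation once both are in hand; there is no real obstacle, only bookkeeping depending on the parity of~$d$.

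First I would note that both referenced results apply under the stated hypotheses, so that we have on $B\Diff^+_\partial(M)$ the identities
\[
\sum_{p=0}^{d} (-1)^p \beta^p_{4k+1} \;=\; 0, \qquad \beta^p_{4k+1} \;=\; -\,\beta^{d-p}_{4k+1} \quad (0 \le p \le d).
\]
Now I would split the sum by pairing the index $p$ with $d-p$.

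For $d = 2n$, every index $p \neq n$ pairs with a distinct index $d-p$, and the contribution of such a pair is
\[
(-1)^p \beta^p_{4k+1} + (-1)^{d-p}\beta^{d-p}_{4k+1} \;=\; (-1)^p\bigl(\beta^p_{4k+1} + \beta^{d-p}_{4k+1}\bigr) \;=\; 0,
\]
using $d$ even and Poincar\'e duality. The only unpaired term is $p = n$, so the vanishing of the total sum yields $(-1)^n \beta^n_{4k+1} = 0$, hence $\beta^n_{4k+1} = 0$.

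For $d = 2n+1$, every index $p \in \{0,\ldots,n\}$ pairs with a distinct index $2n+1-p \in \{n+1,\ldots,2n+1\}$, and the pair contributes
\[
(-1)^p \beta^p_{4k+1} + (-1)^{2n+1-p}\beta^{2n+1-p}_{4k+1} \;=\; (-1)^p\bigl(\beta^p_{4k+1} - \beta^{2n+1-p}_{4k+1}\bigr) \;=\; 2(-1)^p \beta^p_{4k+1},
\]
again using Poincar\'e duality. Summing over $p = 0,\ldots,n$ and dividing by $2$ gives $\sum_{p=0}^n (-1)^p \beta^p_{4k+1} = 0$, as required. The only subtle point worth double-checking is that the Poincar\'e duality identification of Lemma~\ref{lem:borel-class-poincaredual} really holds for the boundary indices $p = 0$ and $p = d$ (where $H_p(M;\bZ)$ is torsion-free of rank~$1$ with trivial action, so both $\beta^0_{4k+1}$ and $\beta^d_{4k+1}$ vanish anyway), but this is automatic and does not affect the pairing.
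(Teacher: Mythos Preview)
Your proof is correct and is essentially identical to the paper's own argument: both combine Theorem~\ref{dww:vaninsihg} with Lemma~\ref{lem:borel-class-poincaredual} and carry out the same parity-dependent pairing of indices $p \leftrightarrow d-p$ to reduce the alternating sum. Your additional remark about the boundary indices $p=0$ and $p=d$ is harmless but unnecessary, since Lemma~\ref{lem:borel-class-poincaredual} already covers them uniformly.
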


\begin{proof}
For $d=2n$, compute
\[
0 \stackrel{\eqref{dww:vaninsihg}}{=}  \sum_{p=0}^{n-1} \Bigl( (-1)^p \beta_{4k+1}^p + (-1)^{2n-p} \beta_{4k+1}^{2n-p}\Bigr) + (-1)^n \beta_{4k+1}^n= 
\]
\[
\stackrel{\eqref{lem:borel-class-poincaredual}}{=} \sum_{p=0}^{n-1} (-1)^p \Bigl(  \beta_{4k+1}^p -  \beta_{4k+1}^{p}\Bigr) +  (-1)^n \beta_{4k+1}^n =  (-1)^n \beta_{4k+1}^n. 
\]
For $d=2n+1$, compute 
\[
0 \stackrel{\eqref{dww:vaninsihg}}{=}  \sum_{p=0}^{n} \Bigl( (-1)^p \beta_{4k+1}^p + (-1)^{2n+1-p} \beta_{4k+1}^{2n+1-p}\Bigr) = 
\]
\[
\stackrel{\eqref{lem:borel-class-poincaredual}}{=} \sum_{p=0}^{n} \Bigl( (-1)^p \beta_{4k+1}^p + (-1)^{2n+1-p+1} \beta_{4k+1}^{p}\Bigr) = 
\]
\[
2 \sum_{p=0}^{n} (-1)^p \beta_{4k+1}^p.
\]
\end{proof}

\begin{rem}
The even-dimensional case of Proposition \ref{prop:vanishingdww} can be shown directly from Borel's work, without recourse to \cite{DWW}. The point is that by Poincar\'e duality, $\rho_n$ factors through the symplectic group or through an orthogonal group of some signature, depending on the parity of $n$, and Borel also computed the stable rational cohomologies of such groups: in a stable range, they are concentrated in even degrees.
\end{rem}

\section{Rational cohomology of block diffeomorphism spaces: general theory}\label{sec:cohoblockdiffgeneral}

We shall approach $H^* (B \blockdiff_\partial (U_{g,1}^n);\bQ)$ by the surgery-theoretic approach to the topology of diffeomorphism groups which is due to Quinn \cite{Quinn}; a detailed exposition is available in \cite{Nicas}. For our purposes, work of Berglund and Madsen \cite[\S 4]{BerglundMadsen} enables us to treat all the surgery theory as a black box. Our aim in this section is to reformulate the results of \cite[\S 4]{BerglundMadsen} in a way which is readily applicable to $U_{g,1}^n$. The main result of this section is Theorem \ref{thm:blockdiffsversusmappingspace} which gives essentially a formula for $H^* (B \blockdiff_\partial (M);\bQ)$ when $M$ is stably parallelizable, $1$-connected, of dimension $d\geq 5$ and has boundary $S^{d-1}$. The theorem is formulated in terms of the rationalization of spaces, which we recapitulate in \S \ref{subsec:rationalhomotopy}, and in terms of the cohomology of mapping spaces of the form $\map_\partial (M;BO)$, which we describe in detail in \S \ref{subsec:cohmappingspace}. In \S \ref{subsec:blcockdiffberglundmadsensrv}, we review the results of \cite[\S 4]{BerglundMadsen} and polish them slightly, leading to Proposition \ref{prop:rationalized-blockdiffs}. 
The final subsection \ref{subsec:finalformula-for-blockdiffs} combines all these ingredients to Theorem \ref{thm:blockdiffsversusmappingspace}.

\subsection{Some words about rational homotopy theory}\label{subsec:rationalhomotopy}

\subsubsection*{Generalities}
Let us recall some notions and results from rational homotopy theory. For us, a \emph{space} will be a Kan complex. The category of spaces is denoted $\sSet$, and the category of pointed spaces by $\sSet_*$. Recall that the category $\sSet$ is enriched over itself. 

We say that a space $X$ is \emph{finite} if the geometric realization $|X|$ is homotopy equivalent to a finite CW complex. 

A map $f:X \to Y$ is a \emph{$H\bQ$-equivalence} if the induced map $f_*: H_* (X,\bQ) \to H_* (Y,\bQ)$ is an isomorphism. When all path components of $X$ and $Y$ are nilpotent (e.g. $1$-connected or simple), this requirement is equivalent to saying that $f_*: \pi_0 (X) \to \pi_0 (Y)$ is bijective and that $f_*: \pi_k (X,x) \otimes \bQ \to \pi_k (Y,f(x)) \otimes \bQ$ is an isomorphism for all $k\geq 1$ and all $x \in X$ (for an arbitrary nilpotent group $G$, we use the notation $G \otimes \bQ$ for the $\bQ$-localization of $G$, see \cite[\S I]{HilMisRoi}). In this situation, we call an $H \bQ$-equivalence also a \emph{rational homotopy equivalence}. 

A space $Z$ is \emph{$\bQ$-local} if for each $H \bQ$-equivalence $f:X \to Y$ and all choices of basepoints, the map 
\[
\_ \circ f: \map_* (Y;Z) \to \map_* (X;Z)
\]
is a weak equivalence (equivalently the map induced by $\_ \circ f$ on $\pi_0$ is bijective for all such $f$). If $Z$ is nilpotent, this is equivalent to saying that all homotopy groups $\pi_k (Z,z)$ for $k  \geq 2$ are $\bQ$-vector spaces and that the fundamental groups $\pi_1 (Z,z)$ are $\bQ$-local nilpotent groups in the sense of \cite[p.4]{HilMisRoi}. 

A map $f:X \to Y$ is a \emph{$\bQ$-localization} if $f$ is an $H\bQ$-equivalence and $Y$ is $\bQ$-local. Such a map, if it exists, is unique up to weak equivalence. It was proven by Sullivan \cite{Sullivan} that each nilpotent space admits a $\bQ$-localization. 
For our purposes, it will be convenient to have a strictly functorial $\bQ$-localization. 

\begin{thm}\label{thm:rationalhomotopyinput}
There is an enriched functor $(\_)_\bQ: \sSet \to \sSet$, together with an enriched natural transformation $\eta: \id \to (\_)_\bQ$, such that for each $X \in \sSet$, the map $\eta_X:X \to X_\bQ$ is a $\bQ$-localization.
\end{thm}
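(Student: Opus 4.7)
The plan is to construct $(\_)_\bQ$ as a functorial fibrant replacement in Bousfield's $H\bQ$-localized model structure on $\sSet$, with $\eta$ the resulting natural cofibration into the replacement. This localized model structure exists as a left Bousfield localization of the standard Kan--Quillen structure; its weak equivalences are the $H\bQ$-equivalences, its cofibrations are the monomorphisms, and its fibrant objects are precisely the Kan complexes $Z$ such that $\map_*(\_;Z)$ carries $H\bQ$-equivalences to weak equivalences, which is exactly the notion of $\bQ$-locality recalled in the paper.

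First I would fix a small generating set $J$ of acyclic cofibrations for this localized structure, whose existence is guaranteed by combinatoriality and which one may take to consist of the standard horn inclusions together with a set of rationalizing trivial cofibrations. Applying the small object argument to $J$ at each $X$ produces a transfinite tower
\[
X = X_0 \hookrightarrow X_1 \hookrightarrow \cdots \hookrightarrow X_\lambda =: X_\bQ,
\]
in which $X_\bQ$ has the right lifting property against $J$, and is therefore $\bQ$-local. The composite $\eta_X: X \to X_\bQ$ is a transfinite composition of pushouts of maps in $J$, hence a trivial cofibration in the localized structure, and in particular an $H\bQ$-equivalence. Strict functoriality in $X$ is automatic from the functoriality of the small object argument and provides the natural transformation $\eta$.

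The remaining point, and the one requiring the most care, is the simplicial enrichment of $(\_)_\bQ$. This follows from the general fact that in a simplicial combinatorial model category, the functorial factorizations produced by the small object argument can be chosen enriched: pushouts and transfinite colimits in $\sSet$ commute with tensoring by a fixed simplicial set $K$, so the construction of $X_\bQ$ is compatible with the assignment $K\times X \mapsto K \times X_\bQ$, producing natural simplicial maps $\map(X;Y) \to \map(X_\bQ;Y_\bQ)$. The main obstacle is verifying this enriched compatibility stage-by-stage through the transfinite tower; this is the reason we must rely on Bousfield's combinatorial setup rather than on Sullivan's non-functorial minimal model construction, which gives rationalizations object-by-object but not as an enriched functor on $\sSet$.
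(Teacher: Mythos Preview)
The paper does not give a proof of this theorem at all; it merely records references to Bousfield's two papers, the first for the unenriched localization functor and the second for the simplicial enrichment. Your sketch is in the spirit of Bousfield's construction, so at the level of strategy you are aligned with the cited sources.

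There is, however, a genuine gap in your treatment of the enrichment. The claim that ``pushouts and transfinite colimits in $\sSet$ commute with tensoring by a fixed simplicial set $K$'' is true, but it does not suffice. The ordinary small object argument attaches cells at stage $\alpha$ indexed by the \emph{set} $\Hom(A_i,X_\alpha)$ of lifting problems, and this set is not preserved under $K\times(\_)$: one has $\Hom(A_i,K\times X_\alpha)\not\cong\Hom(A_i,X_\alpha)$ in general. So the tower for $K\times X$ is not $K\times(\text{tower for }X)$, and the naive argument does not produce a map $\map(X;Y)\to\map(X_\bQ;Y_\bQ)$.

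What Bousfield actually does in \cite[\S 5]{BousfieldK} is to run a \emph{simplicially enriched} small object argument: at each stage one pushes out along the evaluation map $\map(A_i,X_\alpha)\times A_i\to X_\alpha$ rather than along the discrete coproduct $\coprod_{\Hom(A_i,X_\alpha)}A_i\to X_\alpha$. Because $\map(A_i,\_)$ is a simplicial functor and the rest of the construction is built from simplicial colimits, the resulting tower is simplicially functorial stage by stage, and the composite $\eta_X$ becomes an enriched natural transformation. Your sentence ``the functorial factorizations produced by the small object argument can be chosen enriched'' is correct as a slogan, but it is precisely this modification---not commutation with $K\times(\_)$---that makes it true.
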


We refer to \cite[\S 3]{RiehlCatHomThy} for the vocabulary of enriched category theory. The statement that $(\_)_\bQ$ is enriched means that it comes along with natural maps 
\[
i_{X,Y}: \map (X;Y) \to \map(X_\bQ;Y_\bQ), 
\]
and the enriched natural transformation $\eta$ is given by maps $\eta_X:X \to X_\bQ$ such that the composition 
\[
\map (X;Y) \stackrel{i_{X,Y}}{\to} \map(X_\bQ;Y_\bQ) \stackrel{\_ \circ \eta_X}{\to} \map (X;Y_\bQ)
\]
agrees with the map $\eta_Y \circ \_$. There is an induced enriched functor $\sSet_* \to \sSet_*$ of pointed spaces. 

\begin{proof}[References for Theorem \ref{thm:rationalhomotopyinput}]
This is due to Bousfield; first in \cite{Bousfield} without the word ``enriched'', the enrichment is constructed in \cite[\S 5]{BousfieldK}. 
\end{proof}

\subsubsection*{Rationalization of mapping spaces}

\begin{lem}\label{rationalization-mappingspace}
Let $X$ and $Z$ be connected pointed spaces and let $g: X \to Z$ be a pointed map. We write $\map_* (X;Z)^g \subset \map_*(X;Z)$ for the component containing $g$; and we write $g_\bQ: X_\bQ \to Z_\bQ$ for the rationalization of $g$, i.e. $g_\bQ:= i_{X,Z}(g)$. The map $i_{X,Z}$ restricts to 
\[
i_{X,Z}^g: \map_* (X;Z)^g \to \map_* (X_\bQ;Z_\bQ)^{g_\bQ}.
\]
Then if $X$ is finite, the spaces $\map_* (X;Z)^g$ and $\map_* (X_\bQ;Z_\bQ)^{g_\bQ}$ are nilpotent, $\map_* (X_\bQ;Z_\bQ)^{g_\bQ}$ is $\bQ$-local and $i_{X,Z}^g$ is a rational homotopy equivalence.
\end{lem}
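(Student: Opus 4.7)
The plan is to run cellular induction on $X$, which is feasible because $X$ is finite. Choose a finite pointed CW decomposition $\ast = X^{(0)} \subset X^{(1)} \subset \cdots \subset X^{(N)} = X$ attaching one cell at a time, and prove all three assertions simultaneously for each $X^{(k)}$ with restricted map $g^{(k)} := g|_{X^{(k)}}$. The case $k = 0$ is trivial, since both mapping spaces reduce to a point.

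For the inductive step, the cofibre sequence $X^{(k-1)} \to X^{(k)} \to S^k$ produces a fibre sequence of pointed mapping spaces
\[
\Omega^k Z \to \map_*(X^{(k)}; Z)^{g^{(k)}} \to \map_*(X^{(k-1)}; Z)^{g^{(k-1)}},
\]
and analogously after rationalisation. Since $\Omega^k Z$ is a connected $H$-space for $k \geq 1$, it is simple and hence nilpotent, and the action of $\pi_1$ of the base on $\pi_*$ of the fibre is nilpotent (standard Hilton--Mislin--Roitberg argument, cf.\ \cite[Ch.\ II]{HilMisRoi}), yielding nilpotence of $\map_*(X^{(k)}; Z)^{g^{(k)}}$. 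The same argument applied to $Z_\bQ$, combined with the facts that loops preserve $\bQ$-locality and that $\bQ$-locality propagates through fibrations of nilpotent spaces with $\bQ$-local fibre and base, gives nilpotence and $\bQ$-locality of $\map_*(X^{(k)}_\bQ; Z_\bQ)^{g^{(k)}_\bQ}$.

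For the rationalisation claim, $\eta_Z\colon Z \to Z_\bQ$ induces a map between the two fibre sequences above; on fibres this is $\Omega^k \eta_Z$, a rationalisation of nilpotent spaces, and on bases it is a rationalisation by the inductive hypothesis. The classical lemma that rationalisation commutes with fibre sequences of nilpotent spaces then shows that $(\eta_Z)_*\colon \map_*(X^{(k)}; Z)^{g^{(k)}} \to \map_*(X^{(k)}; Z_\bQ)^{\eta_Z g^{(k)}}$ is a rationalisation. The $\bQ$-locality of $Z_\bQ$ together with the fact that $\eta_{X^{(k)}}$ is an $H\bQ$-equivalence imply that precomposition with $\eta_{X^{(k)}}$ defines an equivalence $\map_*(X^{(k)}_\bQ; Z_\bQ)^{g^{(k)}_\bQ} \simeq \map_*(X^{(k)}; Z_\bQ)^{\eta_Z g^{(k)}}$; the map $i_{X^{(k)}, Z}^{g^{(k)}}$ factors through this identification as the rationalisation above, so is itself a rational homotopy equivalence, closing the induction.

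The main obstacle will be the nilpotence check in each fibre sequence, namely that the action of $\pi_1$ of the base mapping space on $\pi_*$ of the fibre $\Omega^k Z$ is nilpotent; this is where the finiteness of $X$ is essentially used, so that the tower of nilpotent fibrations terminates, and is the core content of the Hilton--Mislin--Roitberg theory of nilpotent fibrations. All other ingredients (loops preserving $\bQ$-locality, rationalisation commuting with fibre sequences of nilpotent spaces, the universal property of $\bQ$-localisation) are standard facts from rational homotopy theory and should fit together without trouble.
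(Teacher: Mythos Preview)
Your approach is correct and conceptually the same as the paper's, just more hands-on: the paper directly cites \cite{HilMisRoi} (Theorem~2.5 on p.~64 for nilpotence of each mapping-space component, and Theorem~3.11 on p.~77 for the fact that postcomposition with $\eta_Z$ is a $\bQ$-localisation $\map_*(X;Z)^g \to \map_*(X;Z_\bQ)^{\eta_Z g}$), and then uses the same factorisation through $(\eta_X)^*$ that you use in your final paragraph. Your cellular induction is essentially how those cited theorems are proven in \cite{HilMisRoi}, so you are unpacking what the paper quotes as a black box.

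One small slip worth correcting: $\Omega^k Z$ need not be connected for $k \geq 1$ (its $\pi_0$ is $\pi_k(Z)$), so the fibre appearing in your inductive step is in general only a union of components of $\Omega^k Z$. This is harmless for the argument, since every component of a loop space is simple and hence nilpotent, which is all you need.
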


\begin{proof}
The composition
\[
\map_* (X;Z)^g \stackrel{i_{X,Z}^g}{\to} \map_* (X_\bQ; Z_\bQ)^{g_\bQ}\stackrel{\_ \circ \eta_X}{\to} \map_* (X;Z_\bQ)^{ g_\bQ \circ \eta_X}
\]
is equal to $\eta_Z \circ \_$. The map $\_ \circ \eta_X$ is a weak homotopy equivalence as $\eta_X$ is an $H\bQ$-equivalence and $Z_\bQ$ is $\bQ$-local. 

The spaces $\map_* (X;Z)^g$ and $\map_* (X;Z_\bQ)^{ g_\bQ \circ \eta_X}$ are nilpotent, by Theorem 2.5 on p.64 of \cite{HilMisRoi}, and therefore so is $\map_* (X_\bQ;Z_\bQ)^{g_\bQ}$. The map 
\[
\eta_Z \circ\_ : \map_* (X;Z)^g \to \map_* (X;Z_\bQ)^{\eta_Z \circ g}
\]
is a $\bQ$-localization by Theorem 3.11 on p. 77 of loc.cit., which entails the other claims.
\end{proof}

There is a similar fact for homotopy automorphisms. The map $i_{X,X}$ restricts to a monoid homomorphism
\[
j_X: \hAut_* (X) \to \hAut_* (X_\bQ),
\]
and 
\[
j_{X,A}: \hAut_A(X) \to \hAut_{A_\bQ}(X_\bQ)
\]
when $X$ is a pointed space or $(X,A)$ is a space pair. We write $\hAut_A(X)^\id$ for the unit component of $\hAut_A(X)$, and let moreover 
\[
\hAut_{A_\bQ} (X_\bQ)_\bZ \subset \hAut_{A_\bQ} (X_\bQ)
\]
be the union of all path components which are hit by $j_{X,A}$; this are clearly grouplike submonoids. 

\begin{lem}\label{lem:rationalization-homotopyautomorphisms}
Suppose that $(X,A)$ is a pair of finite spaces, with $X$ nilpotent and $A$ connected. Then the natural map 
\[
j_{X,A}: \hAut_A (X)^{\id} \to \hAut_{A_\bQ} (X_\bQ)^{\id}
\]
is $\bQ$-localization. 
\end{lem}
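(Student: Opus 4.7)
The strategy is to reduce the claim to the rationalization-of-mapping-spaces result in Lemma~\ref{rationalization-mappingspace}. Write $\map^{\id}(X,X;A)$ for the space of self-maps of $X$ restricting to $\id_A$ on $A$. Being a homotopy equivalence is an open-and-closed condition on $\pi_0$ of a mapping space, so the identity component $\hAut_A(X)^{\id}$ coincides with the identity component of $\map^{\id}(X,X;A)$, and likewise for the rationalized pair $(X_\bQ, A_\bQ)$. Under these identifications, the restriction of $j_{X,A}$ to identity components is simply the effect on identity components of the enriched rationalization applied to the relative mapping space $\map^{\id}(X,X;A)$.

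Take $A \hookrightarrow X$ to be a cofibration. Then the restriction map $\map(X,X) \to \map(A,X)$ is a Serre fibration with fibre $\map^{\id}(X,X;A)$ over $i\colon A \hookrightarrow X$, and the analogous statement holds after rationalizing. Enriched functoriality of $(\_)_\bQ$ produces a map of fibre sequences; restricting to the components of $\id_X$, $i$ and their rationalizations yields a ladder
\[
\xymatrix{
\hAut_A(X)^{\id} \ar[r] \ar[d] & \map(X,X)^{\id} \ar[r] \ar[d] & \map(A,X)^{i} \ar[d] \\
\hAut_{A_\bQ}(X_\bQ)^{\id} \ar[r] & \map(X_\bQ,X_\bQ)^{\id} \ar[r] & \map(A_\bQ,X_\bQ)^{i_\bQ}
}
\]
of fibrations of nilpotent spaces. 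To show the left vertical map is a $\bQ$-localization, I would argue that the two outer verticals are $\bQ$-localizations: the pointed version is exactly Lemma~\ref{rationalization-mappingspace}, and the unpointed version is deduced from it by comparing the evaluation fibrations $\map_*(W,Z)^g \to \map(W,Z)^g \to Z$ before and after rationalizing $Z$, invoking compatibility of $(\_)_\bQ$ with fibrations of nilpotent spaces (\cite[Theorem 3.11]{HilMisRoi}). A five-lemma argument applied to the long exact sequences of rational homotopy groups then forces the induced map on fibres to be an $H\bQ$-equivalence onto a $\bQ$-local target, which is the desired conclusion.

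The main obstacle I anticipate is the bookkeeping of path components and nilpotence: one must verify that all relevant mapping-space components are nilpotent, check that the restriction fibrations restrict correctly to the chosen identity components on both sides, and use that $A$ is connected to guarantee that $A_\bQ$ and the component $\map(A_\bQ, X_\bQ)^{i_\bQ}$ are connected with well-behaved basepoints. The potential discrepancy between $\hAut_{A_\bQ}(X_\bQ)^{\id}$ and the larger submonoid $\hAut_{A_\bQ}(X_\bQ)_\bZ$ does not enter here because we have cut down to identity components from the outset.
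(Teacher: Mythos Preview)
Your proposal is correct and follows essentially the same route as the paper: a restriction fibration to $\map(A,X)$, Lemma~\ref{rationalization-mappingspace} for the outer columns, and a five-lemma on rational homotopy groups. The only difference is that the paper works with \emph{pointed} mapping spaces from the start (picking a basepoint in $A$ and using the fibration $\hAut_*(X)^{\id}\to \map_*(A;X)^{\inc}$), so Lemma~\ref{rationalization-mappingspace} applies directly and your detour through evaluation fibrations to handle the unpointed case is avoided.
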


\begin{proof}
The case $A=*$ is a special case of Lemma \ref{rationalization-mappingspace}. 
In the general case, consider the commutative diagram
\[
\xymatrix{
\hAut_{\ast} (X)^{\id} \ar[r]^{j_X}\ar[d] & \hAut_{\ast} (X_\bQ)^{\id}\ar[d] \\
\map_{\ast} (A;X)^{\inc} \ar[r]^{i_{A,X}^\inc} & \map_{\ast} (A_\bQ;X_\bQ)^{\inc},
}
\]
where the symbol $\inc$ is used to denote both inclusions $A \to X$ and $A_\bQ \to X_\bQ$.
The horizontal maps are rational homotopy equivalences by Lemma \ref{rationalization-mappingspace}. It follows that the induced map on vertical homotopy fibres induces an isomorphism on all rational homotopy groups in degrees $\geq 2$. The same is true on fundamental groups, using \cite[Proposition 1.10]{HilMisRoi}. On the other hand, restricting the map on vertical fibres to the path component of the identity gives the map $j_{A,X}$. 
\end{proof}

Having understood these matters, we usually abuse notation and write
\[
\hAut_A (X_\bQ):= \hAut_{A_\bQ} (X_\bQ)
\]
and use the notation $\hAut_A(X_\bQ)_\bZ$, $\hAut_A(X_\bQ)^\id$ similarly.

\subsection{Block diffeomorphisms versus tangential homotopy automorphisms}\label{subsec:blcockdiffberglundmadsensrv}

We need to introduce some more notation. 

\begin{notation}
\begin{enumerate}
\item We write $\Diff_\partial (M)^{\sim \id} \subset \Diff_\partial (M)$ for the subgroup of diffeomorphisms which are homotopic to the identity (relative boundary), and define $\blockdiff_\partial (M)^{\sim \id} \subset \blockdiff_\partial (M)$ analogously.
\item Assume that $C \subset A\subset X$ are subcomplexes and that $V \to X$ is a vector bundle. In that situation, we denote by 
\[
\hAut_A^C (V)^{\sim \id} \subset \hAut_A^C (V)
\]
the preimage of $\hAut_A (X)^\id$ under the forgetful map $\hAut_A^C (V) \to \hAut_A (X)$; in other words the space of pairs $(f,\hat{f})$ with $f \sim \id $ (relative $A$). We define $\hAut_A^C (V)^{\st,\sim \id} \subset \hAut_A^C (V)^\st$ similarly. 
\item We let 
\[
\hAut_\partial (M)^{\cong} \subset\hAut_\partial (M)
\]
be the union of the components which are hit by the forgetful map $\blockdiff_\partial (M) \to \hAut_\partial (M)$ (or equivalently the forgetful map $\Diff_\partial (M) \to \hAut_\partial (M)$, as $\Diff_\partial (M) \to \blockdiff_\partial (M)$ is by definition $0$-connected). 
\item For a subcomplex $C \subset \partial M$, we define
\[
\hAut_\partial^C (TM)^{\st,\cong} \subset \hAut_\partial^C (TM)^\st
\]
as the preimage of $\hAut_\partial (M)^{\cong}$ under the forgetful map.
\end{enumerate}
\end{notation}

The derivative map \eqref{eqn:detrivativemap} is, by virtue of its definition, a map over $\hAut_\partial (M)$, and hence it restricts to a map 
\[
D: \blockdiff_\partial (M)^{\sim \id} \to \hAut_\partial^\partial (TM)^{\st,\sim \id}.
\]
If $* \in \partial M$ is a basepoint, we can furthermore compose the derivative map with the forgetful map $\hAut_\partial^\partial (TM)^{\st} \to \hAut_\partial^* (TM)^{\st}$. Hence by restriction and taking classifying spaces, we obtain a map 
\begin{equation}\label{eqn:derivative-map-finalform}
BD: B \blockdiff_\partial (M)^{\sim \id} \to B \hAut_\partial^* (TM)^{\st,\sim \id}. 
\end{equation}

All the surgery theory we need enters the proof of the following result. 

\begin{thm}\cite[Theorem 1.1]{BerglundMadsen}\label{thm:berglundmadsen}
Assume that $M^d $ is $1$-connected, $\partial M=S^{d-1}$ and that $d \geq 5$. Then the spaces $B \blockdiff_\partial (M)^{\sim \id}$ and $B \hAut_\partial^* (TM)^{\st,\sim\id}$ are nilpotent, and the map $BD$ from \eqref{eqn:derivative-map-finalform} is a rational homotopy equivalence. 
\end{thm}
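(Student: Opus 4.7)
The plan is to deploy Quinn's space-level surgery exact sequence to describe the homotopy fibre of $BD$, then rationalise and check that it becomes contractible on the appropriate components. I proceed in three steps.

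First, I would address nilpotency. Since $M$ is $1$-connected and finite, $\hAut_\partial(M)^\id$ is the identity component of the grouplike monoid of self-maps of a nilpotent finite complex relative to the boundary sphere, so $B\hAut_\partial(M)^\id$ is nilpotent by the standard Postnikov-tower argument (mapping spaces from finite complexes into nilpotent targets are nilpotent, as used in \S\ref{subsec:rationalhomotopy}). The space $B\hAut_\partial^*(TM)^{\st,\sim\id}$ sits over $B\hAut_\partial(M)^\id$ with fibre a component of $\map_*(M;BO)$, itself nilpotent, so the total space is nilpotent. Nilpotency of $B\blockdiff_\partial(M)^{\sim\id}$ follows from the surgery description below, since the ingredient spaces and connecting fibres (mapping spaces into $G/O$ and $L$-theory loop spaces) are nilpotent.

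Second, I would invoke Quinn's theory. For a $1$-connected manifold $M$ of dimension $d\ge 5$ with $\partial M = S^{d-1}$, a space-level surgery exact sequence provides, after restricting to the subspaces of objects homotopic to the identity, a homotopy fibration sequence of the shape
\[
\map_\partial(M;G/O)^{\sim \id} \longrightarrow B\blockdiff_\partial(M)^{\sim\id} \longrightarrow B\hAut_\partial(M)^{\id} \longrightarrow \Omega^{\infty-d-1}L(\bZ),
\]
where $L(\bZ)$ is the quadratic $L$-spectrum, using $\pi_1(M)=0$ to identify $L(\bZ[\pi_1 M])=L(\bZ)$. On the tangential side there is a straightforward homotopy fibration
\[
\map_\partial(M;BO)^{0} \longrightarrow B\hAut_\partial^*(TM)^{\st,\sim\id} \longrightarrow B\hAut_\partial(M)^\id,
\]
arising from the definition of $\hAut^*$, a stable trivialisation of $TM|_{\partial M}$ having been chosen (possible since $\partial M = S^{d-1}$). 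By construction the derivative map $BD$ fits into a map of these two sequences, which on fibres is induced by the standard infinite loop map $G/O \to BO$.

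Third, I would rationalise. By Serre finiteness, $G$ has finite homotopy groups, so the map $G/O \to BO$ is a rational equivalence, hence $\map_\partial(M;G/O)^{\sim\id} \to \map_\partial(M;BO)^0$ is a rational equivalence. Further, $\pi_*(L(\bZ))\otimes\bQ$ is concentrated in degrees $\equiv 0\pmod 4$ and the rational surgery obstruction for $1$-connected $M$ is captured by the signature of the relevant normal invariant, which is already detected by the tangential (Pontrjagin) data via the Hirzebruch signature theorem. Consequently, when one passes to rational homotopy groups, the fibration sequence for $\blockdiff$ and that for $\hAut^*(TM)^\st$ have isomorphic rational long exact sequences, compatibly under $BD$; a five-lemma argument then delivers that $BD$ is a rational homotopy equivalence on the selected components.

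The main obstacle will be the careful bookkeeping of components and the rigorous identification of the last map in the surgery sequence as rationally zero on the relevant components. Concretely, one must verify that the $L$-theory summand $\Omega^{\infty-d-1}L(\bZ)\otimes\bQ$ contributes nothing beyond what the tangential comparison already sees: rationally this reduces to showing that the surgery obstruction factors through a signature invariant already built into the derivative data. This is exactly the content of \cite[\S 4]{BerglundMadsen}, whose Theorem 1.1 we are invoking; the proof given there is our blueprint.
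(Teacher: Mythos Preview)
The paper does not prove this theorem: it is cited from \cite[Theorem 1.1]{BerglundMadsen} and used as a black box, so your sketch already goes beyond what the paper provides. Your overall strategy (Quinn's space-level surgery sequence, the rational equivalence $G/O \simeq_\bQ BO$, and a comparison with the tangential fibration) is indeed the outline of Berglund--Madsen's argument.

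However, there is a genuine gap in your sketch. You write the tangential fibration with fibre $\map_\partial(M;BO)^0$, but the theorem concerns $\hAut_\partial^*(TM)^{\st}$, where the superscript $*$ (not $\partial$) means the bundle map is required to be the identity only over a single point. By \eqref{eqn:prop4.14BM}, the fibre of $B\hAut_\partial^*(TM)^{\st,\sim\id} \to B\hAut_\partial(M)^\id$ is a component of $\map_*(M;BO)$, not $\map_\partial(M;BO)$. This is not a minor slip: the discrepancy between $\map_\partial$ and $\map_*$ is $\map_*(\partial M;BO) = \Omega^{d-1}BO$, which rationally has the same homotopy groups as the relevant loop space of $L(\bZ)$. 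It is precisely this extra room created by relaxing $\partial$ to $*$ that absorbs the $L$-theory contribution in the surgery sequence. Consequently, your claim that the map to $L$-theory is ``rationally zero on the relevant components'' is not correct in general, and your five-lemma argument as stated does not go through: the two sequences you wrote down do not have matching fibres. The actual mechanism is that the rational surgery obstruction (a signature-type invariant, via Hirzebruch) is exactly detected by the $\Omega^{d-1}BO$ factor coming from the boundary-to-basepoint relaxation on the tangential side, and this cancellation is the heart of the Berglund--Madsen proof.
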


In the rest of this subsection, we derive a version of Theorem \ref{thm:berglundmadsen} (namely Proposition \ref{prop:rationalized-blockdiffs} below) which involves the full block diffeomorphism group and not just $\blockdiff_\partial (M)^{\sim\id}$ and which is directly applicable to the manifolds $U_{g,1}^n$. 
The derivative map yields
\begin{equation}\label{eqn:derivative-map-finalform2}
BD: B \blockdiff_\partial (M) \to B \hAut_\partial^\partial (TM)^{\st,\cong}\to B \hAut_\partial^* (TM)^{\st,\cong}. 
\end{equation}

\begin{cor}\label{cor:toberglundmadsen}
Let $M$ be as in Theorem \ref{thm:berglundmadsen}. Then the composition \ref{eqn:derivative-map-finalform2} is an $H \bQ$-equivalence.
\end{cor}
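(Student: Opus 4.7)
The plan is to deduce Corollary \ref{cor:toberglundmadsen} from Theorem \ref{thm:berglundmadsen} by comparing the map $BD$ on full block diffeomorphisms with its $\sim\id$-version via a map of fibre sequences over the discrete base $B\pi_0(\hAut_\partial(M)^{\cong})$.

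First I would set up the commutative diagram of classifying spaces of grouplike $E_1$-monoids
\[
\xymatrix{
B\blockdiff_\partial(M)^{\sim\id} \ar[r] \ar[d]^{BD} & B\blockdiff_\partial(M) \ar[r] \ar[d]^{BD} & B\pi_0(\hAut_\partial(M)^{\cong}) \ar@{=}[d] \\
B\hAut_\partial^*(TM)^{\st,\sim\id} \ar[r] & B\hAut_\partial^*(TM)^{\st,\cong} \ar[r] & B\pi_0(\hAut_\partial(M)^{\cong}).
}
\]
In both rows the ``$\sim\id$''-piece is a normal $E_1$-submonoid whose $\pi_0$ is the kernel of the forgetful homomorphism to $\pi_0(\hAut_\partial(M))$, so each row is a genuine fibre sequence. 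That the quotient group is $\pi_0(\hAut_\partial(M)^{\cong})$ in both cases is the key bookkeeping step: for the top row this is the very definition of $\hAut_\partial(M)^{\cong}$ as the image of $\pi_0(\blockdiff_\partial(M))$; for the bottom row it uses that the derivative of a block diffeomorphism provides a lift, so that the forgetful map $\pi_0(\hAut_\partial^*(TM)^{\st,\cong}) \to \pi_0(\hAut_\partial(M)^{\cong})$ is surjective. The right square commutes tautologically because $D$ covers the underlying homotopy self-equivalence.

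Next I would apply Theorem \ref{thm:berglundmadsen} to conclude that the left vertical map is an $H\bQ$-equivalence of nilpotent spaces, while the right vertical map is the identity. The Leray--Serre spectral sequences with rational coefficients,
\[
E_2^{p,q} = H^p(\pi; H^q(\mathrm{fibre};\bQ)) \Longrightarrow H^{p+q}(\mathrm{total};\bQ), \qquad \pi := \pi_0(\hAut_\partial(M)^{\cong}),
\]
then give a comparison: it suffices to show that $BD$ induces an isomorphism on $E_2$-pages, which automatically propagates to $E_\infty$ and hence to the abutment.

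The main obstacle will be verifying that the two $\pi$-actions on the rational cohomology of the fibres are intertwined by $BD$, i.e., that the local coefficient systems match. I expect this to be a formal consequence of functoriality: both local systems arise from the conjugation action of the ambient $E_1$-monoid on its $\sim\id$-subgroup, and the derivative map is equivariant for these conjugation actions by construction. With this compatibility in hand, no further surgery-theoretic input beyond Theorem \ref{thm:berglundmadsen} is required, and the middle vertical $BD$ is an $H\bQ$-equivalence as claimed.
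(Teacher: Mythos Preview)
Your proof is correct and follows essentially the same approach as the paper: set up the map of fibre sequences over $B\pi_0(\hAut_\partial(M)^{\cong})$, invoke Theorem \ref{thm:berglundmadsen} on the fibres, and conclude via the Leray--Serre spectral sequence comparison. Your extra bookkeeping (surjectivity of $\pi_0(\hAut_\partial^*(TM)^{\st,\cong}) \to \pi_0(\hAut_\partial(M)^{\cong})$ via the derivative, and compatibility of the local coefficient systems) makes explicit what the paper leaves implicit.
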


\begin{proof}
There is a diagram
\[
\xymatrix{
B \blockdiff_\partial (M)^{\sim\id} \ar[r]^-{BD} \ar[d] & B \hAut_\partial^\ast (TM)^{\st,\sim\id}\ar[d]\\
B \blockdiff_\partial (M) \ar[r]^-{BD} \ar[d] & B \hAut_\partial^\ast (TM)^{\st,\cong}\ar[d]\\
B\pi_0 (\hAut_\partial (M)^{\cong}) \ar@{=}[r] & B\pi_0 (\hAut_\partial (M)^{\cong})
}
\]
whose columns are fibre sequences, so the claim follows immediately from Theorem \ref{thm:berglundmadsen} and an application of the Leray--Serre spectral sequence.
\end{proof}

\begin{cor}\label{cor:toberglundmadsenfinal}
Assume that $M$ is a $1$-connected and stably parallelizable manifold of dimension $d \geq 5$, and that $\partial M = S^{d-1}$. Then the composition 
\[
B \blockdiff_\partial (M) \to \hq{\map_\partial (M;BO)^0}{\hAut_\partial(M)^{\cong}} \to \hq{\map_* (M;BO)^0}{\hAut_\partial(M)^{\cong}}
\]
of the derivative map with the forgetful map is an $H \bQ$-equivalence.
\end{cor}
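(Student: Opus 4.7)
The plan is to identify the target of the composition with the target $B\hAut_\partial^*(TM)^{\st,\cong}$ of the derivative map from Corollary~\ref{cor:toberglundmadsen}, and then invoke that corollary. Since $M$ is stably parallelizable, a stable trivialization of the whole of $TM$ restricts to stable trivializations of $TM|_*$ and of $TM|_{\partial M}$, so Lemma~\ref{lem:Bofbundlehomotopyautomorphisms}, applied with $X=M$, $A=\partial M$, $V=TM$, and $C=\{*\}$ respectively $C=\partial M$, produces weak equivalences
\[
B\hAut_\partial^\partial(TM)^{\st} \simeq \hq{\map_\partial(M;BO)^0}{\hAut_\partial(M)}, \qquad B\hAut_\partial^*(TM)^{\st} \simeq \hq{\map_*(M;BO)^0}{\hAut_\partial(M)},
\]
which restrict on both sides to weak equivalences between the $\cong$-part of the left-hand sides and the Borel constructions over $\hAut_\partial(M)^\cong$ on the right-hand sides.

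The next step is to check that, modulo these equivalences, the composition in the statement agrees with the derivative map $BD\colon B\blockdiff_\partial(M) \to B\hAut_\partial^*(TM)^{\st,\cong}$. This amounts to the commutativity of the diagram
\[
\xymatrix@C=1.1em{
B\blockdiff_\partial(M) \ar[r]^-{BD} \ar@{=}[d] & B\hAut_\partial^\partial(TM)^{\st,\cong} \ar[r]^-{\simeq} \ar[d] & \hq{\map_\partial(M;BO)^0}{\hAut_\partial(M)^\cong} \ar[d] \\
B\blockdiff_\partial(M) \ar[r]^-{BD} & B\hAut_\partial^*(TM)^{\st,\cong} \ar[r]^-{\simeq} & \hq{\map_*(M;BO)^0}{\hAut_\partial(M)^\cong},
}
\]
whose middle vertical arrow is the forgetful map (reducing the locus on which the bundle automorphism is required to be the identity from $\partial M$ to $\{*\}$), and whose right vertical arrow is induced by the obvious restriction of mapping spaces. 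The left square commutes by construction of $BD$, see~\eqref{eqn:derivative-map-finalform2}. The commutativity of the right square follows from naturality of the Berglund--Madsen equivalence~\eqref{eqn:prop4.14BM} with respect to the subcomplex $C$: at the point-set level the equivalence sends a stable bundle self-equivalence to its classifying map, and shrinking $C$ from $\partial M$ to $\{*\}$ translates into forgetting the extra trivialization along $\partial M$ on the mapping-space side.

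Granting this, the composition in the statement coincides (up to the horizontal weak equivalence in the bottom row) with $BD$, and Corollary~\ref{cor:toberglundmadsen} supplies the desired $H\bQ$-equivalence. The one step not immediately formal is the naturality of~\eqref{eqn:prop4.14BM} in $C$; this is the main obstacle to a fully rigorous write-up, and I would address it by unpacking the point-set model used in \cite[Proposition~4.13]{BerglundMadsen}, where the weak equivalence is built from the universal stabilised tangent bundle of the canonical fibration over $B\hAut_A^C(V)^\st$ --- a construction which is manifestly functorial in enlargements or shrinkings of $C$.
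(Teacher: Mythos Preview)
Your proposal is correct and follows essentially the same route as the paper: the paper's proof is the single line ``Combine Corollary~\ref{cor:toberglundmadsen} with Lemma~\ref{lem:Bofbundlehomotopyautomorphisms}.'' You have simply made explicit the naturality-in-$C$ check that the paper leaves implicit.
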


\begin{proof}
Combine Corollary \ref{cor:toberglundmadsen} with Lemma \ref{lem:Bofbundlehomotopyautomorphisms}. 
\end{proof}

Using the rationalization functor, we now give the variant of Theorem \ref{thm:berglundmadsen} which we shall eventually use. We let 
\[
\hAut_\partial (M_\bQ)^{\cong} \subset \hAut_\partial (M_\bQ)_\bZ
\]
the union of components which are hit by the rationalization map 
\[
\hAut_\partial (M)^{\cong} \to \hAut_\partial (M_\bQ)_\bZ.
\]

\begin{prop}\label{prop:rationalized-blockdiffs}
Let $M$ be a $1$-connected manifold of dimension $d \geq 5$, assume that $M$ is stably parallelizable and that $\partial M = S^{d-1}$.  Then the composition 
\[
B \blockdiff_\partial (M) \stackrel{BD}{\to} \hq{\map_\partial (M;BO)^0}{\hAut_\partial(M)^{\cong}} \to \hq{\map_\partial (M_\bQ;BO_\bQ)^0}{\hAut_\partial(M_\bQ)^{\cong}}  \to \hq{\map_* (M_\bQ;BO_\bQ)^0}{\hAut_\partial(M_\bQ)^{\cong}}
\]
of the derivative map, the rationalization and the forgetful map is an $H \bQ$-equivalence.
\end{prop}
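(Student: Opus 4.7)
The plan is to reduce the statement, via Corollary~\ref{cor:toberglundmadsenfinal}, to the claim that a single rationalization map is an $H\bQ$-equivalence, and then to establish that claim by comparing Borel fibrations with a Leray--Serre spectral sequence argument.

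By Corollary~\ref{cor:toberglundmadsenfinal}, the composition
\[
B \blockdiff_\partial (M) \longrightarrow \hq{\map_\partial (M;BO)^0}{\hAut_\partial(M)^{\cong}} \longrightarrow \hq{\map_* (M;BO)^0}{\hAut_\partial(M)^{\cong}}
\]
is already an $H\bQ$-equivalence, and by naturality of the rationalization functor the composition in the statement of the proposition agrees (up to this equivalence) with the above composition followed by the canonical rationalization map
\[
R: \hq{\map_* (M;BO)^0}{\hAut_\partial(M)^{\cong}} \longrightarrow \hq{\map_* (M_\bQ;BO_\bQ)^0}{\hAut_\partial(M_\bQ)^{\cong}}.
\]
So I only need to prove that $R$ is an $H\bQ$-equivalence.

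I would compare the two Borel fibrations
\[
\map_*(X;Z)^0 \longrightarrow \hq{\map_*(X;Z)^0}{\hAut_\partial(X)^{\cong}} \longrightarrow B\hAut_\partial(X)^{\cong}
\]
for $(X,Z)=(M,BO)$ and $(X,Z)=(M_\bQ, BO_\bQ)$ via the standard spectral sequence comparison theorem applied to the associated rational Leray--Serre spectral sequences. It then suffices to check that the induced map of fibres and the induced map of bases are each an $H\bQ$-equivalence. Since $M$ is a finite CW-complex, Lemma~\ref{rationalization-mappingspace} shows that $\map_*(M;BO)^0 \to \map_*(M_\bQ;BO_\bQ)^0$ is even a $\bQ$-localization, so the fibre condition is automatic. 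For the base, I would further decompose
\[
B\hAut_\partial(X)^{\id} \longrightarrow B\hAut_\partial(X)^{\cong} \longrightarrow B\pi_0(\hAut_\partial(X)^{\cong})
\]
for $X = M, M_\bQ$ and apply spectral sequence comparison a second time: Lemma~\ref{lem:rationalization-homotopyautomorphisms} handles the identity components as a $\bQ$-localization, leaving only the $B\pi_0$-comparison to verify.

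The main obstacle is the $\pi_0$-step. By the very construction of $\hAut_\partial(M_\bQ)^{\cong}$ the map $\pi_0(\hAut_\partial(M)^{\cong}) \to \pi_0(\hAut_\partial(M_\bQ)^{\cong})$ is surjective; what needs to be shown is that its kernel has vanishing positive-degree rational cohomology, which will be immediate once the kernel is known to be torsion. Standard rational obstruction theory for the finite nilpotent space $M$ (cf.\ \cite{HilMisRoi}) identifies elements of this kernel with homotopy self-equivalences that become homotopic to the identity after $\bQ$-localization, and any such class must be torsion, as the rationalization map on the nilpotent group $\pi_0 \hAut_*(M)$ has torsion kernel. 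For the manifolds $U_{g,1}^n$ to which this proposition is applied in Theorem~\ref{thm:blockdiffs}, the situation is even more transparent: $\pi_0(\hAut_\partial(U_{g,1}^n)^{\cong})$ is closely related to $\GL_g(\bZ)$, which injects into $\GL_g(\bQ) \subseteq \pi_0(\hAut_\partial((U_{g,1}^n)_\bQ))$, so the kernel is trivial and the $B\pi_0$-map is literally an equivalence.
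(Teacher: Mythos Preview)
Your overall strategy---reduce to showing $R$ is an $H\bQ$-equivalence using Corollary~\ref{cor:toberglundmadsenfinal}, then compare Borel fibrations via Leray--Serre---is the same as the paper's, and the fibre step via Lemma~\ref{rationalization-mappingspace} and the identity-component step via Lemma~\ref{lem:rationalization-homotopyautomorphisms} are fine.

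The gap is in the $\pi_0$-step. Your argument rests on two assertions, both of which fail:
\begin{enumerate}
\item You assert that $\pi_0(\hAut_*(M))$ is nilpotent, so that the ``rationalization map on the nilpotent group'' has torsion kernel. But $\pi_0(\hAut_*(M))$ is essentially never nilpotent: already for $M=U_{g,1}^n$ it surjects onto $\GL_g(\bZ)\times\GL_g(\bZ)$ (Lemma~\ref{lem:pinullautvgn}). So there is no theory of $\bQ$-localization of this group to appeal to, and your deduction that the kernel $Q=\ker\bigl(\pi_0(\hAut_\partial(M)^{\cong})\to\pi_0(\hAut_\partial(M_\bQ)^{\cong})\bigr)$ is torsion is unsupported.
\item Even if $Q$ were torsion, that would not in general imply $H_{>0}(BQ;\bQ)=0$; this holds for \emph{locally finite} (hence for finite) groups, but ``torsion'' alone is insufficient for a non-abelian group.
\end{enumerate}
What is actually needed, and what the paper invokes, is the nontrivial finiteness theorem of Sullivan (in the pointed case) and its relative version due to Espic--Saleh \cite[Theorem 1.1]{EspicSaleh}: $Q$ is \emph{finite}. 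With that in hand your spectral-sequence comparison goes through, though you would still need to say a word about why the twisted coefficients in $H^p(\pi_0;H^q(\text{fibre};\bQ))$ are insensitive to quotienting by a finite normal subgroup. The paper sidesteps this last point by instead analyzing the homotopy fibre $F$ of $B\hAut_\partial(M)^{\cong}\to B\hAut_\partial(M_\bQ)^{\cong}$ directly: it shows $\pi_1(F)$ is an extension of the finite group $Q$ by an abelian torsion group $T$ (the cokernel on $\pi_1$), and that this forces $F$ to be rationally acyclic. Your observation that for $M=U_{g,1}^n$ the kernel is in fact trivial is correct and is recorded in the paper as Lemma~\ref{lem:pinullhautrelbound}, but the proposition is stated and proved for general $M$.
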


\begin{proof}
The composition in question agrees with 
\[
B \blockdiff_\partial (M) \to \hq{\map_* (M;BO)^0}{\hAut_\partial(M)^{\cong}} \to \hq{\map_* (M_\bQ;BO_\bQ)^0}{\hAut_\partial(M_\bQ)^{\cong}},
\]
so Corollary \ref{cor:toberglundmadsenfinal} leaves us with the task of proving that the second map is an $H \bQ$-equivalence. This second map can clearly be factored as 
\[
\hq{\map_* (M;BO)^0 }{ \hAut_\partial(M)^{\cong}} \to \hq{\map_* (M_\bQ;BO_\bQ)^0}{\hAut_\partial(M)^{\cong}} \to \hq{\map_* (M_\bQ;BO_\bQ)^0}{\hAut_\partial(M_\bQ)^{\cong}}. 
\]
The first of those maps is a $H \bQ$-equivalence, by Lemma \ref{rationalization-mappingspace} and a straightfoward spectral sequence argument.
To prove that the second map is also an $H \bQ$-equivalence, observe that there is a homotopy cartesian diagram
\[
\xymatrix{
\hq{\map_* (M_\bQ;BO_\bQ)^0}{\hAut_\partial(M)^{\cong}}\ar[r] \ar[d] & B \hAut_\partial(M)^{\cong}\ar[d]\\
\hq{\map_* (M_\bQ;BO_\bQ)^0}{\hAut_\partial(M_\bQ)^{\cong}}  \ar[r] & B\hAut_\partial(M_\bQ)^{\cong}. 
}
\]
Therefore, it is sufficient to prove that the homotopy fibre $F$ of the right vertical map is rationally acyclic. 

The definition of $B\hAut_\partial(M_\bQ)^{\cong}$ shows that $F$ is connected, and Lemma \ref{lem:rationalization-homotopyautomorphisms} shows that $\pi_k(F) \otimes \bQ=0$ for all $k \geq 2$. Hence the universal cover $\tilde{F}$ is rationally acyclic, and the Leray--Serre spectral sequence of the fibre sequence $\tilde{F} \to F \to B \pi_1 (F)$ shows that 
\begin{equation}\label{eqn:lerayserreunivcover}
H_k (F;\bQ)\cong H_k (B \pi_1 (F);\bQ). 
\end{equation}
The exact sequence 
\[
\pi_1 (\hAut_\partial(M)^{\cong}) \to \pi_1 (\hAut_\partial(M_\bQ)^{\cong} )\to \pi_1 (F) \to \pi_0 (\hAut_\partial(M)^{\cong}) \to \pi_0 (\hAut_\partial(M_\bQ)^{\cong})  
\]
yields a short exact sequence 
\begin{equation}\label{eqn:groupextension}
0 \to T \to \pi_1 (F) \to Q \to 1
\end{equation}
of groups, where 
\[
T:= \coker \Bigl(\pi_1 (\hAut_\partial(M)^{\cong}) \to \pi_1 (\hAut_\partial(M_\bQ)^{\cong} )\Bigr)
\]
is an abelian torsion group by Lemma \ref{lem:rationalization-homotopyautomorphisms} and 
\[
Q:= \ker \Bigr(\pi_0 (\hAut_\partial (M)^{\cong}) \to \pi_0 (\hAut_\partial (M_\bQ)^{\cong})\Bigr)
\]
is finite by \cite[Theorem 1.1]{EspicSaleh}. The latter is a relative version of a theorem of Sullivan \cite[Theorem 10.2]{Sullivan1977} which asserts that $\pi_0 (\hAut_*(X)) \to \pi_0 (\hAut_* (X_\bQ))$ has finite kernel whenever $X$ is a finite and $1$-connected CW complex. As $Q$ is finite, we get
\[
H_k (\pi_1 (F);\bQ)\cong H_k (T;\bQ)_Q
\]
from the Lyndon--Hochschild--Serre spectral sequence of the group extension \eqref{eqn:groupextension}.
Since $H_k (T;\bQ)=0$ for $k \geq 1$, being the colimit of $H_k (H;\bQ)$, where $H$ runs through the finitely generated (and hence finite, as $T$ is torsion) subgroups of $T$, $B \pi_1 (F)$ is rationally acyclic. By \eqref{eqn:lerayserreunivcover}, $F$ is rationally acyclic as claimed. 
\end{proof}

\subsection{Cohomology of mapping spaces}\label{subsec:cohmappingspace}

Proposition \ref{prop:rationalized-blockdiffs} shows that we need to understand the cohomology of mapping spaces $\map_* (M;BO_\bQ)^0$. It turns out to be more convenient for our purposes to consider $\map_\partial (M;BO_\bQ)^0$ instead. Since $\map_\partial (M;BO_\bQ)^0 = \map_* (M/\partial M;BO_\bQ)^0$, it suffices to consider pointed mapping spaces. 

In what follows, all homology and cohomology groups are taken with coefficients in $\bQ$. Though this is not needed for large parts of the section, it is all we shall need later on. To ease notation, we often write $\alpha \beta:= \alpha \cup \beta$ for the cup product of two cohomology classes. 

\subsubsection*{The slant product}
Let us first recall from \cite[Chapter 6.1]{Spanier} the slant product
\[
H^n (Y \times X) \otimes H_k (X) \to H^{n-k} (Y), \; (\xi,x) \mapsto \xi / x,
\]
which is related to the cohomology cross product and the Kronecker product by the formula
\begin{equation}\label{eqn:slant-identity}
(\eta \times \zeta) / x = \eta \scpr{\zeta,x}
\end{equation}
for $\eta \in H^* (Y)$, $\zeta \in H^* (X)$ and $x \in H_* (X)$. Assume that $X$ has finite type over $\bQ$ (i.e. each $H_k (X)$ is finite-dimensional), pick a homogeneous basis $(b_i)_i$ of $H_* (X)$ and let $(\beta_i)_i$ be the dual basis of $H^* (X)$. An arbitrary $\gamma \in H^* (Y \times X)$ can be written in the form $\gamma = \sum_i \gamma_i \times \beta_i$ by the K\"unneth formula, and we get 
\[
\gamma / b_j = \sum_i (\gamma_i \times \beta_i)/b_j = \sum_i \gamma_i \scpr{\beta_i,b_j} = \gamma_j. 
\]
or 
\begin{equation}\label{eqn:slantvsbasis}
\gamma = \sum_i (\gamma/b_i) \times \beta_i
\end{equation}

\subsubsection*{The $\lambda$-classes}

\begin{defn}\label{defn:lambdaclass}
Let $X$ and $Z$ be pointed spaces. The evaluation map $\ev: \map_* (X;Z) \times X \to Z$ and the slant product yield
\[
\lambda: H^n (Z) \otimes H_k (X) \to H^{n-k} (\map_* (X;Z)), \; \xi \otimes x \mapsto \lambda_{x,\xi}:= (\ev^* \xi)/x. 
\]
\end{defn}
This construction enjoys a naturality property, which is most concisely expressed by saying that $\lambda$ is a natural transformation of functors 
\[
\sSet_* \times \sSet_*^{\op} \to \bQ-\Mod
\]
from $(X,Z) \mapsto H^n (Z) \otimes H_k (X)$ to $(X,Z) \mapsto H^{n-k} (\map_* (X;Z))$. 

If $X$ has finite type over $\bQ$, formula \eqref{eqn:slantvsbasis} leads to the equation 
\begin{equation}\label{eqn:formulaevz}
\ev^* z = \sum_i (\ev^* z/b_i)\times \beta_i = \sum_i \lambda_{b_i,z}\times \beta_i \in H^* (\map_* (X;Z) \times X)
\end{equation}
for $z \in H^*(Z)$. 

\begin{lem}\label{lem:product-lambdaclasses}
Let $c^k_{i,j}$ be the structure constants of the algebra $H^* (X)$ with respect to the basis $(\beta_i)$, i.e.
\[
\beta_i \beta_j = \sum_k c^k_{i,j} \beta_k. 
\]
Then 
\[
\lambda_{b_k,zy} =  \sum_{i,j}(-1)^{|b_i|(|y|-|b_j|)}c_{i,j}^k \lambda_{b_i,z}\lambda_{b_j,y}
\]
for all $z,y \in H^* (Z)$. 
\end{lem}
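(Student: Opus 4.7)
The plan is to compute $\lambda_{b_k, zy} = (\ev^*(zy))/b_k$ directly by expanding $\ev^*(zy) = (\ev^* z)(\ev^* y)$ via the formula \eqref{eqn:formulaevz} and then collecting coefficients through the slant product identity \eqref{eqn:slant-identity}.

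Concretely, first I would write
\[
\ev^* z = \sum_i \lambda_{b_i,z} \times \beta_i, \qquad \ev^* y = \sum_j \lambda_{b_j,y} \times \beta_j
\]
using \eqref{eqn:formulaevz}. Multiplying these two classes in $H^*(\map_*(X;Z)\times X)$ requires the Koszul sign rule for the cross product, namely $(\alpha_1\times\alpha_2)(\gamma_1\times\gamma_2) = (-1)^{|\alpha_2||\gamma_1|}(\alpha_1\gamma_1)\times(\alpha_2\gamma_2)$. Here $|\beta_i| = |b_i|$ and $|\lambda_{b_j,y}| = |y|-|b_j|$, so the resulting sign is exactly $(-1)^{|b_i|(|y|-|b_j|)}$. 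Expanding $\beta_i\beta_j = \sum_k c^k_{i,j}\beta_k$, I obtain
\[
\ev^*(zy) = \sum_{i,j,k} (-1)^{|b_i|(|y|-|b_j|)}\, c^k_{i,j}\,(\lambda_{b_i,z}\lambda_{b_j,y}) \times \beta_k.
\]

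Next I would apply the slant product $\_/b_k$ to this identity. By \eqref{eqn:slant-identity}, $(\eta\times\beta_l)/b_k = \eta\,\langle\beta_l,b_k\rangle = \delta_{l,k}\,\eta$ since $(b_i)$ and $(\beta_j)$ are dual bases. This collapses the sum on $k$ to the single contribution at index $k$, giving exactly
\[
\lambda_{b_k,zy} = \ev^*(zy)/b_k = \sum_{i,j}(-1)^{|b_i|(|y|-|b_j|)}\, c^k_{i,j}\,\lambda_{b_i,z}\lambda_{b_j,y},
\]
which is the claimed formula.

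The only nontrivial point is keeping track of the sign, which is purely bookkeeping via the Koszul rule for cross products of graded classes; there is no real geometric obstacle. I would also note implicitly that $\ev^*$ is a ring homomorphism (so that $\ev^*(zy) = (\ev^*z)(\ev^*y)$) and that the slant product is $\bQ$-linear in the homology variable, both of which are standard. Since $X$ has finite type over $\bQ$, the K\"unneth formula applies without Tor terms, so the expansion \eqref{eqn:formulaevz} is legitimate for both $\ev^*z$ and $\ev^*y$.
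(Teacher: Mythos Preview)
Your proof is correct and follows essentially the same route as the paper: expand $\ev^*(zy)=(\ev^*z)(\ev^*y)$ via \eqref{eqn:formulaevz}, apply the Koszul sign rule, and read off the coefficient of $\beta_k$. The paper phrases the last step as ``comparing coefficients'' of two expansions of $\ev^*(zy)$, while you slant with $b_k$ directly; since $(b_i)$ and $(\beta_i)$ are dual bases these are the same operation.
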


\begin{proof}
Using \eqref{eqn:formulaevz}, one checks that firstly
\[
\ev^* (z) \ev^* (y)= \ev^* (zy)= \sum_k \lambda_{b_k,zy}\times \beta_k  ,
\]
and secondly
\[
\begin{split}
\ev^* (z) \ev^* (y) & = (\sum_i \lambda_{b_i,z}\times \beta_i) (\sum_j \lambda_{b_j,y}\times \beta_j) = \\
& = \sum_{i,j}(-1)^{|b_i|(|y|-|b_j|)} (\lambda_{b_i,z}\lambda_{b_j,y})\times (\beta_i\beta_j)=\\
& =  \sum_{i,j,k}(-1)^{|b_i|(|y|-|b_j|)}c_{i,j}^k (\lambda_{b_i,z}\lambda_{b_j,y})\times \beta_k. 
\end{split}
\]
Comparing coefficients yields the result.
\end{proof}

\begin{lem}\label{lem:cuplength}
Assume that $X$ is connected and that the cup length of $X$ is $\leq r-1$ (i.e. cup products of $r$ elements of $\tilde{H}^* (X)$ vanish). Then for $z_1, \ldots,z_r \in H^* (Z)$, we have 
\[
\lambda_{b_k,z_1 \cdots z_r}=0. 
\]
\end{lem}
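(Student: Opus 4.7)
The strategy is to iterate formula \eqref{eqn:formulaevz} to express $\ev^*(z_1 \cdots z_r)$ as a sum of terms of the form (product of $\lambda$-classes) $\times$ (product of $\beta_i$'s), and then to extract $\lambda_{b_k, z_1\cdots z_r}$ by taking the slant product with $b_k$. After this expansion, each summand vanishes either by the cup-length hypothesis on $X$, or because pointed evaluation forces $\lambda_{b_0, z}$ to vanish for any positive-degree class $z$.

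Concretely, since $\ev^*$ is a ring homomorphism, formula \eqref{eqn:formulaevz} gives
\[
\ev^*(z_1 \cdots z_r) = \prod_{j=1}^r \Bigl( \sum_{i_j} \lambda_{b_{i_j}, z_j} \times \beta_{i_j} \Bigr).
\]
Expanding the right-hand side (collecting the Koszul signs from the usual cross-product reordering) produces
\[
\ev^*(z_1 \cdots z_r) = \sum_{(i_1,\ldots,i_r)} (\pm)\, \Bigl( \prod_{j=1}^r \lambda_{b_{i_j}, z_j} \Bigr) \times \Bigl( \prod_{j=1}^r \beta_{i_j} \Bigr).
\]
Taking the slant product with $b_k$ and applying \eqref{eqn:slant-identity} then yields
\[
\lambda_{b_k, z_1 \cdots z_r} = \sum_{(i_1,\ldots,i_r)} (\pm) \Bigl\langle \prod_{j=1}^r \beta_{i_j},\, b_k \Bigr\rangle \prod_{j=1}^r \lambda_{b_{i_j}, z_j}.
\]

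To finish, I would separate the summands according to whether any $i_j = 0$. Because $X$ is connected, I may pick $\beta_0 = 1 \in H^0(X)$ as the first basis element. If all $i_j > 0$, then $\prod_j \beta_{i_j}$ is an $r$-fold cup product of classes in $\tilde H^*(X)$, hence vanishes by the cup-length hypothesis, killing the coefficient $\langle \prod_j \beta_{i_j}, b_k\rangle$. If some $i_j = 0$, the summand contains the factor $\lambda_{b_0, z_j}$; assuming (without loss of generality) that each $z_j$ has positive degree, this factor vanishes because the restriction of $\ev$ to $\map_*(X;Z) \times \{*_X\}$ is the constant map to $*_Z$ by pointedness, and via K\"unneth this restriction computes precisely the slant product with the basepoint class $b_0$.

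I expect the only real bookkeeping issue is cleanly isolating the $\beta_0 = 1$ contribution and confirming the vanishing of $\lambda_{b_0, z_j}$ from pointedness; once done, the result reduces entirely to the cup-length hypothesis. No input beyond Definition \ref{defn:lambdaclass} and the two identities \eqref{eqn:slant-identity} and \eqref{eqn:formulaevz} is required, so this is really just bookkeeping rather than a genuine obstacle.
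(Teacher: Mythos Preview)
Your proof is correct and follows essentially the same route as the paper: expand $\ev^*(z_1\cdots z_r)=\ev^*(z_1)\cdots\ev^*(z_r)$ via \eqref{eqn:formulaevz} and observe that every term carries either an $r$-fold cup product $\beta_{i_1}\cdots\beta_{i_r}$ of reduced classes (killed by the cup-length hypothesis) or a factor $\lambda_{b_0,z_j}$ (killed by pointedness). The paper simply writes the expansion and declares it zero, then reads off the vanishing of each $\lambda_{b_k,z_1\cdots z_r}$ by comparing coefficients of the $\beta_k$; your taking the slant product with $b_k$ is the same operation.

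One small remark: your phrase ``without loss of generality'' for assuming each $z_j$ has positive degree is not quite accurate as a reduction step --- the lemma is actually false as literally stated if some $z_j$ lies in $H^0(Z)$ (e.g.\ $X=S^2$, $r=2$, $z_1=1$, $z_2=u\in H^{>0}(Z)$). What you are really doing is supplying the hypothesis $z_j\in\tilde H^*(Z)$ that the paper's statement tacitly intends and its proof tacitly uses; with that in place your handling of the $b_0$ term via pointedness is exactly what is needed, and in fact makes explicit a step the paper's proof glosses over.
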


\begin{proof}
Using \eqref{eqn:formulaevz}, we compute 
\[
\begin{split}
\sum_k \lambda_{b_k,z_1 \cdots z_r} \times \beta_k  & = \ev^* (z_1 \cdots z_r) = \ev^* (z_1) \cdots \ev^*(z_r) = \\
& = \sum_{j_1, \ldots,j_r} \epsilon_{j_1, \ldots ,j_r} (\lambda_{b_{j_1,z_1}} \cdots \lambda_{b_{j_r,z_r}}) \times (\beta_{j_1} \cdots \beta_{j_r}) =0
\end{split}
\]
for some signs $\epsilon_{j_1, \ldots ,j_r} \in \{\pm 1 \}$. The claim follows immediately. 
\end{proof}

\subsubsection*{The case of an Eilenberg--Mac-Lane space}

Now we consider the case $Z=K(\bQ,m)$ and let $u_m \in H^m (K(\bQ,m);\bQ)$ be the fundamental cohomology class. 

\begin{prop}\label{prop:cohomologyofmappingspace1}
Assume that $X$ is connected and of finite type over $\bQ$. Then the natural map 
\[
\bF(\bigoplus_{k=1}^{m-1} H_k (X)[m-k]) \to  H^* (\map_* (X;K(\bQ,m)^0))
\]
induced by the maps
\[
H_k (X) \to H^{m-k}(\map_* (X;K(\bQ,m)^0)), \; a \mapsto \lambda_{a,u_m}
\]
is an isomorphism. Hence $H^* (\map_* (X;K(\bQ,m)^0))$ is the free graded commutative algebra generated by the elements $\lambda_{a_i,u_m}$, where $a_i$ runs through a homogeneous basis for $\tilde{H}_{*<m} (X)$. 
\end{prop}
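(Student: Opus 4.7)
\emph{Strategy.} The plan is to identify $M := \map_*(X; K(\bQ,m))^0$ as a generalized rational Eilenberg--MacLane space, read off its cohomology as a free graded-commutative algebra whose generators are dual to $\pi_*(M)$, and then check that the $\lambda$-classes span these generators bijectively. Because the target algebra will turn out to be free graded-commutative with the correct Hilbert series by abstract nonsense, the content is really to identify the $\lambda$-classes with the generators.

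\emph{Step 1: the space $M$ is a product of Eilenberg--MacLane spaces.} Modeling $K(\bQ,m)$ by a simplicial abelian $\bQ$-vector space via Dold--Kan makes $\map_*(X; K(\bQ,m))$ into a simplicial abelian group; in particular all its path components are weakly equivalent via translation to its identity component, and that component, being a connected $\bQ$-local topological abelian group of finite type, is weakly equivalent to the product $\prod_{k\geq 1} K(\pi_k(M), k)$. The homotopy groups are
\[
\pi_k(M) \cong [S^k \wedge X, K(\bQ,m)]_* \cong \tilde H^{m-k}(X;\bQ),
\]
which are finite-dimensional $\bQ$-vector spaces (as $X$ has finite type) and vanish outside the range $1 \leq k \leq m-1$ (as $X$ is connected). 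The classical formula $H^*(K(V,k);\bQ) \cong \bF(V^\vee[k])$ for finite-dimensional rational $V$, together with the K\"unneth theorem, then yields an abstract algebra isomorphism
\[
H^*(M;\bQ) \cong \bF\Bigl(\bigoplus_{k=1}^{m-1} \pi_k(M)^\vee[k]\Bigr) \cong \bF\Bigl(\bigoplus_{k=1}^{m-1} H_k(X;\bQ)[m-k]\Bigr).
\]

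\emph{Step 2: matching the $\lambda$-classes with the generators.} Since both sides of the algebra map in the proposition are free graded-commutative, it suffices to show that for each $1 \leq k \leq m-1$, the linear map $H_k(X;\bQ) \to H^{m-k}(M;\bQ)$, $a \mapsto \lambda_{a,u_m}$, sends a basis of $H_k(X;\bQ)$ to a basis of the space of indecomposables in degree $m-k$. Because $M$ is an $H$-space, the indecomposables coincide with the primitives, and the Hurewicz pairing identifies primitives in degree $j$ with $\pi_j(M)^\vee \cong H^{m-j}(X;\bQ)^\vee \cong H_{m-j}(X;\bQ)$. Thus the task reduces to computing the pairing of $\lambda_{a,u_m}$ against a spherical class in $H_{m-k}(M;\bQ)$.

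\emph{Step 3: pairing computation and main obstacle.} For a map $f: S^{m-k} \to M$ with adjoint $\tilde f: S^{m-k} \wedge X \to K(\bQ,m)$, let $z_f \in H^k(X;\bQ)$ correspond to $\tilde f^* u_m$ under the suspension isomorphism. The factorization $\ev \circ (f \times \id) \simeq \tilde f$ (after collapse to the smash) combined with the defining adjointness $\scpr{\xi/x, y} = \scpr{\xi, y \times x}$ of the slant product gives
\[
\scpr{\lambda_{a,u_m},\, f_*[S^{m-k}]} = \scpr{\ev^* u_m,\, f_*[S^{m-k}]\times a} = \scpr{\tilde f^* u_m,\, [S^{m-k}]\wedge a} = \pm \scpr{z_f, a},
\]
which is the canonical pairing up to a standard sign. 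As $f$ varies, $z_f$ sweeps out all of $H^k(X;\bQ) \cong \pi_{m-k}(M)$, so the map $a \mapsto \lambda_{a,u_m}$ is (up to sign) the duality isomorphism into the primitives, completing the proof. The main technical obstacle is Step 1, i.e., verifying that the $k$-invariants of the rational Postnikov tower of $M$ all vanish; once the topological-abelian-group structure coming from Dold--Kan is in place this is automatic, and the remaining steps are pure bookkeeping with the definitions of $\ev$ and the slant product.
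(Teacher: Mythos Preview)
Your proof is correct and follows essentially the same strategy as the paper's: identify $M=\map_*(X;K(\bQ,m))^0$ as a generalized rational Eilenberg--MacLane space, so that $H^*(M;\bQ)$ is abstractly free on $\bigoplus_k \pi_k(M)^\vee[k]$, and then verify that the $\lambda$-classes hit the generators by showing the pairing $\langle \lambda_{a,u_m},\hur([f])\rangle$ is nondegenerate. The only difference in execution is that the paper checks this nondegeneracy by using naturality in $X$ to reduce to $X=S^k$ (where the slant map becomes the loop-space transgression $H^m(K(\bQ,m))\to H^{m-k}(\Omega^k K(\bQ,m))$), whereas you compute the pairing directly for arbitrary $X$ via the adjunction $\ev\circ(f\times\id_X)=\tilde f$ and the defining identity $\langle \xi/x,y\rangle=\langle\xi,y\times x\rangle$; your route is marginally more direct and avoids the intermediate reduction.
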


For the proof, we need a lemma. 

\begin{lem}\label{lem:bilinearform}
Let $Y$ be a connected simple space, and $V_*$ be an $\bN_0$-graded, degreewise finite-dimensional $\bQ$-vector space with $V_0=0$ and let $\sigma_*: V_* \to H^* (Y;\bQ)$ be a graded linear map. Assume that the bilinear form 
\[
B_k: \pi_k (Y)\otimes \bQ \times V_k \to \bQ, \; (y,v) \mapsto \scpr{\sigma(v) , \hur_k^\bQ(y)}
\]
is nondegenerate, for each $k$. Then the induced algebra map
\[
\bF(\sigma): \bF(V_*) \to H^* (Y;\bQ)
\]
from the free graded-commutative algebra on $V$ to the cohomology of $Y$ is an isomorphism. 
\end{lem}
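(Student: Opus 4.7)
The plan is to realize the data $(V_*,\sigma)$ as a single map $F \colon Y \to Z$ from $Y$ to a product of rational Eilenberg--MacLane spaces, show that $F$ is a rational homotopy equivalence by reading off its effect on rational homotopy groups from the bilinear form $B_k$, and then identify $H^*(Z;\bQ)$ with $\bF(V_*)$ by a K\"unneth computation.

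Concretely, I would first fix a homogeneous basis $(v_\alpha)_{\alpha \in A}$ of $V_*$ with $v_\alpha \in V_{n_\alpha}$ and $n_\alpha \geq 1$, represent each class $\sigma(v_\alpha) \in H^{n_\alpha}(Y;\bQ)$ by a map $f_\alpha \colon Y \to K(\bQ,n_\alpha)$, and combine these into
\[
F \colon Y \longrightarrow Z := \prod_{\alpha \in A} K(\bQ,n_\alpha).
\]
The space $Z$ is simple, and its rational homotopy groups are
\[
\pi_k(Z) \otimes \bQ \;\cong\; \bigoplus_{n_\alpha = k} \bQ \;\cong\; V_k^\vee,
\]
the last identification using the basis dual to $(v_\alpha)$. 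By construction, $F_* \colon \pi_k(Y)\otimes \bQ \to V_k^\vee$ sends $y$ to the functional $v_\alpha \mapsto \langle f_\alpha^* u_{n_\alpha}, \hur_k^\bQ(y)\rangle = B_k(y,v_\alpha)$, i.e.\ $F_*$ is the adjoint of $B_k$. The nondegeneracy hypothesis thus forces $\pi_k(Y) \otimes \bQ$ to be finite-dimensional of the same dimension as $V_k$, and makes $F_*$ an isomorphism for every $k \geq 1$.

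Since $Y$ is simple (hence nilpotent, its fundamental group being abelian), and $Z$ is as well, a map of nilpotent spaces inducing an isomorphism on all rational homotopy groups is a rational homotopy equivalence by Bousfield--Kan, so $F^* \colon H^*(Z;\bQ) \to H^*(Y;\bQ)$ is an isomorphism. Because each $V_k$ is finite-dimensional, only finitely many factors $K(\bQ,n_\alpha)$ lie below any fixed degree, so a degreewise K\"unneth argument identifies $H^*(Z;\bQ)$ with the free graded-commutative algebra on the pullbacks of the fundamental classes $u_{n_\alpha}$; under $F^*$ these go to $\sigma(v_\alpha)$, so the composite is precisely $\bF(\sigma)$, which is therefore an isomorphism.

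The only step requiring real care is the passage from ``iso on rational $\pi_*$'' to ``iso on rational cohomology'' for a map between merely simple spaces (rather than $1$-connected ones); this is handled by invoking the nilpotent localization theory, using that simple spaces are nilpotent. Everything else is essentially bookkeeping: choosing the basis, tracing through the definition of $\lambda$-classes on the factors, and assembling the K\"unneth identification.
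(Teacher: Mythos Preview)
Your proposal is correct and follows essentially the same route as the paper: build a map $F$ from $Y$ into a product of rational Eilenberg--MacLane spaces using the classes $\sigma(v_\alpha)$, verify that $F_*$ on rational homotopy is the adjoint of $B_k$ (the paper does exactly this computation), conclude that $F$ is a rational equivalence via nilpotent localization, and identify $F^*$ with $\bF(\sigma)$. The only cosmetic difference is that the paper writes the target as $\prod_{k\geq 1} K(V_k^\vee,k)$ rather than choosing a basis; your remark about $\lambda$-classes in the final paragraph is extraneous here, but everything substantive matches.
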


\begin{proof}
From the assumption that $B_k$ is nondegenerate, it follows that $\pi_k (Y) \otimes \bQ$ is finite-dimensional, so that $Y$ is of finite type over $\bQ$.
The $k$th component $\sigma_k$ dualizes to a map $\sigma_k^\vee: H_k (Y;\bQ) \to V_k^\vee$, and there is a unique (up to homotopy) map $F_k: Y \to K(V_k^\vee,k)$ which induces $\sigma_k^\vee$ on $k$th rational homology. The maps $F_k$ together give a map 
\[
F: Y \to \prod_{k\geq 1 } K(V_k^\vee,k)
\]
The induced map $F^*: H^* (\prod_{k\geq 1 } K(V_k^\vee,k);\bQ)\to H^* (Y;\bQ)$ can be identified with $\bF(\sigma)$. Hence it must be verified that the nondegeneracy of all $B_k$'s implies that $F$ is a rational homotopy equivalence. 
The induced map 
\[
F_*: \pi_k (Y) \otimes \bQ \to \pi_k (\prod_{k\geq 1 } K(V_k^\vee,k)) \otimes \bQ= V_k^\vee
\]
is adjoint to a bilinear map
\[
C_k:\pi_k (Y) \otimes \bQ \times V_k \to \bQ,
\]
and we claim that $C_k =B_k$. Because of this identity, nondegeneracy of $B_k$ implies that $F_*$ is an isomorphism (in degree $k$). 

To check that $C_k = B_k$, let $v \in V_k \subset H^k (\prod_{k\geq 1 } K(V_k^\vee,k);\bQ)$ and $y \in \pi_k (Y) \otimes \bQ$ and calculate
\[
C_k (y,v)= \langle v, \hur (F_* (y)) \rangle  = \langle v, F_* \hur (y) \rangle  = \langle F^* v, \hur (y) \rangle = \langle \sigma (v),\hur(y)\rangle = B_k (y,v).
\]
\end{proof}

\begin{proof}[Proof of Proposition \ref{prop:cohomologyofmappingspace1}]
The space $\map_* (X;K(\bQ,m))^0$ is a connected infinite loop space; in fact it is a generalized Eilenberg--Mac Lane space with homotopy groups 
\begin{equation}\label{eqn:homotopymappingspace}
 \pi_k (\map_* (X;K(\bQ,m))) \otimes \bQ = [X,\Omega^k K(\bQ,m)]_* =   \tilde{H}^{m-k} (X) 
\end{equation}
in positive degrees. It follows from the Milnor--Moore theorem \cite{MilnorMoore} and the assumption that $X$ is of finite type that $H^* (\map_* (X;K(\bQ,m)^0))$ is isomorphic (as a graded algebra) to $\bF(\bigoplus_{k=1}^{m-1} H_k (X)[m-k])$. The only issue is to verify that the map in question is indeed an isomorphism. 

We shall show that the bilinear form 
\[
B_{X,k}: \pi_{m-k}  (\map_* (X;K(\bQ,m)^0)) \times \tilde{H}_k (X) \to \bQ
\]
given by 
\[
([f], a) \mapsto \scpr{\lambda_{a,u},\hur_{m-k}^\bQ([f])}
\]
is nondegenerate, and invoke Lemma \ref{lem:bilinearform}. 

If $F: X \to Y$ is a map of pointed spaces, we denote by $F^\sharp : \map_* (Y;K(\bQ,m))^0 \to \map_* (X;K(\bQ,m))^0$ the induced map. By the naturality of the slant product and the Kronecker product, we have 
\[
B_{Y,k} ([f],F_* a) = B_{X,k} ((F^\sharp)_* [f],a). 
\]
If $F_*: H_k (X) \to H_k (Y)$ is an isomorphism then so is the induced map $(F^\sharp)_*:\pi_{m-k}(\map_* (Y;K(\bQ,m))^0) \to \pi_{m-k}(\map_* (X;K(\bQ,m))^0)$ by \eqref{eqn:homotopymappingspace}, and it follows that nondegeneracy of $B_{Y,k}$ is equivalent to nondegeneracy of $B_{X,k} $. Now there are maps $X \to K(H_k (X;\bQ),k)$ and $\bigvee^g S^k \to K(H_k (X;\bQ),k)$ inducing isomorphisms on $H_k (\_;\bQ)$. This argument proves that it suffices to consider the case where $X$ is a wedge of finitely many $k$-spheres. 
Using the naturality again, the form $B_{\bigvee^g S^k,k}$ decomposes as the direct sum of $k$ copies of the form $B_{S^k,k}$, and so we are left with the case $X=S^k$. 

Both, $\pi_{m-k}  (\map_* (S^k;K(\bQ,m)^0))\otimes \bQ$ and $\tilde{H}_k (S^k;\bQ)$ are $1$-dimensional and $\map_* (S^k;K(\bQ,m))^0 \simeq K(\bQ,m-k)$ is $(m-k-1)$-connected. Hence (by the Hurewicz theorem) it is left to be proven that $\lambda_{[S^k],u_m} \in H^{m-k} (\map_* (S^k;K(\bQ,m)))$ is nonzero. On the other hand, the map 
\[
H^m (K(\bQ,m);\bQ) \to H^{m-k} (\map_* (S^k;K(\bQ,m))) = H^{m-k} (\Omega^k K(\bQ,m)), \; u \mapsto \ev^* (u)/[S^k]
\]
can be identified with the ''transgression`` map 
\[
H^m (K(\bQ,m);\bQ) = [K(\bQ,m);K(\bQ,m)] \stackrel{\Omega^k}{\to} [\Omega^k K(\bQ,m);\Omega^k K(\bQ,m)] = H^{m-k} (\Omega^k K(\bQ,m))
\]
which is well-known to be an isomorphism. 
\end{proof}

Using that $BO_\bQ\simeq \prod_{m \geq 1} K(\bQ;4m)$ via the classes $L_m$, we deduce:

\begin{cor}\label{cor:cohomology-mappingspace}
Let $X$ be a connected space of finite type over $\bQ$. Then the map 
\[
\bF \Bigl( \bigoplus_{m\geq 1, 0<k<4m} H_k (X)[4m-k]\Bigr)\to H^* (\map_* (X;BO_\bQ)^0;\bQ)
\]
which on the generators is the direct sum of the maps 
\[
H_k (X)[4m-k] \to H^{4m-k}(H^* (\map_* (X;BO_\bQ)^0;\bQ)); \; a \mapsto \lambda_{a,L_m}
\]
is an isomorphism. Thus $H^* (\map_* (X;BO_\bQ)^0;\bQ)$ is the free graded-commutative algebra with generators 
\[
\lambda_{a_i,L_m} \in H^{4m-|a_i|} (\map_* (X;BO_\bQ)^0;\bQ)
\]
where $a_i$ runs through a homogeneous basis for $\tilde{H}_*(X)$ and $m$ through the natural numbers and $4m-|a_i|>0$. The classes $\lambda_{a_i,L_{m_1} \cdots L_{m_r}}$ with $m_1 \leq \ldots \leq m_r$ are determined by the relation stated in Lemma \ref{lem:product-lambdaclasses}. \qed
\end{cor}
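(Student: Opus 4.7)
The plan is to reduce the statement to Proposition \ref{prop:cohomologyofmappingspace1} via the rational splitting of $BO$.

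First, the Hirzebruch $L$-classes induce a rational homotopy equivalence
\[
L:=(L_m)_{m \geq 1}: BO_\bQ \xrightarrow{\simeq} \prod_{m \geq 1} K(\bQ,4m),
\]
and since $\map_*(X,-)$ commutes with products, this yields a weak equivalence
\[
\map_*(X;BO_\bQ) \simeq \prod_{m \geq 1} \map_*(X; K(\bQ,4m)).
\]
Restricting to the component of the constant map on both sides (using that path components of a product are the products of path components) gives
\[
\map_*(X;BO_\bQ)^0 \simeq \prod_{m \geq 1} \map_*(X; K(\bQ,4m))^0.
\]
Because $X$ is connected and of finite type over $\bQ$, each factor $\map_*(X; K(\bQ,4m))^0$ has rational cohomology concentrated in degrees $\leq 4m-1$ (by Proposition \ref{prop:cohomologyofmappingspace1}), so only finitely many factors contribute in any fixed cohomological degree, and the K\"unneth theorem identifies the cohomology of the product with the tensor product of the cohomologies of the factors.

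Next, I apply Proposition \ref{prop:cohomologyofmappingspace1} to $Z=K(\bQ,4m)$ with fundamental class $u_{4m}$, obtaining an isomorphism of graded algebras
\[
\bF\Bigl(\bigoplus_{0<k<4m} H_k(X)[4m-k]\Bigr) \xrightarrow{\cong} H^*(\map_*(X;K(\bQ,4m))^0),
\]
sending a class $a\in H_k(X)$ to $\lambda_{a,u_{4m}}$. Taking the tensor product over all $m \geq 1$ produces an isomorphism from the free graded commutative algebra $\bF \bigl(\bigoplus_{m \geq 1, 0<k<4m} H_k(X)[4m-k]\bigr)$ onto $H^*(\map_*(X;BO_\bQ)^0)$, since the free graded commutative algebra functor turns direct sums into (graded) tensor products.

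It remains to identify the generators. Pulling back the fundamental class $u_{4m}$ along the projection $BO_\bQ \to K(\bQ,4m)$ gives exactly $L_m$, and the slant product in Definition \ref{defn:lambdaclass} is natural in the coefficient space, so the image of $a \in H_k(X)$ in $H^{4m-k}(\map_*(X;BO_\bQ)^0)$ under the composite is $\lambda_{a,L_m}$. This identifies the map in the corollary with an isomorphism. The formula for $\lambda_{a_i,L_{m_1}\cdots L_{m_r}}$ as a polynomial in the $\lambda_{a_j,L_m}$ is then a direct consequence of Lemma \ref{lem:product-lambdaclasses}.

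The only subtlety is the passage through the (possibly infinite) product, but this is harmless because of the vanishing range for each factor's cohomology. No serious obstacle arises; the work has already been done in Proposition \ref{prop:cohomologyofmappingspace1}.
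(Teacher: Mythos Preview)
Your proof is correct and follows exactly the approach the paper intends: the paper simply writes ``Using that $BO_\bQ\simeq \prod_{m \geq 1} K(\bQ;4m)$ via the classes $L_m$, we deduce:'' before stating the corollary with a \qed, and your argument spells out precisely this deduction from Proposition~\ref{prop:cohomologyofmappingspace1} together with the K\"unneth theorem and naturality of the $\lambda$-classes.
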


Let us collect a useful consequence of the naturality of the $\lambda$-classes. Let $(X,A)$ be a pair of finite spaces, with $X$ nilpotent and $A$ connected. The fibration
\[
\hq{\map_A (X_\bQ;BO_\bQ)^0}{\hAut_A (X_\bQ)^\id} \to B \hAut_A (X_\bQ)^\id
\]
is $\pi_0 (\hAut_A(X_\bQ))$-equivariant. Hence its Leray--Serre spectral sequence $E_*^{*,*}$ is a spectral sequence of $\pi_0 (\hAut_A(X_\bQ))$-modules. On the other hand, the $E_2$-term is identified as
\begin{equation}\label{eqn:idetificationE_2term}
\begin{split}
E_2^{*,*}= H^* (B \hAut_A (X_\bQ)^\id;\bQ) \otimes H^* (\map_A (X_\bQ;BO_\bQ)^0;\bQ) \cong \\
H^* (B \hAut_A (X_\bQ)^\id;\bQ) \otimes \bF \Bigl( \bigoplus_{m\geq 1, 0<k<4m} H_k (X/A;\bQ)[4m-k]\Bigr).
\end{split}
\end{equation}
On the other hand, $\pi_0 (\hAut_A(X_\bQ))=\pi_1 (B \hAut_A(X_\bQ))$ acts on the universal cover $B \hAut_A (X_\bQ)^\id$ in the usual way and hence on its cohomology; and it acts on $\bF \Bigl( \bigoplus_{m\geq 1, 0<k<4m} H_k (X/A)[4m-k]\Bigr)$ via the action of $\pi_0 (\hAut_A(X_\bQ))$ on $H_* (X/A;\bQ)= H_* (X_\bQ/A;\bQ)$. 

\begin{cor}\label{cor:actiononcohomologymappingspace}
Let $(X,A)$ be as above. Then the isomorphism \eqref{eqn:idetificationE_2term} is $\pi_0 (\hAut_A(X_\bQ))$-equivariant, where the actions are as just described.
\end{cor}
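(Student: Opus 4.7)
The plan is to combine the tautological equivariance of the Leray--Serre spectral sequence with the naturality of the $\lambda$-classes, reducing the question to a direct computation on the algebra generators.

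First, I would observe that the whole fibration
\[
\map_A (X_\bQ;BO_\bQ)^0 \to \hq{\map_A (X_\bQ;BO_\bQ)^0}{\hAut_A (X_\bQ)^\id} \to B \hAut_A (X_\bQ)^\id
\]
is $G$-equivariant with $G:=\pi_0(\hAut_A(X_\bQ))$, where $G$ acts on the base as deck transformations of the covering $B\hAut_A(X_\bQ)^\id \to B\hAut_A(X_\bQ)$ (which is precisely the usual monodromy action), and acts on the fibre by precomposition through any chosen set of representatives. Because $\hAut_A(X_\bQ)^\id$ is connected, the base is simply connected, so the cohomological Leray--Serre spectral sequence has trivial local system and its $E_2$-term canonically splits via Künneth as $H^p(B\hAut_A(X_\bQ)^\id) \otimes H^q(\map_A(X_\bQ;BO_\bQ)^0)$. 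Since the Künneth identification is natural, this splitting is $G$-equivariant for the diagonal action. The action on the first tensor factor is then, by construction, the monodromy action described in the statement.

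The main step is to verify that the isomorphism of Corollary \ref{cor:cohomology-mappingspace},
\[
\bF\bigl(\bigoplus_{m,k} H_k(X/A)[4m-k]\bigr) \xrightarrow{\cong} H^*(\map_A(X_\bQ;BO_\bQ)^0),
\]
intertwines the precomposition action of $G$ on the right with the action of $\phi \mapsto \phi_*$ on $H_*(X/A)$ on the left. Since both sides are free graded-commutative algebras, it is enough to check this on the algebra generators $\lambda_{a,L_m}$. For any $\phi \in \hAut_A(X_\bQ)$, the equality $\ev(\phi^\sharp f,x) = f(\phi(x)) = \ev(f,\phi(x))$ gives the identity of maps $\ev \circ (\phi^\sharp \times \id) = \ev \circ (\id \times \phi)$. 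Pulling back $L_m \in H^*(BO_\bQ)$ and taking the slant product with $a \in H_*(X/A)$, naturality of the slant product in each variable yields
\[
(\phi^\sharp)^*(\ev^* L_m / a) = (\id \times \phi)^*(\ev^* L_m)/a = \ev^* L_m/\phi_* a,
\]
that is, $(\phi^\sharp)^*\lambda_{a,L_m} = \lambda_{\phi_* a, L_m}$, as required.

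Combining the two $G$-equivariant identifications gives the corollary. The only point requiring care, and the main (mild) obstacle I expect, is bookkeeping: one must check that the right/left-action conventions used in the two-sided bar construction $\hq{\map}{\hAut^\id}$ are compatible with the description of the monodromy action via precomposition by representatives in $\hAut_A(X_\bQ)$, so that the fibrewise $G$-action one actually meets in the spectral sequence agrees with the action $\phi \mapsto (\phi^\sharp)^*$ analysed above. Once this translation is fixed, no further work is needed.
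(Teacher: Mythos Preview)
Your argument is correct and is exactly the route the paper has in mind: the paper does not write out a proof of this corollary, treating it as immediate from the naturality of the $\lambda$-classes recorded after Definition~\ref{defn:lambdaclass} together with the general equivariance of the Leray--Serre spectral sequence. Your computation $(\phi^\sharp)^*\lambda_{a,L_m}=\lambda_{\phi_* a,L_m}$ is precisely that naturality statement specialised to $Z=BO_\bQ$, so you have simply made the implicit proof explicit.
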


\subsection{Consequences for block diffeomorphism spaces}\label{subsec:finalformula-for-blockdiffs}

We now collect some consequences of the above calculations. Let $M^d$ be a smooth oriented compact manifold with boundary. In that case, there is a fundamental class $[M] \in H_d (M/\partial M)\cong H_d (M,\partial M)$. 

\begin{lem}\label{lem:kappaclassaslambdaclass}
Let $M$ be a smooth oriented compact manifold with boundary, and denote by 
\[
q: \map_\partial (M;BO)  \to \hq{\map_\partial (M;BO)}{\hAut^+_\partial (M)}
\]
the fibre inclusion. Then 
\[
q^* \kappa_c = \lambda_{[M],c}
\]
for each $c \in H^* (BO;\bQ)$. 
\end{lem}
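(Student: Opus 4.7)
The plan is to unwind both sides using the naturality of the fibre integration (Gysin map) against the pullback square that expresses the trivialization of the universal $(M,\partial M)$-bundle over the fibre of the Borel construction.

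First I would form the pullback square
\[
\xymatrix{
(\map_\partial (M;BO) \times M,\map_\partial (M;BO) \times \partial M) \ar[r]^-{\tilde q} \ar[d]_{\tilde{\pi}} & (E,\partial E) \ar[d]^{\pi}\\
\map_\partial (M;BO) \ar[r]^-{q} & B,
}
\]
where $\tilde{\pi}$ is the trivial $(M,\partial M)$-bundle: the restriction of the universal bundle to the fibre $\map_\partial(M;BO)$ is, by the very definition of the two-sided bar construction, precisely the trivial bundle $\map_\partial(M;BO)\times(M,\partial M)$. Naturality of the Leray--Serre spectral sequence under pullback then gives $q^*\pi_! = \tilde{\pi}_!\tilde q^*$ as maps $H^k(E,\partial E)\to H^{k-d}(\map_\partial(M;BO))$; this is a standard property of the fibre integration.

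Next I would identify the composition $\epsilon\circ \tilde q$. By Construction \ref{construction-kappaclasses}, the restriction of the augmentation $\epsilon\colon E\to BO$ to the fibre over the basepoint of $B$ is exactly the evaluation map $\ev\colon \map_\partial(M;BO)\times M\to BO$ (sending $\map_\partial(M;BO)\times \partial M$ to the basepoint). Hence $\tilde q^*\epsilon^*c = \ev^*c$, so
\[
q^*\kappa_c \;=\; q^*\pi_!(\epsilon^*c)\;=\;\tilde{\pi}_!(\ev^*c).
\]

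Finally I would interpret the Gysin map $\tilde{\pi}_!$ for the trivial bundle $\tilde{\pi}$. For the projection $\tilde\pi\colon Y\times M\to Y$ (rel.\ $Y\times\partial M$), integration along the fibre is, by one of the standard characterizations, the slant product with the fundamental class $[M]\in H_d(M,\partial M)$: one has $\tilde{\pi}_!(\alpha)=\alpha/[M]$ for every $\alpha\in H^*(Y\times M,Y\times\partial M;\bQ)$. Applying this to $\alpha=\ev^*c$ yields
\[
q^*\kappa_c \;=\; \ev^*c/[M]\;=\;\lambda_{[M],c},
\]
which is the desired identity. No step here is delicate; the only thing to be careful about is keeping track that $\ev$ factors through $\map_\partial(M;BO)\times (M/\partial M)$, so that the slant product with $[M]\in H_d(M,\partial M)$ is well-defined and agrees with $\lambda_{[M],c}$ of Definition \ref{defn:lambdaclass}.
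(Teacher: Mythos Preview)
Your proof is correct and follows essentially the same route as the paper: unwind the definitions so that $q^*\kappa_c=\tilde\pi_!(\ev^*c)$ for the trivial $(M,\partial M)$-bundle over $\map_\partial(M;BO)$, and then use that the Gysin map for a trivial bundle is the slant product with the fundamental class. The paper's argument is terser but identical in substance.
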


\begin{proof}
This is a consequence of the definitions and the fact that for a space $Y$, the Gysin map of the trivial bundle $\pi: Y \times (M/\partial M) \to Y$ is given by $\pi_! (\xi)=\xi/[M]$.  

The latter identity holds for classes of the form $\xi=\alpha \times \beta$, by the naturality and linearity of $\pi_!$ over the cohomology of the base, together with formula \eqref{eqn:slant-identity}. The K\"unneth formula shows that it is enough to consider such $\xi$.
\end{proof}

\begin{thm}\label{thm:blockdiffsversusmappingspace}
Assume that $M$ is a stably parallelizable $1$-connected smooth manifold of dimension $d \geq 5$ with $\partial M   = S^{d-1}$. 
Then the algebra map
\[
H^* (\hq{\map_\partial (M_\bQ;BO_\bQ)^0}{\hAut_\partial(M_\bQ)^{\cong}}) \to H^* (B \blockdiff_\partial (M))
\]
is surjective. Moreover
\begin{enumerate}
\item If $d$ is odd, the kernel is the ideal generated by the classes $\kappa_{L_m}$, $4m-d>0$.  
\item If $d=2n$ is even, the kernel is the ideal generated by the classes $\kappa_{L_m}-\sigma_{4m-d}$, $4m-d>0$.
\end{enumerate}
\end{thm}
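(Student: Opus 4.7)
The plan is to combine Proposition \ref{prop:rationalized-blockdiffs} with a Serre spectral sequence analysis of a naturally arising fibration. By that proposition, the composition
\[
B\blockdiff_\partial(M) \xrightarrow{\phi} F \xrightarrow{\iota} T
\]
is an $H\bQ$-equivalence, where $F := \hq{\map_\partial(M_\bQ;BO_\bQ)^0}{\hAut_\partial(M_\bQ)^{\cong}}$, $T := \hq{\map_*(M_\bQ;BO_\bQ)^0}{\hAut_\partial(M_\bQ)^{\cong}}$, the map $\phi$ is the derivative map followed by rationalization, and $\iota$ is induced by the inclusion $\map_\partial \hookrightarrow \map_*$. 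The identity $\phi^* \circ \iota^* = (\iota\phi)^*$ exhibits $\phi^*\colon H^*(F;\bQ) \to H^*(B\blockdiff_\partial(M);\bQ)$ as split surjective, with right inverse $\iota^* \circ ((\iota\phi)^*)^{-1}$, immediately proving the surjectivity claim.

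The inclusion of the specified ideal $I$ into $\ker\phi^*$ is a direct consequence of the vanishing results: Theorem \ref{thm:vanishing} gives $\phi^*(\kappa_{L_m}) = 0$ in the odd case, while formula \eqref{eqn:evenfamilysignautre} gives $\phi^*(\kappa_{L_m} - h^*\sigma_{4m-d}) = 0$ in the even case. For the reverse inclusion, applying $\map_*(-;BO_\bQ)$ to the cofiber sequence $S^{d-1} = \partial M \to M \to M/\partial M$ and then taking Borel constructions (using that $\hAut_\partial(M_\bQ)^{\cong}$ acts trivially on the boundary) produces a fibration
\[
F \xrightarrow{\iota} T \to \Omega_0^{d-1}BO_\bQ.
\]
By Corollary \ref{cor:cohomology-mappingspace}, $H^*(\Omega_0^{d-1}BO_\bQ;\bQ)$ is a free graded-commutative algebra on generators $y_m := \lambda_{[S^{d-1}],L_m}$ of degree $4m-d+1$ for $4m \geq d$, polynomial when $d$ is odd and exterior when $d$ is even.

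The key input into the Serre spectral sequence is the transgression $\tau(\kappa_{L_m}) = y_m$. I would verify first the analogous transgression $\lambda_{[M],L_m} \mapsto \lambda_{[S^{d-1}],L_m}$ in the baby fibration $\map_\partial \to \map_* \to \Omega_0^{d-1}BO_\bQ$, which for $M = D^d$ reduces to the path-loop fibration of $\Omega_0^{d-1}BO_\bQ$ with its classical transgression, and then transfer the conclusion via naturality together with Lemma \ref{lem:kappaclassaslambdaclass}. In the even case, the change of generators $\kappa_{L_m} \rightsquigarrow \bar{\kappa}_{L_m} := \kappa_{L_m} - h^*\sigma_{4m-d}$ (with $h^*\sigma_{4m-d}$ a permanent cycle, being pulled back from $H^*(B\hAut_\partial^+(M);\bQ)$) brings the situation into the same form as the odd case: generators $\bar{\kappa}_{L_m}$ and $y_m$ of opposite parity with $\tau(\bar{\kappa}_{L_m}) = y_m$. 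Since differentials are derivations, the subcomplex they generate in $E_2$ is an acyclic Koszul complex, and the spectral sequence collapses onto the zeroth column with $E_\infty^{0,*} \cong H^*(F;\bQ)/I$. Convergence then gives $H^*(T;\bQ) \cong H^*(F;\bQ)/I$, and a dimension count forces $\ker\phi^* = I$.

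The main technical obstacle is executing the Koszul collapse rigorously, since $H^*(F;\bQ)$ is not manifestly a free algebra over the subring generated by the $\bar{\kappa}_{L_m}$'s; this requires tracking the derivation differentials on arbitrary products and ruling out later transgressions. One also needs triviality of the local coefficient system in the Serre spectral sequence, which is not automatic when $d \equiv 0 \pmod 4$ (where $\Omega_0^{d-1}BO_\bQ$ carries nonzero rational $\pi_1$), but should follow from the base being a rational infinite loop space together with the naturality of the Borel construction.
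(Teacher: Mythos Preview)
Your overall strategy is sound and your identification of the key obstacles is accurate, but the paper takes a cleaner route that sidesteps exactly the difficulties you flag.

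Rather than analyse the Serre spectral sequence of the fibration $F \to T \to \Omega^{d-1}_0 BO_\bQ$, the paper constructs an explicit map
\[
\Pi:\ F \longrightarrow T \times \prod_{4m-d>0} K(\bQ,4m-d)
\]
whose first component is the forgetful map $\iota$ and whose remaining components are the classes $\kappa_{L_m}$ (respectively $\kappa_{L_m}-\sigma_{4m-d}$). The point is that $\Pi$ is a map over $B\hAut_\partial(M_\bQ)^{\cong}$, so to prove it is a weak equivalence it suffices to check on homotopy fibres. There the map becomes
\[
\map_\partial(M_\bQ;BO_\bQ)^0 \longrightarrow \map_*(M_\bQ;BO_\bQ)^0 \times \prod_{4m-d>0} K(\bQ,4m-d),
\]
built from the forgetful map and the classes $\lambda_{[M],L_m}$ (via Lemma~\ref{lem:kappaclassaslambdaclass}). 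By Corollary~\ref{cor:cohomology-mappingspace} both sides are products of rational Eilenberg--Mac~Lane spaces indexed by a basis of $\tilde H_*(M/\partial M;\bQ)$ and $\tilde H_*(M;\bQ)\oplus\bQ[4m-d]$ respectively; since $\tilde H_*(M/\partial M;\bQ)\cong \tilde H_*(M;\bQ)\oplus \bQ\cdot[M]$, naturality of the $\lambda$-classes shows this is a weak equivalence. Once $\Pi$ is an equivalence, $H^*(F)\cong H^*(T)\otimes \bF(\bar\kappa_{L_m})$ as algebras, and the composition $\Pi\circ\phi$ is $(\iota\phi,0)$ by the vanishing theorems, so $\ker\phi^*$ is exactly the ideal generated by the $\bar\kappa_{L_m}$.

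This amounts to exhibiting a \emph{trivialisation} of (the loop of) your fibration at the space level, which is precisely what makes your Koszul collapse work and simultaneously dissolves the two obstacles you raise: there is no spectral sequence left to run, hence no local-coefficient issue when $d\equiv 0\pmod 4$, and the freeness of $H^*(F)$ over the subalgebra on the $\bar\kappa_{L_m}$ is not assumed but produced by $\Pi$. Your route can certainly be completed, but the cleanest way to justify the collapse is essentially to prove the splitting that $\Pi$ gives directly.
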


\begin{proof}
The composition 
\[
B \blockdiff_\partial (M) \to \hq{\map_\partial (M_\bQ;BO_\bQ)^0}{\hAut_\partial(M_\bQ)^{\cong}} \to \hq{\map_* (M_\bQ;BO_\bQ)^0}{\hAut_\partial(M_\bQ)^{\cong}}
\]
is an $H \bQ$-equivalence by Proposition \ref{prop:rationalized-blockdiffs}, and so surjectivity is immediate. Theorem \ref{thm:vanishing} and \eqref{eqn:evenfamilysignautre} prove that the classes listed in (1) and (2) lie in the kernel, and so it remains for us to show that the kernel is not larger. Consider the map 
\begin{equation}\label{eqn:map-in-blockdiffsversusmappingspace}
\Pi: \hq{\map_\partial (M_\bQ;BO_\bQ)^0}{\hAut^+_\partial(M_\bQ)^{\cong}}  \to \hq{\map_* (M_\bQ;BO_\bQ)^0 }{ \hAut^+_\partial(M_\bQ)^{\cong}} \times \prod_{4m-d>0} K(\bQ,4m-d)
\end{equation}
made out of the forgetful map and the classes $\kappa_{L_m}$ or $\kappa_{L_m}-\sigma_{4m-d}$ (for odd or even $d$). If we can prove that $\Pi$ is a weak equivalence, the claim follows. 

To prove this, note first that $\Pi$ is a map over $B \hAut^+_\partial (M_\bQ)^{\cong}$. The induced map on the homotopy fibres of the respective maps to $B \hAut^+_\partial (M_\bQ)^{\cong}$ is the map 
\begin{equation}\label{eqn:map-in-blockdiffsversusmappingspace23434}
 \map_\partial (M_\bQ;BO_\bQ)^0 \to  \map_* (M_\bQ;BO_\bQ)^0 \times \prod_{4m-d>0} K(\bQ,4m-d);
\end{equation}
made out of the forgetful map and the classes $\lambda_{L_m}$. This follows from Lemma \ref{lem:kappaclassaslambdaclass} and the fact that the classes $\sigma_{4m-d}$ are pulled back from $B \hAut^+_\partial (M_\bQ)_\bZ$.

To see that \eqref{eqn:map-in-blockdiffsversusmappingspace23434} is a weak equivalence, one uses Corollary \ref{cor:cohomology-mappingspace} for both mapping spaces and the naturality of the $\lambda$-classes.
\end{proof}

\begin{rem}
The proof suggests that one might extract a proof of Theorem \ref{thm:vanishing} out of the arguments in \cite[\S 4]{BerglundMadsen}. While surely true, it seems simpler to us to use the independent proof of Theorem \ref{thm:vanishing}, as it allows us to treat \cite[\S 4]{BerglundMadsen} as a black box. 
\end{rem}

\section{Homotopy calculations for the manifolds \texorpdfstring{$U_{g,1}^n$}{U}}\label{sec:homotopyautomorphisms}

We wish to apply Theorem \ref{thm:blockdiffsversusmappingspace} to the manifolds $U_{g,1}^n$. We shall approach the required calculation of $H^* (\hq{\map_\partial (M_\bQ;BO_\bQ)^0}{\hAut_\partial(M_\bQ)^{\cong}};\bQ)$ through the spectral sequences of the fibre sequences
\begin{equation}\label{eqn:firstfibresequence-for-blockdiffs}
\map_\partial ((U_{g,1}^n)_\bQ;BO_\bQ)^0 \to \hq{\map_\partial ((U_{g,1}^n)_\bQ;BO_\bQ)^0}{\hAut_\partial((U_{g,1}^n)_\bQ)^{\id}} \to B \hAut_\partial((U_{g,1}^n)_\bQ)^{\id}
\end{equation}
(which is a spectral sequence of $\pi_0 (\hAut_\partial ((U_{g,1}^n)_\bQ))$-modules) and 
\begin{equation}\label{eqn:secondfibresequence-for-blockdiffs}
\hq{\map_\partial ((U_{g,1}^n)_\bQ;BO_\bQ)^0}{\hAut_\partial((U_{g,1}^n)_\bQ)^{\id}} \to \hq{\map_\partial ((U_{g,1}^n)_\bQ;BO_\bQ)^0}{\hAut_\partial((U_{g,1}^n)_\bQ)^{\cong}} \to B \pi_0 \hAut_\partial((U_{g,1}^n)_\bQ)^{\cong}. 
\end{equation}
In this section, we shall collect the homotopy-theoretic input needed for these computations. 
Sections \S \ref{subsec:lowdimpistar}, \ref{subsec:mappingclassgroupsug1n} collect some standard homotopy-theoretic facts. This leads among other things to Lemma \ref{lem:pinullhautrelbound} which identifies $\pi_0 \hAut_\partial((U_{g,1}^n)_\bQ)$ with $\GL_g (\bQ)$ and $\pi_0 \hAut_\partial((U_{g,1}^n)_\bQ)^{\cong}$ with $\GL_g (\bZ)$. In \S \ref{subsec:higherhomotopy-hautug1n}, we compute enough of the homotopy groups of $B \hAut_\partial((U_{g,1}^n)_\bQ)^{\id}$ to identify the $E_2$-term of the spectral sequence for \eqref{eqn:firstfibresequence-for-blockdiffs} up to the $n-3$rd column, as $\GL_g (\bQ)$-modules. We also identify the $d_2$-differential of that spectral sequence, which is based on Lemma \ref{lem:lowdimhomotopyhomoauto2} and carried out in \S \ref{subsec:computationdifferential}. 
Also in this section, we give the relatively easy proof of Theorem \ref{thm:blockdiffs} in the case $n=4$.
\subsection{Homotopy groups}\label{subsec:lowdimpistar}

Let us now focus our attention to the manifolds we are actually interested in, i.e. 
\begin{equation}\label{eqn:Ugnmanifolds}
U_{g,1}^n := \sharp^g (S^n \times S^{n+1}) \setminus \inter{D^{2n+1}}. 
\end{equation}
Note that 
\[
U_{g,1}^n \simeq \bigvee^g S^n \vee \bigvee^g S^{n+1}. 
\]
Let us denote
\[
N(g)_\bZ := H_n (U_{g,1}^n;\bZ) \cong \bZ^g
\]
and 
\[
N(g):= H_n (U_{g,1}^n;\bQ) = N(g)_\bZ \otimes \bQ.
\] 
Poincar\'e duality and the universal coefficient theorem show that
\[
H_{n+1} (U_{g,1}^n;\bZ) \cong H^n (U_{g,1}^n;\bZ) \cong N(g)_\bZ^\vee. 
\]
These isomorphisms are natural with respect to the action of $\hAut_\partial (U_{g,1}^n)$, and hold similarly for rational coefficients. The group $\hAut_\partial (U_{g,1}^n)$ acts trivially on $H_{2n+1}(U_{g,1}^n,\partial;\bZ)$, $H_{2n+1}(U_{g,1}^n,\partial;\bQ)$, $H^{2n+1}(U_{g,1}^n,\partial;\bZ)$ and $H^{2n+1}(U_{g,1}^n,\partial;\bQ)$.

Let $x_1, \ldots,x_g \in \pi_n (U_{g,1}^n)$ be the elements represented by the inclusion of the $g$ different $S^n$'s, and similarly let $y_1, \ldots, y_g \in \pi_{n+1}(U_{g,1}^n)$ be represented by the $g$ copies of $S^{n+1}$. 
The inclusion of the boundary $S^{2n}= \partial U_{g,1}^n \to U_{g,1}^n$ represents an element $\omega \in \pi_{2n}(U_{g,1}^n)$ which agrees with the sum 
\begin{equation}\label{eqn:inclusionaswhitenehadprofduct}
\omega=\sum_{j=1}^g [x_j,y_j]
\end{equation}
of Whitehead products of the generators, if the numbering and the signs of the generators are chosen appropriately. We denote by 
\[
a_j := \hur_n^\bZ(x_j) \in H_n (U_{g,1}^n;\bZ)\; \text{and}\; b_j := \hur_{n+1}^\bZ(y_j) \in H_{n+1} (U_{g,1}^n;\bZ)
\]
the images under the Hurewicz homomorphism, and use the same symbol for the images in rational homology.

We let $(\alpha_1, \ldots,\alpha_g)$ be the basis of $H^{n}(U_{g,1}^n;\bZ) = H^{n}(U_{g,1}^n,\partial;\bZ)$ dual to $(a_1, \ldots,a_g)$ and let $(\beta_1 ,\ldots,\beta_g)$ be the basis of $H^{n+1}(U_{g,1}^n;\bZ)=H^{n+1}(U_{g,1}^n,\partial;\bZ)$ dual to $(b_1, \ldots,b_g)$. Let $\nu \in H^{2n+1}(U_{g,1}^n,\partial;\bZ)$ be dual to the fundamental class $[U]=[U_{g,1}^n]$. The cup product structure of $H^* (U_{g,1}^n;\bZ)$ is given by 
\begin{equation}\label{eqn:cupproductsug1n}
\alpha_i  \beta_j  =\delta_{ij} \nu;
\end{equation}
all other cup products are zero for degree reasons. The cup length of $U_{g,1}^n/\partial$ is therefore $2$, and it follows from Lemma \ref{lem:product-lambdaclasses} that
\begin{equation}\label{eqn:lambdaproductug1n}
\lambda_{[U],L_m L_k}= \sum_{j=1}^g \lambda_{a_j,L_m} \lambda_{b_j,L_k}+ \sum_{j=1}^g \lambda_{b_j,L_m} \lambda_{a_j,L_k},
\end{equation}
and from Lemma \ref{lem:cuplength} that 
\begin{equation}\label{eqn:lambdaproductug1n12234345}
\lambda_{[U],L_{m_1} \cdots L_{m_r}}= 0
\end{equation}
if $r \geq 3$ and $1 \leq m_1 \leq \ldots \leq m_r$. 

Having understood the homological structure, let us turn to homotopy groups. The Hurewicz theorem shows that 
\[
\hur_n^\bZ: \pi_n (U_{g,1}^n) \to H_n (U_{g,1}^n;\bZ)
\]
is an isomorphism, and that 
\[
\hur_{n+1}^\bZ: \pi_{n+1} (U_{g,1}^n) \to H_{n+1} (U_{g,1}^n;\bZ)
\]
is surjective, if $n \geq 2$. The rational Hurewicz theorem implies that $\hur_{n+1}^\bZ$ has finite kernel if $n\geq 3$. 

The rational homotopy groups of $U_{g,1}^n$ are known by a computation in rational homotopy theory. For a graded vector space $V$, we denote the desuspension by $(s^{-1} V)_k := V_{k+1}$. For a ($1$-connected) space $X$, $s^{-1} \pi_* (X_\bQ)$ is a graded Lie algebra under the Whitehead product, and since $U_{g,1}^n$ is a wedge of spheres, we have 
\[
s^{-1} \pi_* ((U_{g,1}^n)_\bQ) \cong \bL (\pi_n((U_{g,1}^n)_\bQ) \oplus \pi_{n+1}((U_{g,1}^n)_\bQ)),
\]
the free Lie algebra on the graded vector space which is $\pi_n((U_{g,1}^n)_\bQ)$ in degree $n-1$ and $\pi_{n+1}((U_{g,1}^n)_\bQ)$ in degree $n$, by \cite[Theorem 24.5]{FHT}. It follows that
\begin{equation}\label{eqn:vanishingrationalhomotopy}
\pi_{n+k} (U_{g,1}^n) \otimes \bQ =0  \;\text{if} \; 2\leq k \leq n-2 \, \text{or}\, n+2 \leq k \leq 2n-3. 
\end{equation}

\subsection{Mapping class groups}\label{subsec:mappingclassgroupsug1n}

\begin{lem}\label{lem:pinullautvgn}
The map
\[
\pi_0 (\hAut_*(U_{g,1}^n)) \to \GL (N(g)_\bZ) \times \GL(N(g)_\bZ^\vee)
\]
given by the action on integral homology is surjective when $n \geq 1$, and has finite kernel, provided that $n \geq 3$. 

The analogous map 
\[
\pi_0 (\hAut_*((U_{g,1}^n)_\bQ)) \to \GL (N(g)) \times \GL(N(g)^\vee)
\]
is an isomorphism if $n \geq 3$, and it maps the subgroup $\pi_0 (\hAut_*((U_{g,1}^n)_\bQ)_\bZ) \subset \pi_0 (\hAut_*((U_{g,1}^n)_\bQ))$ onto the subgroup $\GL (N(g)_\bZ) \times \GL(N(g)_\bZ^\vee)$. 
\end{lem}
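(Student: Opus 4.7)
The strategy is to replace $U_{g,1}^n$ by its homotopy-equivalent skeleton $X := \bigvee^g S^n \vee \bigvee^g S^{n+1}$ and to compute $\pi_0 \hAut_*(X)$ directly. Since $X$ is simply connected, pointed and unpointed homotopy self-equivalences give the same $\pi_0$. By the universal property of the wedge, a pointed self-map of $X$ is specified up to pointed homotopy by the data of $g$ elements $x_i' \in \pi_n(X)$ and $g$ elements $y_j' \in \pi_{n+1}(X)$. By Whitehead's theorem, applied to the simply-connected CW complex $X$ (whose reduced homology is concentrated in degrees $n$ and $n+1$), such a self-map is a homotopy equivalence if and only if it induces isomorphisms on $H_n$ and $H_{n+1}$.

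The homotopy-group inputs are these: $\pi_n(X) = H_n(X) = N(g)_\bZ$ by Hurewicz, with basis $\{x_i\}$. For $\pi_{n+1}(X)$ with $n \geq 3$, a standard computation (via the Hilton--Milnor theorem, or by applying Blakers--Massey to the inclusion of the $n$-skeleton) yields $\pi_{n+1}(X) \cong \bZ^g \oplus (\bZ/2)^g$, where the free summand is generated by $\{y_j\}$ and the torsion summand by the compositions $\{\eta \circ x_i\}$ with $\eta \in \pi_{n+1}(S^n) = \bZ/2$ the stable Hopf class. The Hurewicz map $\pi_{n+1}(X) \to H_{n+1}(X) = N(g)_\bZ^\vee$ is projection onto the free summand.

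Given these ingredients, surjectivity of the integral map onto $\GL(N(g)_\bZ) \times \GL(N(g)_\bZ^\vee)$ is immediate: a pair $(A,B)$ in the target is realized by the self-map with $x_i' := A(x_i) \in \pi_n(X)$ and $y_j' := B(y_j) \in \pi_{n+1}(X)$ (using the free summand), which acts as $(A,B)$ on $(H_n, H_{n+1})$. For the kernel when $n \geq 3$: an element inducing the identity on both $H_n$ and $H_{n+1}$ must have $x_i' = x_i$ and $y_j' = y_j + t_j$, where the torsion shifts $t_j \in (\bZ/2)^g$ are unconstrained. This identifies the kernel with $\Mat_g(\bZ/2) \cong (\bZ/2)^{g^2}$, a finite group.

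For the rationalization, $(U_{g,1}^n)_\bQ \simeq \bigvee^g S^n_\bQ \vee \bigvee^g S^{n+1}_\bQ$ has $\pi_n \otimes \bQ = N(g)$ and $\pi_{n+1} \otimes \bQ = N(g)^\vee$ (torsion killed), and the same scheme yields a bijection $\pi_0 \hAut_*((U_{g,1}^n)_\bQ) \cong \GL(N(g)) \times \GL(N(g)^\vee)$. The final assertion about $\pi_0 \hAut_*((U_{g,1}^n)_\bQ)_\bZ$ follows from the explicit realization above: a pair of integer matrices lifts to an integral self-equivalence, and conversely any integral class has integer-matrix image under the rational iso, so the image is exactly $\GL(N(g)_\bZ) \times \GL(N(g)_\bZ^\vee)$. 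The main technical subtlety is the identification of the torsion in $\pi_{n+1}(X)$; it is precisely this torsion that produces the finite integral kernel and that forces the hypothesis $n \geq 3$ (so that $\pi_{n+1}(S^n)$ is the stable $\bZ/2$).
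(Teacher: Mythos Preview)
Your argument is correct and follows the same route as the paper: both use that $U_{g,1}^n$ is a wedge of spheres, so pointed self-maps are determined by their effect on $\pi_n$ and $\pi_{n+1}$, with the kernel coming from the torsion in $\pi_{n+1}$. The only difference is that the paper argues abstractly (factoring through $\GL(\pi_n)\times\GL(\pi_{n+1})$ and invoking the rational Hurewicz theorem to see $\ker(\hur_{n+1}^\bZ)$ is finite), whereas you compute $\pi_{n+1}(X)\cong \bZ^g\oplus(\bZ/2)^g$ explicitly via Hilton--Milnor and thereby identify the kernel precisely as $(\bZ/2)^{g^2}$; note also that your use of simple connectivity tacitly assumes $n\geq 2$, though this is harmless in the paper's regime.
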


\begin{proof}
Since $\hAut_* (\bigvee^g S^n) \to \GL (H_n  (\bigvee^g S^n;\bZ))$ is surjective, and similarly for a 
wedge of $(n+1)$-spheres, surjectivity follows. 

To see that the kernel is finite, we note that since $n \geq 3$, $\hur_{n+1}^\bZ$ identifies $N(g)^\vee_\bZ=H_{n+1} (U_{g,1}^n;\bZ)$ with the torsionfree quotient of $\pi_{n+1}(U_{g,1}^n)$, which is why we can factor the map in question as
\begin{equation}\label{eqn:facotriztationspinullahhaut}
\pi_0 (\hAut(U_{g,1}^n)) \to \GL (\pi_n(U_{g,1}^n)) \times \GL (\pi_{n+1}(U_{g,1}^n)) \to \GL (N(g)_\bZ) \times \GL(N(g)_\bZ^\vee).
\end{equation}
The kernel of the second map can be identified with 
\[
\GL (\ker (\hur_{n+1}^\bZ)) \ltimes \Hom (H_{n+1}(U_{g,1}^n;\bZ);\ker (\hur_{n+1}^\bZ))
\]
and is therefore finite.

The first map in \eqref{eqn:facotriztationspinullahhaut} is injective: a pointed map $f: U_{g,1}^n\to U_{g,1}^n$ which induces the identity on both, $\pi_n$ and $\pi_{n+1}$, must be homotopic to the identity, as $U_{g,1}^n$ is a wedge of spheres.

The rational case is analogous. The proof of surjectivity is similar (using elementary matrices to generate $\GL_g (\bQ)$), and in the factorization analogous to \eqref{eqn:facotriztationspinullahhaut}, the second map is an isomorphism since $\hur_{n+1}^\bQ$ is an isomorphism.

The last statement follows easily from the others.  
\end{proof}

We are interested in $\hAut_\partial (U_{g,1}^n)$ rather than $\hAut_*(U_{g,1}^n)$. These two spaces are related by a fibre sequence 
\begin{equation}\label{fibresequence}
\hAut_\partial (U_{g,1}^n) \to \hAut_* (U_{g,1}^n) \to \Omega^{2n} U_{g,1}^n. 
\end{equation}

\begin{lem}\label{lem:pinullhautrelbound}
Let $n \geq 3$. 
\begin{enumerate}
\item The map 
\[
\pi_0 (\hAut_\partial ((U_{g,1}^n)_\bQ)) \to \GL(N(g))
\]
induced by the action on $H_n(U_{g,1}^n;\bQ)$ is an isomorphism.
\item It maps the subgroup $\pi_0 (\hAut_\partial ((U_{g,1}^n)_\bQ)_\bZ)$ onto $\GL(N(g)_\bZ)$. 
\item Moreover $\pi_0 (\hAut_\partial ((U_{g,1}^n)_\bQ)^{\cong}) = \pi_0 (\hAut_\partial ((U_{g,1}^n)_\bQ)_\bZ)$. 
\end{enumerate}
\end{lem}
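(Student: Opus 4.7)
The strategy is to run the long exact sequence of the fibration \eqref{fibresequence}, applied rationally, which in low degrees reads
\[
\pi_1(\hAut_*((U_{g,1}^n)_\bQ)) \xrightarrow{r} \pi_{2n+1}((U_{g,1}^n)_\bQ) \xrightarrow{\delta} \pi_0(\hAut_\partial((U_{g,1}^n)_\bQ)) \to \pi_0(\hAut_*((U_{g,1}^n)_\bQ)) \to \pi_{2n}((U_{g,1}^n)_\bQ),
\]
where the last map sends $[f]$ to $f_*\omega$. By Lemma~\ref{lem:pinullautvgn} the fourth term is $\GL(N(g))\times \GL(N(g)^\vee)$; using $\omega=\sum_j[x_j,y_j]$ together with the free graded Lie algebra model $\bL(V)$ for $\pi_*((U_{g,1}^n)_\bQ)$, one computes $f_*\omega=\sum_{i,k}(AB^\top)_{ik}[x_i,y_k]$ when $f$ induces $(A,B)$. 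Since the brackets $\{[x_i,y_k]\}$ are a basis of $\bL(V)_{2n-1}=\pi_{2n}((U_{g,1}^n)_\bQ)$, the stabilizer of $\omega$ is the graph $\{(A,(A^\top)^{-1})\}\cong \GL(N(g))$, and the map in (1) factors through this stabilizer via the action on $H_n$.

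The main issue for (1) is therefore to show that $\delta=0$, equivalently that $r$ is surjective. Since $(U_{g,1}^n)_\bQ$ is rationally a wedge of spheres, $\map_*((U_{g,1}^n)_\bQ,(U_{g,1}^n)_\bQ)$ splits as $(\Omega^n(U_{g,1}^n)_\bQ)^g\times(\Omega^{n+1}(U_{g,1}^n)_\bQ)^g$, and taking $\pi_1$ at the identity yields
\[
\pi_1(\hAut_*((U_{g,1}^n)_\bQ))\cong \Hom(N(g),\pi_{n+1}((U_{g,1}^n)_\bQ))\oplus\Hom(N(g)^\vee,\pi_{n+2}((U_{g,1}^n)_\bQ)).
\]
Differentiating $\omega$ along a loop identifies $r$ with the ``derivation'' map $(\phi,\psi)\mapsto \sum_j[\phi(x_j),y_j]\pm\sum_j[x_j,\psi(y_j)]$. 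For $n\geq 4$, \eqref{eqn:vanishingrationalhomotopy} kills the $\psi$-summand while $\bL(V)_{2n}=\pi_{2n+1}((U_{g,1}^n)_\bQ)$ is spanned by the $[y_i,y_j]$, all in the image of $\phi$-contributions; for $n=3$ one additionally needs the $\psi$-contribution to hit the length-three generators $[x_j,[x_k,x_\ell]]$, which is ensured because $\pi_5((U_{g,1}^3)_\bQ)$ is spanned by the $[x_k,x_\ell]$. Surjectivity of $r$ established this way yields (1).

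For (2), given $A\in \GL(N(g)_\bZ)$ Lemma~\ref{lem:pinullautvgn} produces a self-equivalence $\tilde f$ of $W=\bigvee^g S^n\vee \bigvee^g S^{n+1}$ acting as $(A,(A^{-1})^\top)$ on $(H_n,H_{n+1})$. The bilinearity calculation from the first paragraph, now run over $\bZ$ and using that the Hilton basis elements $[x_i,y_k]$ are $\bZ$-linearly independent in $\pi_{2n}(W)$, gives $\tilde f_*\omega=\omega$ in $\pi_{2n}(W)$ on the nose. A choice of homotopy from $\tilde f\circ \omega$ to $\omega$ then extends $\tilde f$ to a self-equivalence of the mapping cylinder pair $(M_\omega,S^{2n})\simeq (U_{g,1}^n,\partial U_{g,1}^n)$ fixing the source sphere, which produces the desired element of $\hAut_\partial(U_{g,1}^n)$ and gives (2). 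For (3) one must realize every element of $\GL(N(g)_\bZ)$ by a block diffeomorphism: the elementary matrices $E_{ij}(k)$ arise from handle slides of $n$-handles, while a determinant-$(-1)$ generator comes from a double-reflection diffeomorphism of a single $S^n\times S^{n+1}$ summand, supported away from a fixed disc at the boundary. These generators suffice, and $\Diff_\partial\to \blockdiff_\partial$ is $0$-connected by definition. The chief obstacle throughout is the surjectivity of $r$ in~(1): it requires pinning down the part of $\bL(V)_{2n}$ captured by formal derivations at the identity, and forces separate attention to $n=3$ where $\pi_{n+2}\otimes \bQ$ fails to vanish.
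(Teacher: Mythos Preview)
Your argument is correct and follows the same overall architecture as the paper's proof: both use the long exact sequence of the fibration \eqref{fibresequence}, identify the stabilizer of $\omega$ inside $\GL(N(g))\times \GL(N(g)^\vee)$ via the bilinearity of the Whitehead product, and reduce injectivity in (1) to the vanishing of the connecting map $\delta$.

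The genuine difference is in how $\delta=0$ and part (3) are established. The paper simply cites \cite{Grey} for $\delta=0$ and \cite{Wall1963} (via \cite{Grey}) for realizing each element of $\GL(N(g)_\bZ)$ by a diffeomorphism. You instead prove $\delta=0$ directly by identifying $r$ with the derivation map $(\phi,\psi)\mapsto \sum_j[\phi(x_j),y_j]\pm\sum_j[x_j,\psi(y_j)]$ and checking surjectivity by hand in the free Lie algebra, and you realize the needed diffeomorphisms explicitly via handle slides and a double reflection. Your approach is more self-contained; the paper's is shorter. Note also that your argument for (2) is exactly the paper's surjectivity argument for (1) run integrally, whereas the paper obtains (2) as a byproduct of the diffeomorphism realization used for (3).

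One point you should tighten: the identification of $r$ with the derivation formula is asserted rather than proved. It is correct, and follows from the fact that the Whitehead pairing $\Omega^n U\times\Omega^{n+1}U\to\Omega^{2n}U$, $(f,g)\mapsto (f\vee g)\circ w$, is an honest map of spaces, so that on $\pi_1$ at $(x_j,y_j)$ one may homotope the diagonal loop to the concatenation of the two edge loops and obtain $[\phi_j,y_j]+[x_j,\psi_j]$ (up to sign) in $\pi_{2n+1}(U)$. Spelling this out, or citing the standard derivation model for $\pi_*(\hAut_*)$ of a wedge of spheres, would make the step airtight. Your separate treatment of $n=3$ is necessary and correctly handles the extra weight-three $x$-brackets in $\bL(V)_{2n}$.
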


\begin{proof}
(1) Consider the diagram
\begin{equation}\label{diag:asfkjdafafgkjlpolzu}
\xymatrix{
\pi_0 (\hAut_\partial ((U_{g,1}^n)_\bQ)) \ar[r] \ar[d] & \GL (N(g)) \ar[d]^{\Delta} \\
\pi_0 (\hAut_*((U_{g,1}^n)_\bQ)) \ar[r] & \GL (N(g)) \times \GL(N(g)^\vee);
}
\end{equation}
the left vertical map is the obvious one, and the map $\Delta$ sends $h \in \GL(N(g)_\bZ)$ to $(h,(h^{-1})^\vee)$. The horizontal maps are induced by the action on homology, and the diagram commutes by Poincar\'e--Lefschetz duality.

The bottom horizontal map is an isomorphism by Lemma \ref{lem:pinullautvgn}, and our task is to prove that the top horizontal map is an isomorphism as well.  
To prove that the top horizontal map in \eqref{diag:asfkjdafafgkjlpolzu} is injective, it suffices to show that the left vertical map is injective, but that fits into an exact sequence 
\[
\pi_{2n+1} ((U_{g,1}^n)_\bQ) \stackrel{\delta}{\to} \pi_0 (\hAut_\partial ((U_{g,1}^n)_\bQ)) \to \pi_0 (\hAut_*((U_{g,1}^n)_\bQ)) 
\]
coming from the fibre sequence \eqref{fibresequence} or the rationalized version thereof. It is proven in \cite{Grey} that the connecting homomorphism $\delta$ is zero (see the proof of Proposition 6.6 and Remark 6.7 of the quoted paper). 

To prove that the top horizontal map of \eqref{diag:asfkjdafafgkjlpolzu} is surjective, we start with $h \in \GL (N(g))$ and use Lemma \ref{lem:pinullautvgn} to find $\varphi \in \hAut_* ((U_{g,1}^n)_\bQ)$ which realizes $\Delta (h)$ on $H_n(U_{g,1}^n;\bQ) \oplus H_{n+1}(U_{g,1}^n;\bQ)$. 

Let $f=(f_{ij})$ and $g=(g_{ij})$ be the matrices which describe the effect of $\varphi$ on $H_n(U_{g,1}^n;\bQ)$ and $H_{n+1}(U_{g,1}^n;\bQ)$, respectively, in terms of the bases $(a_1, \ldots,a_g)$ and $(b_1, \ldots,b_g)$. By construction $f^\top g=1$ or $\sum_i f_{ij} g_{ik}=\delta_{jk}$. Using \eqref{eqn:inclusionaswhitenehadprofduct}, we deduce that 
\[
\varphi_* (\omega)=\omega \in \pi_{2n}((U_{g,1}^n)_\bQ). 
\]
So $\varphi$ is homotopic relative to the basepoint to a map which is the identity on $(\partial U_{g,1}^n)_\bQ$. So $\varphi$ can be viewed as an element of $\hAut_\partial ((U_{g,1}^n)_\bQ)$ which maps to $h$ under the top horizontal map of \eqref{diag:asfkjdafafgkjlpolzu}. 

To show (2) and (3), we consider the commutative diagram
\begin{equation}\label{asdfjasdlklsakjdffghqqqqqqqqq}
\xymatrix{
\pi_0 (\hAut_\partial ((U_{g,1}^n)_\bQ))^{\cong} \ar[dr] \ar[d] & \\
\pi_0 (\hAut_\partial ((U_{g,1}^n)_\bQ))_\bZ \ar[r] \ar[d] & \GL(N(g)_\bZ)\ar[d]\\
\pi_0 (\hAut_\partial ((U_{g,1}^n)_\bQ)) \ar[r]^-{\cong}  & \GL(N(g)); \\
}
\end{equation}
the bottom map is an isomorphism by item (1), and all maps pointing downwards are by definition injective. 

Now every linear automorphism of $H_n (U_{g,1}^n;\bZ)$ can be realized by a diffeomorphism fixing the boundary pointwise: it is explained in the proof of \cite[Proposition 5.3]{Grey} how to deduce this fact from \cite[Lemma 17]{Wall1963}. It follows that the diagonal map in \eqref{asdfjasdlklsakjdffghqqqqqqqqq} is surjective, hence both maps 
\[
\pi_0 (\hAut_\partial ((U_{g,1}^n)_\bQ))^{\cong} \to \pi_0 (\hAut_\partial ((U_{g,1}^n)_\bQ))_\bZ \to \GL(N(g)_\bZ)
\]
are isomorphisms; which shows (2) and (3). 
\end{proof}

\subsection{Some higher homotopy groups of the homotopy automorphisms}\label{subsec:higherhomotopy-hautug1n}

Let us turn to a description of some of the higher homotopy groups of $\hAut_* (U_{g,1}^n)$ and $\hAut_\partial (U_{g,1}^n)$. Lemma \ref{lem:rationalization-homotopyautomorphisms} shows that the rationalization map 
\[
\pi_k (B \hAut_* (U_{g,1}^n)) \otimes \bQ \to \pi_k (B \hAut_* ((U_{g,1}^n)_\bQ))\otimes \bQ= \pi_k (B \hAut_* ((U_{g,1}^n)_\bQ))
\]
is an isomorphism when $k \geq 2$. The fundamental group 
\[
\pi_1 (B\hAut_* ((U_{g,1}^n)_\bQ))  \cong \pi_0 (\hAut_* ((U_{g,1}^n)_\bQ)) \cong \GL(N(g)) \times \GL(N(g)^\vee)
\]
acts on the higher homotopy groups of $B \hAut_* ((U_{g,1}^n)_\bQ)$ in the usual way. 

\begin{lem}\label{lem:lowdimhomotopyhomoauto1}
We have 
\[
\dim_\bQ (\pi_k (B\hAut_* (U_{g,1}^n))\otimes \bQ)=
\begin{cases}
g^2 & k=2 \;\text{and} \; n \geq 3,\\
0 & 3 \leq k \leq n-2.
\end{cases}
\]
\end{lem}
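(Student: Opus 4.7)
My plan is to reduce everything to a straightforward calculation of homotopy groups of a mapping space out of a wedge of spheres, and then to read off the answer from the known rational homotopy groups of $U_{g,1}^n$ recalled in \S \ref{subsec:lowdimpistar}.

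First, I would use the standard identification $\pi_k(B\hAut_*(U_{g,1}^n)) \cong \pi_{k-1}(\hAut_*(U_{g,1}^n))$ for $k \geq 2$. Since $\hAut_*(U_{g,1}^n)$ is a union of path components of $\map_*(U_{g,1}^n,U_{g,1}^n)$, this equals $\pi_{k-1}(\map_*(U_{g,1}^n,U_{g,1}^n),\id)$ for $k \geq 2$. Using the homotopy equivalence $U_{g,1}^n \simeq \bigvee^g S^n \vee \bigvee^g S^{n+1}$ noted at the start of \S \ref{subsec:lowdimpistar}, the pointed mapping space out of this wedge splits as
\[
\map_*(U_{g,1}^n,U_{g,1}^n) \simeq \bigl(\Omega^n U_{g,1}^n\bigr)^g \times \bigl(\Omega^{n+1} U_{g,1}^n\bigr)^g,
\]
so for each $k \geq 1$ I get, after tensoring with $\bQ$,
\[
\pi_k(\hAut_*(U_{g,1}^n)) \otimes \bQ \;\cong\; \bigl(\pi_{n+k}(U_{g,1}^n)\otimes\bQ\bigr)^g \oplus \bigl(\pi_{n+k+1}(U_{g,1}^n)\otimes\bQ\bigr)^g.
\]

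The second step is to plug in the rational homotopy groups of $U_{g,1}^n$. By the free Lie algebra description $s^{-1} \pi_*((U_{g,1}^n)_\bQ) \cong \bL(N(g)[n-1] \oplus N(g)^\vee[n])$ recalled in \S \ref{subsec:lowdimpistar}, we have $\pi_n \otimes \bQ = N(g)$ of dimension $g$, $\pi_{n+1} \otimes \bQ = N(g)^\vee$ of dimension $g$, and the vanishing \eqref{eqn:vanishingrationalhomotopy} of $\pi_{n+j}\otimes\bQ$ for $2 \leq j \leq n-2$. Applying the above splitting:

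For $k=2$ I get $\pi_1(\hAut_*) \otimes \bQ = g \cdot (\pi_{n+1} \otimes \bQ) \oplus g \cdot (\pi_{n+2} \otimes \bQ)$, and since $n \geq 3$ forces $\pi_{n+2}\otimes\bQ = 0$ by \eqref{eqn:vanishingrationalhomotopy} (or directly because the free Lie algebra on generators in degrees $n-1$ and $n$ is zero in degree $n+1$ when $n \geq 3$), this gives $g \cdot g = g^2$.

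For $3 \leq k \leq n-2$, both summands involve $\pi_{n+j}\otimes\bQ$ with $j = k-1$ or $j = k$ lying in the range $[2,n-2]$, hence both vanish by \eqref{eqn:vanishingrationalhomotopy}.

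No serious obstacle is anticipated; the only subtle point is the empty-range case $n=3$ of \eqref{eqn:vanishingrationalhomotopy}, where the assertion ``$3 \leq k \leq n-2$'' is vacuous so nothing needs to be checked, and the degree $n+1 = 4$ piece of $\bL(N(g)[2]\oplus N(g)^\vee[3])$ is visibly zero for dimensional reasons, securing the $k=2$ value $g^2$.
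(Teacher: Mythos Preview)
Your approach---splitting $\map_*(U_{g,1}^n,U_{g,1}^n)$ along the wedge decomposition of the \emph{source}---is correct and in fact slightly cleaner than the paper's, which instead replaces the \emph{target} by the product $(S^n)^g\times(S^{n+1})^g$ at the cost of an $(n-2)$-connectivity approximation. Your exact identification
\[
\pi_{k-1}(\hAut_*(U_{g,1}^n))\otimes\bQ\;\cong\;\bigl(\pi_{n+k-1}(U_{g,1}^n)\otimes\bQ\bigr)^{\oplus g}\oplus\bigl(\pi_{n+k}(U_{g,1}^n)\otimes\bQ\bigr)^{\oplus g}
\]
yields the stated answer immediately for all $n\geq 4$, just as the paper's approximation does.

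There is, however, a genuine error in your treatment of $n=3$. The degree-$4$ piece of $\bL(N(g)[2]\oplus N(g)^\vee[3])$ is \emph{not} zero: it is spanned by brackets $[x_i,x_j]$ of the degree-$2$ generators, and since Whitehead products of two $\pi_3$-classes satisfy $[x_i,x_j]=-[x_j,x_i]$, this piece is $\Lambda^2 N(g)$, of dimension $\binom{g}{2}$. Your own formula therefore gives $\dim_\bQ\pi_2(B\hAut_*(U_{g,1}^3))\otimes\bQ=g^2+g\binom{g}{2}$ for $g\geq 2$, not $g^2$. Note that \eqref{eqn:vanishingrationalhomotopy} says nothing about $\pi_{n+2}$ when $n=3$, as you yourself observe. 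The paper's own proof shares this limitation: its isomorphism is asserted only for $1\leq k-1\leq n-3$, which is vacuous when $n=3$. In other words, the $k=2$, $n=3$ case of the lemma as written is actually false for $g\geq 2$; fortunately it is only invoked under the hypothesis $n\geq 4$ (see Lemma~\ref{lem:lowdimhomotopyhomoauto2}), so nothing downstream is affected. You should simply drop the final paragraph and restrict the $k=2$ statement to $n\geq 4$.
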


\begin{proof}
For $k \geq 1$, we have $ \pi_k (\hAut_* (U_{g,1}^n)) \cong \pi_k (\map_* (U_{g,1}^n;U_{g,1}^n);\id)$. The inclusion 
\[
U_{g,1}^n \simeq \bigvee^g S^n \vee \bigvee^g S^{n+1} \to (S^n)^g \times (S^{n+1})^g
\]
is the inclusion of the $(2n-1)$-skeleton and hence $(2n-1)$-connected. So the induced map 
\[
\map_* (U_{g,1}^n;U_{g,1}^n) \to \map_* (U_{g,1}^n;(S^n\times S^{n+1})^g)
\]
is $(2n-1)-(n+1)=(n-2)$-connected. Because
\[
\map_* (U_{g,1}^n;(S^n\times S^{n+1})^g) \simeq (\Omega^n (S^n\times S^{n+1})^g) )^g \times (\Omega^{n+1} (S^n\times S^{n+1})^g) )^g,
\]
we obtain an isomorphism
\[
\begin{split}
& \pi_k (\map_* (U_{g,1}^n;U_{g,1}^n))  \cong\\
& (\pi_{n+k}(S^n))^{\oplus g^2}\oplus  (\pi_{n+k}(S^{n+1}))^{\oplus g^2} \oplus (\pi_{n+1+k}(S^n))^{\oplus g^2} \oplus (\pi_{n+k+1}(S^{n+1}))^{\oplus g^2}
\end{split}
\]
for $1 \leq k \leq n-3$. The claim follows by using the knowledge of the rational homotopy groups of the spheres.
\end{proof}

We need to know a precise description of $\pi_1 (\hAut_* (U_{g,1}^n))\otimes \bQ \cong \pi_1 (\hAut_* ((U_{g,1}^n)_\bQ))$, not merely its dimension which we just computed. We identify 
\[
\pi_1 (\hAut_* (U_{g,1}^n))\cong \pi_2 (B\hAut_* (U_{g,1}^n))= H_2 ((B \hAut_* (U_{g,1}^n))^\id;\bZ). 
\]
An element $\gamma \in \pi_2 (B \hAut_* (U_{g,1}^n))$ classifies a fibration $E_\gamma \to S^2$, together with a cross-section and a homotopy equivalence of the fibre over the basepoint with $U_{g,1}^n$. 

The only potentially nonzero differential of the homological Leray--Serre spectral sequence of $E_\gamma \to S^2$ is the map 
\[
d^2_{2,n}: E^2_{2,n} \to E^2_{0,n+1},
\]
which can be rewritten as 
\[
d(\gamma):  N(g)_\bZ \cong H_2 (S^2;N(g)_\bZ)  \to H_0 (S^2;N(g)^\vee_\bZ) \cong N(g)_\bZ^\vee. 
\]
Therefore, assigning $\gamma \mapsto d(\gamma)$ gives a map 
\begin{equation}\label{eqn:isomorphism-pitwo-homotopyauto}
d: \pi_2 (B\hAut_* (U_{g,1}^n)) \to \Hom (N(g)_\bZ;N(g)_\bZ^\vee). 
\end{equation}
Similarly, we obtain 
\begin{equation}\label{eqn:isomorphism-pitwo-homotopyautoQ}
d_\bQ: \pi_2 (B\hAut_* ((U_{g,1}^n)_\bQ) \to \Hom (N(g);N(g)^\vee). 
\end{equation}
Source and target of \eqref{eqn:isomorphism-pitwo-homotopyauto} are $\bZ[\pi_0 (\hAut_* (U_{g,1}^n))]$-modules; and source and target of \eqref{eqn:isomorphism-pitwo-homotopyautoQ} are $\bQ[\pi_0 (\hAut_* (U_{g,1}^n)_\bQ)]$-modules. 

\begin{lem}\label{lem:lowdimhomotopyhomoauto2}
The map $d$ is a homomorphism of $\bZ[\pi_0 (\hAut_* (U_{g,1}^n))]$-modules, and similarly $d_\bQ$ is a homomorphism of $\bQ[\pi_0 (\hAut_* (U_{g,1}^n)_\bQ)]$-modules. When $n  \geq 4$, $d_\bQ$ is an isomorphism. 
\end{lem}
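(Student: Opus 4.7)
The plan is to prove equivariance formally from naturality of the Serre spectral sequence, and to prove the isomorphism by matching source and target dimensions and exhibiting an explicit isomorphism via the wedge decomposition of $U_{g,1}^n$.

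First, for equivariance: the $\pi_0(\hAut_\ast(U_{g,1}^n))$-action on $\pi_2(B\hAut_\ast(U_{g,1}^n))$ is the standard $\pi_1$-on-$\pi_2$ action. Geometrically, if $\gamma$ classifies $E_\gamma \to S^2$ together with an identification of the fiber over the basepoint with $U_{g,1}^n$, then $h \cdot \gamma$ classifies the same fibration with the fiber identification twisted by $h$. Functoriality of the Serre spectral sequence under self-equivalences of the fiber yields $d(h \cdot \gamma) = h_\ast \circ d(\gamma) \circ h_\ast^{-1}$, where $h_\ast$ acts on $N(g)_\bZ \cong H_n(U_{g,1}^n;\bZ)$ directly and on $N(g)_\bZ^\vee \cong H_{n+1}(U_{g,1}^n;\bZ)$ via Poincar\'e--Lefschetz duality. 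This is the claimed equivariance, and the same argument applies verbatim to $d_\bQ$.

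For the isomorphism claim (with $n \geq 4$), both source and target are $g^2$-dimensional over $\bQ$: the source by Lemma~\ref{lem:lowdimhomotopyhomoauto1}, the target by direct count. It therefore suffices to exhibit an explicit isomorphism and then identify it with $d_\bQ$. Using $U_{g,1}^n \simeq \bigvee^g S^n \vee \bigvee^g S^{n+1}$,
\[
\map_\ast (U_{g,1}^n, (U_{g,1}^n)_\bQ) \simeq \prod_{i=1}^g \Omega^n (U_{g,1}^n)_\bQ \times \prod_{j=1}^g \Omega^{n+1}(U_{g,1}^n)_\bQ.
\]
Since each factor is an H-space, taking $\pi_1$ at the identity gives
\[
\pi_1(\hAut_\ast ((U_{g,1}^n)_\bQ), \id) \cong \bigoplus_{i=1}^g \pi_{n+1}((U_{g,1}^n)_\bQ) \,\oplus\, \bigoplus_{j=1}^g \pi_{n+2}((U_{g,1}^n)_\bQ).
\]
For $n\geq 4$ the vanishing range \eqref{eqn:vanishingrationalhomotopy} ensures $\pi_{n+2}((U_{g,1}^n)_\bQ) = 0$, and the surviving summand is $\bigoplus_{i=1}^g N(g)^\vee$ via the rational Hurewicz isomorphism. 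The tautological identification $\bigoplus_{i=1}^g N(g)^\vee \cong \Hom(N(g), N(g)^\vee)$ sending $(\psi_1, \ldots, \psi_g)$ to the linear map $a_i \mapsto \psi_i$ completes the construction of an isomorphism of the right dimension.

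The final step is to verify that this isomorphism agrees, up to a universal sign, with $d_\bQ$. Given a loop $\phi : (S^1,\ast) \to (\hAut_\ast((U_{g,1}^n)_\bQ),\id)$, its restriction along the $i$-th summand $S^n \hookrightarrow U_{g,1}^n$ is a loop in $\Omega^n (U_{g,1}^n)_\bQ$ based at the inclusion $\iota_i$, and translating via the H-space structure identifies it with some $\phi_i \in \pi_{n+1}((U_{g,1}^n)_\bQ)$. The classified fibration $E_\phi \to S^2$ has the clutching presentation $E_\phi = (D^2_+\times U_{g,1}^n) \cup_\phi (D^2_-\times U_{g,1}^n)$, and a direct chain-level Mayer--Vietoris computation tracing $[S^2]\otimes a_i$ through the transgression shows that $d_\bQ(\phi)(a_i) = \pm\,\hur_{n+1}^\bQ(\phi_i) \in N(g)^\vee$. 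Thus $d_\bQ$ coincides with the isomorphism constructed above and is itself an isomorphism. The main obstacle is precisely this last identification of the Serre transgression with the Hurewicz image of the clutching data; the argument is a careful unwinding of the clutching description of $E_\phi$ and the naturality of the Serre spectral sequence, analogous to the classical computation for spherical fibrations over $S^2$.
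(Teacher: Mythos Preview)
Your equivariance argument is essentially the paper's. However, you have not proven that $d$ is \emph{additive}, i.e.\ that $d(\gamma_0 + \gamma_1) = d(\gamma_0) + d(\gamma_1)$; this is part of what ``homomorphism of $\bZ[\pi_0(\hAut_*(U_{g,1}^n))]$-modules'' means, and the paper devotes a careful argument to it (comparing the spectral sequence of the fibration over $S^2$ with that over $S^2 \vee S^2$ via the pinch and fold maps). Your approach would recover additivity of $d_\bQ$ as a byproduct of identifying it with an explicit linear map, but only if that identification is completed, and it would still not directly give additivity of the integral $d$.

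For the isomorphism the two approaches are genuinely different. The paper argues geometrically: if $d(\gamma) = 0$ then the Serre spectral sequence of $E_\gamma \to S^2$ collapses, and one builds a rational fibre-homotopy trivialization of $E_\gamma$ by mapping it to a product of Eilenberg--Mac Lane spaces and lifting to $(U_{g,1}^n)_\bQ$ using connectivity estimates. This shows $\ker(d)$ lies in the kernel of rationalization, hence is finite; injectivity of $d_\bQ$ and the dimension count from Lemma~\ref{lem:lowdimhomotopyhomoauto1} finish. Your route instead tries to compute $d_\bQ$ directly from the wedge decomposition and the clutching description of $E_\phi$, which is a sound strategy and would in fact yield more: an explicit formula for $d_\bQ$, not merely that it is an isomorphism.

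The genuine gap is that you do not actually carry out this computation. The assertion that ``a direct chain-level Mayer--Vietoris computation\dots shows that $d_\bQ(\phi)(a_i) = \pm\,\hur_{n+1}^\bQ(\phi_i)$'' is precisely the heart of your argument and cannot be left as an exercise; this identification of the Serre transgression with the Hurewicz image of the clutching datum is the entire content in your approach. It is true, and your outline of how to get it is reasonable, but a proof must execute it (for instance by restricting $E_\phi$ along the $i$th wedge inclusion $S^n \hookrightarrow U_{g,1}^n$ to obtain an $S^n$-fibration over $S^2$, computing its $d_2$ from the adjoint map $S^{n+1} \to (U_{g,1}^n)_\bQ$, and then invoking naturality of the spectral sequence under the inclusion).
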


\begin{proof}
The group $\pi_0 (\hAut_* (U_{g,1}^n))$ acts on $\pi_2 (B\hAut_* (U_{g,1}^n))$ by changing the identification of the fibres. Therefore, it is clear that $d$ is $\pi_0 (\hAut_* (U_{g,1}^n))$-equivariant; and the same argument applies to $d_\bQ$. 

That $d$ and $d_\bQ$ are additive (the latter is then automatically $\bQ$-linear) requires a more detailed argument, which is the same in both cases, so we concentrate on $d$. Let $\gamma_0,\gamma_1$ be two elements of $\pi_2 (B\hAut_* (U_{g,1}^n))$. Identification of the fibres over the basepoint gives a fibration over $S^2 \vee S^2$ which we denote by $E_{\gamma_0} \vee E_{\gamma_1} \to S^2 \vee S^2$, abusing notation for simplicity. 

The pinch map $p: S^2 \to S^2 \vee S^2 $ is covered by a fibrewise homotopy equivalence $E_{\gamma_0+\gamma_1} \to E_{\gamma_0} \vee E_{\gamma_1 } $. We obtain a commutative diagram
\begin{equation}\label{eqn:proofadditiviry1}
\xymatrix{
H_2 (S^2;N(g)_\bZ) \ar[r]^-{p_*} \ar[d]^{d(\gamma_0+\gamma_1)} & H_2 (S^2 \vee S^2;N(g)_\bZ)\ar[d]^{d}\\
H_0 (S^2;N(g)^\vee_\bZ) \ar[r]^-{p_*} & H_0 (S^2\vee S^2;N(g)^\vee_\bZ)
}
\end{equation}
where the right horizontal map is the differential for the spectral sequence of $E_{\gamma_0}\vee E_{\gamma_1} \to S^2 \vee S^2$. Moreover, the two inclusions $S^2 \to S^2 \vee S^2$ are covered by fibrewise homotopy equivalences $E_{\gamma_j} \to E_{\gamma_0} \vee E_{\gamma_1}$, from which we obtain the upper half of the commutative diagram
\begin{equation}\label{eqn:proofadditiviry2}
\xymatrix{
H_2 (S^2;N(g)_\bZ)\oplus H_2 (S^2;N(g)_\bZ) \ar[r]^-{\phi} \ar[d]^{d(\gamma_0)\oplus d(\gamma_1)} & H_2 (S^2 \vee S^2;N(g)_\bZ)\ar[d]^{d}\\
H_0 (S^2;N(g)^\vee_\bZ)\oplus H_0 (S^2;N(g)^\vee_\bZ) \ar[r]^-{\psi} \ar[dr]_{\Sigma} & H_0 (S^2\vee S^2;N(g)^\vee_\bZ) \ar[d]^{f_*}\\
& H_0 (S^2;N(g)^\vee_\bZ);
}
\end{equation}
the map $f_*$ is induced by the fold map $S^2  \vee S^2 \to S^2$, $\Sigma$ is the sum map, and $\phi$ and $\psi$ are just names for the obvious maps.

Now let $a \in H_2 (S^2;N(g)_\bZ)$; then $p_*(a)=\phi(a,a)$. Moreover $f_* \circ p_*: H_0 (S^2;N(g)^\vee_\bZ) \to H_0 (S^2;N(g)^\vee_\bZ)$ is the identity. So 
\[
d(\gamma_0+\gamma_1) (a) = f_* \circ p_* \circ d(\gamma_0+\gamma_1) (a) = f_* \circ d \circ p_* (a) =  
\]
\[
=f_* \circ d \circ \phi(a,a) = \Sigma \circ (d(\gamma_0)\oplus d (\gamma_1))(a) = d(\gamma_0)(a) + d(\gamma_1)(a),
\]
as claimed. 

For the second part of the proof, consider the diagram
\begin{equation}\label{eqn:diagramidentificationpitwobhaut}
\xymatrix{
\pi_2 (B\hAut_* (U_{g,1}^n)) \ar[r]^{d}\ar[d] & \Hom (N(g)_\bZ;N(g)_\bZ^\vee)\ar[d]\\
\pi_2 (B\hAut_* ((U_{g,1}^n)_\bQ) \ar[r]^{d_\bQ} & \Hom (N(g);N(g)^\vee)
}
\end{equation}
which obviously commutes. We will show that 
\begin{equation}\label{eqn:kerneldinkernelrational}
\ker (d) \subset  \ker \Bigl( \pi_2 (B\hAut_* (U_{g,1}^n)) \to \pi_2 (B\hAut_* ((U_{g,1}^n)_\bQ))\Bigl).
\end{equation}
Both vertical maps in \eqref{eqn:diagramidentificationpitwobhaut} are rationalizations (the left one by Lemma \ref{lem:rationalization-homotopyautomorphisms}), and so \eqref{eqn:kerneldinkernelrational} shows that $\ker (d)$ is finite. Hence $d_\bQ$ is injective, and Lemma \ref{lem:lowdimhomotopyhomoauto1}, together with a dimension count, proves that $d_\bQ$ is an isomorphism. 

To show \eqref{eqn:kerneldinkernelrational}, we must show that a fibration $\pi: E \to S^2 $ with fibre $U_{g,1}^n$ and a cross-section whose Leray--Serre spectral sequence collapses at the $E^2$-stage is rationally fibre-homotopy equivalent to $S^2 \times (U_{g,1}^n)_\bQ$. 
Since the spectral sequence collapses and since $n \geq 3$, the inclusion of the fibre $U_{g,1}^n\to E$ induces isomorphisms on $H_n$ and $H_{n+1}$ and so gives isomorphisms $N(g)_\bZ \to H_n (E)$ and $N(g)_\bZ^\vee \to H_{n+1}(E)$. From the inverses of those isomorphisms, we obtain a map 
\[
f: E \to K(N(g)_\bZ,n) \times K(N(g)_\bZ^\vee,n+1) \to K(N(g),n) \times K(N(g)^\vee,n+1)
\]
inducing an isomorphism on rational homology in degrees $n$ and $n+1$.
The natural map $(U_{g,1}^n)_\bQ\to K(N(g),n) \times K(N(g)^\vee,n+1)$, which induces the identity on homology in degrees $n$ and $n+1$, is $(2n-1) \geq (n+3)$-connected, and since the homotopy dimension of $E$ is $n+3$, we can deform $f$ to a map $g:E \to (U_{g,1}^n)_\bQ$, such that the composition with the inclusion of the fibre is the rationalization map $U_{g,1}^n \to (U_{g,1}^n)_\bQ$. The map 
\[
h:= (\pi,g): E \to S^2 \times (U_{g,1}^n)_\bQ
\]
over $S^2$ induces an isomorphism in rational homology up to degree $n+1$. Using a spectral sequence comparison argument, we obtain that $h$ is a rational homology equivalence, and hence also a rational homology equivalence on the fibres.  
\end{proof}

\begin{prop}\label{prop:rationalhomotopyhautug1n}
We have 
\[
\pi_k (B \hAut_\partial (U_{g,1}^n)) \otimes \bQ =0
\]
for $3 \leq k \leq n-3$. If $n \geq 5$, the map 
\[
\pi_2 (B \hAut_\partial (U_{g,1}^n)) \otimes \bQ \to \pi_2 (B \hAut_* (U_{g,1}^n)) \otimes \bQ
\]
is injective and can be identified, $\pi_0 ( \hAut_\partial ((U_{g,1}^n)_\bQ)$-equivariantly, with the inclusion of 
\[
\begin{cases}
S^2 (N(g)^\vee) & n \; \text{even}\\
\Lambda^2 (N(g)^\vee) & n \; \text{odd}
\end{cases}
\]
into $N(g)^\vee \otimes N(g)^\vee = \Hom(N(g);N(g)^\vee)$. 
\end{prop}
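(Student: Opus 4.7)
The plan is to extract the required rational homotopy groups from the long exact sequence
\[
\cdots \to \pi_{k+2n+1}((U_{g,1}^n)_\bQ) \to \pi_k(\hAut_\partial)_\bQ \to \pi_k(\hAut_*)_\bQ \xrightarrow{\delta_k} \pi_{k+2n}((U_{g,1}^n)_\bQ) \to \cdots
\]
associated to the fibre sequence \eqref{fibresequence}. Since $U_{g,1}^n \simeq \bigvee^g S^n \vee \bigvee^g S^{n+1}$, the shifted rational homotopy Lie algebra $s^{-1}\pi_*((U_{g,1}^n)_\bQ)$ is the free graded Lie algebra $\bL(V)$ on $V = V_{n-1}\oplus V_n$ with $V_{n-1}=N(g)$, $V_n=N(g)^\vee$. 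A degree count is the key numerical input: for $n\ge 5$, $\bL(V)_d=0$ for every $d\in[2n+1,3n-4]$, since generators sit in degrees $n-1,n$, double brackets in $\{2n-2,2n-1,2n\}$ and triple brackets in degrees $\ge 3n-3$.

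For the first claim, set $m=k-1\in[2,n-4]$. Both $\pi_{m+2n}((U_{g,1}^n)_\bQ)$ and $\pi_{m+2n+1}((U_{g,1}^n)_\bQ)$ vanish by the gap just described, while $\pi_m(\hAut_*)_\bQ=0$ by Lemma \ref{lem:lowdimhomotopyhomoauto1}. Hence $\pi_m(\hAut_\partial)_\bQ=0$, i.e.\ $\pi_k(B\hAut_\partial(U_{g,1}^n))_\bQ=0$.

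For the second claim, take $k=1$. The same gap argument yields $\pi_{2n+2}((U_{g,1}^n)_\bQ)=\bL(V)_{2n+1}=0$, so $\pi_1(\hAut_\partial)_\bQ\hookrightarrow\pi_1(\hAut_*)_\bQ$ with image $\ker(\delta_1)$. Graded skew-symmetry of the Lie bracket identifies $\pi_{2n+1}((U_{g,1}^n)_\bQ)=\bL(V)_{2n}$, spanned by brackets $[y_i,y_j]$, with $\Lambda^2 N(g)^\vee$ when $n$ is even and with $S^2 N(g)^\vee$ when $n$ is odd.

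The crux is then to identify $\delta_1$ explicitly. Under Lemma \ref{lem:lowdimhomotopyhomoauto2}, a loop class $\varphi\in\pi_1(\hAut_*)_\bQ$ corresponds to a linear map $f\colon N(g)\to N(g)^\vee$, and a concrete unwinding identifies $f(a_j)=\sigma_j\in\pi_{n+1}((U_{g,1}^n)_\bQ)$ with the track of $\varphi_t(x_j)$. Applying bilinearity of the Whitehead product map $\Omega^n M\times\Omega^{n+1} M\to\Omega^{2n} M$ to the formula $\varphi_t\circ\omega=\sum_j[\varphi_t(x_j),\varphi_t(y_j)]$, and using that the $y_j$-track vanishes rationally (since $\pi_{n+2}((U_{g,1}^n)_\bQ)=0$ for $n\ge 3$), one obtains
\[
\delta_1(f)=\sum_j[\sigma_j,y_j]=\sum_{j,k}f_{jk}[y_k,y_j]\in\bL(V)_{2n},
\]
where $\sigma_j=\sum_k f_{jk}y_k$. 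The kernel consists of the symmetric matrices $(f_{jk})$ when $n$ is even (because $[y_k,y_j]=-[y_j,y_k]$) and of the antisymmetric ones when $n$ is odd (because $[y_k,y_j]=[y_j,y_k]$), yielding $S^2 N(g)^\vee$ or $\Lambda^2 N(g)^\vee$ inside $N(g)^\vee\otimes N(g)^\vee=\Hom(N(g);N(g)^\vee)$. $\pi_0(\hAut_\partial((U_{g,1}^n)_\bQ))$-equivariance is automatic from the naturality of all the constructions.

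The main obstacle is the bilinearity identification of $\delta_1$ in a way compatible with the particular $d_2$-identification of Lemma \ref{lem:lowdimhomotopyhomoauto2}. An equivalent and notationally cleaner route is to invoke the Quillen model: rationally, $\pi_1(\hAut_*(M_\bQ))$ is the space of degree-$+1$ derivations of $\bL(V)$, and such a derivation is forced (since $\bL(V)_{n+1}=0$ for $n\ge 3$) to be determined by its restriction $V_{n-1}\to V_n$ together with $D|_{V_n}=0$. The connecting map becomes $D\mapsto D(\omega)$, and the Leibniz rule gives exactly the formula displayed above.
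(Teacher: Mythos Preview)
Your argument for the vanishing in the range $3\le k\le n-3$ and for the injectivity on $\pi_2$ is correct and essentially identical to the paper's; your use of $\pi_{2n+2}((U_{g,1}^n)_\bQ)=\bL(V)_{2n+1}=0$ for $n\ge 5$ to obtain injectivity is in fact slightly slicker than the paper's appeal to the injectivity of $\pi_1(B\hAut_\partial)\to\pi_1(B\hAut_*)$ established in the proof of Lemma~\ref{lem:pinullhautrelbound}.

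For the identification of the image inside $\Hom(N(g),N(g)^\vee)$ the two arguments diverge genuinely. You compute the restriction map $\delta_1$ explicitly, either by a Leibniz expansion of $\varphi_t\circ\omega=\sum_j[\varphi_t(x_j),\varphi_t(y_j)]$ or via the derivation model of $\hAut_*$, and read off its kernel. The paper instead observes that the short exact sequence $0\to\pi_2(B\hAut_\partial)_\bQ\to N(g)^{\vee\,\otimes 2}\to\pi_{2n+1}((U_{g,1}^n)_\bQ)\to 0$ is $\GL(N(g))$-equivariant and that the quotient is the \emph{irreducible} module $S^2(N(g)^\vee)$ or $\Lambda^2(N(g)^\vee)$; the surjection is therefore forced to be a nonzero multiple of the canonical projection, and the kernel is the complementary summand. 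This one-line representation-theoretic step bypasses exactly the issue you flag as ``the main obstacle'': matching your track-of-$\varphi_t(x_j)$ description of $\pi_1(\hAut_*)_\bQ$ with the spectral-sequence map $d$ of Lemma~\ref{lem:lowdimhomotopyhomoauto2}. Your route, carried through, would give more (an explicit formula for $\delta_1$, not just its kernel), but at the cost of either that compatibility check or of importing the Quillen derivation model, neither of which the paper develops. One small slip: $\pi_{n+2}((U_{g,1}^n)_\bQ)=0$ needs $n\ge 4$, not $n\ge 3$, since for $n=3$ the degree $n+1=2(n-1)$ hits the brackets $[x_i,x_j]$; this is immaterial under the hypothesis $n\ge 5$.
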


\begin{proof}
From the fibre sequence \eqref{fibresequence}, we obtain an exact sequence 
\[
\pi_{2n+k} (U_{g,1}^n) \otimes \bQ \to \pi_k (B \hAut_\partial (U_{g,1}^n))\otimes \bQ \to \pi_k (B \hAut_* (U_{g,1}^n))\otimes \bQ . 
\]
By \eqref{eqn:vanishingrationalhomotopy}, the leftmost group vanishes when $2 \leq k \leq n-3$. Together with Lemma \ref{lem:lowdimhomotopyhomoauto1}, the first claim follows, as well as the injectivity on $\pi_2$. 

For the remaining claim, consider the exact sequence 
\[
\begin{split}
0  \to \pi_2 (B \hAut_\partial ((U_{g,1}^n)_\bQ)) & \to \pi_2 (B \hAut_* ((U_{g,1}^n)_\bQ)) \to \pi_{2n+1} ((U_{g,1}^n)_\bQ) \to\\
\to  \pi_1 (B \hAut_\partial ((U_{g,1}^n)_\bQ)) & \to \pi_1 (B \hAut_* ((U_{g,1}^n)_\bQ)) \to \pi_{2n}((U_{g,}^n)_\bQ) 
\end{split}
\]
derived from \eqref{fibresequence}. The proof of Lemma \ref{lem:pinullhautrelbound} shows that $\pi_1 (B \hAut_\partial ((U_{g,1}^n)_\bQ))  \to \pi_1 (B \hAut_* ((U_{g,1}^n)_\bQ))$ is injective, so we obtain a short exact sequence 
\begin{equation}\label{eqn:pi2BhautVg1n}
0 \to \pi_2 (B \hAut_\partial ((U_{g,1}^n)_\bQ))\to \pi_2 (B \hAut_* ((U_{g,1}^n)_\bQ)) \to \pi_{2n+1} ((U_{g,1}^n)_\bQ) \to 0. 
\end{equation}
The vector space $\pi_{2n+1} ((U_{g,1}^n)_\bQ)$ is generated by the Whitehead products $[y_i,y_j]$, modulo the relations that are universally satisfied by Whitehead products of two elements of degree $n+1$, that is
\[
[y_i,y_j]= (-1)^{(n+1)^2} [y_j,y_i].
\]
It follows that
\begin{equation}\label{eqn:relationwhiteheadprofutvz}
\pi_{2n+1} ((U_{g,1}^n)_\bQ) \cong
\begin{cases}
S^2 (N(g)^\vee) & n \; \text{odd}\\
\Lambda^2 (N(g)^\vee) & n \; \text{even}. 
\end{cases}
\end{equation}
The sequence \eqref{eqn:pi2BhautVg1n} is a sequence of $\pi_0 (\hAut_\partial ((U_{g,1}^n)_\bQ))=\GL(N(g))$-modules, and as $S^2 (N(g)^\vee)$, as well as $\Lambda^2 (N(g)^\vee)$ are irreducible $\GL(N(g))$-modules, the second map in \eqref{eqn:pi2BhautVg1n} must, up to multiplication by a nonzero constant, agree with the natural projection from $ \Hom (N(g);N(g)^\vee) = (N(g)^\vee)^{\otimes 2}$ to $S^2 (N(g)^\vee)$ or $\Lambda^2 (N(g)^\vee)$. Therefore, the kernel of these maps are as asserted.
\end{proof}

\subsection{The spectral sequence for tangential homotopy automorphisms}\label{subsec:computationdifferential}

Lemma \ref{lem:pinullhautrelbound} (3) and Theorem \ref{thm:blockdiffsversusmappingspace} prove the following. 

\begin{cor}\label{cor:finishpreliminarycalculuatin}
The algebra map 
\[
H^* (\hq{\map_\partial ((U_{g,1}^n)_\bQ;BO_\bQ)^0}{\hAut_\partial((U_{g,1}^n)_\bQ)_\bZ}) \to H^* (B \blockdiff_\partial (U_{g,1}^n))
\]
is surjective if $n \geq 3$, and the kernel is the ideal generated by the classes $\kappa_{L_m}$, $m \in \bN$.\qed
\end{cor}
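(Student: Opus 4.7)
The plan is to directly combine Theorem \ref{thm:blockdiffsversusmappingspace} with Lemma \ref{lem:pinullhautrelbound}(3), with only a minor bookkeeping step in between.

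First I would verify that the hypotheses of Theorem \ref{thm:blockdiffsversusmappingspace} apply to $M = U_{g,1}^n$ for $n \geq 3$: the manifold is $1$-connected (since $n \geq 2$), stably parallelizable (as a punctured connected sum of copies of $S^n \times S^{n+1}$, both of which are stably parallelizable), has dimension $d = 2n+1 \geq 7 \geq 5$, and satisfies $\partial U_{g,1}^n = S^{2n} = S^{d-1}$. Since $d$ is odd, case (1) of Theorem \ref{thm:blockdiffsversusmappingspace} yields that the algebra map
\[
H^* (\hq{\map_\partial ((U_{g,1}^n)_\bQ;BO_\bQ)^0}{\hAut_\partial((U_{g,1}^n)_\bQ)^{\cong}}) \to H^* (B \blockdiff_\partial (U_{g,1}^n))
\]
is surjective with kernel the ideal generated by the $\kappa_{L_m}$ with $4m - d > 0$. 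Note that for $4m \leq d$ the class $\kappa_{L_m}$ lives in non-positive cohomological degree and thus contributes nothing to the ideal, so this kernel coincides with the ideal generated by $\kappa_{L_m}$ for all $m \in \bN$, as stated in the corollary.

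Next I would replace $\hAut_\partial((U_{g,1}^n)_\bQ)^{\cong}$ by $\hAut_\partial((U_{g,1}^n)_\bQ)_\bZ$ in the homotopy quotient. By construction both are grouplike submonoids of $\hAut_\partial((U_{g,1}^n)_\bQ)$ defined as unions of path components, with $\hAut_\partial((U_{g,1}^n)_\bQ)^{\cong} \subset \hAut_\partial((U_{g,1}^n)_\bQ)_\bZ$. Lemma \ref{lem:pinullhautrelbound}(3) asserts that these two submonoids have the same set of path components, hence they are literally equal as subspaces. Consequently the two homotopy quotients agree, and substituting one for the other in the displayed isomorphism proves the corollary.

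The proof really involves no obstacle beyond checking that the hypotheses line up: the heavy lifting is done by Theorem \ref{thm:blockdiffsversusmappingspace} (which encodes the surgery-theoretic input from Berglund--Madsen together with the family signature theorem), while Lemma \ref{lem:pinullhautrelbound}(3) is what makes the source of the map match the monoid $\hAut_\partial((U_{g,1}^n)_\bQ)_\bZ$ that the subsequent computation in \S\ref{sec:homotopyautomorphisms} is set up to handle.
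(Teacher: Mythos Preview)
Your proposal is correct and is precisely the argument the paper has in mind: the corollary is stated with a \qed and preceded by the sentence ``Lemma \ref{lem:pinullhautrelbound} (3) and Theorem \ref{thm:blockdiffsversusmappingspace} prove the following,'' and you have simply unpacked that combination, including the harmless observation that the $\kappa_{L_m}$ with $4m \leq d$ lie in nonpositive degree.
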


Hence we need to calculate the cohomology of $\hq{\map_\partial ((U_{g,1}^n)_\bQ;BO_\bQ)^0}{\hAut_\partial((U_{g,1}^n)_\bQ)_\bZ}$, and we shall approach this through the spectral sequences for the two fibrations 
\begin{equation}\label{eqn:firstkeyfibresequence}
\hq{\map_\partial ((U_{g,1}^n)_\bQ;BO_\bQ)^0}{\hAut_\partial((U_{g,1}^n)_\bQ)^{\id}} \to \hq{\map_\partial ((U_{g,1}^n)_\bQ;BO_\bQ)^0}{\hAut_\partial((U_{g,1}^n)_\bQ)_\bZ} \to B \GL(N(g)_\bZ),
\end{equation}
(where we identified the base space using $\pi_0 (\hAut_\partial((U_{g,1}^n)_\bQ)_\bZ) \cong \GL(N(g)_\bZ)$ which is valid when $ n \geq 3$ by Lemma \ref{lem:pinullhautrelbound}), and
\begin{equation}\label{eqn:secondkeyfibresequence}
\map_\partial ((U_{g,1}^n)_\bQ;BO_\bQ)^0 \to \hq{\map_\partial ((U_{g,1}^n)_\bQ;BO_\bQ)^0}{\hAut_\partial((U_{g,1}^n)_\bQ)^{\id}} \to B \hAut_\partial((U_{g,1}^n)_\bQ)^{\id}. 
\end{equation}

Combining our work so far, the $E_2$-term of the spectral sequence of \eqref{eqn:secondkeyfibresequence} is readily computed in a range of degrees. Let us first dispose of the special case $n=4$, which is now easy.

\begin{proof}[Proof of Theorem \ref{thm:blockdiffs} in the case $n=4$]
For $n=4$, Theorem \ref{thm:blockdiffs} just asserts that $H^1 (B  \blockdiff_\partial (U_{g,1}^4);\bQ)=0$ for large enough $g$. Corollary \ref{cor:cohomology-mappingspace} shows that $H^1 (\map_\partial (U_{g,1}^4);\bQ)=0$. Hence by the spectral sequence for \eqref{eqn:secondkeyfibresequence}, we obtain $H^1 (\hq{\map_\partial ((U_{g,1}^n)_\bQ;BO_\bQ)^0}{\hAut_\partial((U_{g,1}^n)_\bQ)^{\id}};\bQ)=0$. As $H^1 (B \GL_g (\bZ);\bQ)=0$ for all $g$, the spectral sequence for \eqref{eqn:firstkeyfibresequence} proves the claim.
\end{proof}

The case $n \geq 5$ is more complicated; let us introduce some notation. 
\begin{notation}\label{notation.algebraicpicturespectralsequence}
\begin{enumerate}
\item We denote 
\[
L^2 (N(g)):= 
\begin{cases}
S^2 (N(g)) & n\; \text{even}\\
\Lambda^2 (N(g)) & n \; \text{odd}.
\end{cases}
\]
\item We fix an integer $M$, and define graded vector spaces 
\[
V(n):= \bigoplus_{4m-2n-1>0, m \leq M} \bQ[4m-2n-1], 
\]
\[
U(n) := \bigoplus_{4m-n-1>0, m \leq M} \bQ [4m-n-1]
\]
and 
\[
W(n):= \bigoplus_{4m-n>0, m \leq M} \bQ[4m-n]. 
\]
\end{enumerate}
\end{notation}

The precise choice of $M$ does not play a role as long as it is large enough; see Proposition \ref{prop:spectralsequence-isomorphism} below. We do not allow $M=\infty$ since we need $V(n)$, $U(n)$ and $W(n)$ to be finite-dimensional in order to apply the results of \S \ref{subsec:specialinvariantcalculation}. The role these vector spaces play is explained by the following construction.

\begin{defn}
Let $n \geq 6$ be even. We define a map of bigraded algebras
\begin{equation}\label{eqn:spectralsequenceneven}
S^* (S^2 (N(g))) \otimes \Lambda^* (V(n)) \otimes S^* (N(g) \otimes W(n)) \otimes \Lambda^* (N(g)^\vee \otimes U(n)) \to E^{*,*}_2
\end{equation}
to the $E_2$-term of the spectral sequence of \eqref{eqn:secondkeyfibresequence}, as follows. 
\begin{itemize}
\item On $S^2 (N(g))$, which is in bidegree $(2,0)$, it is the isomorphism $S^2 (N(g)) \cong H^2 (B \hAut_\partial ((U_{g,1}^n)_\bQ)^\id$ from Proposition \ref{prop:rationalhomotopyhautug1n}. 
\item On $V(n)_{4m-n-1} =\bQ$, which is in bidegree $(0,4m-n-1)$, it is the map which sends $1$ to the class $\lambda_{[U_{g,1}^n],L_m} \in H^{4m-2n-1}(\map_{\partial} ((U_{g,1}^n)_\bQ;BO_\bQ)^0) = E_2^{0,4m-2n-1}$ (using the notations introduced in \S \ref{subsec:lowdimpistar} and Definition \ref{defn:lambdaclass}). 
\item On $N(g) \otimes W(n)_{4m-n}= N(g)$, which is in bidegree $(0,4m-n)$, it is the map which sends $a \in N(g)$ to $\lambda_{a,L_m} \in H^{4m-n}(\map_{\partial} ((U_{g,1}^n)_\bQ;BO_\bQ)^0) = E_2^{0,4m-n}$. 
\item On $N(g)^\vee \otimes U(n)_{4m-n-1}=N(g)^\vee$, which is in bidegree $(0,4m-n-1)$, it sends $b \in N(g)^\vee$ to $\lambda_{b,L_m} \in H^{4m-n-1}(\map_{\partial} ((U_{g,1}^n)_\bQ;BO_\bQ)^0) = E_2^{4m-n-1}$. 
\end{itemize}
We similarly define for odd $n\geq 5$ a map 
\begin{equation}\label{eqn:spectralsequencenodd}
S^* (\Lambda^2 (N(g))) \otimes \Lambda^* (V(n)) \otimes \Lambda^* (N(g) \otimes W(n)) \otimes S^* (N(g)^\vee \otimes U(n)) \to E^{*,*}_2
\end{equation}
by the analogous formulas.
\end{defn}

\begin{prop}\label{prop:spectralsequence-isomorphism}
The maps \eqref{eqn:spectralsequenceneven} and \eqref{eqn:spectralsequencenodd} are $\pi_0 (\hAut_\partial ((U_{g,1}^n)_\bQ)) \cong \GL(N(g))$-equivariant. If $n \geq 5$, they are isomorphisms in bidegrees $(p,q)$ with $p \leq n-3$ and $q \leq 4M-2n+2$.
\end{prop}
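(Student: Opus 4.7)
The plan is to compute the two factors $H^*(B)$ and $H^*(F)$ of the Serre $E_2$-page of \eqref{eqn:secondkeyfibresequence} separately and then assemble them, matching the result with the right-hand side of \eqref{eqn:spectralsequenceneven}--\eqref{eqn:spectralsequencenodd}.

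For the base $B = B\hAut_\partial((U_{g,1}^n)_\bQ)^\id$, which is simply connected as the classifying space of a connected grouplike monoid, Proposition \ref{prop:rationalhomotopyhautug1n} gives $\pi_2 \otimes \bQ \cong L^2(N(g)^\vee)$ and $\pi_k \otimes \bQ = 0$ for $3 \leq k \leq n-3$. The $2$-connected cover is therefore rationally $(n-3)$-connected by the long exact sequence of homotopy, so the second Postnikov section $B \to K(\pi_2, 2)$ is $(n-2)$-connected and induces an isomorphism on rational cohomology up to degree $n-3$. Using $H^*(K(V,2);\bQ) = S^*(V^\vee)$ for a rational vector space $V$ (generators in degree $2$), together with the natural identification $(L^2(N(g)^\vee))^\vee \cong L^2(N(g))$ of $\GL(N(g))$-representations and Lemma \ref{lem:pinullhautrelbound} to factor the $\pi_0 \hAut$-action through $\GL(N(g))$, this gives a $\GL(N(g))$-equivariant identification $H^p(B;\bQ) \cong S^p(L^2(N(g)))$ for $p \leq n-3$. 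This is the first tensor factor in \eqref{eqn:spectralsequenceneven}/\eqref{eqn:spectralsequencenodd}.

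For the fibre $F = \map_\partial((U_{g,1}^n)_\bQ; BO_\bQ)^0$, I would apply Corollary \ref{cor:cohomology-mappingspace} to $X = U_{g,1}^n/\partial$. By Poincar\'e--Lefschetz duality, $\tilde{H}_*(X;\bQ)$ is concentrated in degree $n$ (as $N(g)$), degree $n+1$ (as $N(g)^\vee$), and degree $2n+1$ (spanned by $[U_{g,1}^n]$). The corollary then gives $H^*(F)$ as the free graded-commutative algebra on $\lambda_{a,L_m}$ in degree $4m-n$, $\lambda_{b,L_m}$ in degree $4m-n-1$ and $\lambda_{[U],L_m}$ in degree $4m-2n-1$ (for $a,b$ in bases of $N(g), N(g)^\vee$ and all relevant $m$). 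A parity check sorts these into $S^*$ or $\Lambda^*$ factors: for $n$ even $\lambda_{a,L_m}$ is symmetric and the other two are exterior, for $n$ odd $\lambda_{b,L_m}$ is symmetric and the other two exterior, reproducing the last three tensor factors of \eqref{eqn:spectralsequenceneven}/\eqref{eqn:spectralsequencenodd}. Corollary \ref{cor:actiononcohomologymappingspace} certifies $\GL(N(g))$-equivariance. The truncation $m \leq M$ in $V(n), U(n), W(n)$ is harmless in the range $q \leq 4M - 2n + 2$, because the smallest degree of an excluded generator is $4(M+1) - 2n - 1 = 4M - 2n + 3$.

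Finally, since $B$ is simply connected the Serre spectral sequence of \eqref{eqn:secondkeyfibresequence} has trivial local system, giving $E_2^{p,q} = H^p(B) \otimes H^q(F)$ as a bigraded $\bQ$-algebra and as a $\GL(N(g))$-module; combining the previous two steps and checking on generators that the algebra map in \eqref{eqn:spectralsequenceneven}/\eqref{eqn:spectralsequencenodd} agrees with this product identification yields the claim. The most delicate point is the Postnikov argument for the base, both in pinning down the connectivity bound and in correctly tracking the dualization between $\pi_2$ and $H^2$ as $\GL(N(g))$-representations; the rest is assembly of results already in place.
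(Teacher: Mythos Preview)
Your proposal is correct and follows essentially the same route as the paper's own proof: identify $H^*(B\hAut_\partial((U_{g,1}^n)_\bQ)^{\id};\bQ)$ with $S^*(L^2(N(g)))$ up to degree $n-3$ via Proposition \ref{prop:rationalhomotopyhautug1n} and the cohomology of $K(\bQ^k,2)$, identify the fibre cohomology via Corollary \ref{cor:cohomology-mappingspace}, and invoke Corollary \ref{cor:actiononcohomologymappingspace} for equivariance. Your version spells out the Postnikov/connectivity step and the truncation bound $4(M+1)-2n-1=4M-2n+3$ more explicitly than the paper does, but the argument is the same; only watch the minor notational slip where you write $H^p(B)\cong S^p(L^2(N(g)))$ rather than $H^{2p}(B)\cong S^p(L^2(N(g)))$.
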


\begin{proof}
Equivariance follows from Corollary \ref{cor:actiononcohomologymappingspace}, from the naturality of the $\lambda$-classes that was recorded after Definition \ref{defn:lambdaclass}, and from Proposition \ref{prop:rationalhomotopyhautug1n}.

For the isomorphism statement, first observe that the dual of the equivariant isomorphism $H_2 (B \hAut_\partial((U_{g,1}^n)_\bQ)^{\id};\bQ) \cong L^2 (N(g))^\vee $ from Proposition \ref{prop:rationalhomotopyhautug1n} extends to an equivariant algebra map 
\[
S^* (L^2 (N(g))) \to H^* (B \hAut_\partial((U_{g,1}^n)_\bQ)^{\id};\bQ)
\]
which is an isomorphism up to degree $n-3$, also by Proposition \ref{prop:rationalhomotopyhautug1n} and the well-known computation of the cohomology of rational Eilenberg--Mac-Lane spaces. This observation, together with Corollary \ref{cor:cohomology-mappingspace}, implies that the maps in question are isomorphisms.
\end{proof}

\begin{prop}\label{prop:the:E2differential}
Assume $n \geq 5$ and $g \geq 2$. With respect to the isomorphism of Proposition \ref{prop:spectralsequence-isomorphism}, the $d_2$-differential in the spectral sequence of the fibre sequence \eqref{eqn:secondkeyfibresequence} is given on generators as follows.
\begin{enumerate}
\item On $L^2 (N(g))$, it is zero. 
\item On $V(n)$, it is zero. 
\item On $N(g)^\vee \otimes U(n)$, it is zero. 
\item On $N(g) \otimes W(n)$, it is of the form 
\[
N(g) \otimes W(n) \stackrel{p \otimes S}{\to} L^2 (N(g)) \otimes N(g)^\vee \otimes U(n)
\]
where $p: N(q) \to L^2 (N(g)) \otimes N(g)^\vee$ is adjoint to the projection $N(g)^{\otimes 2} \to L^2 (N(g))$ and $S: W(n) \to U(n)$ is a degree $-1$ map whose restriction $W(n)_{4m-n} \to U(n)_{4m-n-1}$ is an isomorphism unless $4m-n-1=0$.
\end{enumerate}
\end{prop}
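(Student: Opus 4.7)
The four parts are handled in order. Part (1) is immediate for degree reasons: $L^2(N(g))$ sits in bidegree $(2,0)$, so $d_2$ lands in $E_2^{4,-1}=0$. Part (2) follows from Lemma \ref{lem:kappaclassaslambdaclass}: the class $\lambda_{[U_{g,1}^n],L_m}$ is the restriction to the fibre of the tautological class $\kappa_{L_m}$ given by Construction \ref{construction-kappaclasses} on the total space of the universal $U_{g,1}^n$-fibration over $B\hAut_\partial((U_{g,1}^n)_\bQ)^\id$, and is hence a permanent cycle.

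For parts (3) and (4), the strategy is to pull the spectral sequence back along test maps $\gamma\colon S^2\to B\hAut_\partial((U_{g,1}^n)_\bQ)^\id$, using the identification $\pi_2(B\hAut_\partial^\id)\otimes\bQ\cong L^2(N(g))^\vee$ of Proposition \ref{prop:rationalhomotopyhautug1n} so that $\gamma$ ranges through a spanning family. The pullback yields a Serre fibration $\map_\partial\to X_\gamma\to S^2$, where $X_\gamma:=\map_{\partial E_\gamma}(E_\gamma;BO_\bQ)^0$ and $E_\gamma\to S^2$ is the $U_{g,1}^n$-bundle classified by $\gamma$. Since $S^2$ has cohomology only in degrees $0$ and $2$, the cohomology of $X_\gamma$ is entirely determined by the $d_2$-differential of this pulled-back sequence, which is the evaluation of the original $d_2$ against $\gamma$. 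On the other hand, $H^*(X_\gamma)$ can be computed independently via Corollary \ref{cor:cohomology-mappingspace} applied to the pair $(E_\gamma,\partial E_\gamma)$; the inputs needed are the homologies $H_*(E_\gamma,\partial E_\gamma;\bQ)$, which are accessible from the pair Serre spectral sequence over $S^2$, whose only nonzero differential is (under Poincar\'e--Lefschetz identification of $H_n(U_{g,1}^n,\partial)$ with $N(g)$ and $H_{n+1}(U_{g,1}^n,\partial)$ with $N(g)^\vee$) the map $d(\gamma)\colon N(g)\to N(g)^\vee$ of Lemma \ref{lem:lowdimhomotopyhomoauto2}.

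Matching the two computations of $H^*(X_\gamma)$ identifies the $d_2$-differentials. For $a\in N(g)$, the class $\lambda_{a,L_m}$ corresponds on $X_\gamma$ to the $\lambda$-class of the image of $a$ in $H_n(E_\gamma,\partial E_\gamma)$; since $a$ transgresses to $d(\gamma)(a)\in N(g)^\vee=H_{n+1}(U_{g,1}^n,\partial)$, tracing through the $\lambda$-class construction yields
\[
d_2(\lambda_{a,L_m})(\gamma) \;=\; c_m\cdot \lambda_{d(\gamma)(a),L_m}
\]
for some $c_m\in\bQ^\times$ when $4m-n-1>0$. Reinterpreting this as $\gamma$ varies through $L^2(N(g))^\vee$ expresses $d_2$ on $N(g)\otimes W(n)$ as $p\otimes S$, with $p$ adjoint to the projection $N(g)^{\otimes 2}\to L^2(N(g))$ and $S\colon W(n)_{4m-n}\to U(n)_{4m-n-1}$ an isomorphism except in the exceptional case $4m-n-1=0$, where $U(n)_{4m-n-1}=0$ by Notation \ref{notation.algebraicpicturespectralsequence} and $S$ trivially vanishes. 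For (3), the same analysis applied to $b\in N(g)^\vee=H_{n+1}(U_{g,1}^n,\partial)$ shows that the obstruction to extension lies in $H_{n+2}(U_{g,1}^n)=0$, hence $b$ extends for any $\gamma$ and $d_2(\lambda_{b,L_m})=0$.

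The main obstacle is the precise identification of the matching between the two descriptions of $H^*(X_\gamma)$, and in particular the identification of $d_2(\lambda_{a,L_m})$ with $\lambda_{d(\gamma)(a),L_m}$: this requires tracking the naturality of the $\lambda$-class construction (Definition \ref{defn:lambdaclass}) through the two-sided bar construction, together with careful sign and scalar conventions. The $\GL(N(g))$-equivariance of the $E_2$-page (Corollary \ref{cor:actiononcohomologymappingspace}) constrains $d_2$ to the asserted form up to scalars, so the real geometric content is verifying the nonvanishing of $c_m$; this can be checked on a minimal nontrivial example such as $g=1$ with a test $\gamma$ coming from a Whitehead product, where $\lambda_{d(\gamma)(a),L_m}$ is manifestly nonzero in the fibre cohomology by Corollary \ref{cor:cohomology-mappingspace}.
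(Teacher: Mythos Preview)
Your treatment of (1) and (2) is correct and matches the paper. The gap is in (3) and (4): the space you call $X_\gamma$ is misidentified. The pullback of the fibration
\[
\hq{\map_\partial((U_{g,1}^n)_\bQ;BO_\bQ)^0}{\hAut_\partial((U_{g,1}^n)_\bQ)^{\id}} \to B\hAut_\partial((U_{g,1}^n)_\bQ)^{\id}
\]
along $\gamma\colon S^2 \to B\hAut_\partial^{\id}$ has as total space the \emph{fibrewise} mapping space, i.e.\ the bundle over $S^2$ whose fibre over $s$ is $\map_\partial((E_\gamma)_s;BO_\bQ)^0$. The global mapping space $\map_{\partial E_\gamma}(E_\gamma;BO_\bQ)^0$ is instead the space of \emph{sections} of that bundle; it does not sit in a fibration over $S^2$ with fibre $\map_\partial$. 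Your two computations of $H^*(X_\gamma)$ are therefore computations of two different spaces, so the matching argument collapses. The obstruction argument for (3) (that $b$ extends over $E_\gamma$ because $H_{n+2}(U_{g,1}^n)=0$) is likewise about the global mapping space and does not transfer to the fibrewise one.

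The paper avoids this pullback entirely. After replacing $BO_\bQ$ by a single $K(\bQ,k)$ (Lemma \ref{claim:differentialidentificationlemmaproof}), it works directly over the universal base: the evaluation map is $\hAut_\partial$-equivariant, so $\ev^* u_k$ extends to the total space of the combined fibration with fibre $U_{g,1}^n \times \map_*(U_{g,1}^n;K(\bQ,k))^0$ and is therefore a permanent cycle there. Writing $\ev^* u_k = \sum_i \alpha_i \times \lambda_{a_i,u_k} + \sum_i \beta_i \times \lambda_{b_i,u_k}$ via \eqref{eqn:formulaevz} and feeding in the known differential $\tilde d_2 \beta_i = \sum_j x_{ij}\otimes\alpha_j$ of the universal $U_{g,1}^n$-fibration (which is the map $d$ of Lemma \ref{lem:lowdimhomotopyhomoauto2}, so the geometric input is the same one you invoke), one solves $\hat d_2(\ev^* u_k)=0$ for $d_2\lambda_{a_j,u_k}$ and obtains the formula with an explicit sign rather than an unspecified scalar $c_m$. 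Part (3) then follows either by degree count in the reduced model or because the fibration has a section via the constant map. Your test-map strategy can be repaired by running exactly this permanent-cycle argument on the pulled-back fibration over $S^2$; but once corrected it becomes the paper's proof specialized, and the detour through $\map_{\partial E_\gamma}(E_\gamma;BO_\bQ)$ does not shortcut it.
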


\begin{proof}
Item (1) holds for degree reasons. For item (2), note that the inclusion map $\map_\partial ((U_{g,1}^n)_\bQ;BO_\bQ)^0 \to \hq{\map_\partial ((U_{g,1}^n)_\bQ;BO_\bQ)^0}{\hAut_\partial ((U_{g,1}^n)_\bQ)^\id}$ pulls back $\kappa_{L_m}$ to the class $\lambda_{[U_{g,1}^n],L_m}$, by Lemma \ref{lem:kappaclassaslambdaclass}. Therefore, the latter class is a permanent cycle, which verifies (2).

For the other two claims, we use the $\hAut_\partial ((U_{g,1}^n)_\bQ)$-equivariant map
\[
\map_\partial ((U_{g,1}^n)_\bQ;BO_\bQ)^0 \to \prod_{m \geq 1} \map_* ((U_{g,1}^n)_\bQ;K(\bQ,4m))^0
\]
given by the cohomology classes $(L_m)_{m \geq 1}$ and a forgetful map. Since the latter map, combined with the projection onto the $m$th factor, pulls back $\lambda_{a,u_m} \in H^* (\map_\partial ((U_{g,1}^n)_\bQ;K(\bQ,4m))^0;\bQ)$ to $\lambda_{a,L_m}$, the proposition follows from the next Lemma.
\end{proof}
\begin{lem}\label{claim:differentialidentificationlemmaproof}
The $E_2$-term of the spectral sequence of the fibration 
\begin{equation}\label{eqn:toymodel-mappingspacefibration}
\hq{\map_* ((U_{g,1}^n)_\bQ;K(\bQ,k))^0}{\hAut_\partial ((U_{g,1}^n)_\bQ)^\id} \to B \hAut_\partial ((U_{g,1}^n)_\bQ)^\id
\end{equation}
is (in the columns up to degree $n-3$, by Proposition \ref{prop:rationalhomotopyhautug1n}) given by 
\[
\bF (L^2 (N(g)))[2,0] \oplus N(g)[0,k-n]\oplus N(g)^\vee[0,k-n-1]) 
\]
(if $k-n-1>0$; if $k-n-1=0$, the last summand is dropped and if $k-n \leq 0$, the last two summands are dropped). The differential $d_2$ vanishes on $N(g)^\vee[0,k-n-1]$, and on $N(g)[0,k-n]$, it is, up to a sign, the map adjoint to the projection $ N(g)^{\otimes 2} \to L^2 (N(g))$, if $k-n-1>1$.
\end{lem}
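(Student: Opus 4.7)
The plan is to combine a bidegree vanishing argument with a Schur-type representation-theoretic identification, and then to pin down the remaining scalar by pulling back along a nontrivial element of $\pi_2$ of the base. First I would identify the $E_2$-page in the range $p \leq n-3$ by combining Proposition~\ref{prop:rationalhomotopyhautug1n} (which gives $H^*(B\hAut_\partial((U_{g,1}^n)_\bQ)^{\id}; \bQ) = S^*(L^2(N(g)))$ in the range, concentrated in $H^0$ and $H^2$ when $p \leq 3$) with Proposition~\ref{prop:cohomologyofmappingspace1} (which gives the fiber cohomology as the free graded-commutative algebra on $N(g)[k-n] \oplus N(g)^\vee[k-n-1]$). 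Since the base is $1$-connected, the K\"unneth theorem gives the claimed description of $E_2^{*,*}$. Moreover, by Corollary~\ref{cor:actiononcohomologymappingspace} and the naturality of the $\lambda$-classes in Definition~\ref{defn:lambdaclass}, the whole spectral sequence is $\GL(N(g))$-equivariant, so each $d_r$ is a $\GL(N(g))$-module map.

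For the vanishing of $d_2$ on $N(g)^\vee[0, k-n-1]$, the target $E_2^{2, k-n-2}$ is $L^2(N(g)) \otimes H^{k-n-2}(F; \bQ)$. Provided $k-n-1 > 1$, the degree $k - n - 2$ lies strictly between $0$ and the lowest positive-degree generator $k - n - 1$ of the fiber cohomology, so $H^{k-n-2}(F;\bQ)=0$. For the identification on $N(g)[0, k-n]$, the image of $d_2$ lies in $L^2(N(g)) \otimes N(g)^\vee$; by $\GL(N(g))$-equivariance and adjunction,
\[
\Hom_{\GL(N(g))}\bigl(N(g),\, L^2(N(g)) \otimes N(g)^\vee\bigr) \cong \Hom_{\GL(N(g))}\bigl(N(g)^{\otimes 2},\, L^2(N(g))\bigr),
\]
which is one-dimensional for $g \geq 2$, spanned by the canonical projection. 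Hence $d_2|_{N(g)} = c \cdot p$ for some scalar $c \in \bQ$, where $p$ denotes the adjoint of the projection $N(g)^{\otimes 2} \to L^2(N(g))$.

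The crux of the proof, and the main obstacle, is to show that $c \neq 0$. I would pick $\gamma \in \pi_2(B\hAut_\partial((U_{g,1}^n)_\bQ)^{\id})$, which by Proposition~\ref{prop:rationalhomotopyhautug1n} is an element of $L^2(N(g)^\vee) \subset \Hom(N(g), N(g)^\vee)$, and study the classifying map $f_\gamma : S^2 \to B\hAut^{\id}$. This gives two pullback fibrations: the $(U_{g,1}^n)_\bQ$-bundle $P_\gamma \to S^2$ (from the universal fibration) and the mapping-space bundle $E_\gamma \to S^2$. By Lemma~\ref{lem:lowdimhomotopyhomoauto2}, the homological $d^2$ of the Serre spectral sequence of $P_\gamma \to S^2$ equals $d(\gamma) : N(g) \to N(g)^\vee$, which coincides with $\gamma$ under the inclusion $L^2(N(g)^\vee) \hookrightarrow \Hom(N(g), N(g)^\vee)$. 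By naturality of the spectral sequence, the pulled-back $d_2$ on $N(g)$ in $E_\gamma \to S^2$ is $c \cdot (\gamma \cdot -) : N(g) \to N(g)^\vee$, and it is therefore enough to show this pulled-back differential is nonzero for some choice of $\gamma$. Here I would use the naturality of the $\lambda$-construction together with the observation that $\lambda_{a, u_k}$ arises from evaluating $u_k$ against the fiber class $a \in H_n((U_{g,1}^n)_\bQ)$: a direct minimal-model calculation (or equivalently a transgression argument using the evaluation map $E_\gamma \times_{S^2} P_\gamma \to K(\bQ, k)$) identifies the pulled-back $d_2(\lambda_{a, u_k})$ with $\pm \lambda_{d(\gamma)(a), u_k}$. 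Since $d(\gamma) \neq 0$ for nonzero $\gamma$ and $g \geq 2$, this forces $c \neq 0$ and also fixes the sign that appears in the statement.
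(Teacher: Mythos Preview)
Your strategy is correct, but it is more circuitous than necessary, and the step you mark as ``the crux'' is precisely the computation the paper carries out explicitly rather than leaving as a black box.

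The paper does not use the Schur-lemma reduction at all. Instead it works directly with the \emph{universal} fibration and pairs the spectral sequence of \eqref{eqn:toymodel-mappingspacefibration} with that of the universal $(U_{g,1}^n)_\bQ$-fibration over $B\hAut_\partial^\id$. The class $\ev^* u_k$ lives in the fibre cohomology of the fibrewise product, and because the evaluation map is $\hAut_\partial$-equivariant it extends over the total space; hence $\ev^* u_k$ is a permanent cycle. Writing $\ev^* u_k=\sum_i \alpha_i\times\lambda_{a_i,u_k}+\sum_i\beta_i\times\lambda_{b_i,u_k}$ and applying the Leibniz rule together with the known differentials $\tilde d_2(\beta_i)=\sum_j x_{ij}\otimes\alpha_j$, $\tilde d_2(\alpha_i)=0$ (these come from Proposition~\ref{prop:rationalhomotopyhautug1n}) forces
\[
d_2(\lambda_{a_j,u_k})=(-1)^{n+1}\sum_i x_{ij}\otimes\lambda_{b_i,u_k},
\]
which is exactly the adjoint of the projection, with an explicit sign. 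Your ``transgression argument using the evaluation map $E_\gamma\times_{S^2}P_\gamma\to K(\bQ,k)$'' is the pullback of this same computation to $S^2$; once you actually write it down you will find you have reproduced the paper's argument, and the detour through $\pi_2$ and Schur's lemma was unnecessary. The representation-theoretic reduction buys you nothing, since the scalar check requires the full calculation anyway.

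One small gap: your degree argument for $d_2|_{N(g)^\vee}=0$ only covers $k-n-1>1$, but the statement claims it for $k-n-1>0$. When $k-n-1=1$ the target $E_2^{2,0}=L^2(N(g))$ is nonzero; the paper handles this by observing that the fibration \eqref{eqn:toymodel-mappingspacefibration} has a section (the constant map), so no differential can hit the bottom row. Alternatively a weight argument works.
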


\begin{proof}
The differential vanishes on $N(g)^\vee[0,k-n-1]$ for degree reasons, and because the fibration \eqref{eqn:toymodel-mappingspacefibration} has a section. The claim about $N(g)[0,k-n]$ is true for the same reason if $k-n-1=0$, and so we may suppose $k-n-1>0$. We pair the spectral sequence of \eqref{eqn:toymodel-mappingspacefibration} with the spectral sequence of the universal fibration 
\begin{equation}\label{eqn:universlfirbationsasamodule}
\hq{(U_{g,1}^n)_\bQ}{\hAut_\partial ((U_{g,1}^n)_\bQ)^\id} \to B \hAut_\partial ((U_{g,1}^n)_\bQ)^\id . 
\end{equation}
We write $\tilde{E}_{*}^{*,*}$ for the rational cohomological spectral sequence of the latter. The $E_2$-term is of the form
\[
\tilde{E}_2 = S^* (L^2 N(g)) \otimes H^* (U_{g,1}^n;\bQ)
\]
(in the columns up to degree $(n-3)$, by Proposition \ref{prop:rationalhomotopyhautug1n}). 
Let $x_{ij} \in L^2 (N_g)$ the image of $a_i \otimes a_j \in N_g^{ \otimes 2}$ under the projection map; these elements satisfy $x_{ji} = (-1)^n x_{ij}$. 
By Proposition \ref{prop:rationalhomotopyhautug1n}, the differential $\tilde{d}_2: \tilde{E}_2^{0,n+1} \to \tilde{E}_2^{2,n}$ is given on basis elements by the formulas
\begin{equation}\label{eqn:d2betai}
\tilde{d}_2 (\beta_i)= \sum_j  x_{ij} \otimes \alpha_j
\end{equation}
and
\begin{equation}\label{eqn:d2alphai}
\tilde{d}_2 (\alpha_i)=0. 
\end{equation}
To transfer this knowledge about $\tilde{E}_*^{*,*}$ to information about $E_{*}^{*,*}$, observe that the evaluation map $\ev: U_{g,1}^n \times \map_\partial (U_{g,1}^n;K(\bQ,k))^0 \to K(\bQ,k)$ is $\hAut_\partial (U_{g,1}^n)$-equivariant and hence induces a map 
\[
\hq{( U_{g,1}^n \times \map_\partial (U_{g,1}^n;K(\bQ,k))^0)}{\hAut_\partial (U_{g,1}^n)} \to K(\bQ,k). 
\]
It follows that $\ev^* u_k \in H^k (U_{g,1}^n \times \map_\partial (U_{g,1}^n;K(\bQ,k))^0)$ lies in the image of the map induced by the inclusion 
\[
 U_{g,1}^n \times \map_\partial (U_{g,1}^n;K(\bQ,k))^0 \to \hq{( U_{g,1}^n \times \map_\partial (U_{g,1}^n;K(\bQ,k))^0)}{\hAut_\partial (U_{g,1}^n)}
\]
of the fibre in the total space. This forces $\ev^* u_k \in \hat{E}_2^{0,k}$ to be a permanent cycle in the spectral sequence $\hat{E}^{*,*}_*$ of 
\[
\hq{( U_{g,1}^n \times \map_\partial (U_{g,1}^n;K(\bQ,k))^0 )}{\hAut_\partial ((U_{g,1}^n)_\bQ)^\id} \to B \hAut_\partial ((U_{g,1}^n)_\bQ)^\id,
\]
so $\hat{d}_2 (\ev^* u_k)=0$. By formula \eqref{eqn:formulaevz}
\[
\ev^* u_k = \sum_{i=1}^g \alpha_i\times \lambda_{a_i,u_k}  + \sum_{i=1}^g \beta_i \times \lambda_{b_i,u_k}. 
\]
Using \eqref{eqn:d2betai} and \eqref{eqn:d2alphai}, and that $d_2 (\lambda_{b_i,u_k})=0$ for degree reasons, we obtain 
\[
0= \sum_{i=1}^g \hat{d}_2 (\alpha_i \times \lambda_{a_i,u_k})  + \hat{d}_2 (\beta_i \times \lambda_{b_i,u_k}) = 
\]
\[
=\sum_{i=1}^g (d_2 \alpha_i) \times  \lambda_{a_i,u_k}  +  \sum_{i=1}^g (-1)^n \alpha_i \times  (d_2 \lambda_{a_i,u_k})+ 
\sum_{i=1}^g (\tilde{d}_2 \beta_i ) \times \lambda_{b_i,u_k} +  \sum_{i=1}^g (-1)^{n+1}\beta_i \times (\tilde{d}_2 \lambda_{b_i,u_k})  = 
\]
\[
= \sum_{j=1}^g (-1)^n \alpha_j \times (d_2 \lambda_{a_j,u_k})+  \sum_{i,j=1}^g x_{ij} \otimes (\alpha_j\times\lambda_{b_i,u_k}). 
\]
Comparing coefficients leads to the identity
\[
d_2 \lambda_{a_j,u_k} = (-1)^{n+1}\sum_{i=1}^g x_{ij} \otimes \lambda_{b_i,u_k}
\]
which finishes the proof.
\end{proof}

Propositions \ref{prop:spectralsequence-isomorphism} and \ref{prop:the:E2differential} were the goals of this section. Before we can use these to evaluate the two spectral sequences for \eqref{eqn:firstkeyfibresequence} and \eqref{eqn:secondkeyfibresequence}, we need to switch gears and introduce some more algebraic background. 

\section{A representation-theoretic calculation}\label{sec:representationtheory}

\subsection{Generalities}\label{subsec:representationtheory}

\subsubsection*{Rational representations}
Let $\bK$ be a field of characteristic $0$, and let $N$ be a finite-dimensional $\bK$-vector space. Recall that a representation $\rho:\GL (N) \to \GL(W)$ on some other $\bK$-vector space is \emph{rational} if the matrix entries (after choice of a basis) of $\rho(g)$ are polynomial functions of the matrix entries of $g$ and of $\det(g)^{-1}$. A similar definition applies to representations of $\SL(N)$. 

Let $\bK \subset \bL$ be a field extension, let $N$ and $W$ be $\bK$-vector spaces and let $\rho: \GL(N) \to \GL(W)$ be rational. Then $\rho$ extends to a rational representation $\rho_\bL: \GL(N_\bL) \to \GL(W_\bL)$. A similar statement is true for $\SL(N)$. 

If there is $w \in \bZ$ such that each element $\lambda \in \bK^\times\subset \GL(N)$ in the centre acts by multiplication with $\lambda^w$ on $W$, we say that $\rho$ has \emph{weight} $w$. 
We shall use the notation $T^{k,l}(N):= N^{\otimes k}\otimes (N^\vee)^{\otimes l}$. 

The rational representations are described in terms of Schur functors, whose definition we briefly recall. Let $\cP_n$ be the set of partitions of $n$, thought of as Young diagrams, as explained in e.g. \cite[\S 4.1]{FultonHarris}. If $\lambda$ is a partition of $n$, we also write $|\lambda|=n$. The \emph{height} $\height(\lambda)$ of $\lambda \in \cP_n$ is the number of rows of $\lambda$. To a partition $\lambda$ of $n$, there is associated the Young symmetrizer $c_\lambda \in \bQ[\Sigma_n] \subset \bK[\Sigma_n]$ and the associated irreducible $\Sigma_n$-representation $M_\lambda$ (over $\bK$; it is irreducible since it is irreducible when the scalars are extended to the algebraic closure $\overline{\bK}$). The tensor power $T^{n,0}(N)$ is a canonical $\GL(N) \times \Sigma_n$-representation, and the Schur functor $S_\lambda(N)$ is defined as the $\GL(N)$-representation 
\[
S_\lambda (N):= c_\lambda \cdot T^{n,0}(N). 
\]
The theory of rational representations can be summarized in the following result.

\begin{thm}\label{thm:representationtheoryGL(N)}
Let $N$ be a finite-dimensional $\bK$-vector space. 
\begin{enumerate}
\item As $\GL(N) \times \Sigma_n$-modules, we have
\[
T^{n,0}(N)= \bigoplus_{\lambda \in \cP_n} S_\lambda (N) \otimes M_\lambda. 
\]
If $\height(\lambda)> \dim (N)$, then $S_\lambda(N)=0$, and if $\height(\lambda) \leq \dim (N)$, $S_\lambda(N)$ is nonzero and irreducible. The Schur functors $S_\lambda(N)$ and $S_\mu(N)$ are isomorphic only if they both vanish or if $\lambda=\mu$. The Schur functors remain irreducible when viewed as $\SL(N)$-modules. 
\item After taking the tensor product with a suitable power of the determinant representation $\det (N)$, each $\GL(N)$-representation embeds into a direct sum $\bigoplus_j T^{n_j,0}(N)$ for suitable $n_j$. The same is true for $\SL (N)$-representations, without tensoring with $\det (N)$.
\item Rational $\GL(N)$- and $\SL(N)$-representations are completely reducible. The Schur functors give a complete list of the irreducible $\SL(N)$-representations, and the Schur functors, tensorized with powers of the determinant representation, a complete list of the irreducible $\GL(N)$-representations.
\item Let $N,V,W$ be $\bK$ vector spaces, let $\GL(N) \to \GL(V)$ and $\GL(N) \to \GL(W)$ be rational representations and let $\bK \subset \bL$ be a field extension. Then $W$ is irreducible if and only if $W_\bL$ is irreducible; $V$ and $W$ are isomorphic if and only if $V_\bL$ and $W_\bL$ are isomorphic, and furthermore 
\[
(W^{\GL(N)})_\bL=(W_\bL)^{\GL(N_\bL)}. 
\]
The same is true for $\SL$ in place of $\GL$.
\end{enumerate}
\end{thm}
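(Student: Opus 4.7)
The plan is to reduce all four statements to the case where $\bK$ is algebraically closed, where (1), (2), (3) are classical (Weyl, Schur--Weyl), and we may cite \cite{FultonHarris} (Lectures 4, 6, and 15). The descent from $\overline{\bK}$ to an arbitrary $\bK$ of characteristic zero is exactly the content of (4), so the natural order of operations is to establish (4) first and then use it as a black box for (1)--(3).

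For (4), the starting observation is that the Young symmetrizers $c_\lambda$ lie in $\bQ[\Sigma_n]$, hence the construction of Schur functors commutes with arbitrary field extensions: one has a canonical isomorphism $S_\lambda(N)_\bL \cong S_\lambda(N_\bL)$, and likewise for all associated constructions. To identify $\GL(N)$-invariants with $\GL(N_\bL)$-invariants after base change, I would combine two facts. First, for a fixed abstract group $G$, taking $G$-invariants commutes with flat extensions of scalars, so $(W_\bL)^{\GL(N)} = (W^{\GL(N)})_\bL$. Second, $\GL(N) \subseteq \GL(N_\bL)$ is Zariski-dense in the common ambient affine group scheme (defined over $\bZ$), and a rational action is a morphism of schemes, so a vector killed by $\GL(N)$ is automatically killed by $\GL(N_\bL)$; the same argument works for $\SL$. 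That a rational $\bK$-representation $W$ is irreducible iff $W_\bL$ is follows from Galois descent of subrepresentations together with the fact that rational reps over fields of characteristic zero are defined over finitely generated subfields; and the same for isomorphism classes.

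With (4) available, (1)--(3) are deduced by base changing to $\overline{\bK}$. The decomposition $T^{n,0}(N) = \bigoplus_\lambda S_\lambda(N) \otimes M_\lambda$ already holds over $\bQ$ because both sides are defined via the $\bQ[\Sigma_n]$-decomposition $\bQ[\Sigma_n] \cong \bigoplus_\lambda M_\lambda \otimes M_\lambda^\vee$, and the action of $\bQ[\Sigma_n]$ on $T^{n,0}(N)$ is visibly defined over $\bQ$; so the decomposition descends to any $\bK$. The classical Weyl construction over $\overline{\bK}$ tells us that $S_\lambda(N_{\overline{\bK}})$ is nonzero iff $\height(\lambda) \leq \dim N$ and that the nonzero Schur functors form a complete list of pairwise non-isomorphic irreducible rational $\SL(N_{\overline{\bK}})$-modules. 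Applying (4) transfers all of these facts to $S_\lambda(N)$. Complete reducibility over $\overline{\bK}$ likewise descends: if $W' \subseteq W$ is a $\bK$-subrepresentation, a $\overline{\bK}$-complement can be averaged over $\mathrm{Gal}(\overline{\bK}/\bK)$ (upon first reducing to a finite Galois subextension over which both $W'$ and the complement are defined) to produce a $\bK$-complement. The embedding statement in (2) reduces, after twisting by $\det^{-N}$ to make the representation polynomial, to the classical statement that any polynomial $\GL(N)$-representation is a summand of some tensor power.

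The main conceptual point — and the only one that requires real care — is the absolute irreducibility of $S_\lambda(N)$. A general irreducible $\bK$-representation can become reducible upon field extension, so the irreducibility of $S_\lambda(N)$ over $\bK$ is not formal from its irreducibility over $\overline{\bK}$ unless one argues the other way around: we know $S_\lambda(N_{\overline{\bK}}) = S_\lambda(N)_{\overline{\bK}}$ is irreducible, and any proper $\bK$-subrepresentation of $S_\lambda(N)$ would give, after base change, a proper $\overline{\bK}$-subrepresentation of $S_\lambda(N_{\overline{\bK}})$, contradiction. This absolute irreducibility is the reason why the list of Schur functors gives \emph{all} irreducible $\SL(N)$-modules over $\bK$ and not merely a list of absolutely irreducible ones.
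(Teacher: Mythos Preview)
Your overall strategy and most of the individual arguments are fine, but the logical order you propose is backwards from the paper's, and this creates a genuine gap in your treatment of (4).

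You propose to establish (4) first and then deduce (1)--(3). The problematic clause is the forward direction of irreducibility: ``$W$ irreducible over $\bK$ implies $W_\bL$ irreducible.'' You attribute this to Galois descent, but Galois descent alone cannot yield this; for a general group an irreducible $\bK$-representation can certainly split over an extension (think of $\SO(2)$ acting on $\bR^2$). Concretely, if $U\subset W_\bL$ is a proper subrepresentation, neither $\sum_\sigma\sigma(U)$ nor $\bigcap_\sigma\sigma(U)$ need be proper. What actually makes the forward direction true for rational $\GL(N)$- and $\SL(N)$-representations is precisely the classification: every irreducible is (up to a determinant twist) a Schur functor, and Schur functors are \emph{absolutely} irreducible because $S_\lambda(N)_\bL\cong S_\lambda(N_\bL)$. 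So (4) is a \emph{consequence} of (1)--(3), not a lemma towards them. This is exactly how the paper organizes things: it obtains (1) and (2) directly over $\bK$ (the decomposition and the embedding into tensor powers use only that Young symmetrizers have rational coefficients; irreducibility of $S_\lambda(N)$ is the easy direction of base change), then gets complete reducibility in (3) from (2) by noting that any representation embeds, after a det-twist, in the completely reducible module $\bigoplus_j T^{n_j,0}(N)$, and finally reads off (4) from the classification.

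Fortunately, a close reading of your argument shows that your derivations of (1)--(3) never actually invoke the problematic direction of (4): you prove irreducibility of $S_\lambda(N)$ via the easy direction, you prove complete reducibility by averaging Galois-conjugate projections (a correct independent argument, though heavier than the paper's route via (2)), and the exhaustion of irreducibles follows from (2) plus the decomposition of $T^{n,0}(N)$. So the mathematics is salvageable; you should reorganize, drop the unjustified Galois-descent claim for the forward direction of (4), and record (4) as a corollary at the end, as the paper does.
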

\begin{proof}[References]
Statements (1)--(3) are well-known when $\bK$ is algebraically closed; \cite{Procesi} and \cite{FultonHarris} are convenient textbook references. We explain how to generalize the results to arbitrary $\bK$. 

(1) The decomposition is shown for algebraically closed fields (of characteristic $0$) in \cite[Theorem 9.3.1.4]{Procesi}, but it exists over any ground field of characteristic $0$ because the Young symmetrizers have rational coefficients. For a field extension $\bK \subset \bL$, we have $S_\lambda(N_\bL)\cong (S_\lambda(N))_\bL$ as $\GL(N_\bL)$-representations. Hence the other statements follow from the algebraically closed case, which is done in \cite[Theorem 6.3.1]{FultonHarris} and \cite[Theorem 9.8.1]{Procesi}.

(2) The proof of \cite[Lemma 7.1.4]{Procesi} does not use that $\bK$ is algebraically closed. 

(3) The first part follows from (2), since $T^{n,0}(N)$ is completely reducible, and by generalities on completely reducible representations, e.g. \cite[Proposition 3.1.4]{Etingof}. The second part follows from the algebraically closed case which is \cite[Theorem 9.8.1]{Procesi}. 

(4) This is clear from the other items. 
\end{proof}

\begin{defn}\label{defn:load}
Let $\rho: \SL(N) \to \GL(W)$ be a rational representation. We say that $\rho$ (or $W$) has \emph{load $\leq n$} if each irreducible summand of $W$ is a direct summand of $T^{k,l}(N)$, for some $k,l$ with $k+l \leq n$.
\end{defn}

\subsubsection*{Invariants on tensor spaces}
We need the fundamental theorem of invariant theory of $\GL(N)$. We can identify $T^{m,m}(N) \cong \End(N)^{\otimes m} \cong \End(N^{\otimes m})$. There is a natural map
\begin{equation}\label{eqn:fundamentalinvariantmap}
\sigma_{N,m}: \bK[\Sigma_m] \to \End(N^{\otimes m})^{\GL(N)} \cong T^{m,m}(N)^{\GL(N)}
\end{equation}
given by the $\Sigma_m$-action permuting the factors. In terms of a basis $(a_1,\ldots,a_g)$ of $N$ and the dual basis $(a^1,\ldots,a^g)$ of $N^\vee$, $\sigma_{N,m}$ is given by the formula
\begin{equation}\label{eqn:formula-fundmanetalinvariant}
s \mapsto \sum_{i_1, \ldots,i_m} a_{i_1} \otimes \ldots \otimes a_{i_m} \otimes a^{i_{s^{-1}(1)}} \otimes \ldots \otimes a^{i_{s^{-1}(m)}} \in T^{m,m}(N). 
\end{equation}

\begin{thm}\label{thm:fundamentaltheoreminvarianttheory}
Let $N$ be a finite-dimensional $\bK$-vector space. The map $\sigma_{N,m}$ is surjective, and it is also injective if $m \leq \dim (N)$.
\end{thm}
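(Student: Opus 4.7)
The plan is to reduce to the classical Schur--Weyl duality, which is essentially already built into Theorem \ref{thm:representationtheoryGL(N)}(1). By Theorem \ref{thm:representationtheoryGL(N)}(4), the formation of $\GL$-invariants commutes with extension of scalars, and obviously $\bK[\Sigma_m] \otimes_\bK \bL = \bL[\Sigma_m]$, so both claims may be checked after passing to an algebraic closure, and so I would assume $\bK$ is algebraically closed.

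The first step is to identify $T^{m,m}(N) = N^{\otimes m} \otimes (N^\vee)^{\otimes m}$ with $\End_\bK(N^{\otimes m})$ as a $\GL(N)$-representation, where $\GL(N)$ acts on $\End_\bK(N^{\otimes m})$ by conjugation. Under this identification, $T^{m,m}(N)^{\GL(N)} \cong \End_{\GL(N)}(N^{\otimes m})$, and a direct unwinding of \eqref{eqn:formula-fundmanetalinvariant} shows that $\sigma_{N,m}$ becomes the map $\bK[\Sigma_m] \to \End_{\GL(N)}(N^{\otimes m})$ given by the place-permutation action of $\Sigma_m$ on $N^{\otimes m}$ (which commutes with the diagonal $\GL(N)$-action).

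Next I would apply Theorem \ref{thm:representationtheoryGL(N)}(1) to the $\GL(N)\times\Sigma_m$-module $N^{\otimes m}$, obtaining
\[
N^{\otimes m} \cong \bigoplus_{\lambda \vdash m} S_\lambda(N) \otimes M_\lambda,
\]
with $S_\lambda(N) = 0$ precisely when $\height(\lambda) > \dim(N)$ and the nonzero $S_\lambda(N)$ pairwise non-isomorphic irreducible $\GL(N)$-modules. By Schur's lemma and Wedderburn's theorem,
\[
\End_{\GL(N)}(N^{\otimes m}) = \bigoplus_{\substack{\lambda \vdash m \\ \height(\lambda) \leq \dim(N)}} \End_\bK(M_\lambda), \qquad \bK[\Sigma_m] = \bigoplus_{\lambda \vdash m} \End_\bK(M_\lambda),
\]
and $\sigma_{N,m}$ is the projection that kills the summands with $\height(\lambda) > \dim(N)$. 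Surjectivity is then immediate, and injectivity is equivalent to every partition $\lambda \vdash m$ having $\height(\lambda) \leq \dim(N)$; since the extreme partition $(1^m)$ has height $m$, this is precisely the condition $m \leq \dim(N)$.

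The one piece requiring actual work, rather than a citation, is the identification of $\sigma_{N,m}$ with the place-permutation map; but this is a direct verification, tracing the $\Sigma_m$-action through the isomorphism $N^{\otimes m} \otimes (N^\vee)^{\otimes m} \cong \End_\bK(N^{\otimes m})$ and comparing with \eqref{eqn:formula-fundmanetalinvariant}. I expect no real obstacle beyond this bookkeeping.
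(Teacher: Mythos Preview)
The paper does not actually prove Theorem~\ref{thm:fundamentaltheoreminvarianttheory}; it merely cites Procesi, Loday, and Goodman--Wallach for both the first fundamental theorem (surjectivity) and the second (injectivity in the range $m\le\dim N$). Your argument, by contrast, deduces both statements from the Schur--Weyl decomposition recorded as Theorem~\ref{thm:representationtheoryGL(N)}(1), and it is mathematically correct as written.

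There is, however, a circularity you should be aware of. The paper explicitly remarks, in the paragraph after Theorem~\ref{thm:fundamentaltheoreminvarianttheory}, that surjectivity of $\sigma_{N,m}$ ``is a key ingredient for Theorem~\ref{thm:representationtheoryGL(N)}'': in the references the paper cites (notably Procesi), the decomposition $N^{\otimes m}\cong\bigoplus_\lambda S_\lambda(N)\otimes M_\lambda$ and the irreducibility of the Schur functors are established \emph{using} the first fundamental theorem (via the double commutant theorem). So within the paper's declared logical framework, your deduction of Theorem~\ref{thm:fundamentaltheoreminvarianttheory} from Theorem~\ref{thm:representationtheoryGL(N)}(1) is circular. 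This is not a fatal objection, since the Schur--Weyl decomposition can be obtained by independent routes (highest-weight theory, or direct analysis of Young symmetrizers together with a dimension count), but if you want your proof to stand on its own you should either cite such an independent source for Theorem~\ref{thm:representationtheoryGL(N)}(1) or acknowledge that you are really packaging the classical argument differently rather than giving a new one.
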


Surjectivity is the content of the first fundamental theorem which is proven in e.g. \cite[\S 9.1.2]{Procesi}, \cite[Theorem 9.1.2]{Loday} or \cite[Theorem 5.3.1]{GoodmanWallach} (and is a key ingredient for Theorem \ref{thm:representationtheoryGL(N)}). Injectivity is the second fundamental theorem. The treatment of that result in \cite{Procesi} or \cite{GoodmanWallach} has a slightly different layout; the version as stated above is shown in \cite[Theorem 9.1.3]{Loday}. 

\begin{lem}\label{lem:weightargument}
Let $N$ be a finite-dimensional $\bK$-vector space. Then 
\begin{enumerate}
\item $T^{k,l}(N)^{\GL(N)}=0$ unless $k=l$. 
\item $T^{k,l}(N)^{\SL(N)}=0$ unless $\dim (N)$ divides $k-l$. 
\item $T^{k,k}(N)^{\SL(N)} = T^{k,k}(N)^{\GL(N)}$. 
\end{enumerate}
\end{lem}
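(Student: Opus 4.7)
The plan is to deduce all three statements from the action of the center of $\GL(N)$ or $\SL(N)$ on tensor space; let $n = \dim N$.

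For (1), I would observe that each scalar $\lambda I$ in the centre $Z(\GL(N)) = \bK^\times \cdot I$ acts on $N$ by $\lambda$ and on $N^\vee$ by $\lambda^{-1}$, hence on $T^{k,l}(N)$ by the scalar $\lambda^{k-l}$. If $k \neq l$ then any invariant $v$ satisfies $(\lambda^{k-l}-1)v = 0$ for every $\lambda \in \bK^\times$; since $\bK$ is infinite in characteristic $0$, this forces $v = 0$.

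For (2), the same weight argument is the right idea, but now with the centre of $\SL(N)$, which consists of those $\lambda I$ with $\lambda^n = 1$. The obstacle is that $\bK$ may contain too few $n$-th roots of unity (for example if $\bK = \bQ$), in which case the centre of $\SL(N)$ can be small and the weight argument is inconclusive over $\bK$. The remedy is flat base change: by Theorem \ref{thm:representationtheoryGL(N)} (4) applied to $\bL = \overline{\bK}$, one has
\[
T^{k,l}(N)^{\SL(N)} \otimes_\bK \bL \;\cong\; T^{k,l}(N_\bL)^{\SL(N_\bL)}.
\]
Over $\bL$ the centre of $\SL(N_\bL)$ is the full group $\mu_n \cdot I$ of $n$-th roots of unity times identity, and it acts on $T^{k,l}(N_\bL)$ by $\lambda \mapsto \lambda^{k-l}$; demanding triviality of this character on $\mu_n$ is exactly the divisibility condition $n \mid k-l$. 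Hence the right-hand side, and therefore also the left-hand side, vanishes unless $n \mid k - l$.

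For (3), the inclusion ``$\subseteq$'' is automatic. For the reverse, reduce once more to $\bL = \overline{\bK}$ via Theorem \ref{thm:representationtheoryGL(N)} (4). Over $\bL$ every element of $\bL^\times$ is an $n$-th power, so the determinant of the centre $Z(\GL(N_\bL)) = \bL^\times \cdot I$ hits all of $\bL^\times$, giving $\GL(N_\bL) = Z(\GL(N_\bL)) \cdot \SL(N_\bL)$. Since $Z(\GL(N_\bL))$ acts on $T^{k,k}(N_\bL)$ with weight $k - k = 0$, i.e.\ trivially, any $\SL(N_\bL)$-invariant in $T^{k,k}(N_\bL)$ is automatically $\GL(N_\bL)$-invariant. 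The conclusion over $\bK$ follows by descending with Theorem \ref{thm:representationtheoryGL(N)} (4) a second time.

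The main technical step is the use of flat base change for invariants in parts (2) and (3); once this is granted, everything else is the elementary weight bookkeeping in the first paragraph.
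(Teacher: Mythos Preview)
Your proof is correct and follows essentially the same approach as the paper: parts (1) and (2) are identical (weight argument via the centre, with base change to $\overline{\bK}$ for (2)), and for (3) both arguments rest on the factorisation $\GL(N_\bL) = Z(\GL(N_\bL))\cdot \SL(N_\bL)$ over the algebraic closure together with the centre acting with weight $0$ on $T^{k,k}$. The paper phrases (3) via the commutant of the image of $\bK[\GL(N)] \to \End(N^{\otimes k})$, whereas you argue directly on invariants; these are equivalent formulations of the same idea.
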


\begin{proof}
(1) is easy; just look at the action of a scalar matrix. (2) Use Theorem \ref{thm:representationtheoryGL(N)} (4) to replace $\bK$ by its algebraic closure. Then $\SL(N)$ contains a primitive $g$th root of unity $\zeta_g$, where $g=\dim (N)$, which acts by $\zeta_g^{k-l}$ on $T^{k,l}(N)$. Hence $T^{k,l}(N)^{\SL(N)}=0$ unless $g$ divides $k-l$.
(3) Use Theorem \ref{thm:representationtheoryGL(N)} (4) to replace $\bK$ by its algebraic closure. The canonical isomorphism $T^{k,k}(N) \cong \End(N^{\otimes k})$ identifies $T^{k,k}(N)^{\GL(N)}$ with the commutant algebra of the image of $\rho_{\GL}:\bK[\GL(N)] \to \End(N^{\otimes k})$.
Similarly, $T^{k,k}(N)^{\SL(N)}$ is the commutant of the image of $\rho_{\SL}:\bK[\SL(N)] \to \End(N^{\otimes k})$, so it is enough to prove that $\rho_\GL$ and $\rho_\SL$ have the same image. Each element $A \in \GL(N)$ can be written as $A= \lambda B$ with $\lambda \in \bK^\times$ and $B \in \SL(V)$ (here we are using that $\bK$ is algebraically closed). It follows that $\rho_\GL (A) = \lambda^k \rho_\SL(B) \in \im (\rho_\SL)$. 
\end{proof}

\subsection{A special invariant calculation}\label{subsec:specialinvariantcalculation}

In this section, we carry out a representa\-tion-theoretic calculation that will be used later on. The main ideas for the proof were communicated to us by Jerzy Weyman, and we thank him for allowing to reproduce his argument here.

Let $\bK$ be a field of characteristic $0$, and let $N$, $W$ and $U$ be finite-dimensional $\bK$-vector spaces. We let $g:=\dim (N)$ and fix a basis $(a_1,\ldots,a_g)$ of $N$, with dual basis $(a^1,\ldots,a^g)$. We consider the algebras
\[
A:= S^* (S^2 (N)) \otimes S^* (N \otimes W) \otimes \Lambda^* (N^\vee \otimes U)
\]
and 
\[
C:= S^* (\Lambda^2(N)) \otimes \Lambda^* (N \otimes W) \otimes S^* (N^\vee \otimes U).
\]
These algebras have obvious trigradings and actions of the group $\GL(N) \times \GL(W) \times \GL(U)$. We want to determine the algebras $A^{\GL(N)}$ and $C^{\GL(N)}$ of $\GL(N)$-invariants. There are some obvious invariants. We define 
\[
\varphi_A: \Lambda^2 (U) \to (S^1 (S^2 (N)) \otimes S^0 (N \otimes W) \otimes \Lambda^2 (N^\vee \otimes U))^{\GL(N)} =A_{1,0,2}^{\GL(N)} 
\]
by 
\[
u_1 \wedge u_2 \mapsto \sum_{i,j=1}^g (a_i \cdot a_j)\otimes 1 \otimes ((a^i \otimes u_1) \wedge (a^j \otimes u_2)), 
\]
and we define
\[
\psi_A: W \otimes U \to (S^0 (S^2 (N)) \otimes S^1 (N \otimes W) \otimes \Lambda^1 (N^\vee \otimes U))^{\GL(N)}= A_{0,1,1}^{\GL(N)}  
\]
by 
\[
w \otimes u \mapsto \sum_{i=1}^g 1 \otimes (a_i \otimes w) \otimes (a^i \otimes u). 
\]
For $x_0,x_1 \in \Lambda^2 (U)$ and $y_0,y_1 \in W \otimes U$, the relations 
\[
\varphi_A (x_0)\varphi_A (x_1) = \varphi_A (x_1)\varphi_A (x_0),
\]
\[
\psi_A (y_0) \psi_A (y_1)=-\psi_A (y_1) \psi_A (y_0)
\]
and 
\[
\varphi_A (x_i) \psi_A (y_j)= \psi_A (y_j)\varphi_A (x_i) 
\]
hold, and these imply that $\varphi_A \oplus \psi_A$ extends to an algebra map 
\[
G: S^* (\Lambda^2 (U))\otimes \Lambda^* (W \otimes U) \to A^{\GL(N)}.
\]

Similarly let us define 
\[
\varphi_C: \Lambda^2 (U) \to (S^1 (\Lambda^2 (N)) \otimes \Lambda^0 (N \otimes W) \otimes S^2 (N^\vee \otimes U))^{\GL(N)} =C_{1,0,2}^{\GL(N)} 
\]
by 
\[
u_1 \wedge u_2 \mapsto \sum_{1 \leq i <j\leq g} (a_i \wedge a_j)\otimes 1 \otimes ((a^i \otimes u_1) \cdot (a^j \otimes u_2))
\]
and 
\[
\psi_C: W \otimes U \to (S^0 (\Lambda^2 (N)) \otimes \Lambda^1 (N \otimes W) \otimes S^1 (N^\vee \otimes U))^{\GL(N)} = C_{0,1,1}^{\GL(N)}  
\]
by 
\[
w \otimes u \mapsto \sum_{i=1}^g 1 \otimes (a_i \otimes w) \otimes (a^i \otimes u). 
\]
A similar argument as above shows that $\varphi_C \oplus \psi_C$ extends to an algebra map 
\[
H: S^* (\Lambda^2 (U)) \otimes \Lambda^* (W \otimes U)  \to C^{\GL(N)}. 
\]
\begin{prop}\label{prop:keylemma-invarianttheory}
We have $A_{p,q,r}^{\GL(N)}= C_{p,q,r}^{\GL(N)}=0$ unless $2p+q-r=0$, and $A_{p,q,r}^{\SL(N)}= C_{p,q,r}^{\SL(N)}=0$ unless $g:= \dim (N)$ divides $2p+q-r$. The maps 
\[
S^p (\Lambda^2 (U)) \otimes \Lambda^q (W \otimes U) \stackrel{G}{\to} A_{p,q,2p+q}^{\GL(N)} \subset A_{p,q,2p+q}^{\SL(N)}
\]
and 
\[
S^p (\Lambda^2 (U)) \otimes \Lambda^q (W \otimes U) \stackrel{H}{\to} C_{p,q,2p+q}^{\GL(N)} \subset  C_{p,q,2p+q}^{\SL(N)}
\]
are surjective, and isomorphisms if $2p+q \leq g$. 
\end{prop}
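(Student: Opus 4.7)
The vanishing statements follow immediately from Lemma \ref{lem:weightargument}. Each trigraded piece $A_{p,q,r}$ is a $\GL(N)$-equivariant direct summand of the tensor space $N^{\otimes(2p+q)}\otimes(N^\vee)^{\otimes r}\otimes W^{\otimes q}\otimes U^{\otimes r}$, cut out by Young (anti-)symmetrizers applied only on the $N$-factors; the same holds for $C_{p,q,r}$. Part (1) of Lemma \ref{lem:weightargument} then forces the $\GL(N)$-invariants to vanish unless $2p+q=r$, part (2) forces the $\SL(N)$-invariants to vanish unless $g\mid 2p+q-r$, and part (3) identifies the two invariant subspaces in the matched case $r=2p+q$. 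From here on we restrict to the non-vanishing range $r=2p+q$ and compute $\GL(N)$-invariants.

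The core of the argument is the first fundamental theorem of invariant theory (Theorem \ref{thm:fundamentaltheoreminvarianttheory}): the map $\sigma_{N,r}\colon \bK[\Sigma_r]\to T^{r,r}(N)^{\GL(N)}$ is surjective, and an isomorphism when $r\leq g$. Tensoring with $W^{\otimes q}\otimes U^{\otimes r}$ and applying the (anti-)symmetrizations on the $N$-factors yields a surjection
\[
\bK[\Sigma_r]\otimes W^{\otimes q}\otimes U^{\otimes r}\twoheadrightarrow A_{p,q,r}^{\GL(N)}
\]
(and analogously for $C_{p,q,r}$), which is an isomorphism of vector spaces once the $\Sigma_r$-factor is quotiented out by the appropriate relations. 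Pictorially, a permutation $\tau\in\Sigma_r$ encodes a complete matching between the $r$ copies of $N^\vee$ and the $2p+q$ copies of $N$, with the first $2p$ matches feeding into the $S^p(S^2 N)$ or $S^p(\Lambda^2 N)$ factor and the last $q$ matches feeding into the $S^q(N\otimes W)$ or $\Lambda^q(N\otimes W)$ factor.

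The next step is to use the $\Sigma_r$-action on $(N^\vee\otimes U)^{\otimes r}$ (with its appropriate character) to absorb $\tau$ entirely into the $U$-factor. This replaces $\bK[\Sigma_r]\otimes U^{\otimes r}$ by $\Lambda^r U$ for $A$ (sign character from $\Lambda^r(N^\vee\otimes U)$) and by $S^r U$ for $C$, and leaves the residual actions of $\Sigma_2\wr\Sigma_p$ on the first $2p$ positions and $\Sigma_q$ on the last $q$ positions, with characters coming from the symmetrizers on the $N$-factors. A direct sign-by-sign inspection shows that in both cases the quotient is canonically isomorphic to $S^p(\Lambda^2 U)\otimes\Lambda^q(W\otimes U)$: on the first $2p$ positions, the sign-isotypic component for $\Sigma_2\wr\Sigma_p$ on $U^{\otimes 2p}$ is $S^p(\Lambda^2 U)$, and on the last $q$ positions, the sign-isotypic for $\Sigma_q$ on $(W\otimes U)^{\otimes q}$ is $\Lambda^q(W\otimes U)$; the opposite conventions for the $N$-side in $C$ are exactly compensated by the swap $\Lambda^r U\leftrightarrow S^r U$ on the $U$-side. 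Unwinding the definitions of $\varphi_A,\psi_A,\varphi_C,\psi_C$ identifies $G$ and $H$ with the inverse of this canonical isomorphism, which gives surjectivity. The isomorphism claim in the range $2p+q\leq g$ follows from the injectivity of $\sigma_{N,r}$ in that range (the second fundamental theorem).

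The main obstacle I anticipate is the sign-bookkeeping in the identification of the quotient with $S^p(\Lambda^2 U)\otimes\Lambda^q(W\otimes U)$, together with verifying that the same target algebra emerges for both $A$ and $C$ despite the opposite (anti-)symmetrization conventions on the $N$-side. A conceptually cleaner route would be to work in the $\bZ/2$-graded (super) setting, where $S^*$ and $\Lambda^*$ both arise as super-symmetric algebras, collapsing the two cases into a single instance of Howe duality for a pair $(\GL(N),\GL(W\oplus U))$ with appropriate parities; but a case-by-case verification is elementary, if somewhat tedious.
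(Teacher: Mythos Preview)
Your surjectivity argument is essentially the paper's: the paper builds the same surjection $\bK[\Sigma_{2p+q}]\otimes W^{\otimes q}\otimes U^{\otimes(2p+q)}\twoheadrightarrow A_{p,q,2p+q}^{\GL(N)}$ from $\sigma_{N,2p+q}$ composed with the symmetrizer projections, and verifies by direct computation that it factors as $G\circ Q$ for an explicit quotient map $Q$. Your ``absorb $\tau$ into the $U$-factor'' is exactly this factoring.

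Where you diverge is the injectivity. The paper does \emph{not} attempt to show directly that the absorption identifies the invariants; instead it computes $\dim A_{p,q,2p+q}^{\GL(N)}$ independently via the Cauchy formulas \eqref{eqn:cauchy1}--\eqref{eqn:cauchy4}, the Littlewood--Richardson rule, and the symmetry $c^\kappa_{\lambda,\mu}=c^{\tilde\kappa}_{\tilde\lambda,\tilde\mu}$, and then matches dimensions. Your route---identifying the image of the commuting idempotents on $\bK[\Sigma_r]\otimes W^{\otimes q}\otimes U^{\otimes r}$ directly---is more elementary and explains transparently why $A$ and $C$ yield the same target, whereas the paper's argument treats the two cases by parallel but separate Schur-functor manipulations.

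One step of your write-up needs correction. The sign-isotype of $\bK[\Sigma_r]\otimes U^{\otimes r}$ under the diagonal action (one-sided regular on the first factor, permutation on the second) is \emph{not} $\Lambda^r U$; since the regular representation is free, it is canonically $U^{\otimes r}$. What happens is that under this identification the remaining $N$-side action of $\Sigma_r$ transfers to the \emph{sign-twisted} permutation action on $U^{\otimes r}$. It is this twist that converts the trivial-isotype projectors for $\Sigma_2\wr\Sigma_p$ and $\Sigma_q$ (coming from $S^p(S^2 N)$ and $S^q(N\otimes W)$) into the sign-isotype projectors on $U^{\otimes 2p}$ and $(W\otimes U)^{\otimes q}$ that you then correctly evaluate as $S^p(\Lambda^2 U)$ and $\Lambda^q(W\otimes U)$. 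For $C$ the $N^\vee$-side character is trivial, so no twist occurs, but the $N$-side characters are already the sign ones, and you land in the same place. Your final answer and mechanism are right; only the intermediate sentence ``replaces $\bK[\Sigma_r]\otimes U^{\otimes r}$ by $\Lambda^r U$'' is misleading and should be rewritten to reflect that the replacement is by $U^{\otimes r}$ equipped with the twisted residual action.
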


We use Proposition \ref{prop:keylemma-invarianttheory} in conjunction with Proposition \ref{prop:invariantsSLZ} below. To state it, let us assume that $\bK=\bQ$ and that $N$ has an \emph{integral form}, i.e. a subgroup $N_\bZ \subset N$ such that $N_\bZ \otimes \bQ= N$. In that case, we have the subgroup $\SL(N_\bZ)\subset \SL(N)$ of automorphisms preserving $N_\bZ$; note that $\SL(N_\bZ)\cong \SL_g (\bZ)$. 

\begin{prop}\label{prop:invariantsSLZ}
If $\bK=\bQ$ and $N$ has an integral form $N_\bZ$, the inclusions 
\[
A_{p,q,2p+q}^{\SL(N)} \subset A_{p,q,2p+q}^{\SL(N_\bZ)}
\]
and 
\[
C_{p,q,2p+q}^{\SL(N)} \subset C_{p,q,2p+q}^{\SL(N_\bZ)}
\]
are equalities.
\end{prop}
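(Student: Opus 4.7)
The plan is to prove a clean general statement: for any finite-dimensional rational $\SL(N)$-representation $V$ (with $\SL(N) \cong \SL_g(\bQ)$), the inclusion $V^{\SL(N)} \subset V^{\SL(N_\bZ)}$ is an equality. Both $A_{p,q,2p+q}$ and $C_{p,q,2p+q}$ are rational $\SL(N)$-representations, being constructed from $N$ and $N^\vee$ via tensor powers and their symmetric/exterior quotients, so applying this general fact will give the proposition at once.

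This is essentially the content of the Borel density theorem, but for our purely algebraic setting a direct density argument using unipotent one-parameter subgroups is cleanest. The case $g=1$ is vacuous, so assume $g\geq 2$ and fix a basis of $N_\bZ$, so that $\SL(N_\bZ)\cong\SL_g(\bZ)$ and $\SL(N)\cong\SL_g(\bQ)$. I would then invoke two standard facts: (i) the elementary matrices $E_{ij}(1):=I+e_{ij}$ (for $i\neq j$) generate $\SL_g(\bZ)$; and (ii) the unipotent one-parameter subgroups $\{E_{ij}(t):t\in\bQ\}$ for $i\neq j$ generate $\SL_g(\bQ)$.

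The key step is then the following. Fix $v\in V^{\SL(N_\bZ)}$ and an ordered pair $i\neq j$. The map
\[
P_{ij}\colon \bQ\to V,\qquad t\mapsto E_{ij}(t)\cdot v - v
\]
is polynomial in $t$: since the representation $V$ is rational and $E_{ij}(t)=I+te_{ij}$, the action of $E_{ij}(t)$ on any tensor power $T^{k,l}(N)$ (and hence on any sub- or quotient representation such as the pieces of $A$ and $C$) is a polynomial in $t$ of degree at most $k+l$. On the other hand, since $E_{ij}(n)=E_{ij}(1)^n\in\SL_g(\bZ)$ for every $n\in\bZ$, we have $P_{ij}(n)=0$ for all $n\in\bZ$. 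A polynomial with coefficients in a $\bQ$-vector space that vanishes at infinitely many points of $\bQ$ is identically zero, so $P_{ij}\equiv 0$; thus $v$ is fixed by every $E_{ij}(t)$, $t\in\bQ$. By (ii), $v$ is fixed by all of $\SL_g(\bQ)$.

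I do not anticipate any serious obstacle: the only thing to check carefully is that the actions on $S^p(S^2 N)$, $\Lambda^q(N\otimes W)$, etc., are genuinely polynomial in $t$, which is automatic from rationality. One could alternatively invoke Borel density directly over $\overline\bQ$ after applying Theorem~\ref{thm:representationtheoryGL(N)}(4) to reduce to the algebraically closed case, but the elementary unipotent argument above avoids any appeal to Lie-theoretic machinery.
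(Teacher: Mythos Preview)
Your proof is correct and follows essentially the same strategy as the paper: both reduce the proposition to the general lemma that $V^{\SL(N)}=V^{\SL(N_\bZ)}$ for any rational representation $V$, and then deduce this from Zariski density of $\SL_g(\bZ)$ in $\SL_g(\bQ)$. The paper simply cites Borel density (pointing to an elementary proof by Putman), while you spell out a direct argument via unipotent one-parameter subgroups; your argument is in fact a self-contained proof of the needed density. One small remark: your fact (i), that the $E_{ij}(1)$ generate $\SL_g(\bZ)$, is stated but never used---all you need is that each $E_{ij}(n)$ lies in $\SL_g(\bZ)$, which is immediate.
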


\begin{proof}[Proof of Proposition \ref{prop:keylemma-invarianttheory}, surjectivity]
The first sentence is a straightforward application of Lemma \ref{lem:weightargument}. It also follows from Lemma \ref{lem:weightargument} that $A_{p,q,2p+q}^{\GL(N)} = A_{p,q,2p+q}^{\SL(N)}$ and $C_{p,q,2p+q}^{\GL(N)} = C_{p,q,2p+q}^{\SL(N)}$. So we must only show that $G$ and $H$ are isomorphisms, and we start with surjectivity.

Consider the case of $G$ first. We establish a commutative diagram
\begin{equation}\label{diag:invarianttheorysurjectiviy}
\xymatrix{
\bQ[\Sigma_{2p+q}] \otimes W^{\otimes q} \otimes U^{\otimes 2p+q} \ar[r]^-{F} \ar[d]^{Q} & (N^{\otimes 2p+q} \otimes (N^\vee)^{\otimes 2p+q}\otimes W^{\otimes q} \otimes U^{\otimes 2p+q})^{\GL(N)} \ar[d]^{S}\\
S^p (\Lambda^2 (U)) \otimes \Lambda^q (W \otimes U) \ar[r]^{G} & A_{p,q,2p+q}^{\GL(N)},
}
\end{equation}
and show that $F$ and $S$ are surjective. The map $S$ is the restriction of the ($\GL(N)$-equivariant) quotient map 
\[
N^{\otimes 2p+q} \otimes (N^\vee)^{\otimes 2p+q}\otimes W^{\otimes q} \otimes U^{\otimes 2p+q} \cong N^{\otimes 2p} \otimes (N \otimes W)^{\otimes q}\otimes (N^\vee\otimes U)^{\otimes 2p+q}\to A_{q,p,2p+q}
\]
to the invariant subspace. As the quotient map is surjective and as the category of rational representations of $\GL(N)$ is semisimple, $S$ is surjective. 

The map $F$ is the tensor product of $\sigma_{N,2p+q}$ and the identity on $W^{\otimes q} \otimes U^{\otimes 2p+q}$; it sends the element
\[
s \otimes w_1 \otimes \ldots \otimes w_q  \otimes u_1 \otimes \ldots \otimes u_{2p+q} \in \bQ[\Sigma_{2p+q}] \otimes W^{\otimes q} \otimes U^{\otimes 2p+q}
\]
to 
\[
\sum_{i_1, \ldots, i_{2p+q}}a_{i_1} \otimes \ldots \otimes a_{i_{2p+q}}  \otimes a^{i_{s^{-1}(1)}} \otimes\ldots \otimes  a^{i_{s^{-1}(2p+q)} } \otimes w_1 \otimes \ldots \otimes w_q  \otimes u_1 \otimes \ldots \otimes u_{2p+q} ,
\]
and is clearly $\GL(N)$-invariant. Formula \eqref{eqn:formula-fundmanetalinvariant} shows that (upon identification of its target) $F$ is the tensor product of the map $\sigma_{N,2p+q}$ defined in \eqref{eqn:fundamentalinvariantmap} and the identity on $W^{\otimes q} \otimes U^{\otimes 2p+q}$. Therefore, by Theorem \ref{thm:fundamentaltheoreminvarianttheory}, $F$ is surjective.

We define the map $Q$ by 
\[
Q(s \otimes w_1 \otimes \ldots \otimes w_q \otimes u_1 \otimes \ldots \otimes u_{2p+q}):= 
\]
\[
 \sgn (s) (u_{s(1)} \wedge u_{s(2)})  \cdots (u_{s(2p-1)} \wedge u_{s(2p)}) \otimes (w_1 \otimes u_{s(2p+1)}) \wedge \ldots \wedge (w_q \otimes u_{s(2p+q)}). 
\]
It remains to be proven that \eqref{diag:invarianttheorysurjectiviy} commutes, but this follows from 
\[
SF (s \otimes w_1 \otimes \ldots \otimes w_q  \otimes u_1 \otimes \ldots \otimes u_{2p+q} )= 
\]
\[
\sum_{i_1, \ldots, i_{2p+q}} (a_{i_1}\cdot a_{i_2}) \cdots (a_{i_{2p-1}}\cdot a_{i_{2p}}) \otimes ( a_{i_{2p+1}} \otimes w_1) \cdots ( a_{i_{2p+q}} \otimes w_q)  \otimes (a^{i_{s^{-1}(1)}} \otimes u_1) \wedge \ldots \wedge (a^{i_{s^{-1}(2p+q)}} \otimes u_{2p+q})
\]
and 
\[
GQ (s \otimes w_1 \otimes \ldots \otimes w_q  \otimes u_1 \otimes \ldots \otimes u_{2p+q} )= 
\]
\[
\sgn(s) \sum_{i_1,\ldots,i_{2p+q}} (a_{i_1} \cdot a_{i_2}) \cdots (a_{i_{2p-1}} \cdot a_{i_{2p}}) \otimes (a_{i_{2p+1} } \otimes w_1 ) \cdots  (a_{i_{2p+q} } \otimes w_q ) \otimes (a^{i_1} \otimes u_{s (1)}) \wedge \ldots \wedge (a^{i_{2p+q}} \otimes u_{s (2p+q)}). 
\]

This finishes the proof that $G$ is surjective. The case of $H$ is almost identical. In that case, we consider 
\begin{equation}\label{diag:invarianttheorysurjectiviy2}
\xymatrix{
\bQ[\Sigma_{2p+q}] \otimes W^{\otimes q} \otimes U^{\otimes 2p+q} \ar[r]^-{F} \ar[d]^{Q'} & (N^{\otimes 2p+q} \otimes (N^\vee)^{\otimes 2p+q}\otimes W^{\otimes q} \otimes U^{\otimes 2p+q})^{\GL(N)} \ar[d]^{S'}\\
S^p (\Lambda^2 (U)) \otimes \Lambda^q (W \otimes U) \ar[r]^{H} & C_{p,q,2p+q}^{\GL(N)}.
}
\end{equation}
The map $F$ is the same map as before. The map $Q'$ is defined just as $Q$, the only difference being that the factor $\sgn(s)$ in front of the definition of $Q$ is dropped, and the map $S'$ is again the quotient map. 
\end{proof}

For the proof of injectivity of $G$ and $H$, we need some classical results of invariant theory. Here is some notation: the conjugate partition (see \cite[\S 4.1]{FultonHarris} for this term) to $\lambda\in \cP_n$ is denoted $\tilde{\lambda} \in \cP_n$. By $\cP_{2p}^{\evrow}$, we denote the set of partitions of $2p$ with even rows, and by $\cP_{2p}^{\evcol}$ the set of partitions of $2p$ with even columns. 
The first ingredient we shall use are the Cauchy formulas which state that \cite[\S 9.6.3, p. 271]{Procesi}
\begin{equation}\label{eqn:cauchy1}
S^q (V \otimes W)= \bigoplus_{\lambda \in \cP_q} S_\lambda(V) \otimes S_\lambda(W),
\end{equation}
\cite[\S 9.8.4]{Procesi} 
\begin{equation}\label{eqn:cauchy2}
\Lambda^r (V \otimes W)= \bigoplus_{\lambda \in \cP_r} S_\lambda(V) \otimes S_{\tilde{\lambda}} (W)
\end{equation}
and \cite[\S 11.4.5]{Procesi}
\begin{equation}\label{eqn:cauchy3}
S^p (S^2 (V)) = \bigoplus_{\lambda \in \cP_{2p}^{\evrow}} S_\lambda(V), 
\end{equation}
as well as \cite[\S 11.4.5]{Procesi}
\begin{equation}\label{eqn:cauchy4}
S^p (\Lambda^2 (V)) = \bigoplus_{\lambda \in \cP_{2p}^{\evcol}} S_\lambda(V). 
\end{equation}
Furthermore, we need the formula \cite[\S 12.5.1]{Procesi}
\begin{equation}\label{eqn:littlewoodrichardson}
S_\lambda (V) \otimes S_\mu(V) = \bigoplus_{|\kappa|= |\lambda|+|\mu|} c^\kappa_{\lambda,\mu} S_\kappa (V). 
\end{equation}
The coefficients $c^\kappa_{\lambda,\mu}\in \bN_0$ are the well-known \emph{Littlewood--Richardson coefficients}. These are the structure constants of the ring $\bigwedge$ of symmetric functions (over the integers) when one takes the Schur functions $s_\lambda$, $\lambda \in \cP$, as a basis. From this, it follows that the coefficients in \eqref{eqn:littlewoodrichardson} do not depend on $V$. The symmetry
\[
c^\kappa_{\lambda,\mu} = c^\kappa_{\mu,\lambda}; 
\]
of the Littlewood--Richardson coefficients is obvious; we also need to know the relation 
\begin{equation}\label{eqn:symmetry-littlewood-richardson}
c^\kappa_{\lambda,\mu}=c^{\tilde{\kappa}}_{\tilde{\lambda},\tilde{\mu}}.
\end{equation}
To see \eqref{eqn:symmetry-littlewood-richardson}, let $\omega: \bigwedge \to \bigwedge$ be the involutive (ring) automorphism which is constructed in \cite[p.21]{Macdonald}; formula (3.8) on p.42 of \cite{Macdonald} shows that $\omega(s_\lambda)=s_{\tilde{\lambda}}$. Since the Littlewood--Richardson coefficients are the structure constants with respect to the Schur functions, \eqref{eqn:symmetry-littlewood-richardson} follows. An alternative proof of \eqref{eqn:symmetry-littlewood-richardson} can be found in \cite[p. 62]{FultonYoung}. 

\begin{proof}[Proof of Proposition \ref{prop:keylemma-invarianttheory}, injectivity]
Since we already saw that $G$ and $H$ are surjective, it suffices to show that the dimensions of the two vector spaces agree (degreewise, and in the range of degrees we claimed). It is therefore of no danger to write $S=S'$ for isomorphic representations $S$ and $S'$, and $nS := S^{\oplus n}$. 

We first turn to the map $G$. Its components are maps 
\[
S^p (\Lambda^2 U) \otimes \Lambda^q (W \otimes U) \to (S^p (S^2 (N)) \otimes S^q (N \otimes W) \otimes \Lambda^{2p+q} (N^\vee \otimes U))^{\GL(N)}. 
\]
As a $\GL(N) \times \GL(W) \times \GL(U)$-representation, we have by \eqref{eqn:cauchy3}, \eqref{eqn:cauchy1} and \eqref{eqn:cauchy2}
\[
S^p (S^2 (N)) \otimes S^q (N \otimes W) \otimes \Lambda^{2p+q} (N^\vee \otimes U) = 
\]
\[
\bigoplus_{\lambda \in \cP_{2p}^{\evrow},|\mu|=q,|\nu|=2p+q} S_\lambda(N) \otimes S_\mu(N) \otimes S_\mu(W)\otimes S_\nu(N^\vee) \otimes S_{\tilde{\nu}} (U). 
\]
By \eqref{eqn:littlewoodrichardson}, the latter is isomorphic to
\[
\bigoplus_{\lambda \in \cP_{2p}^{\evrow},|\mu|=q,|\nu|=|\kappa|=2p+q} c^{\kappa}_{\lambda,\mu} S_\kappa(N) \otimes S_\mu(W)\otimes S_\nu(N^\vee) \otimes S_{\tilde{\nu}} (U).
\]
Since the $S_\lambda(N)$ are mutually nonisomorphic irreducible $\GL(N)$-representations or trivial, we have, with $g:= \dim (N)$,
\begin{equation}\label{eqn:orthogonality-schurfunctor}
(S_\kappa(N) \otimes S_\nu(N^\vee))^{\GL(N)}\cong 
\begin{cases}
\bQ & \kappa=\nu\; \text{and} \; \height(\nu)=\height (\kappa) \leq g\\
0 & \kappa\neq \nu\; \text{or} \; \height(\nu) > g \; \text{or} \; \height(\kappa)>g.
\end{cases}
\end{equation}
Therefore
\[
(S_\kappa(N) \otimes S_\mu(W)\otimes S_\nu(N^\vee) \otimes S_{\tilde{\nu}} (U))^{\GL(N)} = 
\begin{cases}
S_\mu(W)\otimes  S_{\tilde{\nu}} (U) &  \kappa=\nu\; \text{and} \; \height(\nu)=\height (\kappa) \leq g\\
0 & \kappa\neq \nu\; \text{or} \; \height(\nu) > g \; \text{or} \; \height(\kappa)>g, 
\end{cases}
\]
and so 
\[
\begin{split}
(S^p (S^2 (N)) \otimes S^q (N \otimes W) \otimes \Lambda^{2p+q} (N^\vee \otimes U) )^{\GL(N)} & \\
 = \bigoplus_{\lambda \in \cP_{2p}^{\evrow},|\mu|=q,|\nu|=2p+q,\height(\nu)\leq g} c^{\nu}_{\lambda,\mu}  S_\mu(W)\otimes S_{\tilde{\nu}} (U) & \\
 = \bigoplus_{\lambda \in \cP_{2p}^{\evrow},|\mu|=q,|\nu|=2p+q,\height(\nu)\leq g} c^{\tilde{\nu}}_{\tilde{\lambda},\tilde{\mu}}  S_\mu(W)\otimes S_{\tilde{\nu}} (U) & \; \; \; (\text{by \eqref{eqn:symmetry-littlewood-richardson}}). 
\end{split}
\]
Under the hypothesis that $g \geq 2p+q$, $\height(\nu)\leq g$ holds for all $\nu \in \cP_{2p+q}$. Using \eqref{eqn:littlewoodrichardson} again, the latter is isomorphic to 
\[
\bigoplus_{\lambda \in \cP_{2p}^{\evrow},|\mu|=q }  S_\mu(W)\otimes S_{\tilde{\lambda}} (U) \otimes S_{\tilde{\mu}}(U), 
\]
and by \eqref{eqn:cauchy2}, this agrees with 
\[
\bigoplus_{\lambda \in \cP_{2p}^{\evrow}}  \Lambda^q (W \otimes U)\otimes  S_{\tilde{\lambda}} (U) = \Lambda^q (W \otimes U) \otimes  \bigoplus_{\lambda \in \cP_{2p}^{\evcol}} S_{\lambda} (U) = 
\]
\[
 = \Lambda^q (W \otimes U) \otimes S^p (\Lambda^2 (U)). 
\]

The proof for $H$ is almost identical. The components of $H$ are maps 
\[
S^p (S^2 U) \otimes \Lambda^q (W \otimes U) \to
 (S^p (\Lambda^2 (N)) \otimes \Lambda^q (N \otimes W) \otimes S^{2p+q} (N^\vee \otimes U))^{\GL(N)}. 
\]
We compute, by \eqref{eqn:cauchy4}, \eqref{eqn:cauchy1} and \eqref{eqn:cauchy2} and \eqref{eqn:littlewoodrichardson},
\[
S^p (\Lambda^2 (N)) \otimes \Lambda^q (N \otimes W) \otimes S^{2p+q} (N^\vee \otimes U) = 
\]
\[
=\bigoplus_{\lambda \in \cP_{2p}^{\evcol},|\mu|=q,|\nu|=2p+q} S_\lambda(N) \otimes S_\mu(N) \otimes S_{\tilde{\mu}}(W)\otimes S_\nu (N^\vee) \otimes S_{\nu} (U)=
\]
\[
=\bigoplus_{\lambda \in \cP_{2p}^{\evcol},|\mu|=q,|\nu|=|\kappa|=2p+q} c^\kappa_{\lambda,\mu} S_\kappa(N) \otimes S_{\tilde{\mu}}(W)\otimes S_\nu (N^\vee) \otimes S_{\nu} (U). 
\]
Taking $\GL(N)$-invariants and using \eqref{eqn:orthogonality-schurfunctor} yields 
\[
 (S^p (\Lambda^2 (N)) \otimes \Lambda^q (N \otimes W) \otimes S^{2p+q} (N^\vee \otimes U))^{\GL(N)} = 
\]
\[
=\bigoplus_{\lambda \in \cP_{2p}^{\evcol},|\mu|=q,|\nu|=2p+q,\height(\nu) \leq g} c^\nu_{\lambda,\mu}  S_{\tilde{\mu}}(W)\otimes S_{\nu} (U).\]
If $g \geq 2p+q$, \eqref{eqn:littlewoodrichardson} shows that this is equal to 
\[
\bigoplus_{\lambda \in \cP_{2p}^{\evcol},|\mu|=q}  S_{\tilde{\mu}}(W)\otimes S_{\lambda} (U) \otimes S_{\mu} (U)=
\]
\[
(\bigoplus_{\lambda \in \cP_{2p}^{\evcol}} S_{\lambda} (U)) \otimes (\bigoplus_{|\mu|=q}  S_{\tilde{\mu}}(W) \otimes S_{\mu} (U))\stackrel{\eqref{eqn:cauchy4}, \eqref{eqn:cauchy2}}{=}
\]
\[
S^p (\Lambda^2 (U)) \otimes \Lambda^q (W \otimes U)
\]
as claimed. 
\end{proof}

Finally, we give the short proof of Proposition \ref{prop:invariantsSLZ}. This is an immediate consequence of a more general result.

\begin{lem}\label{lem:boreldensity}
Let $N_\bZ$ be a finitely generated free abelian group, write $N:= N_\bZ \otimes \bQ$ and let $\rho: \SL(N)\to \GL(W)$ be a rational representation. Then 
\[
W^{\SL(N)} =W^{\SL(N_\bZ)} .
\]
\end{lem}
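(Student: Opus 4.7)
The inclusion $W^{\SL(N)} \subset W^{\SL(N_\bZ)}$ is immediate, so I would focus on the reverse containment. My plan is to run the standard \emph{Borel density} argument: after choosing a basis of $N_\bZ$, I view $\SL(N)$ and $\SL(N_\bZ)$ as the $\bQ$- and $\bZ$-points of the affine algebraic $\bQ$-group $\SL_g$, and then exploit both the hypothesis that $\rho$ is rational (that is, a morphism of algebraic varieties over $\bQ$) and the classical fact that $\SL_g(\bZ)$ is Zariski dense in $\SL_g$.

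Concretely, I would fix $w \in W^{\SL(N_\bZ)}$ and consider its stabilizer
\[
\mathrm{Stab}(w) := \{\, g \in \SL_g \mid \rho(g)w = w \,\}.
\]
Because $\rho$ is rational, the defining condition $\rho(g)w - w = 0$ translates into a system of polynomial equations in the matrix entries of $g$ (and $\det(g)^{-1}$, but the latter is trivial on $\SL_g$), so $\mathrm{Stab}(w)$ is a Zariski-closed subgroup of $\SL_g$. By assumption it contains $\SL_g(\bZ)$, hence it contains the Zariski closure $\overline{\SL_g(\bZ)}$ inside $\SL_g$. Passing to $\bQ$-points in the inclusion $\overline{\SL_g(\bZ)} \subset \mathrm{Stab}(w)$ will then give $w \in W^{\SL(N)}$, provided we know that $\overline{\SL_g(\bZ)} = \SL_g$.

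That density statement is the only step with any real content, and I would include a short proof for completeness rather than cite Borel. The Zariski closure of $\SL_g(\bZ)$ is a closed subgroup of $\SL_g$; for each pair $i \neq j$ the set $\{\,I + n E_{ij} \mid n \in \bZ\,\}$ is an infinite subset of the one-dimensional irreducible affine root subgroup $U_{ij} = \{\,I + tE_{ij}\,\}$, so its Zariski closure is all of $U_{ij}$. Hence $\overline{\SL_g(\bZ)}$ contains every $U_{ij}$, and since these root subgroups generate $\SL_g$ as an abstract group, the closure must equal $\SL_g$. The main obstacle, to the extent that there is one, is precisely this density step; everything else follows formally from rationality of the representation.
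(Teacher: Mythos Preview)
Your argument is correct and follows essentially the same route as the paper: both use Zariski density of $\SL_g(\bZ)$ in $\SL_g$ to conclude that any polynomial condition (here, fixing $w$) satisfied by $\SL_g(\bZ)$ is satisfied by all of $\SL_g(\bQ)$. Your inclusion of the elementary density argument via the root subgroups $U_{ij}$ is exactly the content of the reference the paper cites for this step.
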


\begin{proof}
Assume $N= \bZ^g$. A special case of Borel's density theorem \cite{BorelDensity} states that $\SL_g (\bZ) \subset \SL_g (\bQ)$ is Zariski dense (a very short and elementary proof for the special linear group has been written down by Putman \cite{Putman}). For $v \in V^{\SL_g (\bZ)}$ and $\ell \in V^*$, the function $\SL_g (\bQ) \to \bQ$, $A \mapsto \ell (v-Av)$, is polynomial and vanishes on $\SL_g (\bZ)$, hence on $\SL_g (\bQ)$, whence $v=Av$ for all $A \in \SL_g (\bQ)$. 
\end{proof}

\section{The cohomology of the block diffeomorphism space}\label{sec:coh:blockdiffspecial}

\subsection{Using invariant theory}

In this section, we finish our partial evaluation of the spectral sequence of the fibration
\begin{equation}\label{eqn:secondkeyfibresequence2334346}
\map_\partial ((U_{g,1}^n)_\bQ;BO_\bQ)^0 \to \hq{\map_\partial ((U_{g,1}^n)_\bQ;BO_\bQ)^0}{\hAut_\partial((U_{g,1}^n)_\bQ)^{\id}} \to B \hAut_\partial((U_{g,1}^n)_\bQ)^{\id}. 
\end{equation}
Before we state the result, let us fix some bounds that the various parameters have to fulfil. 
\begin{assumption}\label{ass:bounds}
\begin{enumerate}
\item We assume throughout that $n  \geq 5$. 
\item We pick $M$ in \eqref{notation.algebraicpicturespectralsequence} so that 
\[
4M \geq 3n-5.
\]
\item We furthermore choose $g$ large enough to satisfy
\[
g >n-3,
\]
which implies also that $g \geq 3$.
\end{enumerate}
\end{assumption}
Using the number $M$, we define as in \eqref{notation.algebraicpicturespectralsequence}
\[
V(n):= \bigoplus_{4m-2n-1>0, m \leq M} \bQ[4m-2n-1], 
\]
\[
U(n) := \bigoplus_{4m-n-1>0, m \leq M} \bQ [4m-n-1]
\]
and 
\[
W(n):= \bigoplus_{4m-n>0, m \leq M} \bQ[4m-n]. 
\]
We let $v_m \in V(n)_{4m-2n-1}$ and $w_m \in W(n)_{4m-n}$ be the obvious generators, and let $u_m := S(w_m) \in U(n)_{4m-n-1}$ be the image under the map $S$ from Proposition \ref{prop:the:E2differential}. 
To formulate the result we are aiming at, some more notation is necessary. 

\begin{defn}\label{defn:Kvecspace}
We let $K(n)$ be the following graded vector space. It has basis elements $k_m \in K(n)_{4m-2n-1}$ for $m \leq M$ and $4m-2n-1>0$, and it has basis elements $k_{m_0,m_1}\in K(n)_{4(m_0+m_1)-2n-1}$ for $m_0 \leq m_1\leq M$, $4m_0 \geq n+1$, $4(m_0+m_1)-2n-1>1$. 
\end{defn}
Note that all the generators of $K(n)$ are in odd degrees. The role that $K(n)$ plays is explained by the next result, which is our goal in this subsection.

\begin{prop}\label{evaluation-spectralsequence-invarianttheory}
The composition of the graded algebra homomorphism
\begin{equation}\label{eqn:defnximap}
\xi: \Lambda^* (K(n)) \to H^* (\hq{\map_\partial ((U_{g,1}^n)_\bQ;BO_\bQ)^0}{\hAut_\partial ((U_{g,1}^n)_\bQ)_\bZ})
\end{equation}
given by 
\[
\xi(k_m):= \kappa_{L_m}, \; \xi(k_{m_0,m_1}):= \kappa_{L_{m_0}L_{m_1}}
\]
with the pullback map 
\[
H^* (\hq{\map_\partial ((U_{g,1}^n)_\bQ;BO_\bQ)^0}{\hAut_\partial ((U_{g,1}^n)_\bQ)_\bZ};\bQ) \to H^* (\hq{\map_\partial ((U_{g,1}^n)_\bQ;BO_\bQ)^0}{\hAut_\partial ((U_{g,1}^n)_\bQ)^\id};\bQ)^{\SL(N(g))_\bZ } \subset H^* (\hq{\map_\partial ((U_{g,1}^n)_\bQ;BO_\bQ)^0}{\hAut_\partial ((U_{g,1}^n)_\bQ)^\id};\bQ)
\]
is an isomorphism
\[
\Lambda^*(K(n)) \to H^* (\hq{\map_\partial ((U_{g,1}^n)_\bQ;BO_\bQ)^0}{\hAut_\partial ((U_{g,1}^n)_\bQ)^\id};\bQ)^{\SL(N(g))_\bZ }
\]
in degrees $* \leq (n-3)$, provided that $M$ and $g$ satisfy the bounds from \eqref{ass:bounds}. 
\end{prop}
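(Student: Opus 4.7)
\emph{Proof plan.} The strategy is to compute $\SL(N(g)_{\bZ})$-invariants through the Leray--Serre spectral sequence of \eqref{eqn:secondkeyfibresequence2334346}, whose $E_2$-page and $d_2$ are determined by Propositions~\ref{prop:spectralsequence-isomorphism} and~\ref{prop:the:E2differential}. First I would verify that taking invariants commutes with running the spectral sequence. Lemma~\ref{lem:boreldensity} identifies $\SL(N(g)_\bZ)$-invariants with $\SL(N(g))$-invariants; the weight of any nonzero element in spectral sequence bidegree $(p_{\mathrm{ss}},q_{\mathrm{ss}})$ with $p_{\mathrm{ss}}+q_{\mathrm{ss}}\leq n-3$ is bounded in absolute value by $n-3<g$, so Lemma~\ref{lem:weightargument} forces weight zero, and $\SL(N(g))$-invariants coincide with $\GL(N(g))$-invariants. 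Since rational $\GL(N(g))$-representations are semisimple (Theorem~\ref{thm:representationtheoryGL(N)}(3)), the invariant functor is exact, and the invariant spectral sequence converges to $H^*(\hq{\map_\partial ((U_{g,1}^n)_\bQ;BO_\bQ)^0}{\hAut_\partial((U_{g,1}^n)_\bQ)^{\id}})^{\SL(N(g)_\bZ)}$.

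Second, Proposition~\ref{prop:keylemma-invarianttheory} (whose hypothesis $2p+q\leq g$ is satisfied because $2p+q\leq p_{\mathrm{ss}}+q_{\mathrm{ss}}\leq n-3<g$) identifies the invariant $E_2$-page with
\[
\Lambda^*(V(n))\otimes S^*(\Lambda^2 U(n))\otimes \Lambda^*(W(n)\otimes U(n)).
\]
Combining Proposition~\ref{prop:the:E2differential} with the explicit formulas for $\varphi$ and $\psi$ from \S\ref{subsec:specialinvariantcalculation} and the Leibniz rule shows that the induced invariant $d_2$ vanishes on the $V(n)$ and $\Lambda^2 U(n)$ generators, whereas on a generator $\psi(w_m\otimes u_{m'})\in W(n)\otimes U(n)$ it equals a nonzero scalar multiple of $\varphi(S(w_m)\wedge u_{m'})\in \Lambda^2 U(n)$.

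The key algebraic step is then computing the cohomology of the Koszul-type CDGA $S^*(\Lambda^2 U(n))\otimes \Lambda^*(W(n)\otimes U(n))$ with this differential. Write $W(n)=W'\oplus \bQ\cdot w_{m_*}$, where the second summand appears precisely when $n\equiv 3\pmod 4$ (with $4m_*=n+1$), and where $S$ restricts to an isomorphism $W'\stackrel{\cong}{\to}U(n)$ while $S(w_{m_*})=0$. Then $\Lambda^*(w_{m_*}\otimes U(n))$ consists entirely of $d_2$-cycles and splits off as a tensor factor. On the remaining factor, identifying $W'\cong U(n)$ via $S$, the vector space decomposition $U(n)^{\otimes 2}=S^2 U(n)\oplus \Lambda^2 U(n)$ rewrites the complex as
\[
\Lambda^*(S^2 U(n))\otimes \Bigl(\Lambda^*(\Lambda^2 U(n))\otimes S^*(\Lambda^2 U(n))\Bigr),
\]
with $d_2$ trivial on the outer factor and sending each $\Lambda$-generator of the inner factor to the same vector in $\Lambda^2 U(n)\subset S^*(\Lambda^2 U(n))$; the inner factor is the standard Koszul complex of $\Lambda^2 U(n)$, which is acyclic in positive degrees. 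Hence the cohomology equals $\Lambda^*(S^2 U(n))$, and the invariant $E_3$-term in the range is $\Lambda^*(V(n))\otimes \Lambda^*(S^2 U(n))\otimes \Lambda^*(w_{m_*}\otimes U(n))$. All its generators live in bidegree $(0,*)$, so higher differentials vanish, the invariant spectral sequence collapses at $E_3$, and there are no extension problems.

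Finally, Lemma~\ref{lem:kappaclassaslambdaclass} and the product formula~\eqref{eqn:lambdaproductug1n} imply that $\xi(k_m)$ pulls back to the generator $v_m\in V(n)$, while $\xi(k_{m_0,m_1})$ pulls back to $\psi(w_{m_0}\otimes u_{m_1})+\psi(w_{m_1}\otimes u_{m_0})$, representing $u_{m_0}\cdot u_{m_1}\in S^2 U(n)$, when $4m_0>n+1$, and to $\psi(w_{m_*}\otimes u_{m_1})$ when $4m_0=n+1$. These classes form a complete, bijective list of free generators of the invariant $E_3$-term, so $\xi$ composed with pullback is the claimed isomorphism onto $\SL(N(g)_\bZ)$-invariants. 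The main obstacle is the Koszul computation in the third paragraph, together with the bookkeeping needed to treat the special case $n\equiv 3\pmod 4$, where $S$ vanishes on the generator $w_{m_*}$ and produces the extra family of classes $\kappa_{L_{m_*}L_{m_1}}$ in $K(n)$.
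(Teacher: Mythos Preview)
Your approach is essentially the same as the paper's: pass to the invariant spectral sequence, identify the invariant $E_2$ via Proposition~\ref{prop:keylemma-invarianttheory}, compute $E_3$ as a Koszul-type cohomology concentrated in column $0$, and match generators with restrictions of $\kappa$-classes via Lemma~\ref{lem:kappaclassaslambdaclass} and \eqref{eqn:lambdaproductug1n}. Your explicit splitting $U(n)^{\otimes 2}=S^2U(n)\oplus\Lambda^2U(n)$ reproduces what the paper obtains from the general Koszul lemma (their Lemma~\ref{lem:cohomology-in-model}) applied to the surjection $V(n)\oplus W(n)\otimes U(n)\to\Lambda^2U(n)$; the two computations are equivalent.

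Two points deserve tightening. First, your claim that ``the invariant spectral sequence converges to $H^*(\cdots)^{\SL(N(g)_\bZ)}$'' needs the filtration pieces of $H^*$, not only the $E_r$-pages, to lie in a category on which taking $\SL(N(g)_\bZ)$-invariants is exact. Knowing that each $E_\infty^{p,q}$ is a rational $\GL(N(g))$-representation does not by itself force the (a priori abstract) $\GL_g(\bQ)$-module $H^q$ to be rational; the paper closes this gap in Proposition~\ref{prop:applicationmargulis}(1) by invoking Bass--Milnor--Serre, which guarantees that any finite-dimensional $\SL_g(\bQ)$-representation with $g\geq 3$ is rational. You should either cite that, or note that $\pi_0(\hAut_\partial((U_{g,1}^n)_\bQ))\cong\GL_g(\bQ)$ acts and appeal to the same result.

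Second, the sentence ``all its generators live in bidegree $(0,*)$, so higher differentials vanish'' is not quite self-justifying at the top of the range: for $q=n-3$ a differential $d_r\colon \overline{E}_r^{0,n-3}\to\overline{E}_r^{r,n-2-r}$ lands in total degree $n-2$, outside the region where you have shown vanishing. The correct argument---which you in effect give in your final paragraph---is that the generators of $\overline{E}_3^{0,*}$ are restrictions of the classes $\kappa_{L_m}$ and $\kappa_{L_{m_0}L_{m_1}}$ from the total space, hence permanent cycles; the paper phrases this as the fibration having a section. With these two clarifications your proof is complete and matches the paper's.
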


As a preparation for the proof of Proposition \ref{evaluation-spectralsequence-invarianttheory}, recall from the discussion before Corollary \ref{cor:actiononcohomologymappingspace} that the spectral sequence $E_*^{*,*}$ of \eqref{eqn:secondkeyfibresequence2334346} is a spectral sequence of $\GL(N(g))\cong \pi_0 (\hAut_\partial((U_{g,1}^n)_\bQ))$-modules. Let us elaborate this a little.

\begin{prop}\label{prop:applicationmargulis}
Let $n \geq 5$ and $g \geq 3$.
\begin{enumerate}
\item For $q \leq (n-3)$, $H^q (\hq{\map_\partial ((U_{g,1}^n)_\bQ;BO_\bQ)^0}{\hAut_\partial ((U_{g,1}^n)_\bQ)^\id};\bQ)$ is a rational representation of $\SL(N(g))$, of load $\leq q$. 
\item Let $E_*^{*,*}$ denote the spectral sequence of \eqref{eqn:secondkeyfibresequence2334346}, and define 
\[
\overline{E}_r^{p,q}:= (E_r^{p,q})^{\SL(N(g)_\bZ)}.
\]
Then $\overline{E}_*^{*,*}$ is a spectral sequence, and it converges to $H^* (\hq{\map_\partial ((U_{g,1}^n)_\bQ;BO_\bQ)^0}{\hAut_\partial ((U_{g,1}^n)_\bQ)^\id};\bQ)^{\SL(N(g))_\bZ }$. 
\end{enumerate}
\end{prop}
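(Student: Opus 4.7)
My plan is to combine the explicit description of $E_2^{*,*}$ from Proposition \ref{prop:spectralsequence-isomorphism} with two standard ingredients: the semisimplicity of rational $\GL(N(g))$-representations (Theorem \ref{thm:representationtheoryGL(N)}(3)) and Borel density (Lemma \ref{lem:boreldensity}).

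For part (1), I would first analyze $E_2^{p,k}$ in the range $p \leq n-3$, $k \leq 4M-2n+2$, which by Assumption \ref{ass:bounds}(2) covers all bidegrees with $p+k \leq n-3$. By Corollary \ref{cor:actiononcohomologymappingspace} combined with Lemma \ref{lem:pinullhautrelbound} and Proposition \ref{prop:spectralsequence-isomorphism}, the discrete action of $\pi_0\hAut_\partial((U_{g,1}^n)_\bQ)\cong\GL(N(g))$ on $E_2^{p,k}$ is induced from the natural rational action on the polynomial algebras \eqref{eqn:spectralsequenceneven}/\eqref{eqn:spectralsequencenodd}. I then count load in bidegree $(p,k)$: the horizontal degree is only carried by $S^{p/2}(L^2 N(g))$, which for even $p$ is a summand of $N(g)^{\otimes p}$ (and vanishes for odd $p$), contributing load $\leq p$; the vertical part decomposes into tensor products of generators from $V(n)$ (load $0$), $N(g)\otimes W(n)$ (load $1$), and $N(g)^\vee\otimes U(n)$ (load $1$). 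Since every such generator lives in strictly positive cohomological degree, a monomial of vertical degree $k$ involves at most $k$ generators, hence at most $k$ factors contributing to the load; this shows $E_2^{p,k}$ has load $\leq p+k$. Because every differential $d_r$ is $\GL(N(g))$-equivariant, each $E_r^{p,k}$ is a subquotient of $E_2^{p,k}$ in the semisimple category of rational representations, and therefore rational of load $\leq p+k$; in particular $E_\infty^{p,q-p}$ has load $\leq q$. Using semisimplicity once more, the filtration quotients reassemble to a rational $\GL(N(g))$-representation structure on $H^q$ of load $\leq q$, proving (1).

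For part (2), Borel density (Lemma \ref{lem:boreldensity}) gives $W^{\SL(N(g)_\bZ)} = W^{\SL(N(g))}$ for every rational $\SL(N(g))$-representation $W$. Applied to each $E_r^{p,k}$ with $p+k \leq n-3$, this yields $\overline{E}_r^{p,k} = (E_r^{p,k})^{\SL(N(g))}$. Because the functor of $\SL(N(g))$-invariants is exact on rational representations (semisimplicity), it commutes with the subquotient formation $E_{r+1}=\ker(d_r)/\im(d_r)$, so $\overline{E}_*^{*,*}$ inherits a spectral sequence structure with differentials restricted from $E$. The same exactness, applied to the short exact sequences $0 \to F^{p+1}H^n \to F^p H^n \to E_\infty^{p,n-p} \to 0$ coming from the filtration of $H^n$, shows that $(F^\bullet H^n)^{\SL(N(g))}$ filters $(H^n)^{\SL(N(g))}$ with associated graded $\overline{E}_\infty^{\bullet,n-\bullet}$; combined with Borel density on the abutment $(H^n)^{\SL(N(g))} = (H^n)^{\SL(N(g)_\bZ)}$, this is the required convergence.

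The main issue to check carefully is the load bookkeeping and the confirmation that the $\GL(N(g))$-action on the total cohomology is genuinely rational rather than just a discrete arithmetic-group action; the latter point is precisely what Corollary \ref{cor:actiononcohomologymappingspace} delivers, and once it is in hand the rest is essentially a formal application of semisimplicity and Borel density.
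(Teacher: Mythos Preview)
Your approach to part (2) and the load counting in part (1) are essentially the same as the paper's. However, there is a genuine gap in your rationality argument for part (1).

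You correctly observe, via Corollary \ref{cor:actiononcohomologymappingspace} and Proposition \ref{prop:spectralsequence-isomorphism}, that the $\GL(N(g))$-action on $E_2^{p,k}$ (in the relevant range) is rational; since subquotients of rational representations are rational, this propagates to every page $E_r$ and to $E_\infty$. But the step from $E_\infty$ to $H^q$ does \emph{not} follow from semisimplicity alone. The filtration $F^\bullet H^q$ is a filtration of \emph{abstract} $\GL(N(g))$-modules, and an abstract extension of two rational modules need not be rational: semisimplicity of the rational representation category says nothing about extension classes computed in the category of discrete $\GL(N(g))$-modules. Your claim that ``the latter point is precisely what Corollary \ref{cor:actiononcohomologymappingspace} delivers'' is therefore incorrect---that corollary concerns only the $E_2$-page, not the abutment.

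The paper closes this gap with a deep external input you do not mention: the theorem of Bass--Milnor--Serre \cite[Corollary 16.6]{BassMilnorSerre}, which implies that for $g \geq 3$ every abstract homomorphism $\SL(N(g)) \to \GL(V)$ with $V$ a finite-dimensional $\bQ$-vector space is automatically rational. This gives rationality of $H^q$ (and of every $E_r^{p,q}$, in \emph{all} bidegrees, not just those with $p+q\leq n-3$) directly, after which your load bookkeeping and the exactness-plus-Borel-density argument for (2) go through exactly as you describe. Without this input, neither the rationality in (1) nor the unrestricted form of (2) is established.
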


\begin{proof}
(1) We have to invoke a deep result by Bass--Milnor--Serre \cite[Corollary 16.6]{BassMilnorSerre}. The quoted result implies that a homomorphism $\SL(N(g)) \to \GL(V)$, where $V$ is a finite-dimensional $\bQ$-vector space and $g \geq 3$, is actually rational (see also the discussion on p. 63 f and p. 134 of loc.cit.). Hence $H^q (\hq{\map_\partial ((U_{g,1}^n)_\bQ;BO_\bQ)^0}{\hAut_\partial ((U_{g,1}^n)_\bQ)^\id};\bQ)$ is a rational $\SL(N(g))$-representation. 

From the description of the $E_2$-term given in Proposition \ref{prop:spectralsequence-isomorphism}, it is apparent that the $\GL(N(g))$-representation $E_2^{p,q}$, for $p \leq (n-3)$, is rational of load $\leq q$. Since subquotients of rational representations are again rational (and the load does not increase), the representations occuring in $E_\infty^{*,*}$ with total degree $q$ are all rational, of load $\leq q$. 

(2): Theorem \ref{thm:representationtheoryGL(N)} and Lemma \ref{lem:boreldensity} together imply that taking $\SL(N(g)_\bZ)$-invariants is an exact functor from rational $\GL(N(g))$-representations to $\bQ$-vector spaces. Hence $\overline{E}_*^{*,*}$ is also a spectral sequence, and the claim about convergence also follows easily.
\end{proof}

Hence in order to prove Proposition \ref{evaluation-spectralsequence-invarianttheory}, we can focus our attention completely on $\overline{E}_*^{*,*}$. 

\begin{defn}
Let $D^{*,*}$ be the bigraded algebra
\[
D^{*,*}:= \Lambda^* (V(n)\oplus W(n) \otimes U(n)) \otimes S^* (\bQ[2,0]\otimes \Lambda^2 (U(n))) 
\]
where the graded vector spaces $W(n),U(n)$ and $V(n)$ are as in \eqref{notation.algebraicpicturespectralsequence}, but sit in bidegrees $(0,*)$. Let $\delta: D^{*,*} \to D^{*,*}$ be the differential of degree $(2,-1)$ which is a derivation, and is given on the generators as follows:
\begin{enumerate}
\item $\delta|_{V(n)} =0$, 
\item $\delta|_{W(n) \otimes U(n)}$ is the map $W(n) \otimes U(n) \stackrel{S\otimes 1}{\to} U(n) \otimes U(n) \to \Lambda^2 (U(n))\cong \bQ[2,0] \otimes \Lambda^2 (U(n))$,
\item $\delta|_{\bQ[2,0]\otimes \Lambda^2 (U(n))}=0$.
\end{enumerate}
\end{defn}
We define a graded algebra homomorphism 
\begin{equation}\label{deqn:etahom}
\eta:D^{*,*} \to \overline{E}^{*,*}_2= (E^{*,*}_2)^{\SL(N_\bZ(g))}
\end{equation}
to the invariant part of the spectral sequence of \eqref{eqn:secondkeyfibresequence2334346}, by sending
\begin{align*}
v_m \mapsto & \lambda_{[U_{g,1}^n],L_m} \in \overline{E}_2^{0,4m-2n-1} \\
w_{m_0} \otimes u_{m_1}\mapsto & \sum_i \lambda_{a_i,L_{m_0}} \lambda_{b_i,L_{m_1}} \in \overline{E}_2^{0,4m_0+4m_1-2n-1} \\
u_{m_0} \wedge u_{m_1} \mapsto & \sum_{ij} x_{ij} \otimes \lambda_{b_i,L_{m_0}} \lambda_{b_j,L_{m_1}} \in \overline{E}_2^{2,4m_0+4m_1-n-2} .\\
\end{align*}
Here $x_{ij} \in E_2^{2,0}$ are the generators in $E_2^{2,0} \cong L^2 (N(g))$.

\begin{prop}\label{invariantofspectralsequeE2}
The map $\eta$ is a map of differential bigraded algebras, and it is an isomorphism in total degrees $\leq (n-3)$, provided that the bounds of \ref{ass:bounds} are satisfied.
\end{prop}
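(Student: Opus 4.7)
The plan is to proceed in three main steps. First, I would verify that $\eta$ is a map of differential bigraded algebras by checking compatibility with $\delta$ and $d_2$ on each generator of $D^{*,*}$. On $V(n)$, both differentials vanish, the latter by Proposition \ref{prop:the:E2differential}(2). On $\bQ[2,0] \otimes \Lambda^2 U(n)$, $\delta$ vanishes by construction, and the image $\sum_{i,j} x_{ij}\,\lambda_{b_i,L_{m_0}}\,\lambda_{b_j,L_{m_1}}$ is a product of $d_2$-permanent cycles by parts (1) and (3) of the same proposition. The only substantive check is on $W(n) \otimes U(n)$: Proposition \ref{prop:the:E2differential}(4) computes $d_2(\lambda_{a_i,L_{m_0}})$, up to sign, as $\sum_j x_{ij}\,\lambda_{b_j, L_{m_0}}$ whenever $4m_0 - n - 1 > 0$, while $\lambda_{b_i,L_{m_1}}$ is a permanent cycle. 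The Leibniz rule, combined with the symmetry $x_{ji} = (-1)^n x_{ij}$ from the proof of Lemma \ref{claim:differentialidentificationlemmaproof}, then identifies $d_2\eta(w_{m_0} \otimes u_{m_1})$ (up to sign) with $\eta(u_{m_0} \wedge u_{m_1}) = \eta\delta(w_{m_0} \otimes u_{m_1})$, as required.

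Second, I would identify the underlying bigraded algebra of $\overline{E}_2^{*,*}$ using the invariant theory of Section \ref{sec:representationtheory}. Proposition \ref{prop:spectralsequence-isomorphism} identifies $E_2^{*,*}$ in the range $p \leq n-3$, $q \leq 4M - 2n + 2$ with one of the algebras $A$ or $C$ of \S \ref{subsec:specialinvariantcalculation} (according to the parity of $n$), tensored with the $\GL(N(g))$-trivial factor $\Lambda^*V(n)$, taking $N = N(g)$, $W = W(n)$, $U = U(n)$, and with the $S^p(L^2 N(g))$ factor contributing column $2p$. Lemma \ref{lem:weightargument} confines the $\GL(N(g))$-invariants to tridegrees $(p, q, 2p+q)$, and Proposition \ref{prop:invariantsSLZ} identifies $\SL(N(g))$- and $\SL(N(g)_\bZ)$-invariants in these tridegrees. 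Proposition \ref{prop:keylemma-invarianttheory} then provides a surjection $\Lambda^*V(n) \otimes S^*(\Lambda^2 U(n)) \otimes \Lambda^*(W(n) \otimes U(n)) \twoheadrightarrow \overline{E}_2^{*,*}$ (with the same target for both parities of $n$) which is an isomorphism in every tridegree satisfying $2p+q \leq g$. After absorbing the column shift $\bQ[2,0]$, this is exactly the underlying bigraded algebra of $D^{*,*}$, and a direct comparison of the defining formulas for $\varphi$ and $\psi$ in Section \ref{subsec:specialinvariantcalculation} with those for $\eta$ shows that the two maps coincide.

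Third, I would verify the bound $2p+q \leq g$ in total degrees $\leq n-3$. Each generator in $\bQ[2,0] \otimes \Lambda^2 U(n)$ has total degree $\geq 4$: two from the column shift and at least two from the two $u$-factors, since each $u_m \in U(n)$ has degree $4m - n - 1 \geq 1$. Each generator in $W(n) \otimes U(n)$ has total degree $\geq 1$, since $4(m_0 + m_1) - 2n - 1 \geq 1$ follows from $4m_0 > n$ and $4m_1 > n + 1$. Hence a monomial of total degree $\leq n - 3$ has $4p + q \leq n - 3$, so a fortiori $2p+q \leq n - 3 < g$ by Assumption \ref{ass:bounds}(3). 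Combined with Assumption \ref{ass:bounds}(2), which ensures that Proposition \ref{prop:spectralsequence-isomorphism} applies throughout the relevant range, this finishes the argument. The main obstacle has already been surmounted in Proposition \ref{prop:keylemma-invarianttheory}; after that, only the differential check and careful bookkeeping across the various gradings remain.
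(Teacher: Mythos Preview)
Your proposal is correct and follows essentially the same route as the paper's proof, which simply cites Propositions \ref{prop:spectralsequence-isomorphism}, \ref{prop:keylemma-invarianttheory}, \ref{prop:invariantsSLZ} for the isomorphism and Proposition \ref{prop:the:E2differential} together with the definitions in \S\ref{subsec:specialinvariantcalculation} for the differential compatibility. Your version is considerably more explicit: you spell out the generator-by-generator differential check, the matching of $\eta$ with $\varphi$ and $\psi$, and the verification that $2p+q \leq g$ holds in the relevant range, none of which the paper writes out.
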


\begin{proof}
That $\eta$ is an isomorphism in the indicated range of degrees follows from Propositions \ref{prop:spectralsequence-isomorphism}, \ref{prop:keylemma-invarianttheory} and \ref{prop:invariantsSLZ}. The first two conditions of \ref{ass:bounds} are needed for \ref{prop:spectralsequence-isomorphism}, and the third for \ref{prop:keylemma-invarianttheory}. 

Proposition \ref{prop:the:E2differential} and the definition of the maps in Proposition \ref{prop:keylemma-invarianttheory} show that $\eta$ is compatible with the differential.
\end{proof}

Let us next compute the cohomology of the differential graded algebras $D^{*,*}$. This is in terms of the following well-known construction. 

\begin{defn}
Let $F:Y \to X$ be a linear map of $\bQ$-vector spaces. The \emph{Koszul complex} of the map $F$ is the graded commutative differential graded algebra
\[
D_F := \Lambda^* (Y) \otimes S^* (X) ,
\]
where $S^1(X)$ has degree $2$, $\Lambda^1(Y)$ has degree $1$, and the differential $d_F: D_F \to D_F$ is the derivation of degree $+1$ given by the condition that $d_F|_{S^1(X)}=0$ and that $d_F: \Lambda^1 (Y) \to S^1 (X)$ is the map $F$. 
\end{defn}

Up to a different grading, $D^{*,*}$ is $D_F$, where $F$ is the map
\begin{equation}\label{eqn:mapforkoszulcomplex}
V(n)\oplus W(n) \otimes U(n) \stackrel{0 \oplus S \otimes 1_{U(n)}}{\to} U(n)^{\otimes 2} \to \Lambda^2(U(n)). 
\end{equation}
In order to compute $H^*(D^{*,*})$ and hence $(E_3^{*,*})^{\SL(N_\bZ(g))}$ in a range of degrees, we compute $H^* (D_F)$ in general. There are obvious linear maps 
\[
\ker (F) \cong \ker \Bigl(d_F: \Lambda^1 (Y) \to S^1 (X)\Bigr) \to H^1 (D_F)
\]
and 
\[
\coker (F) \cong \coker \Bigl(d_F: \Lambda^1 (Y) \to S^1 (X)\Bigr) \to H^2 (D_F). 
\]
They give a map of graded commutative algebras
\[
\eta_F: \Lambda^* (\ker (F)) \otimes S^* (\coker (F)) \to H^* (D_F).
\]
\begin{lem}\label{lem:cohomology-in-model}
The map $\eta_F$ is an isomorphism. 
\end{lem}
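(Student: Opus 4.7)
The plan is to reduce to a tensor-product decomposition of $D_F$ via splittings, and then to invoke the classical acyclicity of the Koszul complex of an isomorphism together with K\"unneth. Since we work over a field, I would begin by choosing vector-space splittings $Y = \ker(F) \oplus Y'$ and $X = \im(F) \oplus X'$; the restriction $\phi := F|_{Y'}: Y' \xrightarrow{\cong} \im(F)$ is then a linear isomorphism, and the composite $X' \hookrightarrow X \twoheadrightarrow \coker(F)$ is also an isomorphism. Because $\Lambda^*$ and $S^*$ turn direct sums into graded tensor products, these splittings induce an isomorphism of differential graded algebras
\[
D_F \;\cong\; \Lambda^*(\ker F) \,\otimes\, D_\phi \,\otimes\, S^*(X'),
\]
in which the outer two factors carry the zero differential and the middle factor $D_\phi = \Lambda^*(Y') \otimes S^*(\im F)$ is the Koszul complex of $\phi$.

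By the K\"unneth formula over $\bQ$, the cohomology splits accordingly, so the lemma reduces to showing that $H^*(D_\phi) = \bQ$ concentrated in degree $0$. To verify this I would pick a basis $y_1, \ldots, y_k$ of $Y'$ with images $x_i := \phi(y_i)$ and decompose further
\[
D_\phi \;\cong\; \bigotimes_{i=1}^{k} \bigl(\Lambda^*(\bQ y_i) \otimes S^*(\bQ x_i)\bigr).
\]
Each factor is the elementary two-term Koszul complex $\bQ[x_i] \oplus \bQ[x_i] y_i$ with $d(y_i x_i^n) = x_i^{n+1}$; a direct inspection gives cohomology $\bQ$ in degree $0$ and zero otherwise. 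Another application of K\"unneth then yields $H^*(D_\phi) = \bQ$ as desired.

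Finally, to see that $\eta_F$ coincides with the isomorphism produced above, I would observe that $\ker(F) \subset \Lambda^1(Y)$ is precisely the space of $d_F$-cycles in $\Lambda^1(Y)$, and these generate the $\Lambda^*(\ker F)$ tensor factor; similarly, lifts of elements of $\coker(F)$ to $S^1(X)$ produce classes in $H^2(D_F)$ that generate the $S^*(X')$ factor. Since $\eta_F$ is a map of graded-commutative algebras behaving correctly on these generators, and the target algebra is generated by them, $\eta_F$ must equal the K\"unneth isomorphism. There is no serious obstacle here: the argument is entirely formal once the splittings are chosen, with the only substantive input being the one-variable Koszul acyclicity.
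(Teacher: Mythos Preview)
Your proof is correct and follows essentially the same approach as the paper: both split $F$ as a direct sum of an isomorphism and a zero map via the decomposition $Y=\ker(F)\oplus Y'$, $X=\im(F)\oplus X'$, use K\"unneth to reduce to these two cases, and invoke the acyclicity of the Koszul complex of an isomorphism. Your version is slightly more explicit in further decomposing the isomorphism case into one-variable factors and in verifying that $\eta_F$ agrees with the resulting K\"unneth isomorphism, but the underlying argument is the same.
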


\begin{proof}
It follows from the K\"unneth formula that if $\eta_{F_0}$ and $\eta_{F_1}$ are isomorphisms, then so is $\eta_{F_0 \oplus F_1}$. 

By elementary linear algebra, we can write $F$ as a direct sum of an isomorphism and a zero map, and so it suffices to treat these two cases separately. If $F$ is a zero map, the claim is obvious. If $F$ is an isomorphism, we can assume without loss of generality that $F$ is an identity map. In that case, $D_{\id_V}$ is the Koszul complex of the vector space $V$ which is well-known to be acyclic (since we are over a field of characteristic $0$), see e.g. \cite[\S 3.1]{GuilStern}. 
\end{proof}

\begin{proof}[Proof of Proposition \ref{evaluation-spectralsequence-invarianttheory}]
The map in \eqref{eqn:mapforkoszulcomplex} is surjective. Therefore Lemma \ref{lem:cohomology-in-model} and Proposition \ref{invariantofspectralsequeE2} prove that 
\[
\overline{E}_3^{p,q} =0
\]
if $p+q \leq n-3$ and $p \neq 0$. Hence, using Proposition \ref{prop:applicationmargulis}, the edge homomorphism
\[
A:  H^q (\hq{\map_\partial ((U_{g,1}^n)_\bQ;BO_\bQ)^0}{\hAut_\partial ((U_{g,1}^n)_\bQ)^\id};\bQ)^{\SL(N(g))_\bZ } \to \overline{E}^{0,q}_\infty \stackrel{\subset}{\to} \overline{E}_3^{0,q}
\]
is an isomorphism if $q \leq n-3$ (to see that this is also true for $q=n-3$, note that the fibration \eqref{eqn:secondkeyfibresequence2334346} has a section). It is therefore sufficient to show that $A \circ \xi: \Lambda^*(K(n))\to \overline{E}_3^{0,*}$ is an isomorphism in that range of degrees. By the definition of $\xi$ and by Lemma \ref{lem:kappaclassaslambdaclass}, $A \circ \xi$ is given by 
\[
k_m \mapsto \lambda_{[U_{g,1}^n],L_m}
\]
and 
\[
k_{m_0,m_1} \mapsto \lambda_{[U_{g,1}^n],L_{m_0}L_{m_1}}. 
\]
We must therefore check that $\overline{E}_3^{0,*}$ is (in degrees $*\leq n-3$) the free graded-commutative algebra on the listed generators. Lemma \ref{lem:cohomology-in-model} tells us how to do that. 

We now have to distinguish two cases, which our system of notation has hidden so far. If $n \not \equiv 3 \pmod 4$, the map $S : W(n) \to U(n)$ is an isomorphism, while for $n \equiv 3 \pmod 4$, $S$ has a nontrivial kernel. Hence bases for the kernel of \eqref{eqn:mapforkoszulcomplex} will be given by slightly different sets of generators. 
It is not a coincidence that $n \equiv 3 \pmod 4$ is the case where Proposition \ref{prop:vanishing-degree1} provides an additional relation for the tautological classes, see the remark after Theorem \ref{mainthm:main}

If $n \not \equiv 3 \pmod 4$, the map $S : W(n) \to U(n)$ is an isomorphism. We deduce that the following set is a basis for the kernel of \eqref{eqn:mapforkoszulcomplex}:
\begin{equation}\label{eqn:basisforkernelkoszul}
\{ v_m, ( w_{m_0} \otimes u_{m_1} + w_{m_1} \otimes u_{m_0})  \vert \; 4m-2n-1>0, m \leq M, m_0 \leq m_1 \leq M, 4m_0-n-1>0 \}. 
\end{equation}
Under the map $\eta$ from \eqref{deqn:etahom}, the element $v_m$ is mapped to $\lambda_{[U_{g,1}^n],L_m}$. The element $ w_{m_0} \otimes u_{m_1} + w_{m_1} \otimes u_{m_0}$ is mapped to 
\[
\begin{split}
 \sum_i \lambda_{a_i,L_{m_0}} \lambda_{b_i,L_{m_1}} + \lambda_{a_i,L_{m_1}} \lambda_{b_i,L_{m_0}}   & \\
 =  \sum_i \lambda_{a_i,L_{m_0}} \lambda_{b_i,L_{m_1}} +  \lambda_{b_i,L_{m_0}} \lambda_{a_i,L_{m_1}}  & (\text{for degree reasons})\\
 =  \lambda_{[U_{g,1}^n],L_{m_0}L_{m_1}}  & (\text{by \eqref{eqn:lambdaproductug1n}}).
\end{split}
\]
This completes the proof if $n \not \equiv 3 \pmod 4$. 

If $n \equiv 3 \pmod 4$, the kernel of \eqref{eqn:mapforkoszulcomplex} is larger: one obtains a basis by adding the elements
\[
w_{\frac{n+1}{4}} \otimes u_m, \; 4m-n-1>0, m \leq M
\]
to the list of elements in \eqref{eqn:basisforkernelkoszul}. Using \eqref{deqn:etahom} and \eqref{eqn:lambdaproductug1n} again, these elements go to $\lambda_{[U_{g,1}^n],L_{\frac{n+1}{4}}L_{m}}$. So the proof is complete.
\end{proof}

\subsection{Using Borel's vanishing theorem} 

We now look at the spectral sequence of the fibration 
\begin{equation}\label{eqn:firstkeyfibresequence2132344567576879}
\hq{\map_\partial ((U_{g,1}^n)_\bQ;BO_\bQ)^0}{\hAut_\partial((U_{g,1}^n)_\bQ)^{\id}} \to \hq{\map_\partial ((U_{g,1}^n)_\bQ;BO_\bQ)^0}{\hAut_\partial((U_{g,1}^n)_\bQ)_\bZ} \to B \GL(N(g)_\bZ). 
\end{equation}
Recall the graded vector space $K(n)$ from Definition \ref{defn:Kvecspace} and from \eqref{eqn:defnximap} the map 
\[
\xi: \Lambda^* (K(n)) \to H^* (\hq{\map_\partial ((U_{g,1}^n)_\bQ;BO_\bQ)^0}{\hAut_\partial((U_{g,1}^n)_\bQ)_\bZ}; \bQ).
\]
Let moreover $B$ be the graded vector space $B:=\bigoplus_{k \geq 1} \bQ[4k+1]$. Mapping the generators to the Borel classes gives furthermore a map 
\[
\beta: \Lambda^* (B) \to H^* (\hq{\map_\partial ((U_{g,1}^n)_\bQ;BO_\bQ)^0}{\hAut_\partial((U_{g,1}^n)_\bQ)_\bZ};\bQ).
\]

\begin{prop}\label{prop:cohomology-tangentialhomotopyautomorphisms}
Assume the bounds from \eqref{ass:bounds} (which in particular means $n\geq 5$), but strengthened by
\[
g  \geq 2n-4. 
\]
Then the map 
\[
\xi\otimes \beta: \Lambda^* (K(n)  \oplus B) \to H^* (\hq{\map_\partial ((U_{g,1}^n)_\bQ;BO_\bQ)^0}{\hAut_\partial((U_{g,1}^n)_\bQ)_\bZ};\bQ)
\]
is an isomorphism in degrees $* \leq (n-3)$. 
\end{prop}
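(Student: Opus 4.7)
The plan is to exploit the Leray--Serre spectral sequence
\[
E_2^{p,q} = H^p(\GL(N(g)_\bZ); H^q(F;\bQ)) \Longrightarrow H^{p+q}(\hq{\map_\partial((U_{g,1}^n)_\bQ;BO_\bQ)^0}{\hAut_\partial((U_{g,1}^n)_\bQ)_\bZ};\bQ)
\]
associated to the fibration \eqref{eqn:firstkeyfibresequence2132344567576879}, where $F$ denotes its fibre, and combine it with Borel's vanishing and stability theorems for $\GL_g(\bZ)$.

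First, by Proposition \ref{prop:applicationmargulis}(1) together with the Bass--Milnor--Serre rigidity theorem cited in its proof, for $q \leq n-3$ the coefficients $H^q(F;\bQ)$ form a rational $\GL(N(g))$-representation whose irreducible summands all have load $\leq q$. Borel's vanishing theorem \cite{Borel2} asserts that for each non-trivial irreducible rational $\GL_g$-representation $V$ of bounded load, $H^p(\GL_g(\bZ); V) = 0$ in a range of $p$ which grows linearly with $g$. Under the hypothesis $g \geq 2n-4$ this range covers $p \leq n-3-q$, and therefore
\[
E_2^{p,q} \cong H^p(\GL_g(\bZ);\bQ) \otimes H^q(F;\bQ)^{\GL_g(\bZ)} \quad \text{for } p+q \leq n-3.
\]

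Next, I identify each factor in the stated range. Borel stability (Theorem \ref{thm:Borel}(2)) with the hypothesis $g \geq 2(n-3)+2 = 2n-4$ identifies $H^p(\GL_g(\bZ);\bQ)$ with $\Lambda^p(B)$ for $p \leq n-3$. For the fibre invariants, Proposition \ref{evaluation-spectralsequence-invarianttheory} computes the $\SL(N(g)_\bZ)$-invariants as $\Lambda^q(K(n))$; the weight constraint $g \mid 2p+q-r$ arising in Proposition \ref{prop:keylemma-invarianttheory} collapses to $2p+q = r$ in our degree range (since $2p+q < g$), so $\SL$- and $\GL$-invariants agree. Thus in the range $p+q \leq n-3$ the $E_2$-page is the bigraded algebra $\Lambda^*(B) \otimes \Lambda^*(K(n))$.

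Finally, I exhibit permanent cycles lifting every $E_2$-generator. The Borel classes $\beta_{4k+1}$ are pulled back from the base and are permanent cycles by construction. The tautological classes $\kappa_{L_m}$ and $\kappa_{L_{m_0}L_{m_1}}$ are defined directly on the total space via Construction \ref{construction-kappaclasses}, so they, too, are permanent cycles. Their images assemble into the multiplicative map $\xi \otimes \beta$, and a dimension count in each bidegree (using the $E_2$-identification and the absence of further differentials) forces $\xi \otimes \beta$ to be an isomorphism in degrees $\leq n-3$. The main obstacle is securing Borel's vanishing theorem with a quantitative range wide enough to absorb all irreducible summands of $H^q(F;\bQ)$ for $q \leq n-3$ under the sharp bound $g \geq 2n-4$; once this is in hand the remaining steps are essentially bookkeeping.
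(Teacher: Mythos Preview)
Your argument is correct and follows essentially the same route as the paper: identify the $E_2$-term of the spectral sequence of \eqref{eqn:firstkeyfibresequence2132344567576879} via Borel vanishing as a tensor product $H^p(\text{arithmetic group};\bQ)\otimes (\text{fibre invariants})$, recognise the two factors as $\Lambda^*(B)$ and $\Lambda^*(K(n))$ using Theorem~\ref{thm:Borel} and Proposition~\ref{evaluation-spectralsequence-invarianttheory}, and then observe that the $\kappa$-classes and Borel classes furnish permanent cycles which multiplicatively generate $E_2$, forcing collapse (this is the Leray--Hirsch step the paper invokes at the end).

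One small technical point you should tidy up: the paper's Borel vanishing theorem (Theorem~\ref{thm:borelvanishing}) is stated for $\SL_g(\bZ)$, not $\GL_g(\bZ)$, which is why the paper first replaces $\hAut_\partial((U_{g,1}^n)_\bQ)_\bZ$ by the index-two submonoid $\mathrm{S}\hAut_\partial((U_{g,1}^n)_\bQ)_\bZ$ whose $\pi_0$ is $\SL(N(g)_\bZ)$, and then uses the double-cover transfer to deduce the $\GL$-statement. Your direct $\GL$-approach works too, but you should either insert this covering step or remark that $H^p(\GL_g(\bZ);V)=H^p(\SL_g(\bZ);V)^{\bZ/2}$ so that Borel vanishing for $\SL$ immediately gives it for $\GL$. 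Also, your phrase ``the absence of further differentials'' in the final sentence reads circularly; what you mean (and what the paper spells out) is that since the multiplicative generators of $E_2$ are all permanent cycles and the differentials are derivations, every differential vanishes in the relevant range---state it that way and the logic is clean.
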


When combined with Theorem \ref{thm:blockdiffsversusmappingspace}, we obtain the following result as a corollary. This establishes Theorem \ref{thm:blockdiffs} from the introduction. 

\begin{cor}\label{cor:cohomologyblockdiffs}
Assume the bounds of Proposition \ref{prop:cohomology-tangentialhomotopyautomorphisms}. Then in degrees $\leq (n-4)$, $H^* (B \blockdiff_\partial (U_{g,1}^n);\bQ)$ is the exterior algebra on the Borel classes and on the tautological classes $\kappa_{L_{m_0} L_{m_1}}$ with $m_0 \leq m_1$, $4m_0-n>0$ and $4(m_0+m_1)-2n-1>0$. \qed
\end{cor}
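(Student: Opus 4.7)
The plan is to combine the two main ingredients already assembled: Corollary \ref{cor:finishpreliminarycalculuatin}, which identifies the kernel of the surjection
\[
H^* (\hq{\map_\partial ((U_{g,1}^n)_\bQ;BO_\bQ)^0}{\hAut_\partial((U_{g,1}^n)_\bQ)_\bZ};\bQ) \twoheadrightarrow H^* (B \blockdiff_\partial (U_{g,1}^n);\bQ)
\]
as the ideal generated by the classes $\kappa_{L_m}$, $m \in \bN$; and Proposition \ref{prop:cohomology-tangentialhomotopyautomorphisms}, which in degrees $\leq n-3$ identifies the source with the exterior algebra $\Lambda^*(K(n) \oplus B)$ via $\xi \otimes \beta$, where $k_m \mapsto \kappa_{L_m}$, $k_{m_0,m_1} \mapsto \kappa_{L_{m_0}L_{m_1}}$, and each Borel generator in degree $4k+1$ maps to the Borel class $\beta_{4k+1}$.

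First I would split $K(n) = \bQ\{k_m \mid m\in\bN\} \oplus K'(n)$, where $K'(n)$ is the subspace spanned by the generators $k_{m_0,m_1}$ with $m_0 \leq m_1$, $4m_0 - n > 0$, and $4(m_0+m_1) - 2n - 1 > 1$. Since $K(n) \oplus B$ is concentrated in odd degrees, the exterior algebra factors as
\[
\Lambda^*(K(n) \oplus B) \;\cong\; \Lambda^*(\bQ\{k_m\}) \otimes \Lambda^*(K'(n) \oplus B),
\]
so the quotient of $\Lambda^*(K(n) \oplus B)$ by the ideal generated by the $k_m$'s is precisely $\Lambda^*(K'(n) \oplus B)$.

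Next I would transport this computation through the isomorphism $\xi \otimes \beta$. In the range of degrees where $\xi \otimes \beta$ is an isomorphism, the ideal of $H^* (\hq{\map_\partial ((U_{g,1}^n)_\bQ;BO_\bQ)^0}{\hAut_\partial((U_{g,1}^n)_\bQ)_\bZ};\bQ)$ generated by the $\kappa_{L_m}$ corresponds exactly to the ideal of $\Lambda^*(K(n)\oplus B)$ generated by the $k_m$, and the quotient identification delivers the claimed isomorphism
\[
\Lambda^*(K'(n) \oplus B) \;\xrightarrow{\ \cong\ }\; H^*(B \blockdiff_\partial(U_{g,1}^n);\bQ)
\]
in the appropriate range. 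The bound $\leq n-4$ (rather than $\leq n-3$) in the statement reflects the need to control simultaneously the degree of a given class and the degree $d - |k_m|$ of a cofactor appearing in a relation $\kappa_{L_m} \cdot y$ in the kernel, so that both remain within the isomorphism range of Proposition \ref{prop:cohomology-tangentialhomotopyautomorphisms}. No serious obstacle arises here: the argument is pure bookkeeping, and all of the substantive work has already been done in the invariant-theoretic calculation of \S\ref{sec:representationtheory} and in the surgery-theoretic reformulation packaged by Corollary \ref{cor:finishpreliminarycalculuatin}.
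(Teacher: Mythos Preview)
Your proposal is correct and matches the paper's approach exactly: the corollary is marked with a \qed\ in the paper, being an immediate consequence of combining Proposition~\ref{prop:cohomology-tangentialhomotopyautomorphisms} with Corollary~\ref{cor:finishpreliminarycalculuatin} (equivalently Theorem~\ref{thm:blockdiffsversusmappingspace}) and reading off the quotient $\Lambda^*(K(n)\oplus B)/(k_m) \cong \Lambda^*(K'(n)\oplus B)$. Your splitting of $K(n)$ and the bookkeeping are precisely what is intended.
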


The proof of Proposition \ref{prop:cohomology-tangentialhomotopyautomorphisms} relies on Borel's vanishing theorem \cite{Borel2} that we shall state first. 

\begin{thm}[Borel]\label{thm:borelvanishing}
Let $V$ be a rational representation of $\SL_g (\bQ)$ of load at most $n$. Then the map 
\[
H^p (\SL_g (\bZ);\bQ) \otimes V^{\SL_g(\bZ)} = H^p (\SL_g (\bZ);\bQ) \otimes V^{\SL_g(\bQ)} \to H^p (\SL_g (\bZ);V^{\SL_g (\bQ)}) \to H^p (\SL_g (\bZ);V)
\]
is an isomorphism, provided that $ 2p+2 \leq g-n $ (the first equation holds by Lemma \ref{lem:boreldensity}).
\end{thm}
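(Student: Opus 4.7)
The plan is to use complete reducibility of rational $\SL_g(\bQ)$-representations to separate off the trivial isotypic component, and then to invoke Borel's classical vanishing theorem on what remains. By Theorem \ref{thm:representationtheoryGL(N)} (3) we may split $V = V^{\SL_g(\bQ)} \oplus V'$ as $\SL_g(\bQ)$-representations, where $V'$ is the sum of the non-trivial isotypic components; this splitting is in particular $\SL_g(\bZ)$-equivariant, so
\[
H^p(\SL_g(\bZ); V) \;\cong\; \bigl( H^p(\SL_g(\bZ); \bQ) \otimes V^{\SL_g(\bQ)} \bigr) \;\oplus\; H^p(\SL_g(\bZ); V'),
\]
and the composition described in the statement of the theorem is exactly the inclusion of the first factor. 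Consequently the theorem is equivalent to the vanishing $H^p(\SL_g(\bZ); V') = 0$ throughout the range $2p+2 \leq g-n$.

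Next, I would decompose $V'$ further into finitely many non-trivial irreducible $\SL_g(\bQ)$-summands. The load hypothesis guarantees that each such irreducible occurs as a summand of some $T^{k,l}(N(g))$ with $k+l \leq n$, which is the precise condition controlling the ``size'' of the representation that enters Borel's stability estimate. It therefore suffices to prove that for each non-trivial irreducible rational $\SL_g(\bQ)$-representation $W$ of load $\leq n$,
\[
H^p(\SL_g(\bZ); W) = 0 \quad \text{whenever } 2p+2 \leq g - n.
\]

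This is precisely Borel's vanishing theorem as established in \cite{Borel2}. Borel proves that the cohomology of $\SL_g(\bZ)$ with coefficients in any non-trivial rational irreducible $\SL_g$-representation vanishes in a stable range whose width grows linearly in $g$; combined with the load hypothesis, his estimate yields the uniform bound $2p+2 \leq g-n$. The only real obstacle here is bookkeeping, namely unwinding Borel's precise constants to confirm the stated range and checking that the standard reformulation in terms of load produces the asserted bound. Sharper ranges are available in \cite{KupersMillerPatzt} and \cite{LiSun}, which, as the paper notes, could in principle be used to improve Theorem \ref{thm:blockdiffs}; however, the classical formulation of \cite{Borel2} suffices for our purposes.
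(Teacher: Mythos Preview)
Your reduction via complete reducibility to showing $H^p(\SL_g(\bZ);W)=0$ for non-trivial irreducibles $W$ of load $\leq n$ is fine and matches the paper's first step. The genuine gap is in the sentence ``his estimate yields the uniform bound $2p+2 \leq g-n$'', which you treat as mere bookkeeping. The paper explicitly notes that the ranges in \cite{Borel2} are \emph{not} made explicit enough to read off this bound: Borel's result gives an isomorphism for $p \leq \min(M,C)$ where one constant satisfies $M \geq g-2$ but the other constant $C$ is left implicit. Extracting an explicit estimate for $C$ (the paper cites the argument in \cite[Theorem~7.3]{KrannRW}, giving roughly $C \geq \tfrac{1}{8}g^2 - \max(k,l)-1$) only shows the map is an isomorphism for $p \leq f_{k,l}(g)$ with $f_{k,l}(g)\to\infty$, not the linear bound $2p+2\leq g-n$.

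The missing idea is a bootstrap via twisted homological stability: Van der Kallen's theorem \cite[Theorem~5.6]{VanderKallen} gives that $H_q(\SL_g(\bZ);T^{k,l}(\bR^g)) \to H_q(\SL_h(\bZ);T^{k,l}(\bR^h))$ is an isomorphism for all $h\geq g$ once $2q+2 \leq g-(k+l)$. One then chooses $h$ large enough that Borel's (now-explicit) range applies at $h$, and transports the conclusion back to $g$ via stability. Without this step, you have no way to certify the stated linear range, and your proof as written is incomplete.
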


\begin{proof}[References]
This is essentially due to Borel \cite[Theorem 4.4]{Borel2}, but the ranges are not made explicit enough for our needs in Borel's work, so some more words need to be said here. By complete reducibility, we can assume that $V \subset T^{k,l}(\bQ^g)$ with $k+l\leq n$, and finally suppose that $V= T^{k,l}(\bQ^g)$. Moreover, it is enough to prove that statement for $\bQ$ replaced by $\bR$ and $\bQ^g$ replaced by $\bR^g$. 

Borel proved in loc.cit. that
\begin{equation}\label{eqn:borelmap1}
H^p (\SL_g (\bZ);\bR) \otimes T^{k,l}(\bR^g)^{\SL_g(\bR)}\to H^p (\SL_g (\bZ);T^{k,l}(\bR^g))
\end{equation}
is an isomorphism provided that $p \leq \min (M(\SL_g(\bR),(k,l)),C(\SL_g(\bR),(k,l)))$, where $M(\SL_g(\bR),(k,l))$ and $C(\SL_g(\bR),(k,l))$ are constants which can be read off from the root system of $\mathfrak{sl}_g$ and the weights of $T^{k,l}(\bQ^g)$. In loc.cit., Borel showed that $M(\SL_g(\bR),(k,l)) \geq g-2$, but left $C(\SL_g(\bR),(k,l))$ implicit. 
A relatively naive counting argument given in the proof of \cite[Theorem 7.3]{KrannRW} shows that $C(\SL_g(\bR),(k,l)) \geq \frac{1}{8} g^2-\max(k,l)-1$, which implies that \eqref{eqn:borelmap1} is an isomorphism when $p \leq f_{k,l}(g)$, and $f_{k,l}$ is a function with $\lim_{g \to \infty} f_{k,l}(g)=\infty$. 

To get at the range claimed by us, we use Van der Kallen's work on homological stability, more precisely \cite[Theorem 5.6]{VanderKallen}. The latter result implies that for $2q+2\leq g-(k+l)$ and all $h \geq g$, the map $H_q (\SL_g (\bZ); T^{k,l}(\bR^g)) \to H_q (\SL_h (\bZ); T^{k,l}(\bR^h))$ is an isomorphism. This implies a cohomological statement by an instance of the universal coefficient theorem \cite[Lemma 3.5]{ORWOutFn}, and we can pick $h$ large enough so that $f(h) \geq q$. 
\end{proof}

\begin{rem}
The referee pointed us to the paper \cite{LiSun} which gives a range for Theorem \ref{thm:borelvanishing} that only depends on $g$, not on $V$. Using this, we would achieve some minor simplification in our proofs. Tshishiku \cite{Tshishiku} showed by a careful analysis of root systems that the corresponding result is true for $\SO_{g,g}(\bZ)$ and $\Sp_{2g}(\bZ)$; we have been informed by him that the analogous procedure for $\SL_g (\bZ)$ does \emph{not} lead to a range independent of $V$. 
\end{rem}

\begin{proof}[Proof of Proposition \ref{prop:cohomology-tangentialhomotopyautomorphisms}]
In order to accommodate to the fact that Theorem \ref{thm:borelvanishing} is about the special linear group rather than the general linear group, we let $\mathrm{S}\hAut_\partial((U_{g,1}^n)_\bQ)_\bZ \subset \hAut_\partial((U_{g,1}^n)_\bQ)_\bZ$ consists of those homotopy automorphisms whose action on $H_n (U_{g,1}^n)$ is by maps of determinant $1$. The natural map 
\[
\hq{\map_\partial ((U_{g,1}^n)_\bQ;BO_\bQ)^0}{\mathrm{S}\hAut_\partial((U_{g,1}^n)_\bQ)_\bZ} \to \hq{\map_\partial ((U_{g,1}^n)_\bQ;BO_\bQ)^0}{\hAut_\partial((U_{g,1}^n)_\bQ)_\bZ}
\]
is, up to homotopy, a $2$-fold covering; hence it induces an injective map in rational cohomology by a general argument \cite[Proposition 3G.1]{Hatcher}. Therefore, it is enough to give the proof with $\hq{\map_\partial ((U_{g,1}^n)_\bQ;BO_\bQ)^0}{ \mathrm{S}\hAut_\partial((U_{g,1}^n)_\bQ)_\bZ}$ in place of $\hq{\map_\partial ((U_{g,1}^n)_\bQ;BO_\bQ)^0}{\hAut_\partial((U_{g,1}^n)_\bQ)_\bZ}$. 

Let $E_*^{*,*}$ be the spectral sequence for the fibre sequence
\begin{equation}\label{eqn:firstkeyfibresequenceSL}
\hq{\map_\partial ((U_{g,1}^n)_\bQ;BO_\bQ)^0}{\hAut_\partial((U_{g,1}^n)_\bQ)^{\id}} \to \hq{\map_\partial ((U_{g,1}^n)_\bQ;BO_\bQ)^0}{\mathrm{S}\hAut_\partial((U_{g,1}^n)_\bQ)_\bZ} \to B \SL(N(g)_\bZ);
\end{equation}
it has $E_2$-term
\[
E_2^{p,q} = H^p (\SL(N(g)_\bZ);H^q (\hq{\map_\partial ((U_{g,1}^n)_\bQ;BO_\bQ)^0}{\hAut_\partial((U_{g,1}^n)_\bQ)^{\id}};\bQ)). 
\]
By Proposition \ref{prop:applicationmargulis}, the coefficient module is a rational representation of load $\leq q$. Hence if $2p+q+2\leq g$, we may invoke Theorem \ref{thm:borelvanishing} and see that 
\begin{equation}\label{eqn:spectralseqzuenceproduct}
E_2^{p,q} = H^p (\SL(N(g)_\bZ);\bQ) \otimes H^q (\hq{\map_\partial ((U_{g,1}^n)_\bQ;BO_\bQ)^0}{\hAut_\partial((U_{g,1}^n)_\bQ)^{\id}};\bQ))^{\SL(N(g)_\bZ)}. 
\end{equation}
Under our bound on $g$, this holds for all $p,q$ with $p+q \leq n-3$.

It follows from Proposition \ref{evaluation-spectralsequence-invarianttheory} and \eqref{eqn:defnximap} that the map 
\[
H^*( \hq{\map_\partial ((U_{g,1}^n)_\bQ;BO_\bQ)^0}{\mathrm{S}\hAut_\partial((U_{g,1}^n)_\bQ)_\bZ};\bQ) \to
H^* (\hq{\map_\partial ((U_{g,1}^n)_\bQ;BO_\bQ)^0}{\hAut_\partial((U_{g,1}^n)_\bQ)^{\id}};\bQ)^{\SL(N(g)_\bZ)}
\]
is surjective if $* \leq n-3$, but that can be identified with the edge homomorphism of the spectral sequence, so that all differentials starting in the zeroeth column vanish (in degrees $* \leq n-3$). The isomorphism \eqref{eqn:spectralseqzuenceproduct} implies that the map $E_2^{*,0} \otimes E_2^{0,*} \to E_2^{*,*}$ of bigraded vector spaces is an isomorphism in total degrees $\leq n-3$. As the spectral sequence is a spectral sequence of algebras, we conclude that all differentials starting in a term $E_r^{p,q}$ with $p+q \leq n-3$ are trivial. 

An application of Theorem \ref{thm:Borel} and the Leray--Hirsch theorem shows that the map $\xi \otimes \beta$ is an isomorphism in degrees $\leq n-3$. 
\end{proof}

\section{The endgame: from block diffeomorphisms to actual diffeomorphisms}\label{sec:endgame}

In this rather short section, we compare the rational cohomologies of $B \Diff_\partial (U_{g,1}^n)$ and $B \blockdiff_\partial (U_{g,1}^n)$ in a range of degrees and thereby complete the proof of Theorem \ref{mainthm:main}. 
We phrase the argument in a way that is largely independent of the calculations for $H^* (B \blockdiff_\partial (U_{g,1}^n);\bQ)$ which we carried out in the previous sections. To formulate the result, let us introduce the following notations: 
\[
\cD_g :=  \Diff_\partial (U_{g,1}^n); \; \tilde{\cD}_g := \blockdiff_\partial (U_{g,1}^n). 
\]
There are stabilization maps $B\cD_g \to B\cD_{g+1}$ and $B\tilde{\cD}_g \to B\tilde{\cD}_{g+1}$ defined by gluing in $S^n \times S^{n+1}$ with two discs removed and extending block diffeomorphisms by the identity, and we let $B \cD_\infty:= \hocolim_{g \to \infty} B \cD_g$ and define $B \tilde{\cD}_\infty$ analogously. 
The aim of this section is the following result, which together with Theorem \ref{thm:blockdiffs} and \cite[Corollary 1.3.2]{Perlmutter} immediately implies Theorem \ref{mainthm:main} for $n \geq 5$ (the case $n=4$ is easier).

\begin{thm}\label{thm:comparisondiffblockdiff}\mbox{}
\begin{enumerate}
\item Let $n \geq 3$. Then $H^* (B \tilde{\cD}_\infty;\bQ)$ is a free-graded commutative algebra which is degreewise finitely generated.
\item Let $n \geq 4$. Then $H^* (B \cD_\infty;\bQ)$ is a free-graded commutative algebra which is degreewise finitely generated.
\item Let $n \geq 3$. Then the map 
\[
B\rho_n: B \tilde{\cD}_\infty \to B \GL_\infty (\bZ)
\]
given by the action on $H_n(\_;\bZ)$ induces an injective map in rational cohomology, and $H^*(B \tilde{\cD}_\infty)$ is isomorphic to the tensor product of $H^*(B \GL_\infty (\bZ);\bQ)$ and a free graded-commutative algebra. 
\item Let $n \geq 5$. Then the comparison map 
\[
BI_\infty: B \cD_\infty \to B \tilde{\cD}_\infty
\]
induces a surjection in rational cohomology in degrees $* \leq 2n-5$, and in this range of degrees, the kernel of $(BI_\infty)^*$ is the ideal generated by the image of $(B \rho_n)^*: H^{*>0}(B \tilde{\cD}_\infty;\bQ) \to H^{*>0}(B\cD_\infty;\bQ)$. 
\end{enumerate}
\end{thm}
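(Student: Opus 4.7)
The plan is to separate the proof into three independent ingredients: the construction of homotopy commutative $H$-space structures via group completion and the Quillen plus construction, an analysis of $B\rho_n$ using Hopf algebra techniques, and a fibre sequence analysis comparing diffeomorphisms to block diffeomorphisms via Morlet disjunction, Farrell--Hsiang, and \cite{EbertDisc}. First I would apply the plus construction to $B \cD_\infty$ and $B \tilde{\cD}_\infty$. Both disjoint unions $\coprod_g B \cD_g$ and $\coprod_g B\tilde{\cD}_g$ carry compatible little $(2n+1)$-discs algebra structures, so the group completion theorem identifies the rational homology of $B \cD_\infty$ and $B \tilde{\cD}_\infty$ with components of the respective group completions. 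After the plus construction, $(B\cD_\infty)^+$ and $(B \tilde{\cD}_\infty)^+$ become homotopy commutative $H$-spaces without changing rational cohomology. Since rational homology stabilizes with $g$ in each degree (Perlmutter in the smooth case, Grey in the block case) and each $H_k(B\cD_g;\bQ)$, $H_k(B\tilde{\cD}_g;\bQ)$ is finite-dimensional, the resulting $H$-spaces are of finite rational type. The Milnor--Moore structure theorem then identifies both rational cohomologies with free graded-commutative algebras, proving (1) and (2).

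For (3), the map $B\rho_n \colon (B\tilde{\cD}_\infty)^+ \to B \GL_\infty(\bZ)^+$ is a map of homotopy commutative $H$-spaces. By Milnor--Moore applied to both sides, injectivity of $(B\rho_n)^*$ on rational cohomology is equivalent to surjectivity on rational homotopy, i.e.\ that for each $k \geq 1$ some class in $\pi_{4k+1}((B\tilde{\cD}_\infty)^+) \otimes \bQ$ hits a generator of $\pi_{4k+1}(B\GL_\infty(\bZ)^+) \otimes \bQ$. Such classes can be constructed by realizing arithmetic Borel-class-detecting families through block diffeomorphisms: Wall's realization result, already invoked in Lemma \ref{lem:pinullhautrelbound}, provides a surjection $\pi_0(\tilde{\cD}_g) \to \GL_g(\bZ)$, and a standard obstruction-theoretic lifting of higher-dimensional arithmetic families from $B\GL_g(\bZ)$ to $B\tilde{\cD}_g$ yields the required preimages in rational homotopy (the relevant obstructions lie in the rational homotopy of the fibre, which is the surgery-theoretic object controlled by the theory of \S \ref{sec:cohoblockdiffgeneral}). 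Once injectivity on primitives is established, the free graded-commutative Hopf algebra structure forces the tensor product description of (3).

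For (4), consider the homotopy fibre $F_g$ of $BI \colon B\cD_g \to B\tilde{\cD}_g$. Morlet's lemma of disjunction identifies $F_g$ rationally with $B \Diff_\partial(D^{2n+1})$ in the range $* \lesssim 2n$; combined with Farrell--Hsiang, rationally $F_g$ is a product of Eilenberg--MacLane spaces $K(\bQ, 4k)$ with $k \geq 1$ in this range. The main result of \cite{EbertDisc} (drawing on \cite{BotPerl} and \cite{PerlStab}) implies that $F_g \hookrightarrow B\cD_g$ is rationally trivial on positive-degree cohomology in the relevant range. To transfer this to the stable setting, I would invoke Krannich's argument from \cite{Krannich}: a second application of Morlet disjunction shows that the comparison from $F_g$ to the homotopy fibre of $(B\cD_\infty)^+ \to (B\tilde{\cD}_\infty)^+$ is highly connected. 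In the Leray--Serre spectral sequence of the stabilized fibration, the polynomial generators of the fibre's rational cohomology must therefore transgress to nonzero primitive classes in $H^{4k+1}((B\tilde{\cD}_\infty)^+;\bQ)$, and the ideal they generate is exactly the kernel of $(BI_\infty)^*$ in the relevant range, while surjectivity is an immediate consequence of the collapse of the spectral sequence beyond the transgressing differential. The last step is to identify these transgression targets with the Borel-class pullbacks $(B\rho_n)^* \beta_{4k+1}$ up to nonzero scalars, which is consistent with Proposition \ref{prop:vanishingdww} (Dwyer--Weiss--Williams) on actual diffeomorphisms.

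The hard part will be the double Morlet-disjunction argument needed to compare $F_g$ with the stable fibre and the precise identification of transgression targets with Borel classes: while the general theory strongly constrains these classes to agree up to scalar (they live in the one-dimensional primitive subspace of $H^{4k+1}((B\tilde{\cD}_\infty)^+;\bQ)$ that can be killed by the diffeomorphism map), pinning down a nonzero scalar requires either an explicit evaluation on a disc bundle or a quantitative matching with the Dwyer--Weiss--Williams framework. Once (3) and (4) are both in hand, they combine directly to yield the stated kernel description for $(BI_\infty)^*$.
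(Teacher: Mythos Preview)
Your overall architecture for (4) matches the paper's closely: Morlet disjunction twice (once to identify the fibre with the disc case, once in Krannich's form to pass to the stable fibre), Farrell--Hsiang for the disc, \cite{EbertDisc} for triviality of the fibre inclusion, and then a Hopf-algebra/dimension-count endgame using Proposition \ref{prop:vanishingdww}. The paper phrases the last step via Milnor--Moore on rational homotopy rather than transgression, but these are equivalent here.

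There are two genuine gaps.

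\textbf{Finite type in (1) and (2).} You assert that each $H_k(B\cD_g;\bQ)$ and $H_k(B\tilde{\cD}_g;\bQ)$ is finite-dimensional, but this is not free. For block diffeomorphisms the paper extracts it from the surgery-theoretic description (Proposition \ref{prop:rationalized-blockdiffs}) combined with the finiteness of $H^*(B\GL_g(\bZ);V)$ for finite-dimensional $V$. For actual diffeomorphisms the paper invokes Kupers' finiteness theorem \cite{Kupers}, which is a deep input you have not mentioned. Homological stability alone does not give degreewise finiteness.

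\textbf{The argument for (3) does not work as stated.} You propose to produce preimages in $\pi_{4k+1}((B\tilde{\cD}_\infty)^+)\otimes\bQ$ by ``standard obstruction-theoretic lifting of higher-dimensional arithmetic families from $B\GL_g(\bZ)$ to $B\tilde{\cD}_g$''. But before plus construction $B\GL_g(\bZ)$ is aspherical, so there are no higher homotopy classes to lift; after plus construction you would need to control the rational homotopy of the fibre of $(B\tilde{\cD}_\infty)^+ \to B\GL_\infty(\bZ)^+$, and nothing in \S\ref{sec:cohoblockdiffgeneral} tells you those obstructions vanish. The paper's route is entirely different and avoids this: the forgetful map $\hq{\map_\partial((U_{g,1}^n)_\bQ;BO_\bQ)^0}{\hAut_\partial((U_{g,1}^n)_\bQ)_\bZ} \to B\hAut_\partial((U_{g,1}^n)_\bQ)_\bZ$ has a \emph{section} (send everything to the constant map), so via Proposition \ref{prop:rationalized-blockdiffs} it suffices to show that $B\hAut_\partial((U_{g,1}^n)_\bQ)_\bZ \to B\GL_\infty(\bZ)$ is injective on rational cohomology. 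In the range $*\leq n-3$ this is contained in Theorem \ref{thm:blockdiffs}; for the full statement the paper appeals to Stoll's recent computation \cite{Stoll}. Without that section trick (or an equivalent), your lifting argument is incomplete, and this matters because (3) feeds directly into the dimension count at the end of (4).
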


\subsection{Using Morlet's Lemma and Farrell-Hsiangs theorem}

The first step of the proof of Theorem \ref{thm:comparisondiffblockdiff} is to get a hold of the rational homotopy groups of the fibre of $BI_\infty$ in a range of degrees. The stabilization maps fit into a commutative diagram 
\begin{equation}\label{morletdiagram}
\xymatrix{
B \cD_g\ar[r] \ar[d]^{BI_g} & B\cD_{g+1} \ar[d]^{BI_{g+1}}\\
B\tilde{\cD}_g \ar[r] & B\tilde{\cD}_{g+1}
}
\end{equation}
and by passing to the colimits, we obtain
\begin{equation}\label{morletdiagraminfinite}
\xymatrix{
B \cD_0\ar[r] \ar[d]^{BI_0} & B\cD_{\infty} \ar[d]^{BI_\infty}\\
B\tilde{\cD}_0 \ar[r] & B\tilde{\cD}_{\infty}.
}
\end{equation}

\begin{lem}\label{morlet-lemma-of-disjunction}
For $n \geq 2$, the diagrams \eqref{morletdiagram} and \eqref{morletdiagraminfinite} are $(2n-4)$-cartesian; i.e. the induced map on vertical (or equivalently horizontal) homotopy fibres is $(2n-4)$-connected. 
\end{lem}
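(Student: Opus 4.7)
First, (\ref{morletdiagraminfinite}) is the sequential homotopy colimit of (\ref{morletdiagram}) over $g$, and taking based homotopy fibers commutes with filtered homotopy colimits while preserving $k$-connectivity. So it suffices to prove that each diagram (\ref{morletdiagram}) is $(2n-4)$-cartesian, which by definition amounts to showing that the induced map on vertical homotopy fibers
$$s_g: \hofib(BI_g) \longrightarrow \hofib(BI_{g+1})$$
is $(2n-4)$-connected for every $g$.

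Second, we recall the stabilization. Writing $P := (S^n\times S^{n+1}) \setminus (\inter{D^{2n+1}} \sqcup \inter{D^{2n+1}})$ for the pair-of-pants cobordism and fixing a decomposition $U_{g+1,1}^n = U_{g,1}^n \cup_{D^{2n}} P$ along a disc in the boundary, the stabilization is given by extending (block) diffeomorphisms by the identity on $P$. Fix once and for all an open $(2n+1)$-disc $\inter{D^{2n+1}} \subset U_{g,1}^n$ disjoint from the gluing region; it embeds canonically and identically in $U_{g+1,1}^n$.

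Third, we invoke Morlet's lemma of disjunction in exactly the form highlighted in \S\ref{subsec:methodofproof}: for a $1$-connected smooth compact $(2n+1)$-manifold $M$ with $\partial M = S^{2n}$ and $n \geq 2$, the extend-by-identity map
$$e_M: \hofib(BI_{D^{2n+1}}) \longrightarrow \hofib(BI_M),$$
associated to any embedding $D^{2n+1} \hookrightarrow \inter{M}$, is $(2n-4)$-connected. Applied to both $M = U_{g,1}^n$ and $M = U_{g+1,1}^n$ with the common disc from the previous step, this yields a commutative triangle with apex $\hofib(BI_{D^{2n+1}})$, base $s_g$, and slant edges $e_{U_{g,1}^n}$ and $e_{U_{g+1,1}^n}$, both of which are $(2n-4)$-connected. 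A two-out-of-three argument for connectivity then gives the desired bound on $s_g$.

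The main obstacle is Step three: producing Morlet's lemma in exactly this quantitative form. Following \cite{Krannich}, from which we import the argument verbatim, one applies the disjunction lemma twice---first to reduce $\hofib(BI_M)$ to pseudoisotopy-theoretic data supported near a basepoint, and second to absorb the bar resolution needed to pass from the monoid-level map $I_M$ to the classifying-space map $BI_M$. The gap between the `ideal' range of roughly $2n$ (coming from the codimension of a point in a $(2n+1)$-manifold) and the actual range $2n-4$ is the accumulated cost of these zig-zags. Crucially, the resulting bound is independent of $g$, which is precisely what makes the stabilization comparison function.
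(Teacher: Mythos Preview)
Your core strategy is correct: the triangle through $\hofib(BI_{D^{2n+1}})$ together with the two-out-of-three property for $k$-connected maps (if $f$ and $gf$ are $k$-connected then so is $g$) does give the result, and the passage to the colimit for \eqref{morletdiagraminfinite} is fine. The paper's proof is simply more compressed: it cites \cite[Theorem~3.1, p.~28]{BLR} as directly asserting that each square \eqref{morletdiagram} is $(2n-4)$-cartesian, then passes to the colimit. In effect you are applying the same BLR input twice (to the disc-to-$U_{g,1}^n$ and disc-to-$U_{g+1,1}^n$ comparisons) and then pasting, whereas the paper applies it once to the $U_{g,1}^n$-to-$U_{g+1,1}^n$ comparison.

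Your final paragraph should be dropped. Morlet's lemma is treated here as a black-box citation to \cite{BLR}; there is no need to rederive it, and the sketch you give (``reduce $\hofib(BI_M)$ to pseudoisotopy-theoretic data \dots\ absorb the bar resolution'') does not describe how the lemma is actually proven---the BLR argument is by isotopy extension and handle induction, not via pseudoisotopy theory. You seem to be conflating Morlet's lemma with the Weiss--Williams/Hatcher machinery relating $\blockdiff/\Diff$ to concordance spaces and algebraic $K$-theory; that enters the paper only implicitly through Lemma~\ref{lem:farrellhsiang}, not here. The appeal to \cite{Krannich} is likewise misplaced: the paper does borrow an argument from Krannich, but later (in the proof of Lemma~\ref{morlet-lemma-abeliannesstrick}) and for a different purpose, namely to show that the monodromy action on the fibre is trivial, not to establish the connectivity bound.
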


\begin{proof}
The so-called \emph{Morlet's lemma of disjunction}, more precisely \cite[Theorem 3.1 on page 28]{BLR}, immediately says that \eqref{morletdiagram} is $(2n-4)$-cartesian for each $g$, and the claim for \eqref{morletdiagraminfinite} follows by passage to the colimit. 
\end{proof}

\begin{lem}\label{lem:cerf}
The map 
\[
BI_g: B \cD_g \to B \tilde{\cD}_g
\]
induces an isomorphism on fundamental groups, if $n \geq 2$. 
\end{lem}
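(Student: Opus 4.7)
The plan is to translate the statement about $\pi_1$ into one about $\pi_0$ of the underlying topological monoids, via the standard identification $\pi_1(BG) \cong \pi_0(G)$. Thus I need to show that the forgetful map
\[
I_* : \pi_0 \Diff_\partial(U_{g,1}^n) \longrightarrow \pi_0 \blockdiff_\partial(U_{g,1}^n)
\]
is a bijection. Surjectivity is automatic: as recalled in \S \ref{subsec:characteristicclasses}, the comparison map $I \colon \Diff_\partial(M) \to \blockdiff_\partial(M)$ is by definition $0$-connected.

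For injectivity, the strategy is to invoke Cerf's pseudoisotopy theorem. Unwinding the semi-simplicial definition of $\blockdiff_\partial$, a $1$-simplex joining two diffeomorphisms $f_0, f_1 \in \Diff_\partial(U_{g,1}^n)$ is exactly a pseudoisotopy rel boundary from $f_0$ to $f_1$, i.e.\ a diffeomorphism of $U_{g,1}^n \times [0,1]$ that restricts to the identity on $\partial U_{g,1}^n \times [0,1]$ and to $f_i$ on $U_{g,1}^n \times \{i\}$. Cerf's ``concordance implies isotopy'' theorem asserts that for a simply connected compact smooth manifold $M$ of dimension $\geq 5$, every such pseudoisotopy is isotopic rel ends to a level-preserving one, so that $f_0$ and $f_1$ are in fact isotopic through elements of $\Diff_\partial(M)$. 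The hypotheses are satisfied in our situation: $U_{g,1}^n$ is $(n-1)$-connected and of dimension $2n+1$, so for $n \geq 2$ it is simply connected with $\dim \geq 5$.

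There is no genuine mathematical obstacle; the bulk of the work is bookkeeping to make sure the semi-simplicial model of $\blockdiff_\partial$ used in the paper really does identify $1$-simplices with pseudoisotopies rel $\partial M$, and that the form of Cerf's theorem available in the literature is the relative-boundary version needed. Both points are completely standard in the subject, and the plan is to dispatch them by a brief appeal to Cerf's original paper together with the exposition of the semi-simplicial model of $\blockdiff$ already cited in \S \ref{subsec:characteristicclasses}.
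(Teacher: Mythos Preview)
Your proposal is correct and matches the paper's own proof essentially verbatim: the paper also derives surjectivity from the fact that $I$ is $0$-connected by definition, and injectivity from Cerf's theorem (citing \cite[Th\'eor\`eme 0]{CerfIsotopy}), which applies since $U_{g,1}^n$ is simply connected of dimension $2n+1\geq 5$ when $n\geq 2$.
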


\begin{proof}
Surjectivity of $\pi_1 (BI_g)$ follows from the very definition of the block diffeomorphism group, and injectivity follows from Cerf's theorem \cite[Th\'eor\`eme 0]{CerfIsotopy}. 
\end{proof}

\begin{lem}\label{lem:farrellhsiang}
The homotopy fibre $\tilde{\cD}_0/\cD_0$ of $BI_0$ is connected, and its rational homotopy groups in low degrees are given by 
\[
\pi_k (\tilde{\cD}_0/\cD_0)\otimes \bQ =
\begin{cases}
\bQ & 1 \leq k \leq 2n-3, \; k \equiv 0 \pmod 4\\
0 & 1 \leq k \leq 2n-3, \; k \not\equiv 0 \pmod 4\\
\end{cases}
\]
when $n \geq 5$; when $n=4$, the above isomorphisms hold for $1 \leq k \leq 4$.
\end{lem}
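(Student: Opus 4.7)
The plan is to combine two classical inputs. Since $U_{0,1}^n = D^{2n+1}$, we have $\cD_0 = \Diff_\partial(D^{2n+1})$ and $\tilde{\cD}_0 = \blockdiff_\partial(D^{2n+1})$, so $BI_0$ is the canonical comparison between the smooth and block diffeomorphism groups of the disk relative to the boundary.

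First, I would show that $\tilde{\cD}_0$ is weakly contractible by means of the Alexander trick adapted to the block category. Given a $k$-simplex $\phi$, that is, a diffeomorphism of $D^{2n+1}\times\Delta^k$ fixing $\partial D^{2n+1}\times\Delta^k$ and respecting the block structure, the radial scaling $\phi_t(x) := t\phi(x/t)$ for $|x|\le t$ (extended by the identity for $|x|\ge t$) assembles into a block $(k+1)$-simplex interpolating between $\phi$ and the identity; the failure of smoothness at the cone point is absorbed into the additional face $\{t=0\}$, which is allowed in the block category. Hence $B\tilde{\cD}_0 \simeq \ast$, and the homotopy fibre $\tilde{\cD}_0/\cD_0$ of $BI_0$ is canonically weakly equivalent to $B\cD_0 = B\Diff_\partial(D^{2n+1})$; in particular it is connected.

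Next, I would invoke Farrell--Hsiang's theorem \cite{FarrellHsiang} to read off the rational homotopy groups of $B\Diff_\partial(D^{2n+1})$. The original stable range has been extended to roughly $2n$ by Krannich \cite{Krannich} and Krannich--Randal-Williams \cite{KrannRW}, yielding $\pi_k \otimes \bQ \cong \bQ$ in degrees $k\equiv 0 \pmod 4$ and vanishing otherwise, throughout $k \le 2n-3$ for $n \ge 5$, and throughout $k \le 4$ for $n=4$ (this last range reflecting the weaker quantitative bound currently available in dimension $9$).

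I expect the main obstacle to be purely quantitative, namely matching the ranges of validity provided by \cite{Krannich} and \cite{KrannRW} with the bound $2n-3$ appearing in the lemma. Otherwise, both the block Alexander trick and the Farrell--Hsiang--Krannich computation are used essentially as black boxes.
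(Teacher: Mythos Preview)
Your argument contains a genuine error: the block diffeomorphism group $\blockdiff_\partial(D^{2n+1})$ is \emph{not} weakly contractible. The Alexander trick you describe produces a map $D^{2n+1}\times\Delta^{k+1}\to D^{2n+1}\times\Delta^{k+1}$ that fails to be smooth near the face $\{t=0\}$; the block condition demands a genuine diffeomorphism of the whole product, and the singularity at the cone point sits in the interior, not on a face, so it cannot be ``absorbed''. In fact one has $\pi_k(B\blockdiff_\partial(D^{2n+1}))\cong\Theta_{2n+1+k}$, the group of homotopy spheres, which is finite but typically nonzero. (The Alexander trick does work in the topological and PL block categories, which is perhaps the source of the confusion.)

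That said, your conclusion survives with only a small repair. The paper's proof uses exactly that $\Theta_{2n+1+k}$ is finite (Kervaire--Milnor \cite{KervaireMilnor}) to conclude that the map $\tilde{\cD}_0/\cD_0 \to B\cD_0$ is a \emph{rational} homotopy equivalence, rather than a weak equivalence; this is all that is needed. Connectedness of $\tilde{\cD}_0/\cD_0$ is obtained from Lemma~\ref{lem:cerf} (Cerf's theorem that $\pi_1(BI_g)$ is an isomorphism), not from contractibility of $B\tilde{\cD}_0$. The rest of your argument, invoking Farrell--Hsiang together with the Krannich--Randal-Williams range, matches the paper.
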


\begin{proof}
Lemma \ref{lem:cerf} shows that $\tilde{\cD}_0/\cD_0$ is connected and has abelian fundamental group. Note that $U_{0,1}^n=D^{2n+1}$. The homotopy group $\pi_k (B \blockdiff_\partial (D^{2n+1}))$ (for $k \geq 1$) can be identified with the group $\Theta_{2n+1+k}$ of homotopy spheres, which is of course finite by \cite{KervaireMilnor}. It follows that $\tilde{\cD}_0/\cD_0 \to B \cD_0$ induces an isomorphism on rational homotopy groups. 

The rational homotopy groups of $B \Diff_\partial (D^{2n+1})$ are famously related to algebraic $K$-theory, originally by \cite{FarrellHsiang}; the range stated was given by Krannich and Randal--Williams in \cite[Theorem A]{KrannRW}.
\end{proof}

The results surveyed above suffice to deal with the case $n=4$ of our main result.

\begin{proof}[Proof of Theorem \ref{mainthm:main} for $n=4$]
It follows immediately from Theorem \ref{thm:blockdiffs}, Lemma \ref{morlet-lemma-of-disjunction}, Lemma \ref{lem:cerf} and Lemma \ref{lem:farrellhsiang} that 
\[
H^1 (B \Diff_\partial (U_{g,1}^4);\bQ)=0
\]
for all $g$, which shows Theorem \ref{mainthm:main} for $n=4$.
\end{proof}

As we shall see below, the rational cohomology of $\tilde{\cD}_0/\cD_0$ is an easy consequence of Lemma \ref{lem:farrellhsiang}. In order to make use of this for our homological calculation, we need to overcome the problem that the fundamental group of $B \cD_\infty$ is highly nontrivial. The relevant argument is developed in the next subsection. 

\subsection{Getting the mapping class group under control}

The spaces 
\[
B\cD := \coprod_{g \geq 0} B \cD_g \; \text{and} \; B\tilde{\cD} := \coprod_{g \geq 0} B \tilde{\cD}_g
\]
carry $E_{2n+1}$-structures, analogous to the $E_2$-structure described in \cite[\S 3]{GalKupRW} for diffeomorphisms of surfaces; we refrain from giving any more details here. As a consequence of May's recognition principle \cite{MayLoop}, the group completions $\Omega B B \cD$ and $\Omega B B \tilde{\cD}$ have the structures of $(2n+1)$-fold loop spaces. 
The comparison map $B\cD \to B\tilde{\cD}$ is a map of $E_{2n+1}$-spaces, and hence the induced map on group completions is a map of $(2n+1)$-fold loop spaces. 

\begin{lem}\label{lem:plusconstruction}
For $n \geq 1$, the natural maps 
\begin{equation}\label{eqn:groupcompletionmap1}
\bZ \times B \cD_\infty \to \Omega B B \cD
\end{equation}
and 
\begin{equation}\label{eqn:groupcompletionmap2}
\bZ \times B \tilde{\cD}_\infty \to \Omega B B \tilde{\cD}
\end{equation}
are acyclic (i.e. their homotopy fibres have the integral homology of a point). The commutator subgroups of $\pi_1 (B \cD_\infty)$ and $\pi_1 (B \tilde{\cD}_\infty)$ are perfect. The maps \eqref{eqn:groupcompletionmap1} and \eqref{eqn:groupcompletionmap2} identify their targets with the Quillen plus construction on their source (here the Quillen plus construction is performed one component at a time, on the maximal perfect normal subgroup of the fundamental group).
\end{lem}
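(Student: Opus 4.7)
The first step is to invoke the $E_{2n+1}$-algebra structure on $B\cD$ and $B\tilde{\cD}$ already mentioned in the introduction. The block analogue is constructed in exactly the same manner as for ordinary diffeomorphisms, by parametrized embeddings of $(2n+1)$-discs into the interior of $U_{g,1}^n$ and extending block diffeomorphisms by the identity. Since $n \geq 1$, this structure is in particular an $E_2$-structure, so $\pi_0 (B \cD) \cong \bN_0$ and $\pi_0 (B \tilde{\cD}) \cong \bN_0$ are abelian (in fact cyclic) and the monoids are homotopy commutative. The stabilization map used to form $B\cD_\infty$ and $B\tilde{\cD}_\infty$ is, up to homotopy, the operation of multiplying by the generator of $\pi_0$.

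The heart of the argument is the group completion theorem of McDuff--Segal (see for instance \cite[Proposition~1]{MillerIIMM} or the exposition in \cite[\S 7]{GRW18}): for a homotopy commutative topological monoid $M$ whose monoid of components is finitely generated, the natural map from the telescope $M_\infty$ (stabilization by a generator of $\pi_0$) to the base-component $\Omega_0 B M$ is an integral homology isomorphism. Applied to $M = B \cD$ and $M = B \tilde{\cD}$, this shows that the maps \eqref{eqn:groupcompletionmap1} and \eqref{eqn:groupcompletionmap2} are $H_*(-;\bZ)$-isomorphisms.

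Next, since each target $\Omega B (B\cD)$ and $\Omega B (B\tilde{\cD})$ is a double loop space (the source being $E_{2n+1}$ with $2n+1 \geq 3$), its fundamental group is abelian. On the other hand, $\pi_1$ of a connected $H$-space equals its first integral homology, so the group completion map identifies the homomorphism
\[
\pi_1 (B \cD_\infty) \longrightarrow \pi_1 (\Omega_0 B(B\cD))
\]
with the abelianization, and likewise for $\tilde{\cD}$. It remains to upgrade the integral homology equivalence to an acyclic map and to see that the commutator subgroup is perfect. This is the content of the following standard lemma of plus-construction theory (see e.g.\ \cite[Proposition~4.40]{Hatcher} or \cite{Wagoner}): if $f: X \to Y$ is an integral homology equivalence to a space $Y$ with abelian fundamental group, and if $\pi_1(f)$ is the abelianization, then the kernel $K$ of $\pi_1(f)$ is perfect and $f$ exhibits $Y$ as the Quillen plus construction $X^+_K$. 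The proof is the universal cover trick: one lifts to the abelian cover $\widetilde{Y} \to Y$ and its pullback to $X$; the lifted map is again an integral homology equivalence by the comparison of Serre spectral sequences, and since $\widetilde{Y}$ is simply connected and the lift has fundamental group $K$, we read off $H_1(K;\bZ) = H_1(\widetilde{X};\bZ) = H_1(\widetilde{Y};\bZ) = 0$, so $K$ is perfect; acyclicity then follows by comparing with $X \to X^+_K$, which exists once $K$ is perfect and is acyclic by Quillen's theorem. Applying this lemma to \eqref{eqn:groupcompletionmap1} and \eqref{eqn:groupcompletionmap2} yields all three conclusions. The only nontrivial point in the whole argument is verifying the hypotheses of the group completion theorem for these particular $E_{2n+1}$-algebras, which is routine.
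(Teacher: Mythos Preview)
Your overall strategy---group completion theorem, then upgrade the homology equivalence to an acyclic map and identify the target with a plus construction---is the same as the paper's. The gap is in your ``standard lemma.'' The claim that an integral homology equivalence $f: X \to Y$ with $\pi_1(Y)$ abelian and $\pi_1(f)$ the abelianisation is automatically acyclic (equivalently, that $K = \ker \pi_1(f)$ is automatically perfect) is \emph{not} a formality, and your universal-cover sketch does not establish it. In the Serre spectral sequence for $\hat{X} \to X \to B\pi_1(Y)$ the coefficient system $H_*(\hat{X})$ is twisted by the deck action of $\pi_1(Y)$, and there is no reason for this action to be trivial ($X = B\cD_\infty$ is not an $H$-space). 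Without that, Zeeman-type comparison with the spectral sequence for $\tilde{Y} \to Y \to B\pi_1(Y)$ does not go through, and one cannot read off $H_1(\hat{X}) = 0$ from the constant-coefficient isomorphism $H_*(X) \cong H_*(Y)$ alone.

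This is exactly the subtlety addressed by Randal-Williams in \cite{ORWgcthm}, which the paper cites. The point is that the $E_{2}$-structure (which you have, since $2n+1 \geq 3$) lets one run the McDuff--Segal argument with \emph{local} coefficient systems pulled back from $\Omega B M$, and homology equivalence with all such local systems is equivalent to acyclicity. So the fix is simply to replace your ``standard lemma'' with a citation of \cite[Theorem~1.1]{ORWgcthm} for acyclicity and \cite[Proposition~3.1]{ORWgcthm} for perfectness; the identification with the plus construction then follows from the classification of acyclic maps by perfect normal subgroups of $\pi_1$ (e.g.\ \cite{HausHus}).
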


\begin{proof}
A straightforward application of the group completion theorem \cite{McDuffSegal} shows that the two maps are integral homology equivalences. An improved version of the group completion theorem, namely \cite[Theorem 1.1]{ORWgcthm}, proves the (stronger) claim of acyclicity. Perfectness of the commutator subgroups follows from \cite[Proposition 3.1]{ORWgcthm}, and the statement about the Quillen plus construction is a consequence: by \cite[Proposition 3.1]{HausHus}, acyclic maps out of a given space are classified up to homotopy equivalence by the kernels of their induced maps on fundamental groups.
\end{proof}

Our goal in this subsection is the following statement.

\begin{prop}\label{prop:comparisondiagramcartesian}
For $n \geq 3$, the diagram 
\[
\xymatrix{
B \cD_0 \ar[d] \ar[r] & B \cD_\infty^+ \ar[d]\\
B \tilde{\cD}_0 \ar[r] & B \tilde{\cD}_\infty^+
}
\]
is $(2n-5)$-cartesian.
\end{prop}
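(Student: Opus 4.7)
The plan is to factor the square of the proposition through the stable stage before plus construction. Consider the composite rectangle
\[
\xymatrix{
B\cD_0 \ar[r] \ar[d] & B\cD_\infty \ar[r] \ar[d]^{BI_\infty} & B\cD_\infty^+ \ar[d]^{BI_\infty^+}\\
B\tilde{\cD}_0 \ar[r] & B\tilde{\cD}_\infty \ar[r] & B\tilde{\cD}_\infty^+.
}
\]
The left square is $(2n-4)$-cartesian by Lemma \ref{morlet-lemma-of-disjunction} (Morlet's lemma of disjunction). The comparison map on homotopy fibres for the outer rectangle factors as the composition of the comparison maps for the two inner squares, so the outer rectangle is $(2n-5)$-cartesian provided that the right-hand square is $(2n-5)$-cartesian.

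For the right-hand square, I first observe that by Lemma \ref{lem:cerf}, $BI_\infty$ induces an isomorphism on $\pi_1$, so the maximal perfect normal subgroups $P_{\cD} \subset \pi_1(B\cD_\infty)$ and $P_{\tilde{\cD}} \subset \pi_1(B\tilde{\cD}_\infty)$ correspond under this isomorphism. Moreover, by Lemma \ref{lem:plusconstruction}, the plus constructions agree componentwise with the group completions of the $E_{2n+1}$-algebras $B\cD$ and $B\tilde{\cD}$; since group completions are $(2n+1)$-fold loop spaces, their fundamental groups are abelian, and hence both $P_{\cD}$ and $P_{\tilde{\cD}}$ coincide with the commutator subgroups of the respective fundamental groups. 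By a theorem of Quillen on the interplay of the plus construction with fibrations, the right-hand square is homotopy cartesian provided that $P_{\tilde{\cD}}$ acts trivially on the integral homology of the fibre $F := \hofib(BI_\infty)$, because under the $\pi_1$-isomorphism $BI_\infty$ the preimage of $P_{\tilde{\cD}}$ equals $P_{\cD}$.

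The verification of this triviality of action is where I would apply Morlet's lemma of disjunction a second time, following Krannich's argument \cite{Krannich}. Since $P_{\tilde{\cD}}$ is the commutator subgroup, the $\pi_1$-action on $H_\ast(F)$ factors through $\pi_1^{\mathrm{ab}}$ if and only if every commutator $[\alpha,\beta]$ of elements $\alpha,\beta \in \pi_0(\tilde{\cD}_\infty)$ acts trivially on $H_\ast(F)$. For any such commutator, I would represent $\alpha$ and $\beta$ at some finite stage and, after further stabilization, arrange them to be supported in disjoint regions of $U_{g',1}^n$; Morlet's disjunction lemma would then imply that $[\alpha,\beta]$ becomes isotopic to the identity in a range of degrees controlled by the Morlet connectivity, so that its induced action on $H_\ast(F)$ is trivial in that range. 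The main obstacle is carrying out this second disjunction argument cleanly at the level of block diffeomorphisms, and the final bound $(2n-5)$ in the proposition reflects the interaction of the two Morlet applications.
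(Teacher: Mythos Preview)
Your overall factorization through $B\cD_\infty \to B\cD_\infty^+$ is the same as in the paper, and so is the idea of invoking a ``fibrations and plus constructions'' criterion (the paper uses Berrick's version). The gap is in your verification that the relevant subgroup acts trivially on the homology of the fibre.

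First, the disjoint-support argument is not correct as stated. Stabilisation extends (block) diffeomorphisms by the identity on the new handles, so if $\alpha,\beta \in \pi_0(\tilde{\cD}_g)$, their stabilisations in $\tilde{\cD}_{g'}$ both remain supported in the original $U_{g,1}^n$; you cannot make them disjoint merely by stabilising. Moreover, $[\alpha,\beta]$ is an element of $\pi_0$, so the phrase ``isotopic to the identity in a range of degrees controlled by the Morlet connectivity'' does not parse. The second use of Morlet in the paper is something else entirely.

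Second, even granting triviality in a range, to apply Berrick's (or Quillen's) criterion you need the action to be trivial on \emph{all} of $H_*(F)$, not just up to degree $2n-5$. The paper handles this by passing to the $(2n-4)$th Moore--Postnikov stage $\cE_g$ of $B\cD_g \to B\tilde{\cD}_g$, so that the fibre $\cF_g$ has $\pi_k = 0$ for $k \geq 2n-4$. The key point (Lemma \ref{morlet-lemma-abeliannesstrick}) is that \emph{every} $\gamma \in \pi_1(B\tilde{\cD}_g)$ acts trivially on $\cF_g$, not just commutators: one isotopes $\gamma$ to fix a disc, notes that the stabilisation map $\sigma: \tilde{\cD}_0/\cD_0 \to \tilde{\cD}_g/\cD_g$ lands in that disc so that $L_\gamma \circ \sigma \sim \sigma$, and then uses that $\sigma$ is $(2n-4)$-connected (this is the second use of Morlet) together with the vanishing of $\pi_{\geq 2n-4}(\cF_g)$ to conclude $\mu(\gamma) \sim \id_{\cF_g}$. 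Berrick's theorem then makes the $\cE_\infty^+$-square homotopy cartesian, and a final comparison $B\cD_\infty^+ \to \cE_\infty^+$ (which is $(2n-4)$-connected, Lemma \ref{lem:quillenplus}) gives the $(2n-5)$-cartesianness of the square in the proposition. The Moore--Postnikov truncation is what converts a connectivity statement into triviality in all degrees, and you are missing this step.
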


The proof of Proposition \ref{prop:comparisondiagramcartesian} is a sequence of lemmas. We let
\[
B \cD_g \to \cE_g \to B\tilde{\cD}_g
\]
be the $(2n-4)$th stage of the Moore--Postnikov tower of the natural map $B \cD_g \to B \tilde{\cD}_g$ 
(we use the same indexing convention for Moore--Postnikov towers as \cite[p.414]{Hatcher}, so the first map is $(2n-4)$-connected and the second one $(2n-4)$-coconnected). Using naturality of the Moore--Postnikov tower, we obtain the $(2n-4)$-th Moore--Postnikov stage
\[
B \cD_\infty \to \cE_\infty \to B \tilde{\cD}_\infty
\]
in the colimit. 
\begin{lem}\label{lem:moorepostnikov}
The diagrams
\[
\xymatrix{
B \cD_g \ar[r] \ar[d] & \cE_g \ar[d]\\
B \tilde{\cD}_g \ar@{=}[r] & B \tilde{\cD}_g
}
\]
are $(2n-4)$-cartesian for all $0 \leq g\leq \infty$. The diagrams 
\[
\xymatrix{
\cE_g \ar[r] \ar[d] & \cE_{g+1} \ar[d]\\
B\tilde{\cD}_g \ar[r] & B\tilde{\cD}_{g+1}
}
\]
are homotopy cartesian, for each $0 \leq g< \infty$.
\end{lem}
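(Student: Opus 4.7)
The proof will have two parts, one for each assertion. The first claim is essentially a restatement of the defining connectivity properties of the Moore--Postnikov factorization, while the second uses naturality of the factorization together with Morlet's lemma as an input.

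For the first diagram, the map on vertical homotopy fibres (both taken over $B\tilde{\cD}_g$) is exactly the map $F_g \to F'_g$ from the homotopy fibre of $B\cD_g \to B\tilde{\cD}_g$ to that of $\cE_g \to B\tilde{\cD}_g$, induced by $B\cD_g \to \cE_g$. Comparing the long exact sequences of homotopy groups of these two fibrations and applying the five-lemma, the fact that $B\cD_g \to \cE_g$ is $(2n-4)$-connected (by the defining property of the Moore--Postnikov factorization in the indexing we use) implies that $F_g \to F'_g$ is also $(2n-4)$-connected. This is precisely what it means for the first diagram to be $(2n-4)$-cartesian. The same argument applies verbatim with $g$ replaced by $\infty$, using that homotopy groups commute with filtered homotopy colimits.

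For the second diagram, I would first invoke naturality of the Moore--Postnikov factorization with respect to the map of fibrations defined by the (Morlet) commutative square
\[
\xymatrix{
B\cD_g \ar[r] \ar[d] & B\cD_{g+1} \ar[d]\\
B\tilde{\cD}_g \ar[r] & B\tilde{\cD}_{g+1}
}
\]
so as to obtain the induced map $\cE_g \to \cE_{g+1}$ fitting into the square of the claim. The second diagram is homotopy cartesian iff the induced map on the homotopy fibres of the two vertical maps is a weak equivalence. Writing $F'_g$ (resp.\ $F'_{g+1}$) for the homotopy fibre of $\cE_g \to B\tilde{\cD}_g$ (resp.\ for $g+1$), the Moore--Postnikov construction identifies $F'_g$, up to natural weak equivalence, with a Postnikov truncation of $F_g$ in which $\pi_i$ is preserved for $i < 2n-4$ and killed for $i \geq 2n-4$, and similarly for $F'_{g+1}$.

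The final step is then to observe that the map $F'_g \to F'_{g+1}$ is the Postnikov truncation (at level $2n-5$) of the stabilization $F_g \to F_{g+1}$, and that by Lemma \ref{morlet-lemma-of-disjunction} (Morlet's lemma of disjunction) this stabilization is $(2n-4)$-connected, so it is a $\pi_i$-isomorphism for all $i < 2n-4$. The induced map of truncations is therefore a weak equivalence, as required. No step is genuinely hard; the only subtlety is keeping track of the indexing conventions, in particular the shift by one between ``$(2n-4)$-connected'' and ``truncation at level $2n-5$,'' which lines up exactly so that the Morlet connectivity suffices.
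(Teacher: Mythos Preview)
Your proof is correct and is essentially the same argument as the paper's, only unpacked more explicitly. The paper phrases both steps as instances of general connectivity lemmas about squares: for the first, that a square with identity base and $k$-connected top map is $k$-cartesian; for the second, that in a stacked diagram where the top maps are $k$-connected, the bottom maps are $k$-coconnected, and the outer rectangle is $k$-cartesian, the lower square is homotopy cartesian. Your version spells out the second of these by identifying the vertical fibres $F'_g$ as the Postnikov $(2n{-}5)$-truncations of $F_g$ and then using Morlet's lemma directly on $\pi_i$ for $i<2n{-}4$; this is exactly the content of the paper's abstract lemma when applied to the situation at hand.
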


\begin{proof}
If $X \stackrel{f}{\to} Y \stackrel{g}{\to} Z$ are two maps and $f$ is $k$-connected, then 
\[
\xymatrix{
X \ar[d]^{g \circ f}\ar[r]^{f} & Y \ar[d]^{g}\\
Z \ar@{=}[r] & Z
}
\]
is $k$-cartesian. Apply this observation to the definition of $\cE_g$ to get the first claim. The second claim follows from the $(2n-4)$-cartesianness of \eqref{morletdiagram}, together with the following general fact: if 
\[
\xymatrix{
X_0 \ar[d]^{f_0} \ar[r] & X_1 \ar[d]^{f_1} \\
Y_0 \ar[d]^{g_0} \ar[r] & Y_1 \ar[d]^{g_1} \\
Z_0 \ar[r] & Z_1
}
\]
is a commutative diagram of spaces, the maps $f_i$ are $k$-connected, and the maps $g_i$ are $k$-coconnected, and the large rectangle is $k$-cartesian, then the lower square is homotopy cartesian. 
\end{proof}

\begin{lem}\label{morlet-lemma-abeliannesstrick}
For each $1 \leq g \leq \infty$, the fibration $\cE_g \to  \tilde{\cD}_g$ is ``very simple'' in the following sense: if $\cF_g$ denotes its homotopy fibre, then the monodromy action $\mu(\gamma): \cF_g \to \cF_g$, for each $\gamma \in \pi_1 (B \tilde{\cD}_g)$, is homotopic to the identity.
\end{lem}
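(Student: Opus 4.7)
The plan is to combine the stabilization identifications of Lemma \ref{lem:moorepostnikov} with a second application of Morlet's lemma of disjunction, essentially reproducing the argument of Krannich \cite{Krannich} alluded to in the introduction.

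By the second part of Lemma \ref{lem:moorepostnikov}, the stabilization squares are homotopy cartesian, so the stabilization maps induce weak equivalences $\cF_g \xrightarrow{\sim} \cF_{g+1}$ of fibres that intertwine the monodromy actions along $\pi_1(B\tilde{\cD}_g) \to \pi_1(B\tilde{\cD}_{g+1})$. Hence, given $\gamma \in \pi_0(\tilde{\cD}_g)$ represented by $\varphi \in \tilde{\cD}_g$, it is enough to check that the stabilized class $\gamma' := [\varphi \natural \mathrm{id}] \in \pi_0(\tilde{\cD}_{g+1})$, obtained by extending $\varphi$ by the identity over the newly added summand, acts as the identity on $\cF_{g+1}$.

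To that end, write $U_{g+1,1}^n = U_{g,1}^n \natural U_{1,1}^n$ and fix an embedded disc $D^{2n+1} \subset \inter{U_{1,1}^n} \subset U_{g+1,1}^n$ lying entirely in the new summand. Applying Morlet's lemma of disjunction \cite[Theorem 3.1]{BLR} to this inclusion yields that the square
\[
\xymatrix{
B\Diff_\partial(D^{2n+1}) \ar[r] \ar[d] & B\Diff_\partial(U_{g+1,1}^n) \ar[d] \\
B\blockdiff_\partial(D^{2n+1}) \ar[r] & B\blockdiff_\partial(U_{g+1,1}^n)
}
\]
is $(2n-4)$-cartesian. Consequently, the induced map of homotopy fibres of the downward arrows, $\blockdiff_\partial(D^{2n+1})/\Diff_\partial(D^{2n+1}) \to \tilde{\cD}_{g+1}/\cD_{g+1}$, is $(2n-4)$-connected; forming the corresponding Moore--Postnikov $(2n-4)$-stages on both sides yields a weak equivalence $\cF_{D^{2n+1}} \xrightarrow{\sim} \cF_{g+1}$.

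The key observation is that this comparison is induced by extension by the identity on the complement of $D^{2n+1}$, and hence is $\gamma'$-equivariant: since $\varphi \natural \mathrm{id}$ is the identity in a neighbourhood of $D^{2n+1}$, it commutes with every (block) diffeomorphism supported in $D^{2n+1}$, so its conjugation action on $\blockdiff_\partial(D^{2n+1})/\Diff_\partial(D^{2n+1})$ is the identity on the nose. Transporting along the equivalence $\cF_{D^{2n+1}} \simeq \cF_{g+1}$ therefore shows that $\gamma'$ acts as the identity on $\cF_{g+1}$ up to homotopy, completing the argument. The main technical obstacle will be setting up the point-set models of $\blockdiff_\partial(\_)$ and of the relevant homotopy fibres functorially enough so that this equivariance genuinely holds on the nose rather than only up to a chain of chosen homotopies; once a suitably natural simplicial model is adopted, the remaining verifications are straightforward.
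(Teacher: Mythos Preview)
Your proposal is correct and follows essentially the same strategy as the paper's own proof, namely the argument borrowed from Krannich \cite{Krannich}: use a second instance of Morlet's lemma of disjunction to factor through the disc case, where the monodromy is visibly trivial because the relevant representative is supported away from the disc.

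There are two minor differences worth noting. First, the paper works directly inside $U_{g,1}^n$: it picks a disc $D^{2n+1}\subset U_{g,1}^n$ and uses that any class in $\pi_0(\tilde{\cD}_g)=\pi_0(\cD_g)$ can be represented by an actual diffeomorphism isotoped off that disc. You instead stabilize to $g+1$ first and place the disc in the new summand, so that $\varphi\natural\mathrm{id}$ is automatically the identity there; this is a harmless rearrangement. Second, your phrase ``conjugation action'' is slightly misleading: the monodromy of $\pi_0(\tilde{\cD}_{g+1})$ on $\tilde{\cD}_{g+1}/\cD_{g+1}$ is left translation $L_{\gamma'}$, not conjugation. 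The argument still works because (a) $\varphi\natural\mathrm{id}$ commutes with every element in the image of the disc inclusion (disjoint supports), turning left translation into right translation on that image, and (b) $\varphi\natural\mathrm{id}$ is an actual diffeomorphism (the $0$-simplices of $\blockdiff_\partial$ are diffeomorphisms), so right translation by it is trivial on the coset space. You should make this explicit.

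Finally, the ``technical obstacle'' you flag at the end---setting up strictly functorial models so that the equivariance holds on the nose---is sidestepped in the paper by a clean obstruction-theoretic trick: since $\pi_k(\cF_g)=0$ for $k\geq 2n-4$, two maps into $\cF_g$ are homotopic as soon as they agree on a $(2n-4)$-skeleton, and the inclusion of that skeleton factors through $\tilde{\cD}_0/\cD_0$ via the $(2n-4)$-connected map $q\circ\sigma$. This avoids any point-set gymnastics.
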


\begin{proof}
We follow the outline of a very similar argument contained in \cite[\S 5.3]{Krannich}. It is enough to deal with the case of finite $g$, the case $g=\infty$ follows by passage to the colimit. We present the core argument first. Note that the homotopy fibre of $B \tilde{\cD}_g \to B \cD_g$ is the homogeneous space $\tilde{\cD}_g /\cD_g$. Let 
\[
\sigma: \tilde{\cD}_0/\cD_0 \to \tilde{\cD}_g/\cD_g
\]
be the stabilization map and let $\gamma \in \tilde{\cD}_g$. Note that $\sigma$ is given by gluing in (block) diffeomorphisms in a fixed disc, and note that $\gamma$ can be isotoped to a diffeomorphism that fixes this disc. It follows that 
\begin{equation}\label{eqn:monodromyargument}
L_\gamma \circ \sigma \sim \sigma: \tilde{\cD}_0/\cD_0 \to \tilde{\cD}_g/\cD_g,
\end{equation}
where $L_\gamma$ denotes the left translation by $\gamma$ on $\tilde{\cD}_g/\cD_g$. Note that the monodromy action of $\gamma$ on $\tilde{\cD}_g/\cD_g$ is exactly $L_\gamma$.

By definition, $\pi_*(\cF_g)=0$ if $* \geq 2n-4$, and there is a $(2n-4)$-connected map $q: \tilde{\cD}_g/\cD_g\to \cF_g$, equivariant with respect to the two monodromy actions. We may also assume that $\cF_g$ is a CW complex.

Since $\pi_k(\cF_g)=0$ if $k \geq 2n-4$, two maps $f_0,f_1: K \to \cF_g$ from a CW complex are homotopic if their restrictions $f_j|_{K^{(2n-4)}}$ are homotopic. We apply this to $f_0=\id_{\cF_g}$ and $f_1 = \mu(\gamma)$. The inclusion of the $(2n-4)$-skeleton $\cF_g^{(2n-4)} \to \cF_g$ can be factored through maps 
\[
\cF_g^{(2n-4)} \stackrel{h}{\to} \tilde{\cD}_0/\cD_0 \stackrel{\sigma}{\to} \tilde{\cD}_g/\cD_g\stackrel{q}{\to} \cF_g. 
\]
For $\gamma \in \tilde{\cD}_g$, we therefore have
\[
\mu(\gamma)|_{\cF_g^{(2n-4)}} \sim \mu(\gamma) \circ q \circ \sigma\circ h \sim  q \circ L_\gamma \circ \sigma\circ h\stackrel{\eqref{eqn:monodromyargument}}{\sim}  q \circ \sigma\circ h = \id|_{\cF_g^{(2n-4)}},
\]
so that $\mu(\gamma) \sim \id$, as desired.
\end{proof}

\begin{lem}\label{lem:result-of-obstructionargument}
The commutator subgroup of $\pi_1 (\cE_\infty)$ is perfect, and the diagram 
\[
\xymatrix{
\cE_\infty \ar[d] \ar[r] & \cE_\infty^+\ar[d]\\
 B \tilde{\cD}_\infty \ar[r] & B \tilde{\cD}_\infty^+
}
\]
is homotopy cartesian. 
\end{lem}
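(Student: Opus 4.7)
The plan splits naturally into two parts: first verify that $\cE_\infty^+$ is defined and that there is a comparison map $\cE_\infty^+ \to B\tilde{\cD}_\infty^+$; second, establish homotopy cartesianness by exploiting the triviality of monodromy provided by Lemma~\ref{morlet-lemma-abeliannesstrick}. For the first part, since $n \geq 3$ one has $2n-4 \geq 2$, so by Lemma~\ref{lem:moorepostnikov} the map $B\cD_\infty \to \cE_\infty$ is at least $2$-connected and induces an isomorphism on $\pi_1$. Lemma~\ref{lem:plusconstruction} then gives that the commutator subgroup of $\pi_1(\cE_\infty) \cong \pi_1(B\cD_\infty)$ is perfect, so the Quillen plus construction $\cE_\infty \to \cE_\infty^+$ is defined component-by-component. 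Writing $\Gamma := \pi_1(B\tilde{\cD}_\infty)$, Lemma~\ref{lem:cerf} (stabilized in $g$) identifies $\pi_1(B\cD_\infty) \cong \pi_1(\cE_\infty) \cong \Gamma$, so all three plus constructions that appear kill the same perfect subgroup $[\Gamma,\Gamma]$. The composite $\cE_\infty \to B\tilde{\cD}_\infty \to B\tilde{\cD}_\infty^+$ annihilates $[\Gamma,\Gamma] \subset \pi_1(\cE_\infty)$ (its target has abelian fundamental group), so by the universal property of the plus construction it lifts uniquely up to homotopy to a map $\cE_\infty^+ \to B\tilde{\cD}_\infty^+$, producing the square.

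For homotopy cartesianness, form the homotopy pullback $P := \cE_\infty^+ \times^h_{B\tilde{\cD}_\infty^+} B\tilde{\cD}_\infty$ with comparison map $q: \cE_\infty \to P$; the goal is to show $q$ is a weak equivalence. Since $B\tilde{\cD}_\infty \to B\tilde{\cD}_\infty^+$ is acyclic (it is the plus-construction map) and acyclicity of maps is preserved by homotopy pullback (the homotopy fibre is unchanged), the projection $P \to \cE_\infty^+$ is acyclic. On $\pi_1$ the pullback reads $\pi_1(P) \cong \Gamma^{ab} \times_{\Gamma^{ab}} \Gamma \cong \Gamma$, with $q_\ast$ the identity on $\Gamma$, so $q$ is a $\pi_1$-isomorphism. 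An acyclic map that is also a $\pi_1$-isomorphism is automatically a weak equivalence, since its homotopy fibre is simply connected and integrally acyclic, hence contractible by Hurewicz and Whitehead. So it remains to show $q$ is acyclic, and this is where Lemma~\ref{morlet-lemma-abeliannesstrick} enters: the monodromy of $\cE_\infty \to B\tilde{\cD}_\infty$ on $\cF_\infty$ being trivial up to homotopy means the Serre spectral sequence for that fibration has $E_2 = H_\ast(B\tilde{\cD}_\infty) \otimes H_\ast(\cF_\infty)$, and by comparing with the corresponding spectral sequence for $P \to B\tilde{\cD}_\infty$ (whose fibre equals $\mathrm{hofib}(\cE_\infty^+ \to B\tilde{\cD}_\infty^+)$) one deduces a homology isomorphism $\cE_\infty \to P$.

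The main technical obstacle is this spectral-sequence comparison: one must control the monodromy of the plus-constructed fibration $P \to B\tilde{\cD}_\infty$ so that its $E_2$ is expressible in terms of data one already understands, and in particular so that the comparison with $\cE_\infty \to B\tilde{\cD}_\infty$ is tractable. Lemma~\ref{morlet-lemma-abeliannesstrick} together with the fact that $\cF_\infty$ is simply connected (which follows from the $\pi_1$-isomorphisms above and the long exact sequence of the fibration) is exactly what makes this work. Alternatively, one may appeal to the standard formulation of this principle in the literature on acyclic maps, for instance to Hausmann--Husemoller's ``Acyclic maps'' or Berrick's treatment, which packages the statement as: the plus construction, taken with respect to compatible perfect subgroups, preserves fibrations with trivial monodromy — precisely the situation we are in.
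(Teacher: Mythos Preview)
Your first part (perfectness of the commutator subgroup, and the $\pi_1$ identifications) is fine and matches the paper. The problem is in the second part.

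The spectral sequence comparison you propose is circular. To deduce that $q:\cE_\infty \to P$ is a homology isomorphism by comparing the Serre spectral sequences of $\cE_\infty \to B\tilde{\cD}_\infty$ and $P \to B\tilde{\cD}_\infty$, you need the induced map on fibres $\cF_\infty \to \cG$ to be a homology isomorphism; but since the two fibrations share the same base, that is equivalent to what you are trying to prove. Triviality of the monodromy (Lemma~\ref{morlet-lemma-abeliannesstrick}) only tells you both $E_2$-pages have untwisted coefficients; it does not by itself furnish an isomorphism between them. Relatedly, your claim that $\cF_\infty$ is simply connected is not correct: the long exact sequence only gives that $\pi_1(\cF_\infty)$ is a quotient of $\pi_2(B\tilde{\cD}_\infty)$, hence abelian, not that it vanishes. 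Likewise your computation $\pi_1(P)\cong \Gamma^{ab}\times_{\Gamma^{ab}}\Gamma\cong\Gamma$ ignores the possible contribution of $\pi_2(B\tilde{\cD}_\infty^+)$; for a homotopy pullback, $\pi_1$ is not in general the pullback of fundamental groups.

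Your final ``alternatively'' paragraph is on the right track and is essentially what the paper does, but it is not just a packaging of trivial monodromy. The relevant result (Berrick, \emph{The plus-construction and fibrations}, Theorem~1.1) has two hypotheses: that the perfect subgroup of $\pi_1$ of the base acts trivially on $H_*(\cF_\infty)$, \emph{and} that the fibre $\cF_\infty$ is nilpotent with $\cF_\infty^+=\cF_\infty$. The paper checks the second hypothesis explicitly: $\cF_\infty$ is connected (from the $\pi_1$-isomorphism), has abelian $\pi_1$, and is nilpotent because it is also the homotopy fibre of the induced map $\widetilde{\cE_\infty}\to\widetilde{B\tilde{\cD}_\infty}$ between universal covers, hence the fibre of a map of $1$-connected spaces. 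That nilpotency verification is the missing ingredient in your sketch.
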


\begin{proof}
First note that the composition
\[
B \cD_\infty \to \cE_\infty \to B \tilde{\cD}_\infty
\]
induces an isomorphism on fundamental groups by Lemma \ref{lem:cerf}, and the first map is $(2n-4)$-connected by definition, so that both maps induce isomorphisms on fundamental groups. By Lemma \ref{lem:plusconstruction}, the commutator subgroup of $\cE_\infty$ is hence perfect. 

For the proof that the square is homotopy cartesian, we use a theorem by Berrick \cite[Theorem 1.1]{Berrick}.

The fact that $\pi_1 (\cE_\infty) \to \pi_1 (B \tilde{\cD}_\infty)$ is an isomorphism has three consequences: firstly, $\cF_\infty$ is connected; secondly $\pi_1 (\cF_\infty)$ is abelian (because it is a quotient of $\pi_2 (B \tilde{\cD}_\infty)$); and thirdly $\cF_\infty$ is nilpotent ($\cF_\infty$ is also the homotopy fibre of the map $\widetilde{\cE_\infty} \to  \widetilde{B \tilde{\cD}_\infty}$ induced on universal coverings, so it is the homotopy fibre of a map of $1$-connected spaces; it is a general fact that such homotopy fibres are nilpotent, see e.g. \cite[Proposition 4.4.1]{MayPonto}). Hence $\cF_\infty^+ = \cF_\infty$ is nilpotent, which is part of hypothesis (b) of \cite[Theorem 1.1]{Berrick}. The rest of the hypothesis is that the commutator subgroup of $\pi_1 (B \tilde{\cD}_\infty)$ acts trivially on the homology of $\cF_\infty$, and this follows from Lemma \ref{morlet-lemma-abeliannesstrick}. It now follows from \cite[Theorem 1.1]{Berrick} that 
\[
\cF_\infty\to \cE_\infty^+ \to B \tilde{\cD}_\infty^+
\]
is a fibre sequence, which is exactly the statement that the square is homotopy cartesian. 
\end{proof}

To finish the proof of Proposition \ref{prop:comparisondiagramcartesian}, we need a general property of the Quillen plus construction. 

\begin{lem}\label{lem:quillenplus}
Let $f: X \to Y$ be a $n$-connected map of connected spaces, $n \geq 2$, let $P \subset \pi_1 (X)=\pi_1 (Y)$ be a perfect normal subgroup of the common fundamental group, and let $X \to X^+$, $Y \to Y^+$ be the Quillen plus constructions on $P$. Then $f^+: X^+ \to Y^+$ is $n$-connected.
\end{lem}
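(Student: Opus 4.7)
My plan is to pass to universal covers and reduce the statement to a classical Whitehead-type homology argument between simply connected spaces.

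First, I would note that since $n\geq 2$, the $n$-connectedness of $f$ already forces isomorphisms on $\pi_0$ and $\pi_1$. The subgroup $P$ is therefore identified on both sides, and $f^+$ automatically induces an isomorphism on $\pi_1(X^+)=\pi_1(X)/P=\pi_1(Y^+)$. It thus remains to check that the induced map of universal covers $\widetilde{f^+}\colon \widetilde{X^+}\to\widetilde{Y^+}$ is $n$-connected, and since its source and target are simply connected, the Whitehead theorem reduces this further to the assertion that $\widetilde{f^+}$ is a $\bZ$-homology isomorphism in degrees $<n$ and a surjection in degree $n$.

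Second, I would pull back the universal covers of $X^+$ and $Y^+$ along the plus-construction maps. The resulting homotopy cartesian squares
\[
\xymatrix{
\widehat{X} \ar[r] \ar[d] & \widetilde{X^+} \ar[d] \\
X \ar[r] & X^+
}
\qquad
\xymatrix{
\widehat{Y} \ar[r] \ar[d] & \widetilde{Y^+} \ar[d] \\
Y \ar[r] & Y^+
}
\]
exhibit $\widehat{X}\to X$ and $\widehat{Y}\to Y$ as the covers corresponding to $P\subset \pi_1$. Since the right vertical maps are coverings and the bottom horizontal maps are acyclic, the top horizontal maps have the same homotopy fibre as the bottom ones and are therefore still acyclic --- in particular, $\bZ$-homology isomorphisms. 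The lift $\widehat{f}\colon\widehat{X}\to\widehat{Y}$ of $f$ has the same homotopy fibre as $f$ (which is connected because $n\geq 2$), so $\widehat{f}$ remains $n$-connected.

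Third, I would run the Serre spectral sequence of the fibration $F\to\widehat{X}\to\widehat{Y}$, where $F$ is the $(n-1)$-connected homotopy fibre of $f$. Because $H_q(F;\bZ)=0$ for $0<q<n$, one has $E^2_{p,q}=0$ in that range, so no differential can hit $E^2_{n,0}=H_n(\widehat{Y};\bZ)$ and the edge map makes $\widehat{f}$ a $\bZ$-homology isomorphism in degrees $<n$ and a surjection in degree $n$. Combined with the horizontal acyclicity via two-out-of-three in the commutative square formed by $\widehat{f}$ and $\widetilde{f^+}$, this yields the needed homological statement for $\widetilde{f^+}$, which by Whitehead upgrades to $n$-connectedness, and hence to $n$-connectedness of $f^+$.

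The main subtlety will be confirming that the $n$-connectedness of $\widehat{f}$ forces a $\bZ$-homology equivalence through degree $n$ despite $\widehat{X}$ and $\widehat{Y}$ being non-simply connected; but the spectral sequence above circumvents the usual local-coefficient difficulties, because the only potentially problematic rows $0<q<n$ happen to vanish outright.
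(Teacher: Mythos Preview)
Your argument is correct. Both your proof and the paper's pass through the cover $\widehat{X}$ (which the paper calls $\tilde{X}$) with fundamental group $P$, but the two are organized differently. The paper first isolates the special case $P=\pi_1(X)$, where $X^+$ and $Y^+$ are simply connected and the claim reduces immediately to Hurewicz plus the homology-preservation of the plus construction; for the general case it invokes the description of $X^+$ as the homotopy pushout of $X\leftarrow\tilde{X}\to\tilde{X}^+$ (Hatcher, p.~374) and deduces the connectivity of $f^+$ from that of $f$, $\tilde{f}$, and $\tilde{f}^+$ via the resulting map of pushout squares. You instead go straight to universal covers of the targets, identify their pullbacks along the plus-construction maps as the $P$-covers, transport the homology information across using acyclicity, and finish with Whitehead. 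Your route is more self-contained (no external reference for the pushout model, and the Serre spectral sequence step is spelled out), while the paper's is terser and highlights the conceptual reduction to the $P=\pi_1$ case. Note in passing that your $\widetilde{X^+}$ is weakly equivalent to the paper's $\tilde{X}^+$: both are simply connected and receive a $\bZ$-homology equivalence from $\widehat{X}=\tilde{X}$.
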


\begin{proof}
If $P=\pi_1 (X)$, there is not much say, besides quoting Hurewicz' theorem. In the general case, let $\tilde{X} \to X$, $\tilde{Y} \to Y$ be the coverings with fundamental group $P$. Now $X^+$ can be realized as the homotopy pushout 
\[
\xymatrix{
\tilde{X} \ar[r] \ar[d] & \tilde{X}^+\ar[d]\\
X \ar[r] & X^+,
}
\]
see e.g. \cite[p.374]{Hatcher}, and the claim follows.
\end{proof}

\begin{proof}[Proof of Proposition \ref{prop:comparisondiagramcartesian}]
Lemma \ref{morlet-lemma-of-disjunction}, Lemma \ref{lem:moorepostnikov} and Lemma \ref{lem:result-of-obstructionargument} imply that the large rectangle in
\[
\xymatrix{
B \cD_0 \ar[d] \ar[r] & B \cD_\infty \ar[d] \ar[r] & \cE_\infty  \ar[d] \ar[r] & \cE_\infty^+ \ar[d] \\
B \tilde{\cD}_0 \ar[r] & B \tilde{\cD}_\infty \ar@{=}[r] & B \tilde{\cD}_\infty \ar[r] & B \tilde{\cD}_\infty^+
}
\]
is $(2n-4)$-cartesian. This is the same as the rectangle in 
\[
\xymatrix{
B \cD_0 \ar[d] \ar[r] & B \cD_\infty^+ \ar[d] \ar[r] & \cE_\infty^+ \ar[d] \\
B \tilde{\cD}_0 \ar[r] & B \tilde{\cD}_\infty^+ \ar@{=}[r] & B \tilde{\cD}_\infty^+  . 
}
\]
The map $B \cD_\infty^+ \to \cE_\infty^+$ is $(2n-4)$-connected by Lemma \ref{lem:quillenplus} which applies as $n \geq 3$, and so the right square is $(2n-4)$-cartesian. Hence the left square is $(2n-5)$-cartesian, as required. 
\end{proof}

\subsection{Computation of the cohomology}

We now let $\cG$ be the homotopy fibre of the map $\Omega BB\cD \to \Omega BB\tilde{\cD}$ at the basepoint. By Lemma \ref{lem:plusconstruction}, this is also the homotopy fibre of $B \cD_\infty^+ \to B \tilde{\cD}_\infty^+$, so that there is a fibre sequence 
\[
\cG \stackrel{j}{\to} B \cD_\infty^+ \to B \tilde{\cD}_\infty^+. 
\]

\begin{lem}\label{lem:useofpastingtheorem}\mbox{}
\begin{enumerate}
\item $\cG$ has the homotopy type of a connected $(2n+1)$-fold loop space, and its rational homotopy groups in degrees $k \leq 2n-5$ are, when $n \geq 4$, given by 
\[
\pi_k (\cG)\otimes \bQ =
\begin{cases}
\bQ & 1 \leq k \leq 2n-5, \; k \equiv 0 \pmod 4\\
0 & 1 \leq k \leq 2n-5, \; k \not\equiv 0 \pmod 4.\\
\end{cases}
\]
\item The map $j$ induces the zero map on (reduced) rational homology up to degree $2n-5$, if $n \geq 5$.
\end{enumerate}
\end{lem}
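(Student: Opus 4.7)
My plan treats the two parts of the lemma in turn. For part (1), the $E_{2n+1}$-algebra structures on $B\cD$ and $B\tilde{\cD}$ are compatible with the comparison map, so by May's recognition principle \cite{MayLoop} the group completions $\Omega BB\cD$ and $\Omega BB\tilde{\cD}$ carry $(2n+1)$-fold loop space structures and the comparison becomes an $E_{2n+1}$-map; its homotopy fibre $\cG$ therefore inherits a $(2n+1)$-fold loop space structure. Connectedness of $\cG$ is immediate, since on $\pi_0$ the comparison is the identity $\bZ \to \bZ$. To compute $\pi_\ast(\cG)\otimes \bQ$, I invoke Lemma \ref{lem:plusconstruction} to identify $\cG$ with the homotopy fibre of $B\cD_\infty^+ \to B\tilde{\cD}_\infty^+$, and then Proposition \ref{prop:comparisondiagramcartesian} produces a $(2n-5)$-connected comparison map $\phi:\tilde{\cD}_0/\cD_0 \to \cG$. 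Combined with Lemma \ref{lem:farrellhsiang}, $\phi$ induces an isomorphism on rational homotopy groups in degrees $< 2n-5$ and a surjection in degree $2n-5$; since $2n-5$ is odd and hence not divisible by $4$, Lemma \ref{lem:farrellhsiang} already forces $\pi_{2n-5}(\tilde{\cD}_0/\cD_0)\otimes \bQ = 0$, so the same vanishing holds for $\cG$, yielding the asserted formula for all $k \leq 2n-5$.

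For part (2), the strategy is a square-chase. Consider the commutative diagram
\[
\xymatrix{
\tilde{\cD}_0/\cD_0 \ar[r]^{\phi} \ar[d]^{j_0} & \cG \ar[d]^{j}\\
B\cD_0 \ar[r]^{\iota} & B\cD_\infty^+
}
\]
in which $\iota$ is the stabilization map and $j_0$ the inclusion of the fibre of $BI_0$. The main result of \cite{EbertDisc}, a consequence of \cite{BotPerl} and \cite{PerlStab}, asserts that $j_0$ induces the zero map on reduced rational homology in a range of degrees; dually $j_0^\ast = 0$ on reduced rational cohomology in the same range. By part (1), both $\cG$ and $\tilde{\cD}_0/\cD_0$ are rationally simply connected in degrees $\leq 2n-5$, so the cohomology version of the Whitehead theorem, applied to the $(2n-5)$-connected map $\phi$, shows that $\phi^\ast:H^i(\cG;\bQ)\to H^i(\tilde{\cD}_0/\cD_0;\bQ)$ is injective for all $i \leq 2n-5$. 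Commutativity of the square yields $\phi^\ast \circ j^\ast = j_0^\ast \circ \iota^\ast = 0$ in that range, and injectivity of $\phi^\ast$ then forces $j^\ast = 0$.

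The step I expect to be the main obstacle is pinning down the precise range of vanishing supplied by \cite{EbertDisc}; the introduction only quotes this as holding ``in a range of degrees''. However, since Morlet's lemma of disjunction rationally identifies $\tilde{\cD}_0/\cD_0$ with $B\Diff_\partial(D^{2n+1})$ through roughly degree $2n$ (this is the input to Lemma \ref{lem:farrellhsiang}), one expects the vanishing of \cite{EbertDisc} to reach at least $2n-5$ for $g=0$, so the argument goes through without further trouble.
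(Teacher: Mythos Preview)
Your approach to part (1) is essentially the same as the paper's, and your extra care about the degree $k=2n-5$ (using that $2n-5$ is odd) is a nice touch that the paper leaves implicit.

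In part (2) there is a genuine error: you have misidentified which map is trivial in rational homology. The map $j_0:\tilde{\cD}_0/\cD_0\to B\cD_0$ is \emph{not} zero on rational homology; on the contrary, since $B\tilde{\cD}_0=B\blockdiff_\partial(D^{2n+1})$ has finite homotopy groups, $j_0$ is a rational homotopy equivalence. What \cite[Theorem 1.7]{EbertDisc} actually supplies is that the \emph{stabilization map} $B\cD_0\to B\cD_\infty$ (and hence $\iota:B\cD_0\to B\cD_\infty^+$) induces the zero map on reduced rational homology up to degree $2n-3$. The sentence in the introduction about ``the inclusion of the homotopy fibre into the total space'' refers to large $g$, where one first identifies the fibre rationally with $B\cD_0$ via Morlet and then applies this stabilization statement; for $g=0$ the fibre inclusion itself is a rational equivalence.

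Fortunately your square-chase survives this correction unchanged: from $\iota_*=0$ on $\tilde H_*(-;\bQ)$ in degrees $\leq 2n-3$ one gets $\iota^*=0$ on reduced rational cohomology in the same range (finite type), so $\phi^*\circ j^*=j_0^*\circ\iota^*=0$; since $\phi$ is $(2n-5)$-connected, $\phi^*$ is injective in degrees $\leq 2n-5$, whence $j^*=0$ and by duality $j_*=0$ there. The paper runs the dual argument directly in homology: $\phi_*$ is surjective in degrees $\leq 2n-5$ and $j_*\circ\phi_*=\iota_*\circ(j_0)_*=0$, so $j_*=0$. Your worry about the range from \cite{EbertDisc} is unfounded; the paper quotes it as $2n-3$, comfortably above $2n-5$.
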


\begin{proof}
Being the homotopy fibre of a map of $(2n+1)$-fold loop spaces at the basepoint, it is clear that $\cG$ has the homotopy type of a $(2n+1)$-fold loop space. By Proposition \ref{prop:comparisondiagramcartesian}, there is a $(2n-5)$-connected map 
\[
\hq{\blockdiff_\partial (D^{2n+1})}{\Diff_\partial (D^{2n+1})} \to \cG,
\]
and so (1) follows from Lemma \ref{lem:farrellhsiang}. 

For (2), consider the diagram
\[
\xymatrix{
\tilde{\cD}_0/\cD_0 \ar[d] \ar[r] & \cG \ar[d]^{j}\\
B \cD_0 \ar[r] \ar[d]  & B \cD_\infty^+ \ar[d] \\
B \tilde{\cD}_0 \ar[r]  & B \tilde{\cD}_\infty^+. 
}
\]
The middle horizontal map in the diagram induces the zero map on (reduced) rational homology up to degree $2n-3$, because the same is true for the map $B \cD_0 \to B \cD_\infty$ before taking the plus construction. This was proven by the first named author in \cite[Theorem 1.7]{EbertDisc}, based on \cite{BotPerl} and \cite{PerlStab}.

The top map in the diagram is $(2n-5)$-connected by Proposition \ref{prop:comparisondiagramcartesian}, and as $B \tilde{\cD}_0$ has finite homotopy groups, $\tilde{\cD}_0/\cD_0 \to B \cD_0$ is a rational homotopy equivalence. Putting these facts together, triviality of $H_* (j;\bQ)$ follows.
\end{proof}

\begin{proof}[Proof of Theorem \ref{thm:comparisondiffblockdiff}]
(1) We first prove that $H^* (B \blockdiff_\partial (U_{g,1}^n);\bQ)$ is degreewise finitely generated; this implies via the homological stability theorem of \cite{Grey} that $H^* (B \tilde{\cD}_\infty;\bQ)$ is degreewise finitely generated. Proposition \ref{prop:rationalized-blockdiffs} reduces this to the question whether $H^*( \hq{\map_* ((U_{g,1}^n)_\bQ;BO_\bQ)^0}{\hAut_\partial((U_{g,1}^n)_\bQ)^{\cong}};\bQ)$ is degreewise finitely generated. Using Lemma \ref{lem:pinullhautrelbound} (3), the spectral sequence for \eqref{eqn:firstkeyfibresequence} and the well-known fact that $H^* (B \GL_g (\bZ);V)$ is finite-dimensional for each finite-dimensional $\GL_g (\bQ)$-representation $V$, one reduces the problem to the question whether $H^* (\hq{\map_\partial ((U_{g,1}^n)_\bQ;BO_\bQ)^0}{\hAut_\partial((U_{g,1}^n)_\bQ)^{\id}};\bQ)$ is degreewise finitely generated. This is clear from Corollary \ref{cor:cohomology-mappingspace} and the fact that $H^* (B \hAut_\partial ((U_{g,1}^n)_\bQ)^\id;\bQ)$ is degreewise finitely generated, which is easily deduced from the description in terms of mapping spaces that we gave during the course of the proof of Lemma \ref{lem:lowdimhomotopyhomoauto1} and the fibre sequence \eqref{fibresequence}. 

By Lemma \ref{lem:plusconstruction}, $H^* (B \tilde{\cD}_\infty;\bQ) \cong H^* (\Omega_0 B B \cD;\bQ)$. As $\Omega_0 B B \cD$ is a connected $(2n+1)$-fold loop space, it is rationally equivalent to a product of Eilenberg--Mac-Lane spaces, from which the claim about the structure of $H^* (B \tilde{\cD}_\infty;\bQ)$ follows. 

(2) Let $U_g^n:= U_{g,1}^n \cup_{\partial} D^{2n+1}$. Kupers' finiteness theorem\footnote{All that is needed for the proof of Theorem \ref{mainthm:main} is that $H^* (B \cD_\infty;\bQ)$ is finite-dimensional for $* \leq n-3$, and this follows from (1) and the argument we shall give for (4). So the appeal to the deep results of \cite{Kupers} is not really necessary for our main purposes.} \cite[Corollary C]{Kupers} shows that $B \Diff^+ (U_g^n)$ is of homologically finite type if $n \geq 4$. There is a fibre sequence $\Fr^+ (TU_g^n) \to B\Diff_\partial (U_{g,1}^n) \to B \Diff^+ (U_g^n)$, and a straightforward spectral sequence argument proves that $B\Diff_\partial (U_{g,1}^n)$ is of homologically finite type. Together with \cite[Corollary 1.3.2]{Perlmutter}, this establishes the finiteness claim. The rest of the proof is parallel to that of (1). 

(3) In degrees $\leq n-3$, injectivity of $(B \rho_n)^*$ is contained in the statement of Theorem \ref{thm:blockdiffs} (and this is all what is truely needed for the proof of Theorem \ref{mainthm:main}). To get the claim in full generality, we use recent work of Stoll \cite{Stoll}. Theorem A of loc.cit. implies that 
\[
\hocolim_g B \hAut_\partial ((U_{g,1}^n)_\bQ)_\bZ \to B \GL_\infty (\bZ)
\]
induces an injective map in rational cohomology and that $H^*(\hocolim_g B \hAut_\partial ((U_{g,1}^n)_\bQ)_\bZ;\bQ)$ is a free $H^*(B \GL_\infty (\bZ);\bQ)$-module. On the other hand, the forgetful map 
\[
\hq{\map_* ((U_{g,1}^n)_\bQ;BO_\bQ)^0}{\hAut_\partial((U_{g,1}^n)_\bQ)^{\cong}} \to B\hAut_\partial((U_{g,1}^n)_\bQ)^{\cong}
\]
has a right inverse given by the constant map $U_{g,1}^n \to BO$. Using that $\hAut_\partial ((U_{g,1}^n)_\bQ)_\bZ = \hAut_\partial ((U_{g,1}^n)_\bQ)^{\cong}$ (Lemma \ref{lem:pinullhautrelbound} (4)), the claim on the injectivity of $B\rho_n^*$ therefore follows from Proposition \ref{prop:rationalized-blockdiffs}.

Now the map $B \tilde{\cD}_\infty^+\to B \GL_\infty (\bZ)^+$ is a map of $H$-spaces between connected $(2n+1)$-fold loop spaces of finite rational type. For such spaces, the rational cohomology is naturally the free graded-commutative algebra on the dual space of the rational homotopy. 
From the injectivity in rational cohomology that we just established, we deduce surjectivity in rational homotopy, and this proves the claim about the structure of $H^* (B \tilde{\cD}_\infty;\bQ)$. 

(4) For the purpose of computing cohomology, we can replace $BI_\infty$ by its plus-construction or equivalently by the map $p: \Omega_0 BB \cD \to \Omega_0 BB \tilde{\cD}$. The three spaces in the fibre sequence 
\[
\cG \stackrel{j}{\to} \Omega_0 BB \cD \stackrel{p}{\to} \Omega_0 BB \tilde{\cD}
\]
are connected double loop spaces, and $j$ and $p$ are loop space maps. Hence by the Milnor--Moore theorem \cite{MilnorMoore}, the rational homology algebras of the three spaces (with the Pontrjagin product) are the free graded commutative algebras on the rational homotopy, $H_* (p;\bQ)$ is the algebra homomorphisms induced by $\pi_* (p) \otimes \bQ$ and similarly for $j$. It follows from Lemma \ref{lem:useofpastingtheorem} that the map $\pi_* (j)\otimes \bQ$ is also the trivial map up to degree $(2n-5)$. Hence $\pi_* (p) \otimes \bQ$ is injective up to degree $(2n-5)$, so $H_*(p;\bQ)$ is injective, and $H^*(p;\bQ)$ is surjective, both in degrees $\leq (2n-5)$. 
Using the knowledge about the rational homotopy of $\cG$, we see that $\coker (\pi_* (p) \otimes \bQ)$ is concentrated in degrees $4k+1$, $k \geq 1$ and in these degrees has dimension $1$ (again in degrees $\leq (2n-5)$).

Hence the kernel of $H^*(p;\bQ)$ must, in degrees $\leq (2n-5)$, be an ideal generated by classes in each degree $4k+1$, $k \geq 1$.
However, we know by Proposition \ref{prop:vanishingdww} that the ideal generated by the Borel classes lies in the kernel of $H^*(p;\bQ)$, and by a dimension count, justified by items (1),(2),(3), must be equal to the kernel. This finishes the evaluation of $H^*(\Omega_0 BB \cD;\bQ)$, which by Lemma \ref{lem:plusconstruction} gives $H^* (B \cD_\infty;\bQ)$.
\end{proof}

\bibliographystyle{plain}
\bibliography{diffalgK}

\end{document}